\theoremstyle{plain}
	\newtheorem{theorem}{Theorem}[section]
	\newtheorem{lemma}[theorem]{Lemma}
	\newtheorem{proposition}[theorem]{Proposition}
	\newtheorem{corollary}[theorem]{Corollary}
\theoremstyle{definition}
	\newtheorem{definition}[theorem]{Definition}
	\newtheorem{open problem}[theorem]{Open Problem}
	\newtheorem{remark}[theorem]{Remark}
\newcommand{\subjclass}[2][1991]{
  \let\@oldtitle\@title
  \gdef\@title{\@oldtitle\footnotetext{#1 \emph{Mathematics subject classification.} #2}}
}
\newcommand{\keywords}[1]{
  \let\@@oldtitle\@title
  \gdef\@title{\@@oldtitle\footnotetext{\emph{Key words and phrases.} #1.}}
}
\newcommand{\mean}[1]{\,-\hskip-1.08em\int_{#1}} 
\newcommand{\eps}{\varepsilon}
\renewcommand{\div}{\text{\sl div}} 
\newcommand{\uno}{\mathds{1}} 
\newcommand{\hatE}{\widehat{\uno_{E}}}
\newcommand{\hatEc}{\widehat{\uno_{E^c}}}
\newcommand{\tildef}[1]{\widetilde{ #1}}   
\newcommand{\X}{{\mathbb X}} 
\newcommand{\supp}{\mathrm{supp}} 
\renewcommand{\d}{\mathrm{d}}
\newcommand{\Lip}{\mathrm{Lip}} 
\newcommand{\bs}{\mathrm{bs}} 
\newcommand{\DM}{\mathcal{DM}} 
\newcommand{\h}{{\rm h}} 
\newcommand{\rcd}{{\mathsf{RCD}(K,\infty)}}
\newcommand{\weakstarto}{\stackrel{*}{\rightharpoonup}} 
\renewcommand{\a}{{\rm a}}
\newcommand{\weakto}{\rightharpoonup} 
\newcommand{\M}{\mathbf{M}} 
\newcommand{\R}{\mathbb{R}}
\newcommand{\der}{\mathrm{Der}}
\newcommand{\sder}{\mathop{\mbox{$\mathrm{S}$--$\der$}}}
\newcommand{\frd}{\mathfrak{D}}
\newcommand{\res}{\mathop{\hbox{\vrule height 7pt width .5pt depth 0pt
\vrule height .5pt width 6pt depth 0pt}}\nolimits}
\newcommand{\Leb}{\mathscr{L}^}
\newcommand{\ban}[1]{\left\langle  #1 \right\rangle}  
\newcommand{\Haus}[1]{{\mathscr H}^{#1}} 
\newcommand{\redbeu}{\mathscr{F}} 
\def\N{\mathbb{N}}
\def\mm{\mathscr{M}}
\def\nn{\mathscr{N}}
\def\vinf{\mathbb{V}^{\infty}}
\title{On $BV$ functions and essentially bounded divergence--measure fields in metric spaces}
\author{Vito Buffa,
Giovanni E. Comi,
Michele Miranda Jr.
}
\date{\today}
\subjclass[2020]{Primary 26A45, 26B20. Secondary 30L99, 53C23.}
\keywords{Functions of bounded variation, divergence--measure fields, Gauss--Green formula, normal traces, metric measure spaces, curvature dimension condition, cotangent module}
\begin{document}

\maketitle
\begin{abstract}\noindent
\begin{adjustwidth}{0,5cm}{0,5cm}
By employing the differential structure recently developed by N. Gigli, we first give a notion of functions of bounded variation ($BV$) in terms of suitable vector fields on a complete and separable metric measure space $(\X,d,\mu)$ equipped with a non--negative Radon measure $\mu$ finite on bounded sets. Then, we extend the concept of divergence--measure vector fields $\DM^p(\X)$ for any $p\in[1,\infty]$ and, by simply requiring in addition that the metric space is locally compact, we determine an appropriate class of domains for which it is possible to obtain a Gauss--Green formula in terms of the normal trace of a $\DM^\infty(\X)$ vector field. This differential machinery is also the natural framework to specialize our analysis for $\rcd$ spaces, where we exploit the underlying geometry to determine the Leibniz rules for $\DM^\infty(\X)$ and ultimately to extend our discussion on the Gauss--Green formulas.
 \end{adjustwidth}
\end{abstract}

\tableofcontents
\section{Introduction}

The main purpose of this paper is to investigate integration by parts formulas for essentially bounded divergence--measure fields and functions of bounded variation ($BV$) in the very abstract contexts of locally compact and $\rcd$ metric measure spaces $(\X,d,\mu)$.

In order to deal with ``vector fields'' on metric measure spaces, one needs of course to refer to some differential structure of the ambient space, in terms of which the usual differential objects of the ``smooth''  analysis and geometry find a consistent and equivalent counterpart.

To this aim, we chose to follow the approach of \cite{gi2}, where the author builds a first--order $L^2$--theory of differential forms and vector fields which is tailored for metric measure spaces, in particular -- when second--order differentiability issues are addressed -- those whose Ricci curvature is bounded from below by some $K\in\mathbb{R}$. This differential structure, which was anticipated in the analysis carried on by the same author in \cite{gi1}, relies on the theory of $L^p$--normed modules, $1\le p\le\infty$, whose construction is based on the idea of the $L^\infty$--modules previously considered in \cite{we} and which is discussed in detail in \cite{gi2}. There, the machinery of such theory serves as the starting point for the settlement of a metric counterpart to the cotangent bundle, namely the \textit{cotangent module} $L^2\left(T^*\X\right)$ defined as the $L^2$--normed module consisting of square--summable \textit{differential forms}. 

The concept of a differential form is characterized in terms of the minimal weak upper gradients of Sobolev functions -- defined via test plans, \cite{ags3,ags4,gi1} -- and of suitable partitions of the underlying space; by this procedure, the \textit{differential} of a Sobolev function arises as the 1--form given accordingly as an equivalence class consisting of the function itself and the whole space $\X$. Then, by duality -- in the sense of normed modules -- the \textit{tangent module} $L^2(T\X)$, namely the $L^2$--normed module having square-summable \textit{vector fields} as its own elements, is given as the dual module of $L^2(T^*\X)$. Of course, taking into account all the due -- albeit expected -- technicalities (see in particular Remark \ref{p-grad}), this approach can be extended to any couple of conjugate exponents $p,q\in[1,\infty]$ with $\frac{1}{p}+\frac{1}{q}=1$, giving rise to the corresponding modules $L^p(T^*\X)$ and $L^q(T\X)$. As this straightforward task was presented in \cite{bu}, we shall therefore follow that exposition. This is the main object of Section \ref{preliminaries}, where we also address all the other fundamental preliminary tools of our analysis, like the definition of $\rcd$ space and the related consequences of the lower curvature bound, especially in connection with the heat flow.

As one may expect, this language allows for well--posed characterizations of differential operators such as the \textit{gradient} of a function and the \textit{divergence} of a vector field, whose definitions are consistent with the classical ones of the Euclidean and Riemann-Finsler geometries.\\ 
This is for example the case of the just mentioned $\rcd$ spaces, whose core--characterization is that of infinitesimally Hilbertian spaces $\X$; that is, such that $W^{1,2}(\X)$ is a Hilbert space \cite{gi1}, whose Ricci curvature is bounded from below by some $K\in\mathbb{R}$. Such spaces exhibit several interesting properties, especially in regards with the \textit{heat flow} $\h_t:\,L^2(\X)\to L^2(\X)$, which is given as the gradient flow of the Cheeger--Dirichlet energy and defines a semigroup $\left(\h_t\right)_{t\ge 0}$ of linear and self-adjoint operators whose contraction properties in the metric setting with curvature control -- after the pioneering work \cite{ba} -- have been extensively studied, during the last decade, in notable papers like \cite{agmr,ags2,ags3,ags5,gko,sa}.
For us, the most important property of the heat flow is of course the \textit{Bakry---Émery contraction estimate} \eqref{eq:BE}, which gives an exponential decay for the norm of the gradient of a Sobolev function along the heat flow, a decay that is intimately related to the lower curvature bound of the space. The Bakry--Émery condition will be invoked repeatedly in Sections \ref{leib-rules} and \ref{GGformulas}, where it will play a fundamental role in establishing the correct forms of the Leibniz rules for divergence--measure vector fields, and therefore the Gauss--Green formulas.

The use of this differential machinery is essential also in our characterization of $BV$ functions on general metric measure spaces, which is the central topic of Section \ref{BV}. Indeed, by considering the subclass of those essentially bounded vector fields in $L^\infty(T\X)$ whose divergence belongs to $L^\infty(\X)$, namely $\frd^\infty(\X)$, we find that it is possible to characterize the total variation of a function in a familiar way: 
\begin{equation} \label{intro:tot_var_formula} \|Du\|(\Omega)=\sup\left\{\int_\Omega u\div(X)\d\mu;\;X\in\frd^\infty(\X),\;\supp(X)\Subset\Omega,\;|X|\le1\right\},
\end{equation}
where $\Omega\subset\X$ is any open set.

Our procedure is inspired by \cite{di}, where $BV$ functions are characterized by means of (bounded) \textit{Lipschitz derivations}, namely linear operators acting on Lipschitz functions, $\delta:\Lip_\bs(\X)\to L^0(\X)$, satisfying the Leibniz rule and the locality condition
\[|\delta(f)|(x)\le g(x) \Lip_{\mathrm{a}}(f)(x)\]
for $\mu$--almost every $x\in\X$, for all $f\in\Lip_\bs(\X)$ and for some $g\in L^0(\X)$, where $\Lip_{\mathrm{a}}(f)$ denotes the asymptotic Lipschitz constant of $f$, see Definition \ref{der}. As for the $L^\infty$ modules, also this class of objects was previously treated in more generality in \cite{we}.

Our characterization of $BV$ functions  instead, rather than relying on Lipschitz derivations, actually incorporates \textit{Sobolev derivations}, whose notion is taken from \cite{gi2} and reads as follows: given any two conjugate exponents $p,q\in[1,\infty]$, a linear map $L:D^{1,p}(\X)\to L^1(\X)$ satisfying
\[
 |L(f)|\le\ell|Df|
\]
for any $f\in D^{1,p}(\X)$ and for some $\ell\in L^q(\X)$ is a Sobolev derivation, see Definition \ref{sob-der} below. Under some additional assumptions, it turns out that the class of Lipschitz derivations can be actually extended to that of Sobolev ones, thanks to suitable approximation procedures.
Then, by arguments based on the theory of $L^p$--normed modules and on the properties of the modules $L^p(T^*\X)$ and $L^q(T\X)$, we are able to prove that there exists a one-to-one correspondence between the vector fields in the class $\frd^\infty(\X)$ and the bounded Lipschitz derivations used in \cite{di}.
This result (stated in Lemma \ref{isomorphism_1}, Lemma \ref{isomorphism_2} and Theorem \ref{der-iso-d}) allows us to define $BV$ functions. Indeed, it is the starting point for our construction of the $BV$ space in a similar manner as in \cite{di} and for the derivation of \eqref{intro:tot_var_formula}, which produces an equivalent characterization of functions with bounded variation (Theorem \ref{equal-BV}).

As the definition of $BV$ involves an integration by parts (or Gauss--Green) formula, it appears natural to study such formulas in more depth and detail.

The classical statement of the Gauss--Green formula in the Euclidean space $\R^n$ requires a vector field $F \in C^{1}_{c}(\R^{n}; \R^{n})$ and an open set $E$ such that $\partial E$ is a $C^1$ smooth $(n-1)$--dimensional manifold, in order to conclude that
\begin{equation*} \int_{E} \mathrm{div} F \, \d x = - \int_{\partial E} F \cdot \nu_{E} \, \d \Haus{n - 1}, \end{equation*}
where $\nu_{E}$ is the unit interior normal to $\partial E$ and $\Haus{n - 1}$ is the $(n-1)$--dimensional Hausdorff measure. These assumptions on the integration domain and on the field are clearly too strong for many practical purposes, and indeed an immediate consequence of the Euclidean theory of $BV$ functions is that we can extend this formula to the sets of finite perimeter (employing the reduced boundary $\redbeu E$ and the measure theoretic unit interior normal instead of the classical notions). Instead, looking for sharp regularity assumptions on the vector fields, it is convenient to consider the space of essentially bounded divergence--measure fields $\DM^\infty(\X)$, namely those $X\in L^\infty(T\X)$ whose distributional divergence is a finite Radon measure (among which we can mention, as examples in the metric setting, the subclass $L^\infty(T\X)\cap\frd^{1}(\X)$ of essentially bounded vector fields, where $\frd^1(\X)$ is the set of those $X\in\ L^1(T\X)$ with summable divergence\footnote{See Definition \ref{def-div} for the characterization of the spaces $\frd^q(\X)$, $q\ge1$.}).

The space $\DM^\infty(\X)$ was originally introduced in the Euclidean framework by G. Anzellotti in 1983 (\cite{anz}), in order to study the pairings between vector valued Radon measures and bounded vector fields. One of the main results of his seminal work is an integration by parts formula for $\DM^{\infty}$ vector fields and continuous $BV$ functions on a bounded open set $\Omega$ with Lipschitz boundary. In particular, in \cite[Theorem 1.2]{anz} the author proved the existence of essentially bounded normal traces on the boundary of the Lipschitz domain. This family of vector fields was then rediscovered in the early 2000's and studied by many authors aiming to a variety of applications of the generalized Gauss--Green formulas (we refer for instance to \cite{ACM, CF2, DMM, LeoSaracco1, scheven2016bv, scheven2017anzellotti, scheven2016dual, Sch, Si2, Si3}). At first, the integration domains were taken with $C^1$ or Lipschitz regular boundary, and subsequently the case of sets of finite perimeter was considered in \cite{ctz, comi2017locally, CoT, Si}. In particular, in \cite{comi2017locally} the authors extended to $\DM^\infty$ the methods of Vol'pert's proof of the integration by parts formulas for essentially bounded $BV$ functions, \cite{V, VH}. The generalized method consists in proving a Leibniz rule for $\DM^\infty$--fields and essentially bounded scalar functions of bounded variation and in applying this formula to the characteristic function of a set of finite perimeter: then, the Gauss--Green formulas are a simple consequence of some identities between Radon measures derived from the Leibniz rule.

More precisely, \cite[Theorem 3.2]{comi2017locally} states that, if $F \in \DM^{\infty}(\R^{n})$ and $E$ is a bounded set of finite perimeter in $\R^{n}$, then there exists interior and exterior normal traces $(\mathcal{F}_{i} \cdot \nu_{E}), (\mathcal{F}_{e} \cdot \nu_{E}) \in L^{\infty}(\redbeu E; \Haus{n-1})$ such that we have the following Gauss--Green formulas:
\begin{align*}
\mathrm{div}F(E^{1}) & = - \int_{\redbeu E} \mathcal{F}_{i} \cdot \nu_{E} \, d \Haus{n - 1} \\ \mathrm{div}F(E^{1} \cup \redbeu E) &  = - \int_{\redbeu E} \mathcal{F}_{e} \cdot \nu_{E} \, d \Haus{n - 1},
\end{align*}
where $E^1$ is the measure theoretic interior of $E$; that is, the set of points with Lebesgue density 1. We stress the fact that we obtain two formulas since the divergence of the field may be concentrated on an $(n-1)$--dimensional set (such as the reduced boundary), due to the possible jump discontinuities of the field. This explains the necessity of having two different normal traces. Indeed, as an example one may consider
\[F(x) = \frac{x}{|x|^{n}} \chi_{\R^n \setminus B(0, 1)}(x)\] 
(where $B(0,1)$ denotes the unit ball centered in the origin): it is possible to show that $(\mathcal{F}_{i} \cdot \nu_{B(0,1)}) = 0$, while $(\mathcal{F}_{e} \cdot \nu_{B(0,1)}) = 1$.

We refer to \cite{comi2017locally} for a more detailed exposition of the history of the divergence--measure fields and their applications in $\R^{n}$, and to \cite{CCT, crasta2017anzellotti, crastadecicco2, crastadeciccomalusa, LeoSaracco2} for some recent developments. 

It seems natural to investigate the possibility to extend the theory of divergence--measure fields and Gauss--Green formulas to non--Euclidean settings, and indeed there have been researches in this direction: in \cite{bu, bm, mms1} the authors considered doubling metric measure spaces supporting a Poincar\'e inequality, while in \cite{cm} the class of horizontal divergence--measure fields in stratified groups is studied; lastly, in the more recent paper \cite{bps}, a Gauss--Green formula for sets of finite perimeter was proved in the context of an $\mathsf{RCD}(K,N)$ space by means of Sobolev vector fields in the sense of \cite{gi2}. In particular, in \cite{mms1} the authors employed the Cheeger differential structure (see \cite{ch}) to prove a Gauss--Green formula on the so--called \textit{regular balls}, and later, in \cite{bu,bm}, a Maz'ya--type approach based on \cite[Section 9.5]{ma}, allowed to write a similar formula in terms of the \textit{rough trace} of a $BV$ function. 
On the other hand, in \cite{cm} the authors exploited the approach developed in \cite{comi2017locally} and proved that it is possible to extend the method to stratified groups. In Sections \ref{leib-rules} and \ref{GGformulas} we shall follow a similar method in the framework of locally compact $\rcd$ metric measure spaces; that is, we shall first derive a Leibniz rule and then use it to obtain suitable identities between Radon measures, from which the Gauss--Green formulas shall follow.

Having the appropriate class of vector fields to work with, we can start to discuss the main topic of our work, namely the investigation of the Gauss--Green formulas.  In Section \ref{reg-dom} we define the space of $p$--summable divergence--measure fields on a general metric measure space $\X$, $\DM^{p}(\X)$, and, as an interesting byproduct of the definitions and of the differential structure we are using, we show that, if $X\in\DM^p(\X)$, for $p \in (1, \infty]$, the measure $\div(X)$ is absolutely continuous with respect to the $q$--capacity, where $q$ denotes the conjugate exponent of $p$.

Then, we refine the analysis of \cite{bu} -- which was tailored to geodesic spaces -- to give a Gauss--Green formula on regular domains in a locally compact metric measure space, requiring no further structural assumptions to be satisfied. 

The hypothesis of local compactness will be essential here, and also in Sections \ref{leib-rules} and \ref{GGformulas}, since we will often need to rely on the Riesz Representation Theorem and its corollaries in our arguments. 

Inspired by \cite{mms1}, we generalize the concept of regular balls by considering the class of \textit{regular domains} formerly introduced in \cite{bu}, namely those open sets $\Omega\subset\X$ of finite perimeter for which the upper inner Minkowski content of their boundary satisfies
\[
 \mathfrak{M}^*_i(\partial\Omega)\coloneqq\underset{t\to 0}{\lim\sup}\frac{\mu(\Omega\setminus\Omega_t)}{t}=\|D\uno_\Omega\|(\X),
\]
where, for $t>0$, 
\[
 \Omega_t\coloneqq\left\{x\in\Omega;\;\text{dist}(x,\Omega^c)\ge t\right\}.
\]

From our analysis, it turns out that the main notable properties of any regular domain $\Omega$ are the following:
\begin{itemize}
 \item Just like regular balls, also regular domains admit a \textit{defining sequence} $(\varphi_{\eps})_{\eps > 0}$ of bounded Lipschitz functions, which somehow encodes the properties of the domain itself (Definition \ref{def-seq}).
 \item The perimeter measure of $\Omega$ can be approximated in the sense of Radon measures via the family of measures $|D\varphi_\eps|\mu$; that is, $|D\varphi_\eps|\mu\weakto\|D\uno_\Omega\|$, as $\eps\to0$ (Proposition \ref{w-conv-per}).
\end{itemize}
These two facts together entitle us to make use of similar arguments as in the proof of \cite[Theorem 5.7]{mms1} and therefore to establish the Gauss--Green formula given in Theorem \ref{gauss-geod} below, which features the first occurrence of the interior normal trace $\left(X\cdot\nu_\Omega\right)_{\partial\Omega}^{-}$ of a vector field $X\in\DM^\infty(\X)$. We then extend our discussion on regular domains by imposing further assumptions on our selected domain $\Omega$, finding the conditions which first allow us to determine the exterior normal trace $\left(X\cdot\nu_\Omega\right)_{\partial\Omega}^{+}$ of $X\in\DM^\infty(\X)$ and the related integration by parts formula, Theorem \ref{thm:IBP_ext_reg}. We conclude this section with Corollary \ref{cor:dom_reg_trace_abs_cont}, where we find suitable sufficient conditions on $\Omega$ and $\div(X)$ under which the interior and exterior normal traces coincide. In such case, they are denoted by $\left(X\cdot\nu_\Omega\right)_{\partial\Omega}$.

It is worth to mention that a similar connection between the normal traces and the Minkowski content of the boundary of the domain was considered in the Euclidean space in \cite[Proposition 6.1 and Proposition 6.2]{CCT}, in the spirit of \cite[Theorem 2.4]{Si3}.

From Section \ref{leib-rules} onwards our analysis moves definitively to the realm of (locally compact) $\rcd$ spaces. 

Here we address in particular the issue of Leibniz rules for $\DM^\infty(\X)$ vector fields and the action of $BV$ functions on such objects. Our aim is to extend the product rules already established in the Euclidean spaces in \cite[Theorem 3.1]{CF1} and \cite[Theorem 2.1]{Frid}. In order to do so, we have to use extensively the self--adjointness and the contraction properties of the heat flow $\h_t$, and above all the Bakry--Émery contraction estimate in the self-improved form \eqref{eq:BE}. Our arguments eventually lead to Theorem \ref{thm:Leibniz_DM_BV}, where we essentially prove that the action of $f\in BV(\X)$  -- the {\em pairing} between $Df$ and $X$ --  on $X\in\DM^\infty(\X)$ produces a vector field $fX$ still in $\DM^\infty(\X)$ and that any accumulation point $\bm{D}f(X)$ of the family of measures $(\d\h_t f(X) \mu)_{t > 0}$ in $\mathbf{M}(\X)$ is absolutely continuous with respect to the total variation measure $\|Df\|$. This fact will be then fundamental to define the normal traces of a divergence--measure field.

This is indeed the starting point of Section \ref{GGformulas}, where the \textit{interior and exterior distributional normal traces} of $X\in\DM^\infty(\X)$ on the boundary $\partial E$ of a set of finite perimeter $E\subset\X$ are given as the functions $\ban{X, \nu_E}^{-},\ban{X, \nu_E}^{+}_{\partial E}\in L^{\infty}(\X, \|D\uno_E\|)$ such that
\begin{align*}
 2\bm{D}\uno_E(\uno_E X) &=\ban{X, \nu_E}_{\partial E}^-\|D\uno_E\|, \\ 2\bm{D}\uno_E(\uno_{E^c}X) &=\ban{X, \nu_E}_{\partial E}^+\|D\uno_E\|.
\end{align*}
Due to the non--uniqueness of the pairing, a priori, we cannot ensure the uniqueness of these normal traces either. Indeed, from this point on, we shall assume to have fixed a sequence $t_{j} \to 0$ such that 
\begin{equation*}
\d\h_{t_j} (\uno_E)(\uno_E X) \mu \weakto \bm{D}\uno_E(\uno_E X) \ \ \text{and} \ \ \d\h_{t_j} (\uno_E)(\uno_E^c X) \mu \weakto \bm{D}\uno_E(\uno_E^c X).
\end{equation*}
Nevertheless, our arguments allow us to establish the relation between the interior and exterior distributional normal traces of $X$ on $E$ and on $E^c$ (Remark \ref{rem:normal_traces_compl}).
It is also not difficult to notice that the sequence $(\h_{t_{j}} \uno_E)_{j \in \N}$ admits a subsequence converging in the weak* topology of $L^\infty(\X,\|\div(X)\|)$ to some function $\tildef{\uno_E}$, which is again a priori not unique, being a weak* limit point of a uniformly bounded sequence.
The analysis of Section \ref{sec:refined_Leibniz_rule} starts with a discussion on the level sets of $\tildef{\uno_E}$, $\widetilde{E^s} := \{\tildef{\uno_E} = s \}$: we give a weaker notion of measure--theoretic interior and exterior of $E$ related to $\tildef{\uno_E}$, namely $\tildef{E^{1}}$ and $\tildef{E^{0}}$, and of measure--theoretic boundary 
\[
 \tildef{\partial^*E}\coloneqq\X\setminus\left(\tildef{E^0}\cup\tildef{E^1}\right).
\]
Taking then into account also the weak* accumulation points $\hatE$ of the sequence $(\h_{t_{j}}\uno_E)_{j \in \N}$ in $L^\infty(\X,\|D\uno_E\|)$, we first obtain in Theorem \ref{thm:Leibniz_rule_E_1} a refinement of the Leibniz rules for $\DM^\infty(\X)$ vector fields and then, with Proposition \ref{prop:elementary_prop_normal_traces} and Proposition \ref{rel-div-Es} we investigate further the properties of traces and the relation between the divergence--measure $\div(X)$ and the sets $\tildef{E^1}$ and $\tildef{\partial^*E}$. The discussion culminates with Theorem \ref{thm:Leibniz_rule_E_2}, which is a second refined version of the Leibniz rule between $X$ and $\uno_E$ and is fundamental in the derivation of the Gauss--Green formulas in Section \ref{sec:general_Gauss_Green}.

The issue of the dependence on the approximating sequence $(\h_{t_{j}} \uno_{E})_{j \in \N}$ can be solved under the additional assumption that $\|\div(X)\|\ll\mu$. Indeed, in Section \ref{div_abscont_mu}, it is showed that the interior and exterior distributional normal traces of $X\in\DM^\infty(\X)$ on the boundary of $E$ are uniquely determined and coincide. In this case, the unique normal trace is denoted by $\ban{X, \nu_E}_{\partial E}$.

We then reconsider \textit{en passant} a particular case of regular domains to give a characterization of the sets $\tildef{\Omega^0}$ and $\tildef{\Omega^1}$ to conclude that, when $\|\div(X)\|\ll\mu$, the normal trace coincides with the distributional one; that is, $\left(X\cdot\nu_\Omega\right)_{\partial\Omega}=\ban{X, \nu_\Omega}_{\partial \Omega}$.

Our survey on Gauss--Green formulas ends with Section \ref{hatE_onehalf}, where we formulate the hypothesis that the weak* limit points $\hatE$ of $\h_t\uno_E$ in $L^\infty(\X,\|D\uno_E\|)$ are constant functions.
Indeed, if $\X$ is a Euclidean or a Wiener spaces (see \cite[Proposition 4.3]{Ambrosio_Figalli}), given a measurable set $E$ satisfying either $\uno_E\in BV(\X)$ or $\uno_{E^c}\in BV(\X)$, we have the convergence
\[
 \h_t\uno_E\weakstarto\frac{1}{2}\;\text{in}\;L^\infty(\X,\|D\uno_E\|).
\]
However, the proof of this result relies heavily on a Leibniz rule for the total variation, a tool which is not available at present in the more abstract context of a general $\rcd$ space. Nevertheless, in Theorem \ref{thm:weak_star_conv_set_fin_per} we prove that 
\[
\mean{\X}\hatE\d\|D\uno_E\|=\frac{1}{2},
\]
for any weak* limit $\hatE$ of some subsequence of $\h_t \uno_E$. As an easy corollary, it follows that, if $\hatE$ is a constant function, it must be equal to $\frac{1}{2}$. It is worth to mention that this fact does not depend on the local compactness of the space, as one would expect, since the Wiener spaces are not locally compact in general. However, we need that $\supp(\mu) = \X$, an assumption which does not seem too restrictive for our purposes.

A first interesting consequence of the condition $\hatE = \frac{1}{2}$ is that the distributional normal traces of any $X\in\DM^\infty(\X)$ coincide $\|D\uno_E\|$--almost everywhere on the set $\tildef{E^0}\cup\tildef{E^1}$ and that the space $\X$ admits a ``tripartition'' up to a $\|\div(X)\|$--negligible set, namely
\[
 \|\div(X)\|\left(\X\setminus\left(\tildef{E^0}\cup\tildef{E^{1/2}}\cup\tildef{E^1}\right)\right)=0.
\]
Thanks to this result, we finally obtain a refinement of the Gauss--Green formulas previously given in Section \ref{sec:general_Gauss_Green}, as the weak* limit $\hatE$ does not appear anymore in the improved version of the results.


\section{Preliminaries}\label{preliminaries}

Throughout this paper, $(\X,d,\mu)$ shall always be a complete and separable metric measure space endowed with a non--negative Radon measure $\mu$; additional hypotheses on the space will
be made from time to time as soon as they are needed. A $\mu$--measurable set $E$ in $\X$ will be simply called \textit{measurable} when no ambiguity occurs. Given $x \in \X$ and $r > 0$, we denote by $B_{r}(x)$ the open ball centered in $x$ with radius $r$; that is,
\begin{equation*}
B_r(x) := \{ y \in \X : d(x, y) < r \}.
\end{equation*}

For any open set $\Omega \subset \X$, we shall denote by $\mathbf{M}(\Omega)$ the space of signed finite Radon measures on $\Omega$. 
Moreover, by $\Lip_b(\Omega)$ and by $L^0(\Omega)$ we shall indicate the spaces of bounded Lipschitz functions and of measurable functions in $\Omega$, respectively.

In addition, following the notation of \cite{ambhonda}, we set
\begin{equation*}
C_\bs(\Omega) := \{ f \in C_{b}(\Omega) : \mathrm{supp}(f) \Subset \Omega \},
\end{equation*}
where we write $A\Subset B$ to mean that $A\subset \!B$ is bounded and $\text{dist}\left(A,B^c\right)>0$. Analogously, we set $\Lip_{\bs}(\Omega)$ to be the set of Lipschitz functions in $C_{\bs}(\Omega)$. If $\Omega = \X$, then $C_{\bs}(\X)$ and $\Lip_{\bs}(\X)$ are the spaces of continuous and Lipschitz functions with bounded support, respectively. 

We denote by $\Lip_{\text{a}}(f)$ the asymptotic Lipschitz constant of $f$, namely \begin{equation}\label{lip-a}
  \Lip_\text{a}(f)(x)\coloneqq\underset{\rho\to 0}{\lim}\,\Lip(f,B_\rho(x)),\end{equation} where, for any set $E\subset\X$, one defines 
\[
 \Lip(f,E)\coloneqq\underset{x,y\in E;\,x\neq y}{\sup}\frac{|f(x)-f(y)|}{d(x,y)}.
                                                   \]

We recall a notion of weak convergence for Radon measures, following the monograph \cite[Chapter 8]{bo}.

\begin{definition} \label{def:weak_conv}
 Given a sequence of finite Radon measures $\left(\nu_{j}\right)_{j\in\mathbb{N}}\subset\bf{M}(\X)$, we say that $\left(\nu_{j}\right)_{j\in\mathbb{N}}$
is \textit{weakly convergent} to $\nu$ in $\mathbf{M}(\X)$ if
\[
\lim_{j\to+\infty}\int_{{\X}}fd\nu_{j}=\int_{{\X}}fd\nu,\qquad\forall f\in C_{b}({\X}),
\]
and we write $\nu_j\weakto\nu$.
\end{definition}

The above notion of convergence
is the natural generalization of the weak$^{*}$ convergence in the
duality of $C(K)$ with ${\bf M}(K)$, the space of finite Radon measures on $K$, for any compact set
$K\subset{\X}$.

This duality allows us to deduce that,
for a Radon measure $\nu\in{\bf M}({\X})$, its total variation
in ${\X}$ is given by
\[
\|\nu\|({\X})=\sup\left\{ \int_{{\X}}f \, \d\nu : f\in C_{b}({\X}),\:|f(x)|\leq1 \, \forall\,x\in{\X}\right\} .
\]

Since the class of bounded Lipschitz functions ${\rm Lip}_{b}({\X})$ constitutes
a subalgebra of $C_{b}({\X})$ and its restriction to $K$ for
any compact set $K\subset{\X}$ is a subalgebra of $C(K)$ containing
a non--zero constant function and which separates points,
it turns out that the restriction of ${\rm Lip}_{b}({\X})$ to
$K$ is dense in $C(K)$, so that we also obtain
\[
\|\nu\|({\X})=\sup\left\{ \int_{{\X}}f \, \d\nu : f\in{\rm Lip}_{b}({\X}),\:|f(x)|\leq1 \, \forall\,x\in{\X}\right\} .
\]

Of course, this argument can be also extended to Radon measures on
open sets $\Omega\subset{\X}$ to deduce that for any $\nu\in{\bf M}(\Omega)$
\[
\|\nu\|(\Omega)=\sup\left\{ \int_{\Omega}f \, \text{d}\nu : f\in{\rm Lip}_{b,c}(\Omega),\:|f(x)|\leq1 \, \forall\,x\in{\X}\right\} ,
\]
where
\[
{\rm Lip}_{b,c}(\Omega)\coloneqq\left\{ f\in{\rm Lip}_{b}({\X}) : \text{dist}\left({\rm supp}(f),\Omega^{c}\right)>0\right\} .
\]

We recall the notion of uniform tightness of a family of Radon measures and Prohorov's Theorem on the necessary and sufficient conditions for the weak convergence of Radon measures (for which we refer to \cite[Theorem 8.6.2]{bo}).

\begin{definition}
A family of Radon measures $\mathcal{M}$ on a metric space $\X$ is called {\it uniformly tight} if for all $\eps > 0$ there exists a compact set $K_{\eps}$ such that $\|\nu\|(\X \setminus K_{\eps}) < \eps$ for all $\nu \in \mathcal{M}$.
\end{definition}

\begin{theorem}\label{thm:bog_weak_conv}
Let $(\X, d)$ be a complete separable metric space and let $\mathcal{M}$ be a family of Radon measures on $\X$. Then the following conditions are equivalent:
\begin{enumerate}
\item every sequence $(\nu_{j})_{j \in \N} \subset \mathcal{M}$ contains a weakly convergent subsequence;
\item the family $\mathcal{M}$ is uniformly bounded and uniformly tight.
\end{enumerate}
\end{theorem}

\subsection{Sobolev spaces via Test Plans}

Let $C\left([0,1],\X\right)$ denote the space of continuous
curves equipped with the supremum norm. Note that, since the underlying metric space is complete and separable, $C\left([0,1],\X\right)$ will be complete and separable as well.

\begin{definition} We define the \textit{evaluation map} $\text{e}_{t}:\,C\left([0,1],\X\right)\rightarrow\X$,
$t\in[0,1]$ as
\[
\text{e}_{t}(\gamma)\coloneqq\gamma_{t}=\gamma(t)\quad\forall\,\gamma\in C\left([0,1],\X\right).
\]
Any curve $\gamma\in C\left([0,1],\X\right)$
is called \textit{$p$--absolutely continuous}, for some $p \in [1, \infty]$, if there exists $f\in L^{p}\left(0,1\right)$ such that
\begin{equation}
d\left(\gamma_{t},\gamma_{s}\right)\le\int_{t}^{s}f(r)\mathrm{d}r,\qquad\forall\,t,s\in(0,1)\;\text{with}\;t<s,\label{eq:p-abs-cont}
\end{equation}
and in this case we write $\gamma\in AC^{p}([0,1],\X)$.

By \cite[Theorem 1.1.2]{ags1}, to every $p$--absolutely continuous
curve we can associate the \textit{{metric derivative}
}(or the {\textit{speed}}) $t\mapsto\left|\dot{\gamma}_{t}\right|\in L^{p}\left(0,1\right)$
defined as the essential infimum of all the $f\in L^{p}\left(0,1\right)$
satisfying (\ref{eq:p-abs-cont}), and which is representable in terms
of an incremental ratio for almost every $t\in(0,1)$: 
\[
\left|\dot{\gamma}\right|(t)\coloneqq\underset{h\rightarrow0}{\lim}\frac{d\left(\gamma_{t+h},\gamma_{t}\right)}{h}.
\]
\end{definition}

Now let $\mathscr{P}\left(C([0,1],{\X})\right)$ be the space
of probability measures along continuous curves.

\begin{definition} Let $\bm{\pi}\in\mathscr{P}\left(C([0,1],{\X})\right)$.
We say that $\bm{\pi}$ has \textit{bounded compression} whenever there exists a constant $c=c(\bm{\pi})>0$
such that
\[
\left(e_{t}\right)_{\#}\bm{\pi}\le c(\bm{\pi})\mu\quad\forall\,t\in[0,1].
\]
If $p\in[1,\infty)$, $\bm{\pi}$ has bounded compression, it
is concentrated on $AC^{p}([0,1],{\X})$ and
\[
\int\!\!\int_{0}^{1}\left|\dot{\gamma}_{t}\right|^{p}\mathrm{d}t\mathrm{d}\bm{\pi}(\gamma)<\infty,
\]
then it will be called a $p$\textit{--test plan}.

If $p = \infty$, $\bm{\pi}$ has bounded compression, it
is concentrated on $AC^{\infty}([0,1],{\X})$ and $\Lip(\gamma) \in L^{\infty}(C([0, 1], \X), \bm{\pi})$, then it will be called an $\infty${\em --test plan}.
\end{definition}

As it is customary -- see for instance  \cite[Section 3.2]{hkst} -- given a non--negative locally finite Radon measure $\nu$ and $p \in [1, \infty]$, we say that a $\nu$--measurable function $f : \X \to \R$ is $p${\it --summable}, and we write $f \in \mathcal{L}^p(\X, \nu)$, if $\| f \|_{\mathcal{L}^p(\X, \nu)} < \infty$, where
\begin{equation*}
\| f \|_{\mathcal{L}^{p}(\X, \nu)} := \begin{cases} \displaystyle{\left ( \int_{\X} |f|^{p} \, \d \nu \right )^{\frac{1}{p}}} & \text{if} \ p \in [1, \infty),\\
\mbox{ } \\
\inf \{ \lambda > 0 : \nu(\{ |f| > \lambda \}) = 0 \}  & \text{if} \ p = \infty.
\end{cases}
\end{equation*}
Then, we introduce an equivalence relation on $\mathcal{L}^p(\X, \nu)$, by declaring $f_1, f_2 \in \mathcal{L}^p(\X, \nu)$ equivalent if and only if $$\nu(\{x \in \X : f_1(x) \neq f_2(x)\}) = 0.$$ 
Finally, we define the $L^p$--space $L^{p}(\X, \nu)$ as the vector space of such equivalence classes.
As it is well known, $L^{p}(\X, \nu)$ equipped with the $p$--norm $\| \cdot \|_{L^{p}(\X, \nu)} := \| \cdot \|_{\mathcal{L}^{p}(\X, \nu)}$ is a Banach space. With a little abuse of terminology, in the rest of the paper we shall refer to $p$--summable functions, rather than to equivalence classes. In addition, we shall simplify the notation in the case $\nu = \mu$ by removing the reference measure, namely  $L^{p}(\X) := L^{p}(\X, \mu)$.

\medskip

We can now give a definition of Sobolev--Dirichlet classes in terms of test plans. 

\begin{definition} \label{def-sob-class}Let $p\in[1,\infty]$
and let $q$ be its conjugate exponent, that is, $\frac{1}{p}+\frac{1}{q}=1$.
The {\it Sobolev--Dirichlet class} $D^{1,p}({\X})$ consists of all Borel functions
$f:{\X}\rightarrow\mathbb{R}$ for which there exists a non--negative
$g\in L^{p}(\X)$ satisfying
\begin{equation}
\int\left|f\left(\gamma_{1}\right)-f\left(\gamma_{0}\right)\right|\d \bm{\pi}(\gamma)\le\int\!\!\int_{0}^{1}g\left(\gamma_{s}\right)\left|\dot{\gamma}_{s}\right|\d s \d \bm{\pi}(\gamma)\label{eq:up-grad}
\end{equation}
for every $q$--test plan $\bm{\pi}$. Following the usual terminology, we shall say that $g$ is a $p${\it --weak upper gradient} of $f$.
\end{definition}

We note that, even though in the literature the above definition is mostly given for $p\in(1,\infty)$, the existence of $p$--test plans for any $p\in[1,\infty]$ entitles us to define the Sobolev--Dirichlet classes also for the limiting values of $p$.

We recall that, by \cite[Section 5.2]{ags3} and \cite[Section 4.5]{ags4}, for every $f\in D^{1,p}({\X})$ there exists a unique minimal $0\le g\in L^{p}(\X)$ such that (\ref{eq:up-grad}) holds. Such a function will be called the \textit{minimal $p$--weak upper gradient} of $f$ and we shall
denote it by $|Df|$. This allows to define a semi-norm on $D^{1,p}(\X)$, by setting 
\begin{equation} \label{eq:Sobolev_Dirichlet_seminorm}
\left\Vert f\right\Vert _{D^{1,p}({\X})}\coloneqq\left\Vert |Df|\right\Vert _{L^{p}(\X)}.
\end{equation}

\begin{remark}\label{rmk-prop-upgrad}As it is natural to expect, the minimal $p$--weak upper gradient satisfies the following properties (\cite{gi1,gi2} and the references therein):
\begin{enumerate}
 \item {\it Sub--linearity}: $|D(\alpha f+\beta g)| \le |\alpha||Df| + |\beta||Dg|$ for all $\alpha,\beta\in\R$, $f,g\in D^{1,p}(\X)$.
 \item {\it Weak Leibniz rule}: $|D(fg)|\le |f||Dg|+|g||Df|$ for all $f,g\in D^{1,p}(\X) \cap L^\infty(\X)\footnote{This intersection has to be intended in the sense that the Borel functions $f, g \in D^{1,p}(\X)$ are also essentially bounded.}$.
 \item {\it Locality}: $|Df|=|Dg|$ $\mu$--almost everywhere on $\{f=g\}$ for all $f,g\in D^{1,p}(\X)$.
 \item {\it Chain rule}: $|Df|=0$ on $f^{-1}({\cal N})$ for every $f\in D^{1,p}(\X)$ and every $\mathscr{L}^1$--negligible Borel set ${\cal N}\subset\R$. Moreover, if $\varphi:I\to\R$ is Lipschitz and $f\in D^{1,p}(\X)$, with $I\subset\R$ open such that $\mu(f^{-1}(\R\setminus I))=0$, then we have $\varphi\circ f\in D^{1,p}(\X)$ and 
 \[|D(\varphi\circ f)|=|\varphi'\circ f||Df|\] 
 $\mu$--almost everywhere, where $|\varphi'\circ f|$ is defined arbitrarily on the points where $\varphi$ fails to be differentiable.
\end{enumerate}
\end{remark}

We observe that the asymptotic Lipschitz constant $\Lip_\text{a}(f)$ is an upper gradient of $f$; so we get $|Df|\le\Lip_\text{a}(f)$ $\mu$--almost everywhere \cite{ags4,di}. This simple fact will play an important role in Lemma \ref{isomorphism_1}.

Thanks to this notion of Sobolev--Dirichlet class, the definition of
Sobolev spaces may be given in a traditional fashion.

\begin{definition} For all $p \in [1, \infty]$, we set
\[
\mathcal{W}^{1,p}({\X})\coloneqq \{ f \in D^{1,p}(\X) : \|f\|_{L^p(\X)} < \infty \},
\]
and we endow this space with the semi--norm
\begin{equation*}
\left\Vert f\right\Vert _{1,p} \coloneqq \begin{cases} \left ( \left\Vert f\right\Vert _{L^{p}(\X)}^{p}+\left\Vert f\right\Vert _{D^{1,p}(\X)}^{p} \right)^{\frac{1}{p}}  & \text{if} \ p \in [1, \infty), \\
\mbox{ } \\
\max\left \{\left\Vert f\right\Vert _{L^{\infty}(\X)}, \left\Vert f\right\Vert _{D^{1,\infty}(\X)}\right \}  & \text{if} \ p = \infty.
\end{cases}
\end{equation*}
where $\left\Vert f\right\Vert _{D^{1,p}({\X})}$ is given by \eqref{eq:Sobolev_Dirichlet_seminorm}. Let $\sim$ be the equivalence relation on $\mathcal{W}^{1,p}({\X})$ defined by setting $f \sim g$ if and only if $\left\Vert f - g\right\Vert _{1,p} = 0$. We define the {\em Sobolev space} $W^{1,p}(\X)$ as the quotient
\begin{equation*}
W^{1,p}(\X) \coloneqq \mathcal{W}^{1,p}({\X})/\sim
\end{equation*}
endowed with the norm $\| \cdot \| _{W^{1,p}({\X})} := \| \cdot \| _{1,p}$. \end{definition}

As in the case of the $L^p$--spaces, with a little abuse of terminology we shall refer to $W^{1,p}$--functions, rather than to equivalence classes.

To conclude this section, it is worth spending some words on an equivalent characterization of Sobolev spaces via test plans which involves the concept of ``negligibility'' of a family of curves with respect to any such probability measure. To this aim, below we shall recollect the salient arguments of \cite[Appendix B]{gi1}.

\begin{definition}
 A Borel set $\Gamma\subset C\left([0,1],\X\right)$ is called $p$\textit{--negligible} whenever $\bm{\pi}(\Gamma)=0$ for any $q$--test plan $\bm{\pi}$, where $p,q\in [1,\infty]$ are such that $\frac{1}{p}+\frac{1}{q}=1$.
 
 In this sense, any property will be said to hold for $p$\textit{--almost every curve} if the set where it fails is $p$--negligible.
\end{definition}

\begin{definition}\label{sobolev-along}
 A Borel function $f:\X\to\mathbb{R}$ is said to be \textit{Sobolev along $p$--almost every curve} if for $p$--almost every $\gamma$ the composition $f\circ\gamma$ coincides almost everywhere in $[0,1]$ with an absolutely continuous function $f_\gamma:[0,1]\to\mathbb{R}$.
 Thus, to any function $f$ which is Sobolev along $p$--almost every curve we associate a \textit{$p$--weak upper gradient}, i.e. a Borel map $g:\X\to[0,\infty]$ such that
 \[
  \left|f\left(\gamma_1\right)-f\left(\gamma_0\right)\right|\le\int_0^1 g\left(\gamma_s\right)\left|\dot{\gamma_s}\right|\d s
 \]
for $p$--almost every $\gamma$.
\end{definition}

This notion of Sobolev spaces turns out to be indeed equivalent to Definition \ref{def-sob-class}.

\begin{theorem}{\rm \cite[Theorem B.4]{gi1}}
Let $p,q\in[1,\infty]$ be any two conjugate exponents. If $f:\X\to\mathbb{R}$ and $g:\X\to[0,\infty]$ are two Borel functions, with $g\in L^p(\X)$, then the following statements are equivalent:
 \begin{enumerate}
 \item $f\in D^{1,p}(\X)$ and $g$ is a $p$--weak upper gradient of $f$ in the sense of Definition \ref{def-sob-class}.
 
 \item $f$ is Sobolev along $p$-almost every curve and $g$ is a $p$--weak upper gradient of $f$ in the sense of Definition \ref{sobolev-along}.
 \end{enumerate}
\end{theorem}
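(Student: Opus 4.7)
The plan is to prove $(2) \Rightarrow (1)$ by direct integration and $(1) \Rightarrow (2)$ via restriction and reparametrization of test plans. The implication $(2) \Rightarrow (1)$ is immediate: the exceptional set in Definition \ref{sobolev-along} is $p$-negligible, hence has zero $\bm{\pi}$-measure for every $q$-test plan $\bm{\pi}$, so integrating the pointwise bound of Definition \ref{sobolev-along} with respect to $\bm{\pi}$ yields exactly \eqref{eq:up-grad}.

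For the non-trivial direction $(1) \Rightarrow (2)$, I fix an arbitrary $q$-test plan $\bm{\pi}$ and exploit two closure properties of the class of $q$-test plans. First, for $0 \le s < t \le 1$, the reparametrization $R_{s,t}(\gamma)(r) := \gamma(s + r(t-s))$ pushes $\bm{\pi}$ to a measure with the same bounded-compression constant (since $\mathrm{e}_r \circ R_{s,t} = \mathrm{e}_{s+r(t-s)}$) and finite $q$-energy (by the scaling $|\dot{(R_{s,t}\gamma)}_r| = (t-s)|\dot\gamma_{s+r(t-s)}|$). Second, the normalized restriction $\bm{\pi}(A)^{-1}\bm{\pi}|_A$ to any Borel $A \subset C([0,1],\X)$ with $\bm{\pi}(A) > 0$ is again a $q$-test plan. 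Applying \eqref{eq:up-grad} to $(R_{s,t})_\# \bm{\pi}|_A$ and changing variables in the time integral, I obtain
\[
\int_A |f(\gamma_t) - f(\gamma_s)|\,\d\bm{\pi}(\gamma) \;\le\; \int_A \int_s^t g(\gamma_u)|\dot\gamma_u|\,\d u\,\d\bm{\pi}(\gamma).
\]
Varying $A$ forces the pointwise-in-$\gamma$ inequality $|f(\gamma_t) - f(\gamma_s)| \le \int_s^t g(\gamma_u)|\dot\gamma_u|\,\d u$ for $\bm{\pi}$-a.e. $\gamma$, for each fixed pair $(s,t)$.

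Taking the union over the countable family $(s,t) \in \mathbb{Q}^2 \cap \{s<t\}$ yields a single $\bm{\pi}$-null exceptional set outside of which the inequality holds for all rational pairs. Since H\"older's inequality together with the bounded compression and the $q$-energy bound force $u \mapsto g(\gamma_u)|\dot\gamma_u|$ to lie in $L^1(0,1)$ for $\bm{\pi}$-a.e. $\gamma$, the primitive $\varphi_\gamma(t) := \int_0^t g(\gamma_u)|\dot\gamma_u|\,\d u$ is absolutely continuous; a standard monotonicity/completion argument then produces a unique absolutely continuous $f_\gamma : [0,1] \to \R$ that agrees with $f \circ \gamma$ on rationals and satisfies $|f_\gamma(t) - f_\gamma(s)| \le \varphi_\gamma(t) - \varphi_\gamma(s)$ for all $s \le t$. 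To identify $f_\gamma$ with $f \circ \gamma$ $\mathcal{L}^1$-almost everywhere, I apply the integrated inequality with rational $s_n \to t$ to deduce $L^1(\bm{\pi})$-convergence of $f(\gamma_{s_n})$ to $f(\gamma_t)$, extract a $\bm{\pi}$-a.e. convergent subsequence, and combine with continuity of $f_\gamma$ to obtain $f(\gamma_t) = f_\gamma(t)$ for $\bm{\pi}$-a.e. $\gamma$ at each fixed $t$; Fubini then upgrades this to $\mathcal{L}^1$-a.e. agreement for $\bm{\pi}$-a.e. $\gamma$. Differentiating $|f_\gamma(t) - f_\gamma(s)| \le \varphi_\gamma(t) - \varphi_\gamma(s)$ at Lebesgue points furnishes the bound $|f_\gamma'(t)| \le g(\gamma_t)|\dot\gamma_t|$. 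Since the whole argument depends only on $\bm{\pi}$ being an arbitrary $q$-test plan, the Borel set of bad curves is $\bm{\pi}$-null for all such $\bm{\pi}$, i.e.\ $p$-negligible, completing $(1) \Rightarrow (2)$. The main obstacle I anticipate is precisely this last identification: the integrated inequality is only an $L^1(\bm{\pi})$ statement, and passing to pointwise-a.e. information in $\gamma$ without any continuity assumption on $f$ requires both the restriction-to-Borel-$A$ trick to produce the $\bm{\pi}$-a.e. inequality at each $(s,t)$, and the Fubini/subsequence step to reconcile $f \circ \gamma$ with $f_\gamma$ at irrational times.
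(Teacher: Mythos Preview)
The paper does not prove this statement; it merely cites \cite[Theorem B.4]{gi1} and moves on. There is therefore no in-paper proof to compare your proposal against.

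Your argument is the standard one (and is essentially Gigli's proof in the cited reference): $(2)\Rightarrow(1)$ by integrating the curvewise bound against an arbitrary $q$-test plan, and $(1)\Rightarrow(2)$ by exploiting that the class of $q$-test plans is closed under normalized restriction to Borel sets and under affine time-reparametrization, then passing from the integrated inequality to a curvewise one via a countable exhaustion over rational endpoints and a Fubini argument. The structure is correct and the key ideas are all present. The one place to be a little more careful is the ``monotonicity/completion'' step: from $|f(\gamma_t)-f(\gamma_s)|\le \varphi_\gamma(t)-\varphi_\gamma(s)$ for rational $s\le t$ you correctly deduce that $f\circ\gamma$ restricted to $\mathbb{Q}\cap[0,1]$ is uniformly continuous and hence extends uniquely to a continuous (in fact absolutely continuous) $f_\gamma$; the subsequent identification $f_\gamma(t)=f(\gamma_t)$ for $\mathcal{L}^1$-a.e.\ $t$ and $\bm{\pi}$-a.e.\ $\gamma$ does rely on the joint measurability of $(\gamma,t)\mapsto f(\gamma_t)-f_\gamma(t)$, which holds because $f$ is Borel and the construction of $f_\gamma$ is measurable in $\gamma$, but this deserves a sentence rather than being left implicit.
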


We finish the present survey on Sobolev spaces by recalling a result on Lipschitz approximation of functions in $W^{1,p}(\X)$ for $p \in (1, \infty)$.

\begin{proposition} \label{prop:Lip_approx_p}
Let $p \in (1, \infty)$. If $f \in W^{1,p}(\X)$, then there exists a sequence $(f_{k})_{k \in\N}\subset\Lip_{\bs}(\X)$ such that
\begin{equation} \label{eq:Lip_approx_p}
\lim_{k \to + \infty} \int_{\X} |f_k - f|^p + |\Lip_{\a}(f_{k}) - |Df| |^p \, d\mu = 0.
\end{equation}
\end{proposition}
\begin{proof}
Let $f \in W^{1,p}(\X)$. Then, \cite[Theorem 6.1]{acd} implies that $f$ has a $p$--relaxed slope which is equal to its minimal $p$--weak upper gradient $|Df| \in L^p(\X)$. Hence, \cite[Proposition 4.2]{acd} yields the existence of a sequence of functions $(f_{k})_{k \in \N}$ such that $f_k \in \Lip_{\bs}(\X)$ for all $k \in \N$ and \eqref{eq:Lip_approx_p} holds.
\end{proof}

\subsection{The differential structure}\label{sec-diff-str}

Below, we shall briefly discuss the differential machinery that we are going to use for the rest of the paper. As anticipated in the Introduction, this will provide a metric counterpart to the notions of cotangent and tangent bundles. We recall that, while \cite{gi2} provided this structure for the case $p=2$ only, a straightforward generalization of such construction for any exponent $p\in[1,\infty]$ was given in \cite[Chapter 5]{bu}, so here we shall follow the latter as our main reference.

We notice that for a full understanding of this setting, a throughout treatment of the theory of $L^p$--normed modules would be necessary; this task was carried on in detail in \cite{gi2}. Nevertheless, for the reader's convenience, we shall include a brief account on this theory in Appendix \ref{sec-appendix}.

\begin{definition} \label{pcm} We shall call \textit{pre--cotangent module} the set
\[
\textsc{pcm}_p\coloneqq\left\{ \left\{ \left(f_{i},A_{i}\right)\right\} _{i\in\mathbb{N}};\;\left(A_{i}\right)_{i\in\mathbb{N}}\subset\mathfrak{B}(\X),\:f_{i}\in D^{1,p}\left(\X\right)\:\forall\,i\in\mathbb{N},\;\sum_{i\in\mathbb{N}}\||D f_i|\|^p_{L^{p}(A_i)}<\infty\right\} ,
\]
where the $A_{i}$'s form a disjoint partition of
$\X$, $D^{1,p}(\X)$, $p\in[1,\infty)$,
denotes the Sobolev--Dirichlet class of order $p$ as in Definition
\ref{def-sob-class}, and $\mathfrak{B}({\X})$ denotes the set
of all equivalence classes of measurable subsets of ${\X}$, with
$A,B\subset{\X}$ being equivalent whenever 
\begin{equation*}
\mu(A\Delta B)=0, \text{ where } A\Delta B := (A\setminus B)\cup(B\setminus A).
\end{equation*} 

When $p=\infty$, in the corresponding definition of $\textsc{pcm}_\infty$ we shall simply require the $f_i$'s to be in $D^{1,\infty}(A_i)$ for all $i\in\mathbb{N}$ and
\[
 \sup_{i\in\mathbb{N}}\||D f_i|\|_{L^{\infty}(A_i)}<\infty.
\]

\end{definition}

We introduce an equivalence relation $\sim$ on
$\textsc{pcm}_{p}$ by setting $$\left\{ \left(f_{i},A_{i}\right)\right\} _{i\in\mathbb{N}}\sim\left\{ \left(g_{j},B_{j}\right)\right\} _{j\in\mathbb{N}}$$
whenever $\left|D\left(f_{i}-g_{j}\right)\right|=0$ $\mu$--almost
everywhere on $A_{i}\cap B_{j}$ for all $i,j\in\mathbb{N}$.

$\textsc{pcm}_{p}/\!\sim$ turns into a vector space if we define the
sum and the multiplication by scalars as
\begin{align*}[(f_{i},A_{i})_{i}]+[(g_{j},B_{j})_{j}] & = [(f_{i}+g_{j},A_{i}\cap B_{j})_{i,j}],\\
\lambda\left[\left(f_{i},A_{i}\right)_{i}\right] & =\left[\left(\lambda f_{i},A_{i}\right)_{i}\right].
\end{align*}
Let $\text{Sf}(\X,\mu)$ denote the space of simple functions, that is, those of the form 
\[
h=\sum_{j\in\mathbb{N}}\mathds1_{B_{j}}\text{·}a_{j},
\]
where $\left\{B_j\right\}_{j\in \mathbb{N}}$ is a partition of $\X$. If $\left[\left(f_{i},A_{i}\right)_{i}\right]\in\textsc{pcm}_{p}/\!\sim$,
then we define the multiplication with $h \in \text{Sf}(\X,\mu)$ as
\[
h[(f_{i},A_{i})_{i}]\coloneqq[(a_{j}f_{i},A_{i}\cap B_{j})_{i,j}].
\]
This operation gives a bilinear map $\text{Sf}(\mu)\times\textsc{pcm}_{p}/\!\sim\rightarrow\textsc{pcm}_{p}/\!\sim$
such that $$\uno_{\X} \left[\left(f_{i},A_{i}\right)_{i}\right]=\left[\left(f_{i},A_{i}\right)_{i}\right].$$

\begin{definition}\label{point-norm}Consider the map $|\cdot|_{*}:\textsc{pcm}_{p}/\!\sim\rightarrow L^{p}(\X)$
given by
\[
\left|\left[\left(f_{i},A_{i}\right)_{i}\right]\right|_{*}\coloneqq\left|Df_{i}\right|
\]
$\mu$--almost everywhere on $A_{i}$ for all $i\in\mathbb{N}$; this
map, namely the \textit{pointwise norm} on $\textsc{pcm}_{p}/\!\sim$, is well defined thanks
to the above definition of the equivalence relation on $\textsc{pcm}_{p}$.

Since $D^{1,p}(\X)$ is a vector space, one has
the following inequalities for $|\cdot|_{*}$:
\begin{align}\label{eq:point-norm}\begin{split}
|[(f_{i}+g_{j},A_{i}\cap B_{j})_{i,j}]|_{*} & \le|[(f_{i},A_{i})_{i}]|_{*}+|[(g_{j},B_{j})_{j}]|_{*}, \\
\left|\lambda\left[\left(f_{i},A_{i}\right)_{i}\right]\right|_{*} & =\left|\lambda\right|\left|\left[\left(f_{i},A_{i}\right)_{i}\right]\right|_{*}, \\
\left|h\left[\left(f_{i},A_{i}\right)_{i}\right]\right|_{*} & =\left|h\right|\left|\left[\left(f_{i},A_{i}\right)_{i}\right]\right|_{*},
\end{split}\end{align}
valid $\mu$--almost everywhere for every $[(f_{i},A_{i})_{i}],[(g_{j},B_{j})_{j}]\in\textsc{pcm}_{p}/\!\sim$,
$h\in\text{Sf}(\mu)$ and $\lambda\in\mathbb{R}$. \end{definition}

The above arguments, in particular (\ref{eq:point-norm}),
allow us to define a norm on $\textsc{pcm}_{p}/\!\sim$.

\begin{definition} We define $\|\cdot\| _{L^{p}\left(T^{*}\X\right)}:\textsc{pcm}_{p}/\!\sim\rightarrow[0,\infty)$, $p\in[1,\infty)$ by setting
\[
\left\Vert \left[\left(f_{i},A_{i}\right)_{i}\right]\right\Vert _{L^{p}\left(T^{*}\X\right)}^{p}\coloneqq\int_{\X}\left|\left[\left(f_{i},A_{i}\right)_{i}\right]\right|_{*}^{p}\d \mu=\sum_{i\in\mathbb{N}}\int_{A_{i}}\left|Df_{i}\right|^{p}\mathrm{d}\mu=\sum_{i\in\mathbb{N}}\||D f_i|\|_{L^{p}(A_i)}^p.
\]

As in Definition \ref{pcm}, when $p=\infty$, we set $\|\cdot\|_{L^\infty(T^*\X)}$ to be
\[
 \| [(f_i,A_i)_i]\|_{L^\infty(T^*\X)}\coloneqq\sup_{i\in\mathbb{N}}\||D f_i|\|_{L^{\infty}(A_i)}.
\]

The completion of $\textsc{pcm}_{p}/\!\sim$ with respect
to the norm $\| \cdot\| _{L^{p}\left(T^{*}\X\right)}$
will be called \textit{cotangent module} and denoted by $L^{p}\left(T^{*}\X\right)$.
Consequently, its elements will be called $p$-\textit{cotangent vector fields} or, more traditionally, $p$\textit{--integrable 1--forms} when $p<\infty$, \textit{essentially bounded 1--forms} when $p=\infty$.
\end{definition}

We notice that the space $L^{p}\left(T^{*}\X\right)$ is actually an $L^p$--normed module in the sense of Definition \ref{def:L_p_module}.

The choice of the terminology ``cotangent'', as explained in \cite{gi1,gi2}, is due to the fact that $p$--weak upper gradients are defined by means of a $(p,q)$--duality between test plans and speeds of curves, so in some geometric sense they constitute ``cotangent'' objects to the metric space $\X$.

\medskip

Having a notion of ``1--form'' at our disposal, we are entitled to
define the differential of any Sobolev function as follows.

\medskip

\begin{definition}\label{differential}
Given a function $f\in D^{1,p}(\X)$ we define its \textit{differential} $\mathrm{d}f\in L^{p}\left(T^{*}\X \right)$ as
\[
\mathrm{d}f\coloneqq\left[\left(f,\X\right)\right]\in \textsc{pcm}_{p}/\!\sim\, \subset L^{p}\left(T^{*}\X\right).
\]
Here, $\left(f,\X\right)$ stands for $\left(f_{i},A_{i}\right)_{i\in\mathbb{N}}$
with $f_{0}=f$, $A_{0}=\X$ and $f_{i}=0$, $A_{i}=\emptyset$
for every $i>0$. \end{definition}

Of course, the differential is linear by construction (in particular, it is $L^{\infty}$--linear) and the definition
of pointwise norm ensures that 
\[
|\text{d}f|_{*}=|Df|
\]
$\mu$--almost everywhere for all $f\in D^{1,p}({\X})$.
Moreover, $\text{d}$ is local; this means that for any $f,g\in D^{1,p}({\X})$
one has $\text{d}f=\text{d}g$ $\mu$--almost everywhere on $\{f=g\}$,
see \cite[Theorem 2.2.3]{gi2}.

Putting together \cite[Theorem 2.2.6]{gi2} and \cite[Corollary 2.2.8]{gi2} (and the respective generalizations featured in \cite{bu}), we get the usual calculus rules for the differential.

\begin{proposition}  
\label{prop:diff_Leibniz}
Let $p \in [1, \infty]$. The following equations hold $\mu$--almost everywhere:
\begin{enumerate}
       \item $\d(fg)=g\d f+f\d g$ for all $f,g\in D^{1,p}(\X)\cap L^{\infty}(\X)$;
       \item $\d f=0$ on $f^{-1}({\cal N})$ for all $f\in D^{1,p}(\X)$ and every $\mathscr{L}^1$-negligible Borel set ${\cal N}\subset\R$;
       \item $\d(\varphi\circ f)=(\varphi'\circ f)\d f$ for all $f\in D^{1,p}(\X)$ and every $\varphi:I\to\R$ Lipschitz, where $I\subset\R$ open is such that $\mu(f^{-1}(\R\setminus I))=0$ and $\varphi'\circ f$ is defined arbitrarily on $f^{-1}\left (\big\{\text{non--differentiability\:points\:of\:}\varphi\big\}\right)$. 
      \end{enumerate}
\end{proposition}

We recall now another useful property of the differentials of Sobolev-Dirichlet functions, stated in \cite[Proposition 2.2.5]{gi2} for $p = 2$ and generalized in \cite[Proposition 5.4.7]{bu}.

\begin{lemma} \label{lem:L_p_cotangent_generated}
Let $p \in [1, \infty)$. Then $L^{p}(T^* \X)$ is generated in the sense of modules (Definition \ref{def:span}) by the space $\{ \d f : f \in W^{1,p}(\X)\}$.
\end{lemma}

Now, by duality -- in the sense of modules, \cite{gi2} (see also in Appendix \ref{sec-appendix}) -- with the cotangent module $L^{p}\left(T^{*}{\X}\right)$ it
is possible to define a ``tangent module''. To this aim, let $p,q\in[1,\infty]$
be any two conjugate exponents, $\frac{1}{p}+\frac{1}{q}=1$.

\begin{definition}\label{tang-mod} The \textit{tangent module} $L^{q}(T\X)$ is the dual module of $L^{p}\left(T^{*}\X\right)$.
The elements of $L^{q}\left(T\X\right)$ will be called \textit{$q$--vector fields}.
\end{definition}

\begin{remark}\label{duality}
We explicitly point out that, by the theory of $L^p$--normed modules (see Remark \ref{dual-mod-norm}), for all $p\in[1,\infty]$ the module dual of $L^p(T^{*}\X)$ is indeed an $L^q$-normed module, where $\frac{1}{p}+\frac{1}{q}=1$, even in the extreme cases $p =1, q= \infty$ and $p= \infty, q=1$. In addition, thanks to Remark \ref{dual-mod-norm} and Lemma \ref{lem:L_p_cotangent_generated} we can define a pointwise norm $| \cdot|$ on $L^{q}\left (T \X \right)$ by setting 
\begin{equation*}
 \left\vert X \right\vert \coloneqq {\text{ess-}\!\sup}\left \{ \left\vert \d f(X)\right\vert : f\in D^{1,p}(\X),\:|\d f|_{*}\le1 \right \}
\end{equation*}
for every $X \in L^q(T \X)$.
\end{remark}

With the machinery discussed so far, it comes a ``natural'' notion of divergence of a vector field.

\begin{definition}\label{def-div}
For $q \in [1, \infty]$ we define
 \[
  \mathfrak{D}^q(\X) \coloneqq \left\{  X\in L^{q}(T \X);\: \exists f \in L^q (\X): \int_{\X} fg \text{d}\mu = -\int_{\X} \text{d} g(X)\text{d}\mu,\: \forall g\in W^{1,p}(\X)  \right\}.
 \]
 
Clearly, $\mathfrak{D}^q(\X) \subset L^q(T\X)$; the function $f$, which is unique by the density of $W^{1,q}(\X)$ in $L^q(\X)$, will be called the \textit{$q$--divergence} of the vector field $X$, we shall write $\div(X) \coloneqq f$, and the space $\mathfrak{D}^q(\X)$ will be obviously referred to as the \textit{domain of the $q$--divergence}.
\end{definition}

\smallskip

\begin{remark} The linearity of the differential implies that
$\mathfrak{D}^q(\X)$ is a vector space and hence that the divergence is a linear operator. Moreover, the Leibniz rule for differentials immediately yields the same property for the divergence as well: given $X\in \mathfrak{D}^q(\X)$ and $f\in L^{\infty}(\X) \cap D^{1,p}(\X)$ with $|\mathrm{d}f|_{*}\in L^{\infty}(\X)$, one has
\begin{align*}
fX & \in \mathfrak{D}^q(\X),\\
\div(fX) &=\text{d}f(X)+f\text{\ensuremath{\div}(X).}
\end{align*} 
In fact, these hypotheses give $\text{d}f(X)+f\div(X)\in L^{p}(\X)$ and for all $g\in W^{1,q}(\X)$ one has $fg\in W^{1,p}(\X)$, whence
\[
-\int_{\X}gf\div(X) \, \text{d}\mu=\int_{\X}\text{d}(fg)(X) \, \text{d}\mu=\int_{\X} g\text{d}f(X)+\text{d}g(fX) \, \text{d}\mu,
\]
so the claim holds.

\end{remark}

We conclude this section with some important comments on the choice of giving the notions of cotangent and tangent modules for arbitrary exponents $p,q\in[1,\infty]$.
\medskip
\begin{remark}\label{p-grad}
 Note that, until now, we have not made any structural assumptions on the metric measure space $(\X,d,\mu)$. This means that the minimal $p$--weak upper gradient of any Sobolev function $f\in W^{1,p}(\X)$ \textbf{depends} on $p$. Precisely, as the class of $q$--test plans contains the one of $q'$--test plans for $q\le q'$, then $D^{1,p}(\X)\subset D^{1,p'}(\X)$ for $p\ge p'$, so that for any $f\in D^{1,p}(\X)$, one has that if $g$ is a $p$--weak upper gradient of $f$, it is also a $p'$--weak upper gradient. In particular, for the minimal upper gradient it holds $|Df|_p\ge |Df|_{p'}$ $\mu$--almost everywhere in $\X$.
 The paper \cite{ds} contains a throughout discussion of the dependence of $p$--weak upper gradients on $p$ and provides also examples of spaces $\X$ where, for $f\in D^{1,p}(\X)\cap D^{1,p'}(\X)$, one has $|Df|_p \neq |Df|_{p'}$ on a set of positive measure.
 Thus, without any structural assumptions, then also $L^p(T^{*}\X)$ and $L^q(T\X)$ depend on the respective exponents.
 This issue is removed when $(\X,d,\mu)$ is endowed with a doubling measure and supports a Poincar\'e Inequality, \cite{ch}, or when it is an $\rcd$ space, \cite{gh}; then, since in the ensuing discussion we shall work mainly in the $\rcd$ setting, all these problematics will not be relevant to us.
 
 In particular, when the dependence on the exponent is removed, given any $X\in\frd^p(\X) \cap\frd^q(\X)$, from the definition of divergence it follows that there exist $f_p,f_q\in L^p(\X) \cap L^q(\X)$, $f_p=\div_p(X)$ and $f_q=\div_q(X)$, such that for any $g\in W^{1,p}(\X) \cap W^{1,q}(\X)$ one has
 
 \[
  \int_\X \left(f_p-f_q\right)g\text{d}\mu=0,
 \]
 
 meaning that $f_p-f_q=0$ $\mu$--almost everywhere, whence the uniqueness of the divergence of $X$.

\end{remark}

\subsection{$\rcd$ Spaces and the Heat Flow}\label{rcdht}
\bigskip

In this section we recall the notion of an $\rcd$ metric measure space, naively, a space whose Ricci curvature is bounded from below by some $K\in\mathbb{R}$, along with its most notable properties, which we shall use extensively in the upcoming sections. The construction of such spaces relies heavily on the theories of Gradient Flows and of Optimal Transport, see for instance \cite{ags1} and \cite{vi} respectively.

To get started, we give the following basic definition.

\begin{definition}\label{ch-dir-energy} Let $p\in [1,\infty)$.
We define the \textit{Cheeger--Dirichlet Energy} ${\bf E}_{p}:L^{2}({\X})\rightarrow[0,\infty]$
as the functional given by
\[
{\bf E}_{p}(f)\coloneqq\begin{cases}
{\displaystyle \frac{1}{p}\int_{{\X}}|Df|^{p}\text{d}\mu,} & f\in D^{1,p}(\X) \cap L^{2}({\X})\\
\\
+\infty, & \mathrm{otherwise}.
\end{cases}
\]
\end{definition}

\begin{remark}
 In particular, when $p=2$ the Cheeger--Dirichlet energy ${\bf E}_2$ allows us to characterize the \textit{domain of the Laplace operator} -- or, more simply of the {\it Laplacian} -- as
 \[
  D(\Delta)\coloneqq \left\{ f\in L^2(\X):\; \partial^{-}\mathbf{E}_2(f) \neq \emptyset\right\},
 \]
where $\partial^{-}\mathbf{E}_2(f)$ denotes the {\it sub--differential} of $\bf{E}_2$ at $f$, namely
\[
 \partial^{-}\mathbf{E}_2(f)\coloneqq\left\{v\in L^2(\X);\;\mathbf{E}_2(f)+\int_\X vg \, \text{d}\mu\le\mathbf{E}_2(f+g)\;\forall g\in L^2(\X)\right\},
\]
with the convention $\partial^{-}\mathbf{E}_2(f)=\emptyset$ whenever $\mathbf{E}_2(f)=+\infty$. Thus said, whenever $f\in D(\Delta)$ we define its {\it Laplacian} $\Delta f\in L^2(\X)$ as \[\Delta f\coloneqq -v,\] where $v$ is the element with minimal norm in $\partial^{-}\mathbf{E}_2(f)$.
\end{remark}

By the properties of the minimal weak upper gradient, ${\bf E}_{p}$
is convex and lower semi-continuous; moreover, its domain is dense
in $L^{2}({\X})$.

A key--tool in the definition of $\rcd$ spaces is the concept of ``Infinitesimal Hilbertianity''
of the metric measure space, first introduced in \cite{gi1}.

\begin{definition}\label{inf-Hilb} $\left(\X,d,\mu\right)$
will be said {\textit{infinitesimally Hilbertian}} whenever $W^{1,2}({\X})$
is a Hilbert space. This is equivalent to ask that
the semi--norm $\|\cdot\|_{D^{1,2}(\X)}$
satisfies the parallelogram rule, and that the 2--energy ${\bf E}_{2}$
is a Dirichlet form. \end{definition}

Besides infinitesimal Hilbertianity, in order to come to the main
definition we also need the concept of {\textit{entropy}} of a probability
measure.

\begin{definition}\label{entropy} The \textit{relative entropy} is defined as the functional $\mathscr{E}_{\mu}:\mathscr{P}({\X})\rightarrow\mathbb{R}\cup\{+\infty\}$
given by

\[
\mathscr{E}_{\mu}\left(\mathfrak{m}\right)\coloneqq\begin{cases}
{\displaystyle \int_{{\X}}\rho\mathrm{log}(\rho)\mathrm{d}\mu} & \mathrm{if}\:\mathfrak{m}=\rho\mu\:\mathrm{and}\:\left(\rho\mathrm{log}(\rho)\right)^-\in L^{1}(\X)\\
\\
+\infty, & \mathrm{otherwise.}
\end{cases}
\]
\end{definition}

Last but not least, a fundamental role in the description of $\rcd$ spaces is played by the Wasserstein space of probability measures on $\X$.

\begin{definition}
We denote by $\left(\mathscr{P}_2(\X),W_2\right)$ the {\it Wasserstein space} of probability measures on $\X$ with finite second moment, i.e.\[
\mathscr{P}_2(\X)\coloneqq\left\{\mathfrak{m}\in\mathscr{P}(\X):\;\int_\X d^2\left(x,x_0\right)\text{d}\mathfrak{m}<\infty\;\forall\, x_0\in\X\right\}, 
\]
endowed with the \textit{Wasserstein distance} $W_{2}$ given by
\begin{equation}\label{wass}
 W_{2}^{2}\left(\mathfrak{m}_{1},\mathfrak{m}_{2}\right)\coloneqq \inf \left \{\int_{{\X}\times{\X}}d^{2}(x,y)\mathrm{d}\bm{\gamma}(x,y):\; \bm{\gamma}\in\Gamma(\mathfrak{m}_{1},\mathfrak{m}_{2})\right \},
\end{equation}
where 
\begin{equation*}
\Gamma(\mathfrak{m}_{1},\mathfrak{m}_{2}) := \left \{ \gamma \in \mathscr{P}\left({\X}\times{\X}\right)\text{ such that } \pi_{\#}^{i}(\bm{\gamma})=\mathfrak{m}_{i} \text{ for } i=1,2 \right \}
\end{equation*}
and $\pi^{i}$ denotes the canonical projection over
the $i$--th component, for $i =1, 2$.
\end{definition}

We are now able to give the definition of $\rcd$ spaces.

\begin{definition}\label{RCD} A complete and separable metric measure space $({\X},d,\mu)$ endowed with a non--negative Radon measure $\mu$ will be called an $\rcd$ {\normalfont{space}}, for some $K\in\mathbb{R}$,
if it is infinitesimally Hilbertian and, for every $\lambda,\nu\in\mathscr{P}_{2}({\X})$
with finite relative entropy, there exists a $W_{2}$-geodesic $\mathfrak{m}_{t}$
with $\mathfrak{m}_0=\lambda$
and $\mathfrak{m}_1=\nu$
and such that, for every $t\in[0,1]$,
\[
\mathscr{E}_{\mu}\left(\mathfrak{m}_{t}\right)\le(1-t)\mathscr{E}_{\mu}\left(\lambda\right)+t\mathscr{E}_{\mu}\left(\nu\right)-\frac{K}{2}t(1-t)W_{2}^{2}\left(\lambda,\nu\right).
\]

\end{definition}

Another tool which will be of great use to us is the ``heat flow'', {whose analysis has been discussed extensively in \cite{ags3}}.

\medskip

\begin{definition}\label{heatflow} The \textit{heat flow}
$\h_{t}$, $t\ge0$, is the $L^2$--gradient flow of the Cheeger--Dirichlet
2--energy ${\bf E}_{2}$.\end{definition}

As observed in {\cite{ags3,gi2}}, the theory of gradient flows ensures the
existence and uniqueness of the heat flow as a 1--parameter semigroup
$\left(\text{h}_{t}\right)_{t\ge0}$, $\text{h}_{t}:L^{2}(\X)\rightarrow L^{2}(\X)$,
such that for every $f\in L^{2}(\X)$ the curve $t\mapsto\text{h}_{t}(f)$
is continuous on $[0,\infty)$, absolutely continuous on $(0,\infty)$
and moreover fulfills the differential equation

\[
\frac{\text{d}}{\text{d}t}\text{h}_{t}(f)=\Delta\left(\text{h}_t f\right)
\]

for almost every $t>0$, which means $\text{h}_{t}(f)\in D(\Delta)$
for every $f\in L^{2}(\X)$ and for every $t>0$. In addition, one has
$\left\Vert \h_{t}f-f\right\Vert _{L^{2}(\X)}\rightarrow0$
as $t\rightarrow0$ for any $f\in L^{2}(\X)$.

\sloppy

The infinitesimal Hilbertianity of $\mathsf{RCD}(K,\infty)$ spaces ensures that, in our setting, $\left(\text{h}_{t}\right)_{t\ge0}$
defines a semigroup of linear and self-adjoint operators.

Also, from the analysis carried on in \cite{ags3}, we have that for
every $p\in[1,\infty]$ it holds
\begin{equation}
\left\Vert \text{h}_{t}f\right\Vert _{L^{p}(\X)}\le\left\Vert f\right\Vert _{L^{p}(\X)}\label{eq:ht-contraction}
\end{equation}
for every $t\ge0$ and for every $f\in L^{2}(\X)\cap L^{p}(\X)$.
Then, by a density argument we can uniquely extend the heat flow to
a family of linear and continuous operators $\text{h}_{t}:L^{p}(\X)\rightarrow L^{p}(\X)$
of norm bounded by 1 for every $p\in[1,\infty]$, as the contraction
results proved in \cite{agmr} and \cite{ags2} showed.

In regard to our discussion, the most important property of the heat
flow is the \textit{Bakry--Émery contraction estimate}
\begin{equation}\label{BE-first}
\left|D\text{h}_{t}f\right|^{2}\le e^{-2Kt}\text{h}_{t}\left(\left|D f\right|^{2}\right),
\end{equation}
$\mu$--almost everywhere for every $t\ge0$ and
for every $f\in W^{1,2}(\X)$, see \cite{ags2}, \cite{gko} and the seminal paper \cite{ba}.

Actually, we shall use the Bakry--Émery estimate in its ``self--improved''
version established by \cite{sa} in the $\rcd$ setting:
\begin{equation}
\left|D\text{h}_{t}f\right|\le e^{-Kt}\text{h}_{t}\left(|D f|\right) \label{eq:BE}
\end{equation}
$\mu$--almost everywhere for every $t\ge0$ and for every $f\in W^{1,2}(\X)$.

It is important to notice that, as explained in \cite[Section 6]{ags2}, \cite[Section 4]{ags5}, and then ultimately proved in \cite[Section 4]{sa}, the Bakry--\'Emery estimate and the $\rcd$ condition are actually equivalent requirements.

We also recall that $\h_t$ maps contractively $L^\infty(\X)$ in $C_b(\supp(\mu))$, by \cite[Theorem 6.1]{ags2} and \cite[Theorem 7.1]{agmr}, and that 
\begin{equation}\label{htf-le-htg}
 \forall\,f,g\in L^2(\X),\:f\le g+c\:\mathrm{for\:some}\;c\in\mathbb{R}\;\mathrm{implies}\;\h_t f\le \h_t g+c,
\end{equation}
by \cite[Theorem 4.16]{ags3}. 

The lower curvature bound implies also that
\begin{equation}\label{ht-uno}
 \h_t 1 = 1.
\end{equation}
Indeed, by \cite[Theorem 4.24]{st2}, when the metric {measure} space $(\X,d,\mu)$ has curvature bounded from below by some $K\in\mathbb{R}$, then  for every $ x\in\X$ and every $\rho>0$ there is $c>0$ such that 
\begin{equation}\label{ball-exp}\mu\left(B_{\rho}(x)\right)\le ce^{c\rho^2},\end{equation}
which in turn yields \eqref{ht-uno} by the remarks after \cite[Theorem 4]{st1}.

Lastly, another property of the heat flow which will be useful to us is the
following:
\begin{equation}
c(K,t)\left|D\text{h}_{t}f\right|^{2}\le\text{h}_{t}f^{2}-\left(\text{h}_{t}f\right)^{2},\label{eq:BE-1}
\end{equation}
valid $\mu$--almost everywhere in ${\X}$, for every $t\ge0$ and
for every $f\in L^{2}({\X})$ for some positive constant $c=c(K,t)$ (see \cite{ags5,sa}).

\section{Characterization of $BV$ Functions}\label{BV}

In the rest of the paper we shall make an explicit use of $BV$ functions. To this aim, for the first part of this section we shall recall the derivation approach of \cite{di}, and then we will see that the differential machinery discussed in Section \ref{sec-diff-str} entitles us to give an equivalent definition of $BV$ functions in terms of $\frd^\infty(\X)$ vector fields.

We recall that the derivation approach of \cite{di} is equivalent to the relaxation procedure performed by \cite{miranda} and also to the ``weak $BV$'' space described in \cite{ad}; we refer in particular to \cite[Section 7]{ad} and \cite[Section 7.3]{di} for an extensive discussion on the equivalences of $BV$ spaces.

Let $(\X,d,\mu)$ be a complete and separable metric measure space endowed with a non--negative Radon measure $\mu$ which is finite on bounded sets.

\subsection{The derivation approach}

\begin{definition}\label{der}
We say that a linear map $\delta:\Lip_\bs(\X)\to L^0(\X)$ is a \textit{Lipschitz derivation}, and we write $\delta \in \der(\X)$, if it satisfies the following properties:
\sloppy
 \begin{enumerate}
 \item Leibniz rule: $\delta(fg)=f\delta(g)+g\delta(f)$ for every $f,g\in\Lip_\bs(\X)$;
 \item Weak locality: there exists some function $g\in L^0(\X)$ such that $$|\delta(f)|(x)\le g(x)\, \Lip_{\text{a}}(f)(x)$$ for $\mu$--almost every $x\in\X$ and for all $f\in\Lip_\bs(\X)$, where $\Lip_{\text{a}}(f)(x)$ denotes the asymptotic Lipschitz constant of $f$ at $x$, as defined in \eqref{lip-a}.
 \end{enumerate}
The smallest function $g$ satisfying \textit{ii)} will be denoted by $|\delta|$. In the case $|\delta|\in L^p(\X)$, we shall write $\delta\in L^p(\X)$.
\end{definition}

We notice that, thanks to the weak locality {condition in} the definition of derivations, any $\delta \in \der(\X)$ can be extended to $\Lip_{\rm loc}(\X)$.

We define the support of a derivation $\delta \in \der(\X)$ in the usual distributional sense. Indeed, we say that $\delta = 0$ on a open set $V$ if $\delta(f) = 0$ for any $f \in \Lip_{\bs}(\X)$ such that $\supp(f) \subset V$, and then we set
\begin{equation*}
\supp(\delta) := \X \setminus \bigcup_{\substack{V \text{ open},\\ \delta = 0 \text{ on } V}} V.
\end{equation*}

It is possible to define the divergence of a derivation by requiring that the usual integration by parts formula holds.

\begin{definition}
 Given $\delta\in\der(\X)$ with $\delta\in L^1_{\text{loc}}(\X)$, we define its \textit{divergence} as the operator $\div(\delta):\Lip_\bs(\X)\to\mathbb{R}$ such that
 \[
  f\mapsto-\int_\X\delta(f)\d\mu.
 \]
 We say that $\div(\delta)\in L^p(\X)$ if this operator admits an integral representation via a unique $L^p(\X)$ function $h=\div(\delta)$:
 \[
  \int_\X \delta(f)\d\mu=-\int_\X hf\d\mu.
 \]

\end{definition}

\medskip

For all $p,q\in[1,\infty]$ we shall set
\[
 \der^p(\X)\coloneqq \left\{\delta\in\der(\X);\;\delta\in L^p(\X)\right\}
\]
and
\[
 \der^{p,q}(\X)\coloneqq\left\{\delta\in\der(\X);\;\delta\in L^p(\X),\;\div(\delta)\in L^q(\X)\right\}.
\]

When $p=\infty=q$, we write $\der_b(\X)$ instead of $\der^{\infty,\infty}(\X)$.

The domain of the divergence is then characterized as
\[
 D(\div)\coloneqq\left\{\delta\in\der(\X);\;|\delta|,\div(\delta)\in L^1_\text{loc}(\X)\right\},
\]
which contains $\der^{p,q}(\X)$ for all $p,q\in[1,\infty]$.

With these tools available, we may proceed as in \cite{di} to give the definition of $BV$ functions.

\begin{definition}\label{bv-der}
 Let $u\in L^1(\X)$. We say that $u$ is of \textit{bounded variation} in $\X$, and we write $u \in BV(\X)$, if there is a linear and continuous map $L_u :\der_b(\X)\to\mathbf{M}(\X)$ such that
 \begin{equation}\label{eq-bv-der}
  \int_\X \d L_u(\delta)=-\int_\X u\div(\delta)\d\mu
 \end{equation}
for all $\delta\in\der_b(\X)$ and satisfying $L_u(h\delta)=hL_u(\delta)$ for any $h\in\Lip_b(\X)$. 
 
We say that a measurable set $E$ has {\em finite perimeter} in $\X$ if $\uno_{E} \in BV(\X)$.
\end{definition}

As observed in \cite{di}, the above characterization is well-posed since, if we take any two maps $L_u,\tilde{L}_u$ as in Definition \ref{bv-der}, the Lipschitz-linearity of derivations ensures that $L_u(\delta)=\tilde{L}_u(\delta)$ for all $\delta\in\der_b(\X)$. 
In other words, when $u\in BV(\X)$ the measure-valued map $L_u$ is uniquely determined and we shall write $Du(\delta) := L_u(\delta)$.

We combine now the statements of \cite[Theorems 7.3.3 and 7.3.4]{di}, which allow us to recover the usual properties of $BV$ functions, including the representation formula for the total variation as the supremum of 
\begin{equation*}
\int_\Omega u\div(\delta)\d\mu
\end{equation*}
taken over all derivations $\delta$ such that $\supp(\delta) \Subset \Omega$. 

\begin{theorem}{\normalfont{\cite[Theorems 7.3.3 - 7.3.4]{di}}}\label{char-bv}
 Assume $u\in BV(\X)$. There exists a non--negative, finite Radon measure $\nu\in\mathbf{M}(\X)$ such that for every Borel set $B\subset\X$ one has
 
 \begin{equation}\label{Du}
  \int_B\d Du(\delta)\le\int_B |\delta|^*\d\nu\quad \forall \delta\in\der_b(\X),
 \end{equation}

 where $|\delta|^*$  denotes the upper-semicontinuous envelope of $|\delta|$. The least measure $\nu$ satisfying \eqref{Du} will be denoted by $\|Du\|$, the total variation of $u$. Moreover, 
 \[
  \|Du\|(\X)=\sup \left\{|Du(\delta)(\X)|;\;\delta\in\der_b(\X),\;|\delta|\le1\right\}.
 \] 
 Finally, the classical representation formula for $\|Du\|$ holds, in the sense that, if $\Omega\subset\X$ is any open set, then 
\begin{equation} \label{eq:tot_var_duality}
 \|Du\|(\Omega)=\sup\left\{\int_\Omega u\div(\delta)\d\mu;\;\delta\in\der_b(\X),\;\supp(\delta)\Subset\Omega,\;|\delta|\le1\right\}.
 \end{equation}
\end{theorem}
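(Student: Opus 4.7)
\emph{Proof plan.} The plan is to build $\|Du\|$ as a Borel regular measure by extending a set function defined on open sets, and then to derive minimality and the two representation formulas simultaneously.

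First I would introduce the set function
\[
\lambda(\Omega) := \sup\left\{\int_\Omega u\,\div(\delta)\,\d\mu : \delta\in\der_b(\X),\ \supp(\delta)\Subset\Omega,\ |\delta|\le 1\right\}
\]
on open sets $\Omega\subset\X$ and prove via a De Giorgi--Letta argument that it extends to a Borel regular measure. Monotonicity is immediate. Additivity on disjoint open sets follows since two admissible derivations with disjoint compact supports add to a globally admissible derivation (pointwise norm still $\le 1$). The crucial step is countable subadditivity along a cover $\Omega=\bigcup_i\Omega_i$: fix an admissible $\delta$ for $\Omega$, pick a Lipschitz partition of unity $\{\chi_i\}$ subordinate to $\{\Omega_i\}$, and invoke the Lipschitz-linearity $L_u(\chi_i\delta)=\chi_i L_u(\delta)$ together with $|\chi_i\delta|=\chi_i|\delta|\le 1$ to split
\[
\int_\Omega u\,\div(\delta)\,\d\mu \;=\; -L_u(\delta)(\X) \;=\; -\sum_i \chi_i L_u(\delta)(\X) \;=\; \sum_i \int u\,\div(\chi_i\delta)\,\d\mu \;\le\; \sum_i \lambda(\Omega_i),
\]
with a mild shrinking of the $\Omega_i$ used to enforce $\Subset$.

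Next, I would prove that the resulting measure $\|Du\|$ satisfies \eqref{Du}. For open $\Omega$ and $\delta\in\der_b(\X)$, approximating $\delta$ on $\Omega$ by $\chi\delta$ for Lipschitz cut-offs $\chi$ with $\supp(\chi)\Subset\Omega$ and using Lipschitz-linearity gives $|L_u(\delta)(\Omega)|\le \|\delta\|_\infty\,\|Du\|(\Omega)$; refining this via a level-set decomposition of $|\delta|^*$ yields the pointwise inequality \eqref{Du} for every Borel $B$, by outer regularity of $\|Du\|$. Minimality then follows at once: any $\nu\in\M(\X)$ satisfying \eqref{Du} dominates $\lambda$ on open sets, since for any admissible $\delta$ in $\Omega$,
\[
\left|\int_\Omega u\,\div(\delta)\,\d\mu\right| \;=\; |L_u(\delta)(\X)| \;\le\; \int_\X |\delta|^*\,\d\nu \;\le\; \nu(\Omega),
\]
hence $\|Du\|\le \nu$, so $\|Du\|$ is the least such measure, and the representation formula \eqref{eq:tot_var_duality} is tautological. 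The global identity on $\X$ is obtained by taking $B=\X$ in \eqref{Du} for one inequality, and by approximating any $\delta$ with $|\delta|\le 1$ by $\chi_n\delta$ for Lipschitz cut-offs $\chi_n\uparrow 1$ on an exhausting sequence of compact sets (available by local compactness of $\X$) for the reverse, passing to the limit via $L_u(\chi_n\delta)=\chi_n L_u(\delta)$.

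The main obstacle is the countable subadditivity step: the support constraint $\Subset\Omega_i$ and the pointwise bound $|\chi_i\delta|\le 1$ must be maintained simultaneously under the partition-of-unity decomposition, and one genuinely needs the Lipschitz-linearity $L_u(h\delta)=hL_u(\delta)$ to move the decomposition across $L_u$, which is only $\R$-linear a priori. Once this identity is leveraged together with the Leibniz rule for derivations, the De Giorgi--Letta extension and the remaining verifications are routine.
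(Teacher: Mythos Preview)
The paper does not supply its own proof of this statement: it is quoted verbatim from Di~Marino's thesis \cite[Theorems 7.3.3--7.3.4]{di} and no argument is reproduced. Your proposal is therefore not being compared to a proof in the paper but rather reconstructs one. The De~Giorgi--Letta strategy you outline is indeed the standard route (and is essentially what Di~Marino does): define the set function on open sets via the duality sup, verify the De~Giorgi--Letta axioms using Lipschitz partitions of unity together with the Lipschitz-linearity $L_u(h\delta)=hL_u(\delta)$, then upgrade the global bound to the pointwise inequality \eqref{Du} and read off minimality and the representation formula.

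One correction: in your final step you invoke ``local compactness of $\X$'' to produce an exhausting sequence of compact sets. At this point in the paper $(\X,d,\mu)$ is only assumed complete and separable; local compactness is introduced later, in Section~\ref{reg-dom}. Fortunately your argument does not need it: fix $x_0\in\X$, take $\chi_n\in\Lip_b(\X)$ with $\chi_n\equiv 1$ on $B_n(x_0)$, $\supp(\chi_n)\subset B_{n+1}(x_0)$, $0\le\chi_n\le 1$, and use $L_u(\chi_n\delta)=\chi_n L_u(\delta)\to L_u(\delta)$ in $\mathbf{M}(\X)$ by dominated convergence. The support of $\chi_n\delta$ is bounded, which is all that $\supp(\cdot)\Subset\X$ requires. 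With this adjustment the sketch is sound.
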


\smallskip
\begin{remark} \label{rem:prop_set_fin_per}
As a consequence of \eqref{eq:tot_var_duality} and of the Leibniz rule for derivations, we see that, if $E$ is a set of finite perimeter in $\X$, then
\begin{equation} \label{eq:concentration_perimeter_measure}
\|D \uno_{E}\|(\X \setminus \partial E) = 0.
\end{equation}
Hence, if $\mu(\partial E) = 0$, then the measures $\mu$ and $\|D \uno_{E}\|$ are mutually singular.
In addition, we notice that, if $E$ is a measurable set,
\begin{equation*}
\int_\Omega \uno_{E} \div(\delta)\d\mu = - \int_\Omega \uno_{E^{c}} \div(\delta)\d\mu
\end{equation*}
for any open set $\Omega$ and $\delta\in\der_b(\X),\;\supp(\delta)\Subset\Omega$. Hence, if we have $\uno_{E} \in BV(\X)$ or $\uno_{E^c} \in BV(\X)$, then the Radon measures $\|D \uno_{E}\|$ and $\|D \uno_{E^c}\|$ are well defined and coincide.
\end{remark}

We recall now an approximation result for $BV$ functions analogous to Proposition \ref{prop:Lip_approx_p} for Sobolev spaces.

\begin{proposition} \label{prop:Lip_approx_BV}
If $f \in BV(\X)$, then there exists a sequence $(f_{k})_{k \in \N}$ in $\Lip_{\bs}(\X)$ such that $f_{k} \to f$ in $L^1(\X)$ and $\Lip_{\a}(f_k)\mu \weakto \|Df\|$ in $\M(\X)$.
\end{proposition}
\begin{proof}
This result follows by combining \cite[Proposition 4.5.6 and Theorem 7.3.7]{di}.
\end{proof}

\subsection{The approach via vector fields. Proof of the equivalence}

Before discussing our equivalent characterization of the $BV$ space, we need to introduce the notion of Sobolev derivations following the definition given in \cite{gi2}.

\begin{definition}\label{sob-der}
Let $p,q\in[1,\infty]$ be two conjugate exponents. A linear map $L: D^{1,p}(\X)\to L^1(\X)$ such that
 \begin{equation} \label{eq:sob-der}
  |L(f)|\le\ell |Df|
 \end{equation}
 $\mu$--almost everywhere for every $f\in D^{1,p}(\X)$ and for some $\ell\in L^q(\X)$ will be called a $q$--\textit{Sobolev derivation}. The set of $q$--Sobolev derivations $L:D^{1,p}(\X)\to L^1(\X)$ will be denoted by $\sder_q(\X)$.
\end{definition}

Of course, the usual calculus rules hold for this definition as well. It is interesting to notice that, roughly speaking, the space of $q$--Sobolev derivations contains the space of $q$--summable Lipschitz derivations with divergence in $L^{q}(\X)$ for $q > 1$. In order to make this statement rigorous, we start with the following technical lemma. 

\begin{lemma}\label{lem:inclusion_Sob_der}
Let $q \in (1, \infty]$ and $p \in [1, \infty)$ be conjugate exponents. Any $\delta \in \der^{q, q}(\X)$ may be uniquely extended to a linear continuous map $\tilde{\delta} : D^{1,p}(\X)\cap (L^p(\X)\cup L^{\infty}(\X)) \to L^{1}(\X)$ satisfying  
\begin{equation} \label{eq:action_delta_ext_lemma}
\int_{\X} h \, \tilde{\delta}(f) \d \mu = - \int_{\X} f \left ( h \div(\delta) + \delta(h) \right ) \d \mu
\end{equation}
for all $f \in D^{1,p}(\X)\cap (L^p(\X)\cup L^{\infty}(\X)), h \in \Lip_{\bs}(\X)$, and
\begin{equation} \label{eq:sob-der_ext}
  |\tilde{\delta}(f)|\le\ell |Df|
 \end{equation}
$\mu$--almost everywhere for every $f\in D^{1,p}(\X)\cap (L^p(\X)\cup L^{\infty}(\X))$, where 
\begin{equation*}
\ell = \begin{cases} |\delta| & \text{ if } q \in (1, \infty),  \\ 
\|\delta\|_{L^{\infty}(\X)} & \text{ if } q = \infty.
\end{cases}
\end{equation*}
\end{lemma}
\allowdisplaybreaks
\begin{proof}
Let at first $q \in (1, \infty)$ and let $\delta \in \der^{q, q}(\X)$. Then, for any $f \in \Lip_\bs(\X)$ we have
\begin{equation} \label{eq:point_bound_Lip}
|\delta(f)| \le |\delta|\cdot\Lip_{\text{a}}(f), \text{ with } |\delta| \in L^{q}(\X),
\end{equation}
and 
\begin{equation} \label{eq:IBP_div_der_q}
\int_{\X} \delta(f) \d \mu = - \int_{\X} \div(\delta) \, f \d \mu, \text{ with } \div(\delta) \in L^{q}(\X).
\end{equation}
We start by showing that we can extend $\delta$ to functions in $W^{1, p}(\X) := D^{1, p}(\X)\cap L^p(\X)$, where $p$ is the conjugate exponent of $q$, in such a way that \eqref{eq:sob-der_ext} holds with $\ell = |\delta|$. Hence, let $f \in W^{1, p}(\X)$. Thanks to Proposition \ref{prop:Lip_approx_p}, we know that there exists a sequence $(f_{k})_{k \in \N} \subset \Lip_{\bs}(\X)$, such that
\begin{equation*}
f_{k} \to f \text{ in } L^{p}(\X) \text{ and } \Lip_{\text{a}}(f_{k}) \to |D f| \text{ in } L^{p}(\X).
\end{equation*}
Hence, thanks to \eqref{eq:point_bound_Lip}, it is easy to see that $(\delta(f_{k}))_{k \in \N}$ is a bounded sequence in $L^{1}(\X)$, so that $(\delta(f_{k}) \mu)_{k \in \N}$ is uniformly bounded in $\M(\X)$. In addition, this sequence is also uniformly tight. Indeed, since $|\delta| \in L^q(\X)$, for all $\eps > 0$ there exists a compact set $K_{\eps}$ such that $$\int_{\X \setminus K_{\eps}} |\delta|^q \,d \mu \le \left ( \frac{\eps}{C_f} \right )^{q},$$
where $$C_f := \sup_{k} \|\Lip_{\a}(f_k)\|_{L^{p}(\X)} < \infty.$$
Thanks to \eqref{eq:point_bound_Lip} and H\"older's inequality, we see that
\begin{align*}
\|\delta(f_{k}) \mu\|(\X \setminus K_{\eps}) & = \int_{\X \setminus K_{\eps}} |\delta(f_{k})| \, d \mu \le \int_{\X \setminus K_{\eps}} |\delta| \Lip_{\text{a}}(f_k) \, d \mu \\
& \le \left (\int_{\X \setminus K_{\eps}} |\delta|^q \,d \mu\right )^{\frac{1}{q}} \|\Lip_{\a}(f_k)\|_{L^{p}(\X)} \le \eps.
\end{align*}
Therefore, Theorem \ref{thm:bog_weak_conv} yields the existence of a subsequence $(\delta(f_{k_{j}}) \mu)_{j \in \N}$ weakly converging to a Radon measure, which we denote by $\delta_{f}$. By the Leibniz rule (point 1 in Definition \ref{der}) and \eqref{eq:IBP_div_der_q}, the measure $\delta_f$ acts on $h \in \Lip_{\bs}(\X)$ in the following way:
\begin{align*}
\int_{\X} h \d \delta_{f} & = \lim_{j \to + \infty} \int_{\X} h \delta(f_{k_{j}}) \d \mu = - \lim_{j \to + \infty} \int_{\X} f_{k_{j}} \left ( h \div(\delta) + \delta(h) \right ) \d \mu \\
& = - \int_{\X} f \left ( h \div(\delta) + \delta(h) \right ) \d \mu. 
\end{align*}
This shows that $\delta_{f}$ does not depend on the converging subsequence and is indeed unique.
In particular, for all $h \in \Lip_{\bs}(\X)$, we have
\begin{equation*}
\left |\int_{\X} h \, \d \delta_{f} \right |= \lim_{k \to + \infty} \left | \int_{\X} h \, \delta(f_{k}) \d \mu \right | \le \liminf_{k \to + \infty} \int_{\X} |h| \, |\delta| \Lip_{\text{a}}(f_{k}) \d \mu = \int_{\X} |h| \, |\delta| \, |D f| \d \mu,
\end{equation*}
from which we deduce that $\|\delta_{f}\| \le |\delta| |Df| \mu$ in the sense of Radon measures. Therefore, $\delta_{f}$ is indeed absolutely continuous with respect to $\mu$, and we denote by $\tilde{\delta}(f)$ its $L^1$ density. Thus, it is clear that $\tilde{\delta}(f)$ is well defined for all $f \in W^{1, p}(\X)$ and it is a continuous linear operator from $W^{1,p}(\X)$ to $L^1(\X)$ satisfying \eqref{eq:action_delta_ext_lemma} and \eqref{eq:sob-der_ext} with $\ell = |\delta|$. In addition, we have $\tilde{\delta}(f) = \delta(f)$ for all $f \in \Lip_{\bs}(\X)$, since, thanks to \eqref{eq:action_delta_ext_lemma}, the Leibniz rule (point 1 in Definition \ref{der}) and \eqref{eq:IBP_div_der_q}, for all $h \in \Lip_{\bs}(\X)$ we have
\begin{equation*}
\int_{\X} h \tilde{\delta}(f) \d \mu = - \int_{\X} f \left ( h \div(\delta) + \delta(h) \right ) \d \mu = \int_{\X} h \delta(f) \d \mu.
\end{equation*}
Thus, $\tilde{\delta}$ is the unique extension of $\delta$ to $W^{1,p}(\X)$. In the case $f \in D^{1,p}(\X) \cap L^{\infty}(\X)$, we may not have $f \in L^p(\X)$, and so we need to proceed by approximation with Lipschitz cutoff functions. For some fixed $x_0 \in \X$ and all $R > 0$, we define
\begin{equation*}
\eta_{R}(x) := \uno_{B_R(x_0)}(x) + \left ( 2 - \frac{d(x, x_0)}{R}\right ) \uno_{B_{2R}(x_0) \setminus B_R(x_0)}(x).
\end{equation*}
It is easy to see that $\eta_R \in \Lip_{\bs}(\X) \subset D^{1,p}(\X) \cap L^{\infty}(\X)$ and it satisfies
\begin{equation*} 
\eta_R(x) \equiv 1 \text{ on }\overline{B_R(x_0)}, \eta_R(x) \equiv 0 \text{ on }\X \setminus B_{2R}(x_0) \text{ and }|D \eta_{R}| \le \frac{1}{R} \uno_{B_{2R}(x_{0}) \setminus B_R(x_0)}.
\end{equation*}
Therefore, we have 
\begin{equation} \label{eq:Leibniz_ineq}
|D(f \eta_R)| \le \eta_R |Df| +  |f| |D \eta_R|
\end{equation}
by the weak Leibniz rule for the minimal $p$-weak upper gradient, see point 2 of Remark \ref{rmk-prop-upgrad}. This implies $f \eta_R \in W^{1,p}(\X)$, so that $\tilde{\delta}(f \eta_R)$ is well defined for all $R > 0$. In addition, we notice that the sequence $(\tilde{\delta}(f \eta_R))_{R > 0}$ is Cauchy with respect to $L^1_{\rm loc}$--convergence: indeed, for any $R, r > 0$, by \eqref{eq:sob-der_ext} and \eqref{eq:Leibniz_ineq} we have
\begin{equation*}
|\tilde{\delta}(f (\eta_R - \eta_r))| \le |\delta|\left (|\eta_R - \eta_r| |Df| + |f| ( |D \eta_R| + |D \eta_r|)\right),
\end{equation*}
so that for any bounded set $B \subset \X$ we obtain
\begin{equation*}
\int_{B} |\tilde{\delta}(f (\eta_R - \eta_r))| \, d \mu = 0
\end{equation*}
for all $R > r > 0$ such that $B \subset B_r(x_0)$. Hence, by considering a covering of $\X$ of balls $\{B_k(x_0)\}_{k \in \N}$ and a gluing argument, we may deduce the existence of a subsequence $(\tilde{\delta}(f \eta_{R_{j}}))_{j \in \N}$ converging to a function $\overline{\delta}(f)$ in $L^1_{\rm loc}(\X)$, a priori depending on the subsequence.
In addition, since $\tilde{\delta}$ satisfies \eqref{eq:sob-der_ext} on $W^{1,p}(\X)$, we obtain the estimate 
\begin{equation*}
|\tilde{\delta}(f \eta_{R_j})| \le |\delta| \left ( \eta_{R_j} |Df| +  |f| |D \eta_{R_j}| \right ),
\end{equation*}
which easily implies $|\overline{\delta}(f)| \le |\delta| |Df|$ $\mu$--almost everywhere, so that we also deduce that $\overline{\delta}(f) \in L^1(\X)$. In addition, thanks to \eqref{eq:action_delta_ext_lemma}, for all $h \in \Lip_{\bs}(\X)$ we have
\begin{align*}
\int_{\X} h \overline{\delta}(f) \, d \mu & = \lim_{j \to + \infty} \int_{\X} h \tilde{\delta}(f \eta_{R_j}) \, d \mu = - \lim_{j \to + \infty} \int_{\X} h f \eta_{R_j} \div(\delta) + f \eta_{R_j} \delta(h) \, d \mu \\
& = - \int_{X} f \left ( h \div(\delta) + \delta(h) \right ) \, d\mu. 
\end{align*} 
This shows that $\overline{\delta}(f)$ does not depend on the subsequence and is unique. In this way, we extend $\tilde{\delta}$ to a linear operator from $D^{1,p}(\X) \cap L^{\infty}(\X)$ into $L^1(\X)$ satisfying \eqref{eq:action_delta_ext_lemma} and \eqref{eq:sob-der_ext} with $\ell = |\delta|$, which we still denote by $\tilde{\delta}$, with a little abuse of notation.

We now consider the case $q = \infty$ and $p = 1$, and we start by assuming $f \in W^{1, 1}(\X) := D^{1,1}(\X) \cap L^{1}(\X)$. We notice that $f \in BV(\X)$, with $\|D f\| = |D f| \mu$. Thanks to Proposition \ref{prop:Lip_approx_BV}, we know that there exists a sequence $(f_{k})_{k \in \N}\subset\Lip_{\bs}(\X)$ such that
\begin{equation*}
f_{k} \to f \text{ in } L^{1}(\X) \text{ and } \Lip_{\text{a}}(f_{k})\mu \weakto \|D f\|.
\end{equation*}
Therefore, \eqref{eq:point_bound_Lip} implies
\begin{equation*}
|\delta(f_k)| \le |\delta| \Lip_{\text{a}}(f_{k}) \le \|\delta\|_{L^{\infty}(\X)} \Lip_{\text{a}}(f_{k})
\end{equation*} 
$\mu$--almost everywhere. Hence, Theorem \ref{thm:bog_weak_conv} implies that the sequence $(\Lip_{\text{a}}(f_{k})\mu)_{k \in \N}$ is uniformly bounded and tight, since it is weakly converging, and thus we conclude that the 
sequence $(\delta(f_k))_{k \in \N}$ is uniformly bounded and tight, again by Theorem \ref{thm:bog_weak_conv}. Therefore, we can proceed as we did in the case $q \in (1, \infty)$ in order to deduce the existence of a Radon measure $\delta_{f} \in \M(\X)$ which is the weak limit of a subsequence $(\delta(f_{k_j}))_{j \in \N}$. Then, we can employ the Leibniz rule to show that $\delta_{f}$ does not depend on the converging subsequence and that it is unique. In addition, we notice that for all $h \in \Lip_{\bs}(\X)$ \eqref{eq:point_bound_Lip} implies
\begin{align*}
\left |\int_{\X} h \, \d \delta_{f} \right | & = \lim_{k \to + \infty} \left | \int_{\X} h \, \delta(f_{k}) \d \mu \right | \le \liminf_{k \to + \infty} \int_{\X} |h| \, |\delta| \Lip_{\text{a}}(f_{k}) \d \mu \\
& \le \|\delta\|_{L^{\infty}(\X)} \liminf_{k \to + \infty} \int_{\X} |h| \Lip_{\text{a}}(f_{k}) \d \mu = \|\delta\|_{L^{\infty}(\X)} \int_{\X} |h| \, |D f| \d \mu,
\end{align*}
from which we deduce that $\|\delta_{f}\| \le \|\delta\|_{L^{\infty}(\X)} |Df| \mu$ in the sense of Radon measures. Therefore, $\delta_{f}$ is indeed absolutely continuous with respect to $\mu$, and we denote by $\tilde{\delta}(f)$ its $L^1$ density. Thus, it is clear that $\tilde{\delta}$ is a linear continuous operator from $W^{1,1}(\X)$ to $L^1(\X)$ satisfying \eqref{eq:action_delta_ext_lemma} and \eqref{eq:sob-der_ext} with $\ell = \|\delta\|_{L^{\infty}(\X)}$. In addition, we can show that $\tilde{\delta}$ is an extension of $\delta$, arguing by approximation and by the Leibniz rule as in the case $q \in (1, \infty)$. Finally, we can extend $\tilde{\delta}$ to a continuous linear operator from $D^{1,1}(\X) \cap L^{\infty}(\X)$ to $L^1(\X)$ arguing by approximation with cutoff functions as we did in the case $q \in (1, \infty)$, with the only difference that now in \eqref{eq:sob-der_ext} we have $\ell = \|\delta\|_{L^{\infty}(\X)}$.
\end{proof}

\begin{remark}
We wish to underline the fact that the approximation argument of Lemma \ref{lem:inclusion_Sob_der} does not work in the case $q = 1$ and $p = \infty$, since in general it is not possible to approximate functions in $W^{1, \infty}(\X)$ with sequences of Lipschitz functions with bounded support. In addition, unless $(\X, d, \mu)$ is the space $\R^n$ endowed with the standard Euclidean distance and Lebesgue measure $\Leb{n}$, we cannot prove that each function in $W^{1, \infty}(\X)$ admits a representative in $\Lip_{b}(\X)$, given that it is enough to choose as $\X$ a suitable subset of $\R^n$, for $n \ge 2$, to obtain counterexamples (see \cite[Section 2.3]{afp}). Hence, not even a different approximation approach based only on the use of a cutoff sequence seems to be working, without some other assumptions on $(\X, d, \mu)$.
\end{remark}

We proceed now to show, in the case $q > 1$, the existence of an extension of a derivation in $\der^{q, q}(\X)$ to the whole Sobolev-Dirichlet class $D^{1,p}(\X)$, where $p$ is the conjugate exponent to $q$; in this way proving that each $\delta \in \der^{q, q}(\X)$ admits an extension to $\sder_q(\X)$.

\begin{lemma} \label{lem:extension_Sob_der}
Let $q \in (1, \infty]$. Any $\delta \in \der^{q, q}(\X)$ may be extended to a $q$-Sobolev derivation $\overline{\delta} \in \sder_q(\X)$ satisfying \eqref{eq:sob-der} with 
\begin{equation} \label{eq:ell_definition}
\ell := \begin{cases} |\delta| & \text{ if } q \in (1, \infty),  \\ 
\|\delta\|_{L^{\infty}(\X)} & \text{ if } q = \infty.
\end{cases}
\end{equation}
\end{lemma}
\begin{proof}
Let $p \in [1, \infty)$ be the conjugate exponent to $q$ and $f \in D^{1,p}(\X)$. For all $k \in \N$ we set
\begin{equation*}
T_{k}(f) := \begin{cases} k & \text{ if } f \ge k, \\
f & \text{ if } - k < f < k, \\
- k & \text{ if } f \le - k.
\end{cases}
\end{equation*}
Clearly, $T_{k}(f) \in D^{1,p}(\X)\cap L^{\infty}(\X)$, and, thanks to point 4 of Remark \ref{rmk-prop-upgrad}, we have $|DT_{k}(f)| =|D f| \uno_{\{|f| < k\}}$ $\mu$--almost everywhere. Hence, the unique extension $\tilde{\delta}$ of $\delta$ given by Lemma \ref{lem:inclusion_Sob_der} can be applied to $T_k(f)$, and \eqref{eq:sob-der_ext} yields
\begin{equation} \label{eq:k_truncation_estimate}
|\tilde{\delta}(T_k(f))| \le \ell |D T_k(f)| = \ell |D f| \uno_{\{|f| < k\}} \le \ell |D f|,
\end{equation}
\sloppy
where $\ell$ is defined as above. Since $\ell |D f| \in L^1(\X)$, we deduce that the sequence $( \tilde{\delta}(T_k(f)) \mu)_{k \in \N}$ is uniformly bounded in $\M(\X)$. In addition, \eqref{eq:k_truncation_estimate} easily implies also the uniform tightness, since, for all $\eps > 0$, there exists a compact set $K_{\eps}$ such that 
\begin{equation*}
\int_{\X \setminus K_{\eps}} \ell |D f| \d \mu < \eps, 
\end{equation*}
so that
\begin{equation*}
\int_{\X \setminus K_{\eps}} |\tilde{\delta}(T_k(f))| \d \mu \le \int_{\X \setminus K_{\eps}} \ell |D f| \d \mu < \eps.
\end{equation*}
Therefore, Theorem \ref{thm:bog_weak_conv} yields the existence of a subsequence $( \tilde{\delta}(T_{k_j}(f)) \mu)_{j \in \N}$ weakly converging to a Radon measure, which we denote by $\overline{\delta}_{f}$. Thanks to \eqref{eq:k_truncation_estimate}, we deduce that $\|\overline{\delta}_{f}\| \le \ell |D f| \mu$, since, for all $h \in C_{b}(\X)$, we have
\begin{equation*}
\left | \int_{\X} h \d \overline{\delta}_{f} \right | = \lim_{j \to +\infty} \left | \int_{\X} h \tilde{\delta}(T_{k_j}(f)) \d \mu \right | \le \liminf_{j \to +\infty} \int_{\X} |h| \ell |D f| \uno_{\{|f| < k_{j}\}} \d \mu \le \int_{\X} |h| \ell |D f| \d \mu.
\end{equation*}
Hence, there exists a function $\overline{\delta}(f) \in L^1(\X)$ such that $\overline{\delta}_{f} = \overline{\delta}(f) \mu$ and $|\overline{\delta}(f)| \le \ell |D f|$. Thus, we constructed a linear continuous map $\overline{\delta} : D^{1,p}(\X) \to L^1(\X)$ satisfying \eqref{eq:sob-der}. Finally, it is clear that, if $f \in D^{1,p}(\X) \cap L^{\infty}(\X)$, then $\tilde{\delta}(T_k(f)) = \tilde{\delta}(f)$ for $k$ large enough, so that $\overline{\delta} = \tilde{\delta}$ on $D^{1,p}(\X) \cap L^{\infty}(\X)$, and this shows that $\overline{\delta}$ is an extension of the derivation $\delta$, since $\tilde{\delta}$ is the unique extension of $\delta$ to $D^{1,p}(\X) \cap L^{\infty}(\X)$, thanks to Lemma \ref{lem:inclusion_Sob_der}.
\end{proof}

We wish to stress the fact that the extension $\overline{\delta}$ given by Lemma \ref{lem:extension_Sob_der} is a priori not unique. However, we will prove that it is indeed unique, thanks to the one-to-one correspondence between the derivations in $\der^{q,q}(\X)$ and the fields in $\frd^q(\X)$, which is treated in the following statements.

\begin{lemma}\label{isomorphism_1}
Let $q\in[1,\infty]$. Any vector field $X\in\frd^q(\X)$ induces a unique derivation $\delta_X\in\der^{q,q}(\X)$ given by $\delta_X = X \circ \d$, and
 \begin{equation}\label{est-rmk-37}
  |\delta_X(\varphi)|\le \Lip_{\text{a}}(\varphi) |X|
 \end{equation}
$\mu$--almost everywhere for every $\varphi\in\Lip_{\mathrm{bs}}(\X)$, so that $|\delta_X| \le |X|$. In addition, we have $\div(\delta_{X}) = \div(X)$ and $\delta_X \in \sder^q(\X)$, satisfying \eqref{eq:sob-der} with $\ell = |X|$.
\end{lemma}
\begin{proof}
It is easy to see that $\delta_X$ defined as above satisfies the Leibniz rule in Definition \ref{der}, thanks to Proposition \ref{prop:diff_Leibniz}, and that for all $\varphi\in\Lip_\bs(\X)$ we have $\delta_X(\varphi) = \d\varphi(X)$, so that
 \begin{equation} \label{est-rmk-37_1}
|\d\varphi(X)|\le|\d\varphi|_{*}|X|=|D\varphi||X| \le \Lip_{\text{a}}(\varphi) |X|
 \end{equation}
$\mu$--almost everywhere, since $\Lip_{\text{a}}(\varphi)$ is an upper gradient of $\varphi$. This gives point 2 of Definition \ref{der} with $g = |X|$. Then, it follows that $|\delta_X| \le |X| \in L^{q}(\X)$. In addition, we notice that $\delta_X = X \circ \d$ can be naturally extended to $D^{1,p}(\X)$, thanks to the properties of the differential (Definition \ref{differential}); and, analogously to \eqref{est-rmk-37_1}, for all $f \in D^{1,p}(\X)$ we get 
 \begin{equation*}
|\delta_X(f)| = |\d f(X)|\le|\d f|_{*}|X|=|D f||X| 
 \end{equation*}
$\mu$--almost everywhere, which clearly implies \eqref{eq:sob-der} with $\ell = |X|$. Finally, we notice that, for any $\psi\in\Lip_\bs(\X)$, we have
\begin{equation*}
\int_\X\psi\div(\delta_{X})\d\mu = - \int_\X\delta_{X}(\psi)\d\mu = -\int_\X\d\psi(X)\d\mu = \int_\X\psi\div(X)\d\mu.
\end{equation*}
This means that $\div(\delta_{X}) = \div(X) \in L^{q}(\X)$, thus proving our claim. 
\end{proof} 

We show now the converse statement; that is, the fact that every derivation in $\der^{q,q}(\X)$ induces a vector field $X_\delta\in \frd^q(\X)$, in the case $q > 1$.

\begin{lemma} \label{isomorphism_2}
Let $q \in (1, \infty]$ and $\delta\in\der^{q,q}(\X)$. Then there exists a unique vector field $X_\delta\in \frd^q(\X)$ such that $\delta = X_{\delta} \circ \d$. In addition, we have 
\begin{equation} \label{eq:X_delta_estimate} 
|X_{\delta}| \le \begin{cases} |\delta| & \text{ if } q \in (1, \infty),  \\ 
\|\delta\|_{L^{\infty}(\X)} & \text{ if } q = \infty.
\end{cases} 
\end{equation}
and $\div(X_{\delta}) = \div(\delta)$.
\end{lemma} 
\begin{proof}
Thanks to Lemma \ref{lem:extension_Sob_der}, we may extend $\delta$ to a $q$--Sobolev derivation $\overline{\delta} \in \sder_q(\X)$, a priori not unique. By the theory of $L^p$--normed modules, see \cite{gi2} and Appendix \ref{sec-appendix}, $$L^q(T\X)=\textsc{Hom}\left(L^p\left(T^*\X\right),L^1(\X)\right),$$where $\frac{1}{p}+\frac{1}{q}=1$. As a consequence, using \cite[Theorem 5.5.4]{bu}\footnote{Notice that \cite[Theorem 5.5.4]{bu} is a generalization to any exponent $q \in (1, \infty]$ of \cite[Theorem 2.3.3]{gi2}, which was proved only for the case $q=2$.} and the fact that $\overline{\delta} \in \sder_q(\X)$, we deduce that there exists $X_{\overline{\delta}}\in L^q(T\X)$ such that $\overline{\delta} = X_{\overline{\delta}} \circ \d$, which means that $\delta(\varphi) = \d \varphi(X_{\overline{\delta}})$ for all $\varphi \in \Lip_{\bs}(\X)$, since $\overline{\delta} = \delta$ on $\Lip_{\bs}(\X)$. Moreover, Lemma \ref{lem:extension_Sob_der} implies that, for all $f \in D^{1,p}(\X)$,
\begin{equation*}
|\d f(X_{\overline{\delta}})|= |\overline{\delta}(f)| \le \ell |D f| = \ell |\d f|_* 
\end{equation*}
$\mu$--almost everywhere, where $\ell$ is defined as in \eqref{eq:ell_definition}. Hence, by Remark \ref{dual-mod-norm} and Lemma \ref{lem:L_p_cotangent_generated}, $\mu$--almost everywhere we have
\begin{align*}
 \left\vert X_{\overline{\delta}}\right\vert & \coloneqq {\text{ess-}\!\sup}\left \{ \left\vert L(X_{\overline{\delta}})\right\vert : L = \d f \text{ for } f\in D^{1,p}(\X),\:|L|_{*}\le1 \right \} \\
 & = {\text{ess-}\!\sup}\left \{ \left\vert \d f(X_{\overline{\delta}})\right\vert : f\in D^{1,p}(\X),\:|\d f|_{*}\le1 \right \} \le\ell,
\end{align*}
which implies \eqref{eq:X_delta_estimate}, thanks to \eqref{eq:ell_definition}.

We notice that $X_{\overline{\delta}}$ depends a priori on the choice of the extension $\overline{\delta}$ of $\delta$ to $D^{1,p}(\X)$. However, this dependence is actually illusory. Indeed, assume that there exist two such extensions $\overline{\delta}_1, \overline{\delta}_2$. Then, there exists two fields $X_{\overline{\delta}_1}, X_{\overline{\delta}_2} \in L^q(T\X)$ such that $\overline{\delta}_i = X_{\overline{\delta}_i} \circ \d$ for $i = 1, 2$. However, these extensions $\overline{\delta}_1, \overline{\delta}_2$ coincide with the unique extension $\tilde{\delta}$ of $\delta$ to $W^{1,p}(\X)$ given by Lemma \ref{lem:inclusion_Sob_der}, so that, for all $f \in W^{1,p}(\X)$, we have
\begin{equation*}
\d f(X_{\overline{\delta}_1}) = \overline{\delta}_1(f) = \tilde{\delta}(f) = \overline{\delta}_2(f) = \d f(X_{\overline{\delta}_2}),
\end{equation*}
which clearly implies $\d f(X_{\overline{\delta}_1} - X_{\overline{\delta}_2}) = 0$ for all $f \in W^{1,p}(\X)$. Thus, we conclude that it must be $X_{\overline{\delta}_1} = X_{\overline{\delta}_2}$, by the linearity of the pairing and thanks to Lemma \ref{lem:L_p_cotangent_generated}. This proves the uniqueness of the vector field, which we shall denote simply by $X_{\delta}$.
We conclude the proof by showing that $\div(X_{\delta}) = \div(\delta)$. Let $\psi\in\Lip_\bs(\X)$ and notice that
\begin{equation*}
 \int_\X\psi\div(X_{\delta})\d\mu=-\int_\X\d\psi(X_{\delta})\d\mu=-\int_\X\delta(\psi)\d\mu=\int_\X\psi\div(\delta)\d\mu.
\end{equation*}
Since $\psi$ is arbitrary, we conclude that $L^q(\X)\ni\div(\delta)=\div(X_{\delta})$, which means $X_{\delta}\in\frd^q(\X)$.
\end{proof}

\begin{corollary}
Let $q \in (1, \infty]$. Any $\delta \in \der^{q, q}(\X)$ may be uniquely extended to a $q$--Sobolev derivation $\overline{\delta} \in \sder_q(\X)$ satisfying \eqref{eq:sob-der} with 
\begin{equation*}
\ell = \begin{cases} |\delta| & \text{ if } q \in (1, \infty),  \\ 
\|\delta\|_{L^{\infty}(\X)} & \text{ if } q = \infty.
\end{cases}
\end{equation*}
\end{corollary}
\begin{proof}
Lemma \ref{lem:extension_Sob_der} provides the existence of such extension. The uniqueness follows by exploiting Lemma \ref{isomorphism_2}: indeed, we can associate to $\delta$ a unique vector $X_{\delta} \in \frd^q(\X)$ such that $\delta = X_{\delta} \circ \d$. This means that $\delta$ can be naturally extended to $D^{1,p}(\X)$, where $p$ is the conjugate exponent to $q$, by setting $\overline{\delta}(f) := \d f(X_{\delta})$ for all $f \in D^{1,p}(\X)$, which is well posed, since $\d f \in L^{p}(T^*\X)$ and $X_{\delta} \in L^{q}(T\X)$. In order to show that this extension is indeed unique, let $T_k(f)$ be the truncation of $f$ defined as in the proof of Lemma \ref{lem:extension_Sob_der} and let $\tilde{\delta}$ be the unique extension of $\delta$ to $D^{1,p}(\X)\cap L^{\infty}(\X)$. Since $\tilde{\delta}$ is unique, it must coincide with $X_{\delta} \circ \d$ on $D^{1,p}(\X) \cap L^{\infty}(\X)$. Thanks to point 3 of Proposition \ref{prop:diff_Leibniz}, we have
\begin{equation*}
\d T_k(f) = \uno_{\{ |f| < k \}} \d f,
\end{equation*}
so that
\begin{equation*}
\tilde{\delta}(T_k(f)) = \d T_k(f)(X_\delta) = \uno_{\{ |f| < k \}} \d f(X_\delta).
\end{equation*}
Thanks to the estimates on $|X_\delta|$ given in Lemma \ref{isomorphism_2}, we clearly have $\d f(X_\delta) \in L^1(\X)$. Therefore, Lebesgue's dominated convergence theorem implies that 
\begin{equation*}
\tilde{\delta}(T_k(f)) \to \d f(X_\delta) \text{ in } L^1(\X) \text{ as } k \to + \infty,
\end{equation*}
which means that $\overline{\delta} \coloneqq X_{\delta}\circ\d$ is the unique extension of $\delta$ to $D^{1,p}(\X)$. Finally, the estimates on $\ell$ follow immediately from those on $|X_\delta|$ given in Lemma \ref{isomorphism_2}.
\end{proof}

All in all, we have just proved the following result.

\begin{theorem}\label{der-iso-d}
Let $(\X,d,\mu)$ be a complete and separable metric measure space endowed with a non--negative Radon measure $\mu$ which is finite on bounded sets and let $q \in (1, \infty]$. Then, any vector field $X\in\frd^{q}(\X)$ induces a unique derivation $\delta_X\in\der^{q,q}(\X)$ satisfying $\delta_X = X \circ \d$, $|\delta_{X}| \le |X|$ and $\div(\delta_{X}) = \div(X)$. Vice versa, any derivation $\delta\in\der^{q,q}(\X)$ induces a unique vector field $X_\delta\in\frd^q(\X)$ satisfying $\delta = X_{\delta} \circ d$,
\begin{equation*}
|X_{\delta}| \le \begin{cases} |\delta| & \text{ if } q \in (1, \infty),  \\ 
\|\delta\|_{L^{\infty}(\X)} & \text{ if } q = \infty.
\end{cases}
\end{equation*}
and $\div(X_{\delta}) = \div(\delta)$.
\end{theorem}

In conclusion, Lemma \ref{isomorphism_1}, Lemma \ref{isomorphism_2} and Theorem \ref{der-iso-d} entitle us to characterize the space of $BV$ functions in the following alternative way.

\begin{definition}\label{def-bv-rcd}
 Let $u\in L^1(\X)$. We say that $u\in BV_{\frd}(\X)$ if there exists a continuous linear map $L_u:\frd^\infty(\X)\to\mathbf{M}(\X)$ such that
 \begin{equation}\label{eq-bv-vect}
  \int_\X\d L_u(X)=-\int_\X u\, \div(X) \, \d\mu
 \end{equation}
for all vector fields $X\in\frd^\infty(\X)$. 
\end{definition}

In the same fashion as for Definition \ref{bv-der}, we observe that also the above characterization is well--posed, in the sense that, when $u\in BV_{\frd}(\X)$, the map $L_u$ is uniquely determined. Therefore, we shall write, with no ambiguity, $Du(X)$ instead of $L_u(X)$.

It is worth noticing that, if $q=\infty$ -- which is the case of interest for $BV$ functions --, Theorem \ref{der-iso-d} obviously gives a bijection between $\der_b(\X)$ and $\frd^{\infty}(\X)$. This fact has an immediate yet notable consequence.

\begin{theorem}\label{equal-BV} We have the equivalence
\begin{equation}\label{eq-equal-BV}
BV(\X) = BV_{\frd}(\X).
\end{equation}
In particular, if $\delta \in \der_b(\X)$ and $X \in \frd^{\infty}(\X)$ are associated by the bijection given by Theorem \ref{der-iso-d}, then the signed measures $Du(\delta)$ and $Du(X)$ coincide, and the same is true for their respective total variations.
\end{theorem}
\begin{proof}
Given $u\in L^1(\X)$, for all $X\in\frd^\infty(\X)$ we have
\begin{equation*}\int_\X u\div(X)\d\mu = \int_\X u\div(\delta_X)\d\mu,
\end{equation*}
where $\delta_X\in\der_b(\X)$ is the unique derivation associated to $X$ given by Lemma \ref{isomorphism_1}. On the other hand, for all $\delta \in \der_b(\X)$ we have
\begin{equation*}
\int_\X u\, \div(\delta) \, \d\mu = \int_{\X} u\, \div(X_{\delta}) \, \d\mu, 
\end{equation*} 
where $X_{\delta} \in \frd^{\infty}(\X)$ is the unique vector field associated to $\delta$ given by Lemma \ref{isomorphism_2}. Therefore, a straightforward application of Theorem \ref{der-iso-d}  -- with $\delta$ and $X$ associated by the bijection -- allows us to conclude. \end{proof}

The above result motivates the following definition.

\begin{definition}\label{BV-grad}Let $\vinf(\X)$ denote any of the spaces $\der_b(\X)$ or $\frd^\infty(\X)$. If $u\in BV(\X)$, we say that the measure--valued map $Du:\vinf(\X)\to\mathbf{M}(\X)$ satisfying \eqref{eq-bv-der} or \eqref{eq-bv-vect} respectively, is the {\it gradient} of $u$.\end{definition}

We observe that this notation is chosen in order to emphasize the fact that the measure $Du(X)$ -- or, $Du(\delta)$ -- can be seen as a pairing between the vector field $X$ -- or, the derivation $\delta$ -- coherently with the definition of pairing measure given in Section \ref{leib-rules}.

\begin{remark}\label{remark-BV}
We explicitly point out that by virtue of Theorem \ref{equal-BV} and \cite[Theorem 7.3.7]{di}, $BV_\frd(\X)$ is also equivalent to the ``relaxed'' $BV$ space treated in \cite{miranda}.

Moreover, Theorem \ref{char-bv} continues to hold also for the definition via vector fields, so that for any open set $\Omega\subset\X$ we have an analogous representation formula for the total variation, given by
\begin{equation*}\label{T-Var}
\|Du\|(\Omega)=\sup\left\{\int_\X u\div(X)\d\mu;\;X\in\frd^\infty(\X),\;\supp(X)\Subset\Omega,\;|X|\le1\right\},
\end{equation*}
which is obviously lower semicontinuous with respect to the $L^1$--convergence.
Clearly, the observations in Remark \ref{rem:prop_set_fin_per} apply also to Definition \ref{def-bv-rcd} as well.
\end{remark}

In the following sections we shall always consider $BV$ functions in the sense of Definition \ref{def-bv-rcd}, in order to be coherent with our choice of using the differential machinery of \cite{gi2}.

\section{Divergence-measure fields and Gauss--Green formulas on regular domains}\label{reg-dom}

In this section we introduce the main object of our discussion, namely the essentially bounded divergence--measure vector fields $\DM^\infty(\X)$, in terms of which we shall undertake the task of obtaining integration by parts formulas featuring their (interior and exterior) normal traces.
We start with the main definitions and properties in a general metric measure space $(\X,d,\mu)$ satisfying no specific structural assumptions. Then, we consider a particular class of integration domains on which we achieve integration by part formulas; to this purpose, however, as the Riesz Representation Theorem will be often needed to achieve our claims, we shall require in addition the space $\X$ to be locally compact. 

As we already mentioned in the Introduction, this part is based on \cite{bu}, where the issue was attacked in a geodesic space, and which in turn started from the analysis made by the authors of \cite{mms1} in the context of a doubling metric measure space supporting a Poincar\'e inequality and equipped with the Cheeger differential structure (introduced in \cite{ch}).

\bigskip

Let $\left({\X},d,\mu\right)$ be a complete and separable metric
measure space equipped with a non--negative Radon measure $\mu$, finite on bounded sets. Recall that the differential operator satisfies the Leibniz rule, namely
\begin{equation}
\text{d}(fg)=f\text{d}g+g\text{d}f.\label{eq:Leibniz-d}
\end{equation}
for any $f,g\in\Lip(\X)$.

\begin{definition} We say that $X\in L^{p}(T\X)$, $1\le p\le\infty$, is a
$p$-\textit{summable divergence--measure field},
and we write $X\in{\cal DM}^{p}(\X)$, if its distributional
divergence, which we continue to denote as $\div(X)$, is a finite Radon measure; that is, $\div(X)$
is a measure satisfying 
\begin{equation}
-\int_\X g\mathrm{d}\div(X)=\int_\X\mathrm{d}g(X)\mathrm{d}\text{\ensuremath{\mu}}\label{eq:div-meas}
\end{equation}
for every $g\in \Lip_{\bs}(\X)$. If $p = \infty$, we say that $X \in \DM^{\infty}(\X)$ is an {\em essentially bounded divergence--measure field}.
\end{definition}

In the following, we shall be concerned mostly with the case $p = \infty$.

We prove now a simple version of a Leibniz rule for fields $X \in \DM^{p}(\X)$ and bounded Lipschitz scalar functions $f$ with $\d f \in L^{q}(T^{*} \X)$, where $p, q \in [1, \infty]$ are conjugate exponents. This condition is equivalent to ask that the minimal weak upper gradient $|Df|$ belongs to $L^{q}(\X)$. To this purpose, we notice that for any $g\in\Lip_{\bs}({\X})$ and $X\in L^{p}(T \X)$,
the $L^{\infty}$--linearity of the differential ensures that
\begin{equation}
\text{d}g(fX)=f\text{d}g(X)\label{eq:d-Linearity}
\end{equation}
for every $f\in\Lip_{b}({\X})$.

\begin{lemma} \label{dmlipleibniz} 
Let $p, q \in [1, \infty]$ be such that $\frac{1}{p} + \frac{1}{q} = 1$. Let $X\in{\cal DM}^{p}({\X})$ and $f\in{\rm Lip}_b({\X})$ with $\d f \in L^{q}(T^{*} \X)$.
Then, $fX\in{\cal DM}^{p}({\X})$ and 
\begin{equation}
\div(fX)=f\div(X)+ \d f(X) \mu.\label{eq:Leibniz-Lip}
\end{equation}
\end{lemma}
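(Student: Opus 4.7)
The plan is to unwind the distributional divergence of $fX$ by testing it against an arbitrary $g \in \Lip_{\bs}(\X)$, and then to reduce this integral to one involving $\div(X)$ by exploiting the Leibniz rule \eqref{eq:Leibniz-d} for the differential applied to the compactly-supported Lipschitz function $fg$.

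More precisely, I would first verify that $fX \in L^{p}(T\X)$: this is immediate because $|fX| \le \|f\|_{L^{\infty}(\X)} |X|$ pointwise $\mu$-a.e. Next, for any test function $g \in \Lip_{\bs}(\X)$, I would use the $L^{\infty}$-linearity relation \eqref{eq:d-Linearity} to write
\begin{equation*}
\int_\X \d g(fX)\,\d\mu = \int_\X f\,\d g(X)\,\d\mu.
\end{equation*}
Since $fg \in \Lip_{\bs}(\X)$, it is an admissible test function in \eqref{eq:div-meas}, and the Leibniz rule for $\d$ together with linearity of $X$ as an $L^\infty$-module map gives $\d(fg)(X) = f\,\d g(X) + g\,\d f(X)$. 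Applying \eqref{eq:div-meas} to $fg$ and rearranging yields
\begin{equation*}
\int_\X f\,\d g(X)\,\d\mu = -\int_\X fg\,\d\div(X) - \int_\X g\,\d f(X)\,\d\mu.
\end{equation*}
Therefore
\begin{equation*}
-\int_\X g\,\d\div(fX) = -\int_\X g\cdot f\,\d\div(X) - \int_\X g\cdot \d f(X)\,\d\mu,
\end{equation*}
and the arbitrariness of $g \in \Lip_{\bs}(\X)$ gives the identity \eqref{eq:Leibniz-Lip} as an equality of distributions.

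The final step is to check that the right-hand side of \eqref{eq:Leibniz-Lip} is indeed a finite Radon measure, so that $fX \in \DM^{p}(\X)$. The term $f\div(X)$ is a finite measure since $f \in L^{\infty}(\X)$ and $\div(X) \in \mathbf{M}(\X)$. For the term $\d f(X)\mu$, the pointwise-norm inequality $|\d f(X)| \le |\d f|_{*} |X| = |\nabla f| |X|$ together with H\"older's inequality (using $|\nabla f| \in L^{q}(\X)$ and $|X| \in L^{p}(\X)$) gives $\d f(X) \in L^{1}(\X)$, so $\d f(X)\mu$ is a finite Radon measure. The main (minor) subtlety is to justify that $\d f(X)$ is well-defined — this requires $f$ to admit a differential in $L^{q}(T^{*}\X)$, which follows from the hypothesis $f \in \Lip_{b}(\X)$ with $|\nabla f| \in L^{q}(\X)$, that is, $f \in D^{1,q}(\X)$, so that $\d f \in L^{q}(T^{*}\X)$ pairs with $X \in L^{p}(T\X)$ through the module duality of Remark \ref{duality}.
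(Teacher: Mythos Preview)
Your proof is correct and follows essentially the same route as the paper: test against $g \in \Lip_{\bs}(\X)$, use $L^{\infty}$-linearity and the Leibniz rule for $\d$ to rewrite $\int_{\X} \d g(fX)\,\d\mu$ via $\d(fg)(X)$, and then invoke \eqref{eq:div-meas} for $fg$. Your additional check that $\d f(X) \in L^{1}(\X)$ via H\"older is a welcome bit of detail that the paper leaves implicit.
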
 

\begin{proof} 
It is clear that $f X \in L^{p}(T \X)$. Let $g\in\text{Lip}_{\bs}({\X})$. By applying (\ref{eq:Leibniz-d})--(\ref{eq:d-Linearity})
we get
\begin{align*}
\int_{{\X}}\text{d}g(fX) \, \text{d}\mu & =\int_{{\X}}f\text{d}g(X) \, \text{d}\mu=\int_{{\X}}\text{d}(fg)(X) \, \text{d}\mu-\int_{{\X}}g\text{d}f(X) \, \text{d}\mu\\
 & =-\int_{{\X}}fg \, \text{d}\div(X)-\int_{{\X}}g\text{d}f(X) \, \d \mu
\end{align*}
Hence, $\div(f X) \in \M(\X)$ and \eqref{eq:Leibniz-Lip} holds. 
\end{proof}

We show now a result on the absolute continuity of the measure $\|\div(X)\|$ with respect to the $q$-capacity, which extends analogous properties of the divergence--measure fields in the Euclidean framework (for which we refer to \cite{comi2017locally, PT}).
To this aim, let us first recall the notion of $q$-capacity of any set $A \subset \X$.

\medskip

\begin{definition}
Let $\Omega \subset \X$ be an open set and $A \subset \Omega$. We define the {\em relative $q$--capacity}, $1\le q < \infty$, of $A$ in $\Omega$ as the (possibly infinite) quantity
\begin{equation} \label{eq:cap_def}
\mathrm{Cap}_q(A, \Omega) \coloneqq \inf \left\{\|u\|_{W^{1,q}(\X)}^q;\; u\in \mathcal{W}^{1,q}(\X) \ \text{such that} \ 0\le u\le \chi_{\Omega} \ \text{and} \ u|_A=1 \right\}.
\end{equation}
If $\Omega = \X$, we set $\mathrm{Cap}_q(A) \coloneqq \mathrm{Cap}_q(A, \X)$.
\end{definition}

\begin{remark} \label{rem:density_Lip_Cap}
We notice that, thanks to the density of Lipschitz functions inside Sobolev spaces \cite[Theorem 7.3]{ags4}, the infimum in \eqref{eq:cap_def} may be taken over Lipschitz functions in $W^{1, q}(\X)$. 
\end{remark}

We prove now a technical lemma on compact sets with zero $q$--capacity (an analogous result in the Euclidean setting has been proved in \cite[Lemma 2.8]{comi2017locally}).

\begin{lemma} \label{ref:Cap_Lip_sequence}
Let $q \in [1, \infty)$ and $K \subset \X$ be a compact set such that $\mathrm{Cap}_{q}(K) = 0$. Then there exists a sequence of functions $\varphi_k \in \Lip_\bs(\X)$ such that 
\begin{enumerate}
\item $0\le\varphi_k \le1$, 
\item $\varphi_{k} \equiv 1$ on $K$ for any $k$, 
\item $\left \Vert\varphi_k\right\Vert_{W^{1,q}(\X)}\to 0$ as $k \to + \infty$,
\item $\varphi_k\to 0$ pointwise in $\X\setminus K$. 
\end{enumerate}
\end{lemma}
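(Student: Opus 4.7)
The plan is to combine a sequence extracted from the capacity definition with a distance-based cutoff that collapses onto $K$, so that the support of the resulting functions shrinks fast enough to give pointwise decay off $K$ while keeping the $W^{1,q}$-norm small. The main technical point is balancing the Lipschitz constant of the cutoff, which has to blow up in $k$ in order to approximate the characteristic function of $K$, against the smallness of the approximating Sobolev sequence.

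First, using the assumption $\mathrm{Cap}_q(K)=0$ together with Remark \ref{rem:density_Lip_Cap}, which allows the infimum in \eqref{eq:cap_def} to be computed over Lipschitz functions in $W^{1,q}(\X)$, I pick $u_k\in \Lip(\X)\cap W^{1,q}(\X)$ satisfying $0\le u_k\le 1$, $u_k\equiv 1$ on $K$, and $\|u_k\|_{W^{1,q}(\X)}\le 2^{-k}$. In particular both $\|u_k\|_{L^q(\X)}$ and $\||Du_k|\|_{L^q(\X)}$ are bounded by $2^{-k}$. Next, exploiting the compactness of $K$, I introduce the distance cutoffs
\begin{equation*}
\chi_k(x)\coloneqq\min\bigl\{1,\,\max\{0,\,2-2k\,d(x,K)\}\bigr\},
\end{equation*}
which are elements of $\Lip_\bs(\X)$ with $\mathrm{Lip}(\chi_k)\le 2k$, identically $1$ on the $(2k)^{-1}$-neighborhood of $K$ and vanishing outside its $k^{-1}$-neighborhood (a set which is bounded since $K$ is). The candidate sequence is $\varphi_k\coloneqq \chi_k u_k$: items (1), (2) and the membership $\varphi_k\in\Lip_\bs(\X)$ are immediate, and item (4) holds because, for any fixed $x\in\X\setminus K$, one has $d(x,K)>0$, so $\chi_k(x)=0$ as soon as $k>1/d(x,K)$, forcing $\varphi_k(x)=0$ eventually.

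Item (3) then reduces to the Leibniz rule for the minimal $q$-weak upper gradient, which gives
\begin{equation*}
|D\varphi_k|\,\le\,\chi_k|Du_k|+u_k|D\chi_k|\,\le\,|Du_k|+2k\,u_k
\end{equation*}
$\mu$-almost everywhere; combined with $0\le\varphi_k\le u_k$, this yields
\begin{equation*}
\|\varphi_k\|_{W^{1,q}(\X)}\le\|u_k\|_{L^q(\X)}+\||Du_k|\|_{L^q(\X)}+2k\,\|u_k\|_{L^q(\X)}\le(2k+2)\,2^{-k}\to 0.
\end{equation*}

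The delicate step is precisely this last inequality: the cutoff gradient $|D\chi_k|\sim k$ is multiplied in the Leibniz rule by $u_k$, whose pointwise behaviour on the thin annulus where $\chi_k$ varies is not directly controllable. I resolve this by requiring an exponential decay rate for $\|u_k\|_{W^{1,q}(\X)}$, which is always admissible since $\mathrm{Cap}_q(K)=0$, and which is fast enough to dominate the polynomial growth $\mathrm{Lip}(\chi_k)\le 2k$. An alternative localization via $\min(u_k,\chi_k)$ would run into the same issue through the measure of the set $\{\chi_k<u_k\}\cap\mathrm{supp}(\chi_k)$, which need not be $o(k^{-q})$, so the product construction coupled with exponentially small approximants seems to be the cleanest path.
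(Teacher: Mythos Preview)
Your proof is correct. The approach, however, differs from the paper's in an instructive way. The paper first proves the qualitative fact that $\mathrm{Cap}_q(K,\Omega)=0$ for \emph{every} open $\Omega\supset K$: given a fixed cutoff $\eta\in\Lip_{\bs}(\Omega)$ with $\eta\equiv1$ on $K$, one has $\|\eta u\|_{W^{1,q}}^q\le C_{\eta,q}\|u\|_{W^{1,q}}^q$ for any competitor $u$, so the relative capacity vanishes. Then, choosing a shrinking family $\Omega_k\searrow K$ and a vanishing sequence $\eps_k$, the paper extracts $\varphi_k\in\Lip_{\bs}(\Omega_k)$ directly from $\mathrm{Cap}_q(K,\Omega_k)=0$ with $\|\varphi_k\|_{W^{1,q}}<\eps_k$. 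The point is that the constant $C_{\eta_k,q}$ (which, as in your construction, blows up with $k$) never needs to be tracked: once the relative capacity is known to be zero, one can pick $\varphi_k$ with norm as small as desired, decoupling the localization from the smallness. Your route is more direct but requires the quantitative balancing $(2k+2)2^{-k}\to0$; the paper's route trades this for an extra conceptual step but then needs no rate at all. Both arguments ultimately rest on the same Leibniz-rule estimate for the product with a cutoff.
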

\begin{proof}
We start by noticing that $\mathrm{Cap}_{q}(K, \Omega) = 0$ for any open set $\Omega \supset K$. Indeed, if $K \subset \Omega$ and we choose $\eta \in \Lip_{\bs}(\Omega)$ satisfying $\eta \equiv 1$  on $K$, then, for any $u \in \mathcal{W}^{1, q}(\X)$ such that $0 \le u \le 1$ and $u|_{K} = 1$, we have 
\begin{equation*}
\mathrm{Cap}_{q}(K, \Omega) \le \int_{\X} \left(|\eta u|^{q} + |D (\eta u)|^{q}\right)\d \mu \le C_{\eta, q} \int_{\X} \left(|u|^{q} + |D u|^{q}\right)\d \mu,
\end{equation*} 
for some $C_{\eta, q} > 0$. Hence, since $u$ was arbitrary, we conclude that $\mathrm{Cap}_{q}(K, \Omega) = 0$. Actually, this argument, combined with Remark \ref{rem:density_Lip_Cap}, shows also that the infimum may be taken on $\Lip_{\bs}(\Omega)$.
Let now $\Omega_{k}$ be a sequence of bounded open sets satisfying $\Omega_{k} \supset K$ and $\bigcap_{k = 1}^{\infty} \Omega_{k} = K$. Let also $\eps_{k}$ a non--negative vanishing sequence. 
Then, by the above argument, for any $k$ there exists $\varphi_{k} \in \Lip_\bs(\Omega_{k}) \cap \mathcal{W}^{1, q}(\X)$ such that $0 \le \varphi_{k} \le 1$, $\varphi_{k} \equiv 1$ on $K$ and $\| \varphi_{k} \|_{W^{1, q}(\X)} < \eps_{k}$. This ends the proof.\\
\end{proof}

 The following result extends \cite[Proposition 7.1.8]{bu} to any pair of conjugate exponents $p, q$ and uses the same arguments based on the Hahn decomposition as in \cite[Lemma 3.6]{mms1}, and in \cite[Theorem 2.15]{comi2017locally} for the Euclidean case.

\begin{proposition}\label{div-cap}
Let $1 < p \le\infty$ and $1\le q < \infty$ be such that $\frac{1}{p}+\frac{1}{q}=1$, and let $X\in\DM^{p}(\X)$. Then, for any Borel set $B$ such that $\mathrm{Cap}_q(B)=0$, we have $\|\div(X)\|(B)=0$.
\end{proposition}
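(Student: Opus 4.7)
The first step is to reduce the claim to compact sets. Since $\div(X)$ is a finite signed Radon measure, $\|\div(X)\|$ is a finite positive Radon measure and therefore inner regular; thus it suffices to prove $\|\div(X)\|(K) = 0$ for every compact $K \subset B$, and by monotonicity of capacity each such $K$ also satisfies $\mathrm{Cap}_q(K) = 0$. The problem therefore reduces to showing $\|\div(X)\|(K) = 0$ whenever $K \subset \X$ is compact with $\mathrm{Cap}_q(K) = 0$.

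Given such a $K$, I would invoke Lemma \ref{ref:Cap_Lip_sequence} to produce $\varphi_k \in \Lip_\bs(\X)$ with $0 \le \varphi_k \le 1$, $\varphi_k \equiv 1$ on $K$, $\|\varphi_k\|_{W^{1,q}(\X)} \to 0$ and $\varphi_k \to 0$ pointwise on $\X \setminus K$. Since $\varphi_k \in \Lip_\bs(\X)$, inserting it as test function in the defining identity \eqref{eq:div-meas} of $\DM^p(\X)$ yields
\[
-\int_\X \varphi_k \,\d \div(X) \;=\; \int_\X \d \varphi_k (X) \,\d\mu.
\]
On the right-hand side I would bound $|\d\varphi_k(X)| \le |D\varphi_k|\,|X|$ pointwise and apply H\"older's inequality with exponents $(q,p)$ to obtain
\[
\left| \int_\X \d \varphi_k (X) \,\d\mu \right| \;\le\; \||D\varphi_k|\|_{L^q(\X)} \, \|X\|_{L^p(T\X)} \;\le\; \|\varphi_k\|_{W^{1,q}(\X)} \, \|X\|_{L^p(T\X)} \;\longrightarrow\; 0.
\]
On the left-hand side, since $\varphi_k \to \uno_K$ pointwise and $|\varphi_k| \le 1$, dominated convergence for the finite signed measure $\div(X)$ delivers $\int_\X \varphi_k\,\d\div(X) \to \div(X)(K)$. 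Combining both limits yields $\div(X)(K) = 0$.

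The step I expect to be the main obstacle is upgrading this to $\|\div(X)\|(K) = 0$, which is strictly stronger. I would handle it through a Hahn decomposition $\X = P \sqcup N$ of $\div(X)$: writing $K^+ := K \cap P$ and $K^- := K \cap N$, both are Borel sets with zero capacity by monotonicity. For any $\eta > 0$, inner regularity of the finite positive measure $\div(X)^+ = \div(X)\res P$ produces a compact set $K_\eta \subset K^+$ with $\div(X)^+(K^+ \setminus K_\eta) < \eta$, and $K_\eta$ still has $\mathrm{Cap}_q(K_\eta) = 0$. Applying the previous step to $K_\eta$ gives $\div(X)(K_\eta) = 0$, which by $K_\eta \subset P$ reads $\div(X)^+(K_\eta) = 0$; letting $\eta \to 0$ forces $\div(X)^+(K^+) = 0$, hence $\div(X)^+(K) = 0$ since $K \setminus K^+ \subset N$ is $\div(X)^+$-null. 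A symmetric argument on $K^-$ gives $\div(X)^-(K) = 0$, so $\|\div(X)\|(K) = 0$, and a last appeal to inner regularity of $\|\div(X)\|$ concludes.
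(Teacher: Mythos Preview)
Your proof is correct and uses exactly the same ingredients as the paper's: the Hahn decomposition, inner regularity, Lemma~\ref{ref:Cap_Lip_sequence}, the defining identity \eqref{eq:div-meas}, and H\"older's inequality. The only difference is the order: the paper applies the Hahn decomposition \emph{first} to $B$, then reduces by inner regularity to a compact $K$ lying inside one of the Hahn pieces, so that $\div(X)(K)=0$ immediately gives $\|\div(X)\|(K)=0$ with no further work; you instead reduce to a generic compact $K\subset B$ first, prove $\div(X)(K)=0$, and then need a second pass of inner regularity inside the Hahn pieces of $K$ to upgrade to $\|\div(X)\|(K)=0$. Both routes are valid, but the paper's ordering spares you the step you flagged as the ``main obstacle''.
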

\begin{proof}
Let $B$ be a Borel set such that $\mathrm{Cap}_{q}(B) = 0$.
Since $\div(X)$ is a signed Radon measure, we are entitled to consider its positive and negative parts, namely $\div(X)^+$ and $\div(X)^-$ respectively. By the Hahn decomposition theorem, there exist disjoint Borel sets $B_{\pm} \subset B$ with $B_{+} \cup B_{-} = B$ such that $\pm \div(X) \res B_{\pm} \ge 0$.
Hence, we need to prove that $\div(X)(B_{\pm}) = 0$, and, in order to do so, it suffices to show that $\div(X)(K) = 0$ for any compact subset $K$ of $B_{\pm}$, thanks to the interior regularity of $\|\div(X)\|$.

Without loss of generality, let $K \subset B_{+}$, since the calculations in the case $K \subset B_{-}$ are the same. Since $\mathrm{Cap}_{q}(K) = 0$, by Lemma \ref{ref:Cap_Lip_sequence} there exists a sequence $\left(\varphi_k\right)_{k\in\mathbb{N}}\subset\text{Lip}_\bs(\X)$ such that $0\le\varphi_k \le1$, $\varphi_{k} = 1$ on $K$, $\left\Vert\varphi_k\right\Vert_{W^{1,q}(\X)}\to 0$ as $k \to + \infty$, and $\varphi_k\to 0$ pointwise in $\X\setminus K$. Then, it follows that
\begin{equation} \label{eq:cap_abs_cont_proof}
\left\vert\int_\X \varphi_k\text{d}\div(X)\right\vert=\left\vert\int_\X \text{d}\varphi_k(X)\text{d}\mu\right\vert\le\left\Vert\left\vert D\varphi_k\right\vert\right\Vert_{L^q(\X)}\left\Vert |X| \right\Vert_{L^p(\X)}\le\left\Vert\varphi_k\right\Vert_{W^{1,q}(\X)}\left\Vert |X| \right\Vert_{L^p(\X)} \to 0 
\end{equation}
as $k \to + \infty$.
On the other hand, since $0 \le \varphi_k\le1$ for any $k\in\mathbb{N}$ and $\|\div(X)\|(\X)<\infty$, thanks to the Lebesgue's Dominated Convergence Theorem, we get
\begin{equation*}
\div(X)(K) = \lim_{k \to + \infty} \int_\X \varphi_k\text{d}\div(X),
\end{equation*}  
which, combined with \eqref{eq:cap_abs_cont_proof}, implies $\div(X)(K) = 0$.\\
\end{proof}

We give now a definition of regularity on domains that will allow us to establish some integration by parts formula. As a starting point we recall the notion of upper inner Minkowski content of a set: if we set
\[
\Omega_{t}\coloneqq\left\{ x\in\Omega;\:\text{dist}\left(x,\Omega^{c}\right)\ge t\right\} ,
\]
for $t>0$, then we define
\[
\mathfrak{M}_{i}^*(\partial\Omega)\coloneqq\limsup_{t\to0}\frac{\mu\left(\Omega\setminus\Omega_{t}\right)}{t}.
\]
\\
\begin{definition}\label{regdom} An open set $\Omega\subset{\X}$
is said to be a \textit{regular domain} if it has
finite perimeter in ${\X}$, namely $\mathds1_{\Omega}\in BV({\X})$,
and if
\[
\left\Vert D\mathds1_{\Omega}\right\Vert ({\X})=\mathfrak{M}_{i}^{*}(\partial \Omega).
\]
\end{definition}

In the following, we shall make use of the Coarea Formula for Lipschitz functions. We recall that this result was proved for $BV$ functions on doubling metric measure spaces supporting a Poincar\'e inequality in \cite[Proposition 4.2]{miranda}; however, the same property holds as well for Lipschitz functions also in our more general setting by the remarks at the beginning of \cite[Section 4]{adg}.

\begin{proposition}
 Let $f\in \Lip_{\mathrm{loc}}(\Omega)$, $\Omega\subset\X$ open set and denote $$E_t\coloneqq\{x\in\Omega;\;f(x)>t\},$$$t\in\R$. Then, for any open set $A\subset\Omega$ one has
 \begin{equation}\label{coarea}
  \int_{-\infty}^{+\infty}\|D\uno_{E_t}\|(A)\d t=\|Df\|(A).
 \end{equation}
 \end{proposition}
 
\begin{remark}[Examples of regular domains.]
As anticipated in the Introduction, the class of regular domains was introduced in \cite{bu} as a generalization of the so--called {\it regular balls} previously considered in \cite{mms1}. In the latter work the authors considered a doubling, geodesic metric measure space supporting a Poincar\'e inequality: in such setting it is actually possible to see that almost every ball is a regular domain, in the sense that for any $x_0\in\X$ and for $\mathscr{L}^1$--almost every $\rho>0$, $\Omega=B_\rho(x_0)$ is a regular domain. Let us briefly discuss this claim\footnote{Actually, it is not strictly necessary to ask that $(\X,d,\mu)$ is also geodesic. Indeed, by the doubling condition and the Poincar\'e inequality, as shown in \cite[Chapter 8]{hkst}, the metric space turns out to be quasiconvex, and this fact allows for $(\X,d,\mu)$ to be made geodesic by a suitable bi--Lipschitz transformation of the metric, meaning that we can choose a geodesic metric which is bi--Lipschitz equivalent to the original one (and which preserves the doubling property of the measure $\mu$). So, in choosing $d_{x_0}(x)$ we may definitely work with such a geodesic bi--Lipschitz metric change in order to exploit \eqref{dgrad_one}.}.\\
We recall that, under the geodesicity assumption, by the results of \cite{ch} we have that the distance function $$d_{x_0}(x)\coloneqq d(x,x_0)\quad\forall x_0\in\X$$ (which is obviously a Lipschitz function) is such that its minimal weak upper gradient satisfies \begin{equation}\label{dgrad_one}|Dd_{x_0}|(x)=1\end{equation} for $\mu$--almost every $x\in\X$.
Thus, to see that $\Omega=B_\rho(x)$ is a regular domain for almost every $\rho>0$, we can invoke the Coarea Formula \eqref{coarea} together with the condition \eqref{dgrad_one} to find
\[
\frac{\mu\left(B_{\rho}\left(x_0\right)\setminus B_{\rho-h}\left(x_0\right)\right)}{h}=
\frac{1}{h}
\int_{B_{\rho}\left(x_0\right)\setminus B_{\rho-h}\left(x_0\right)}|D d_{x_0}(x)|\text{d}\mu(x)=\frac{1}{h}\int_{\rho-h}^\rho\|D\uno_{B_t\left(x_0\right)}\|(\X)\text{d}t.
\]
So, the claim follows by considering the Lebesgue points of the map $t\mapsto\|D\uno_{B_{t}\left(x_0\right)}\|(\X)$.
\end{remark}

\begin{definition}\label{def-seq}
Given a regular domain $\Omega$, for $\eps>0$ we set
 \begin{equation}
\varphi_{\varepsilon}^{\Omega}(x)\coloneqq\begin{cases}
0 & x\in\Omega^{c},\\
\dfrac{\text{dist}\left(x,\Omega^{c}\right)}{\varepsilon} & x\in\Omega\backslash\Omega_{\varepsilon},\\
1 & x\in\Omega_{\varepsilon}.
\end{cases}\label{eq:def-sequence}
\end{equation}
We call $(\varphi_{\eps}^{\Omega})_{\eps>0}$ the \textit{{defining sequence}} of the regular domain $\Omega$. In the following, we shall simply write $\varphi_{\eps}$, when the relation to $\Omega$ is unambiguous.
\end{definition}

Of course, $(\varphi_{\eps})_{\eps>0} \subset \Lip_{b}(\X)$ by construction.

As we shall prove in the following statement, thanks to the properties of regular domains, the measures $\left|D\varphi_{\varepsilon}\right|\mu$ converge weakly to $\left\Vert D\mathds1_{\Omega}\right\Vert $. To this aim - as we already pointed out - we shall need to assume the space $\X$ to be locally compact, in order to exploit useful consequences of the Riesz Representation Theorem.

\begin{proposition}\label{w-conv-per}
Let $\X$ be locally compact, $\Omega\subset{\X}$ be a regular domain and $(\varphi_{\eps})_{\eps>0}$ be its defining sequence. Set 
\begin{equation*}
\mu_{\eps}(E) := \frac{\mu((\Omega \setminus \Omega_{\eps}) \cap E)}{\eps}
\end{equation*} 
for any Borel set $E$.
Then, we have 
\begin{equation} \label{eq:weak_conv_mink}
\mu_{\eps} \weakto \|D \uno_{\Omega}\|
\end{equation}
and
\begin{equation} \label{eq:weak_conv_nabla_phi}
\left|D \varphi_{\varepsilon}\right|\mu \weakto \|D \uno_{\Omega}\|.
\end{equation} 
\end{proposition}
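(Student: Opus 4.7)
My plan is to reduce both convergences to the Portmanteau theorem for Radon measures on a locally compact Polish space: for each sequence $\nu_\eps \in \{\mu_\eps,\, |\nabla\varphi_\eps|\mu\}$ it is enough to check the total-mass convergence $\nu_\eps(\X) \to \|D\uno_\Omega\|(\X)$ together with the open-set inequality $\liminf_{\eps \to 0} \nu_\eps(U) \geq \|D\uno_\Omega\|(U)$ for every open $U \subset \X$. The quantitative starting point is that $x \mapsto \mathrm{dist}(x,\Omega^c)$ is $1$-Lipschitz, so $|\nabla \varphi_\eps| \leq \frac{1}{\eps}\uno_{\Omega \setminus \Omega_\eps}$ $\mu$-almost everywhere; equivalently, $|\nabla\varphi_\eps|\mu \leq \mu_\eps$ as Radon measures. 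Combined with the defining property $\limsup_\eps \mu_\eps(\X) = \mathfrak{M}_i^*(\partial\Omega) = \|D\uno_\Omega\|(\X)$ of a regular domain, this produces at once the upper mass bound
\begin{equation*}
\limsup_{\eps \to 0} |\nabla\varphi_\eps|\mu(\X) \leq \limsup_{\eps \to 0} \mu_\eps(\X) \leq \|D\uno_\Omega\|(\X).
\end{equation*}

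For the lower bound on open sets I would fix an open $U \subset \X$ and a test vector field $X \in \frd^\infty(\X)$ with $|X|\leq 1$ and $\supp(X) \Subset U$, and establish the integration-by-parts identity
\begin{equation*}
\int_\X \varphi_\eps \div(X)\,\d\mu = -\int_\X \d\varphi_\eps(X)\,\d\mu.
\end{equation*}
When $\mu(\Omega) = \infty$ the function $\varphi_\eps$ need not lie in $W^{1,1}(\X)$, so I would use local compactness to select a Lipschitz cutoff $\eta$ with compact support and identically $1$ on a neighbourhood of $\supp(X)$; then $\eta\varphi_\eps \in W^{1,1}(\X)$ is an admissible test in Definition \ref{def-div}, while the Leibniz rule for $\d$, the locality $\d\eta = 0$ on $\{\eta = 1\}$, the vanishing of $X$ outside $\supp(X)$, and the fact that $\div(X)$ vanishes $\mu$-a.e.\ outside $\supp(X)$ together reduce the identity obtained from Definition \ref{def-div} to the displayed one. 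Now $\varphi_\eps \to \uno_\Omega$ pointwise (each $x \in \Omega$ eventually belongs to $\Omega_\eps$ because $\Omega$ is open), and $\div(X)\uno_{\supp(X)} \in L^1(\X)$ thanks to local compactness, so dominated convergence drives the left-hand side to $\int_\X \uno_\Omega \div(X)\,\d\mu$; the right-hand side is bounded in absolute value by $\int_U |\nabla\varphi_\eps|\,\d\mu = |\nabla\varphi_\eps|\mu(U) \leq \mu_\eps(U)$.

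Taking liminf in $\eps$ and then passing to the supremum over admissible $X$, the representation formula \eqref{T-Var} yields $\|D\uno_\Omega\|(U) \leq \liminf_\eps |\nabla\varphi_\eps|\mu(U) \leq \liminf_\eps \mu_\eps(U)$. Specializing to $U = \X$ and combining with the earlier upper mass bound produces the two total-mass convergences, after which the Portmanteau theorem delivers \eqref{eq:weak_conv_mink} and \eqref{eq:weak_conv_nabla_phi}. I expect the main obstacle to be precisely the integration-by-parts identity above: the cutoff manoeuvre is where local compactness enters essentially (via the existence of compactly supported Lipschitz cutoffs), and one has to check carefully the cancellation of the $\d\eta(X)$ term together with the $L^1$-domination needed for dominated convergence; once these are in place, the remaining steps assemble mechanically.
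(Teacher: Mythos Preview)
Your overall skeleton—reducing to the Portmanteau criterion (total-mass convergence together with the liminf inequality on open sets), obtaining the upper mass bound from the regular-domain hypothesis, and exploiting the pointwise comparison $|\nabla\varphi_\eps|\mu \le \mu_\eps$—is exactly the paper's. The genuine difference, and the place where your argument has a gap, is the lower bound $\|D\uno_\Omega\|(U) \le \liminf_{\eps\to 0} \int_U |\nabla\varphi_\eps|\,\d\mu$ on open sets.

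The paper obtains this via the coarea formula for the Lipschitz function $g=\mathrm{dist}(\cdot,\Omega^c)$: since $|\nabla g|\le 1$ one has
\[
\mu_\eps(A)\ \ge\ \frac{1}{\eps}\int_{(\Omega\setminus\Omega_\eps)\cap A}|\nabla g|\,\d\mu\ =\ \int_0^1 \|D\uno_{\Omega_{\eps s}}\|(A)\,\d s,
\]
and then Fatou's lemma together with the lower semicontinuity of the perimeter (using that $\Omega_{\eps s}\to\Omega$ in measure) gives the liminf bound, with no appeal to the tangent module or to duality with vector fields. The same computation also yields $\int_A|\nabla\varphi_\eps|\,\d\mu=\int_0^1\|D\uno_{\Omega_{\eps s}}\|(A)\,\d s$, settling \eqref{eq:weak_conv_nabla_phi} in one stroke.

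Your route—integrating by parts against $X\in\frd^\infty(\X)$ and then passing to the supremum via \eqref{T-Var}—is appealing, but \eqref{T-Var} is established in the paper only under the $\rcd$ hypothesis, through the isomorphism $\der_b(\X)\cong\frd^\infty(\X)$ of Theorem~\ref{der-iso-d}. The proposition is stated for a \emph{general} locally compact metric measure space; in that generality the map $\frd^\infty(\X)\to\der_b(\X)$, $X\mapsto\delta_X$, need not be onto (the converse direction in Remark~\ref{isomorphism} uses \eqref{regularity}), so the supremum over $X\in\frd^\infty$ with $|X|\le 1$ and $\supp(X)\Subset U$ may be strictly smaller than $\|D\uno_\Omega\|(U)$, and the liminf inequality does not follow. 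Switching to derivations and \eqref{eq:tot_var_duality} does not repair this directly either: the derivation estimate is $|\delta(\varphi_\eps)|\le |\delta|\,\Lip(\varphi_\eps)$, and the pointwise Lipschitz constant is not controlled by $|\nabla\varphi_\eps|$ in a general space. In the $\rcd$ setting your argument is correct and quite clean; for the stated generality, however, the coarea/lower-semicontinuity route is what actually closes the proof.
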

\begin{proof}
Thanks to \cite[Proposition 1.80]{afp}, it is sufficient to prove that, for any open set $A \subset \X$, we have
\begin{equation} \label{eq:weak_conv_mink_proof}
\lim_{\eps \to 0} \mu_{\eps}(\X)=\left\Vert D\mathds1_{\Omega}\right\Vert (\X) \ \text{and} \ \liminf_{\eps \to 0} \mu_{\eps}(A) \ge \|D \uno_{\Omega}\|(A),
\end{equation}
and
\begin{equation} \label{eq:weak_conv_nabla_phi_proof}
\lim_{\eps \to 0} \int_{\X} \left|D \varphi_{\varepsilon}\right| \, \d \mu = \|D \uno_{\Omega}\|(\X) \ \text{and} \ \liminf_{\eps \to 0} \int_{A} \left|D \varphi_{\varepsilon}\right| \, \d \mu \ge \|D \uno_{\Omega}\|(A).
\end{equation}
Since $\Omega$ is a regular domain, we have
\begin{equation*}
\limsup_{\eps \to 0} \mu_{\eps}(\X)=\left\Vert D\mathds1_{\Omega}\right\Vert (\X).
\end{equation*}
Then, by exploiting the Coarea Formula \eqref{coarea}, the fact that the function $g(x)={\rm dist}\left(x,\Omega^{c}\right)$
is Lipschitz with $|D g(x)| \le 1$, and $\Omega_{\eps}=\{g>\eps\}$,
we obtain
\begin{align*}
\frac{\mu\left(\left(\Omega\setminus\Omega_{\eps}\right)\cap A\right)}{\eps} & =\frac{1}{\eps}\int_{\left(\Omega\backslash\Omega_{\eps}\right)\cap A}\text{d}\mu \ge \frac{1}{\eps}\int_{\left(\Omega\backslash\Omega_{\eps}\right)\cap A}|D g|\text{d}\mu \\
 & =\frac{1}{\eps}\int_{\mathbb{R}}\left\Vert D\mathds1_{\left\{ g>s\right\} }\right\Vert \left(\left(\Omega\backslash\Omega_{\eps}\right)\cap A\right)\text{d}s\\
 & =\frac{1}{\eps}\int_{0}^{\eps}\left\Vert D\mathds1_{\Omega_{s}}\right\Vert (A)\text{d}s  =\int_{0}^{1}\left\Vert D\mathds1_{\Omega_{\eps s}}\right\Vert (A)\text{d}s.
\end{align*}
Hence, by Fatou's lemma and the lower semicontinuity of the perimeter
measure (see Remark \ref{remark-BV}), using the fact that $\Omega_{\eps s}$ converges to $\Omega$
in measure for any $s \in [0, 1]$, we get
\begin{align*}
\liminf_{\eps \to0}\frac{\mu\left(\left(\Omega\backslash\Omega_{\eps}\right)\cap A\right)}{\eps} & \ge \liminf_{\eps \to 0} \int_{0}^{1} \|D \uno_{\Omega_{\eps s}}\|(A) \, \d s \\
& \ge \int_{0}^{1} \liminf_{\eps \to 0} \|D \uno_{\Omega_{\eps s}}\|(A) \, \d s \geq\left\Vert D\mathds1_{\Omega}\right\Vert (A),
\end{align*}
for any open set $A \subset \X$. Then, if $A = \X$, we get
\begin{equation*}
\liminf_{\eps \to 0} \mu_{\eps}(\X) \ge \left\Vert D\mathds1_{\Omega}\right\Vert (\X),
\end{equation*}
and this proves \eqref{eq:weak_conv_mink_proof}, which implies \eqref{eq:weak_conv_mink}.
We notice now that
\[
\int_{\X}\left|D\varphi_{\varepsilon}\right| \text{d}\mu = \int_{\Omega \setminus \Omega_{\eps}} \frac{|D g|}{\eps} \, \d \mu  \le \frac{\mu\left(\Omega\backslash\Omega_{\varepsilon}\right)}{\varepsilon}.
\]
Hence, we immediately get
\begin{equation} \label{eq:limsup_X_nabla_phi_eps}
\limsup_{\eps \to 0} \int_{\X}\left|D \varphi_{\varepsilon}\right| \text{d}\mu \le \lim_{\eps \to 0} \frac{\mu\left(\Omega\backslash\Omega_{\varepsilon}\right)}{\varepsilon} = \|D \uno_{\Omega}\|(\X).
\end{equation}
Then, arguing with the Coarea Formula as above, for any open set $A$ we obtain
\begin{equation*}
\int_{A}\left|D \varphi_{\varepsilon}\right| \text{d}\mu = \int_{(\Omega \setminus \Omega_{\eps}) \cap A} \frac{|D g|}{\eps} \, \d \mu = \int_{0}^{1} \|D \uno_{\Omega_{\eps s}}\|(A) \, \d s.
\end{equation*}
Thus, Fatou's lemma, the fact that $\Omega_{\eps s}$ converges to $\Omega$ in measure and the lower semicontinuity of the perimeter measure imply that
\begin{equation*}
\liminf_{\eps \to 0} \int_{A}\left|D \varphi_{\varepsilon}\right| \text{d}\mu \ge \|D \uno_{\Omega}\|(A).
\end{equation*}
This proves the second part of \eqref{eq:weak_conv_nabla_phi_proof}, while we obtain the first by taking $A = \X$ and employing \eqref{eq:limsup_X_nabla_phi_eps}. Thus, \eqref{eq:weak_conv_nabla_phi} follows and this ends the proof.\\
\end{proof}

\begin{remark}
An interesting byproduct of the proof of Proposition \ref{w-conv-per} is that, if $\Omega$ is a regular domain, the measures $\displaystyle \nu_{\eps} := \int_{0}^{1} \|D \uno_{\Omega_{\eps s}}\| \, \d s$ weakly converge to $\|D \uno_{\Omega}\|$. We leave the details to the interested reader.
\end{remark}

We are now able to prove an integration by parts formula for regular domains and vector fields in $\DM ^{\infty} (\X )$, following in part the footsteps of the proof already given in \cite[Theorem 5.7]{mms1}; this result can be actually seen as an improvement of \cite[Theorem 7.1.7]{bu}.

\begin{theorem}
\label{gauss-geod}
Let $\X$ be locally compact, $X\in\DM^{\infty}({\X})$
and $\Omega\subset{\X}$ be a regular domain. Then there
exists a function $(X\cdot\nu_\Omega)_{\partial\Omega}^{-}\in L^{\infty}\left(\partial\Omega,\left\Vert D\mathds1_{\Omega}\right\Vert \right)$
such that 
\begin{equation}
\int_{\Omega}f \, \mathrm{d}\div(X)+\int_{\Omega}\mathrm{d}f(X) \, \mathrm{d}\mu = - \int_{\partial \Omega}f \, (X \cdot {\nu}_{\Omega})_{\partial\Omega}^{-} \, \mathrm{d}\left\Vert D\mathds1_{\Omega}\right\Vert,\label{eq:-36}
\end{equation}
for every $f\in\mathrm{Lip}_{b}({\X})$ such that $\supp(f \uno_{\Omega})$ is a bounded set. In addition, we have the following estimate:
\begin{equation} \label{eq:norm_trace_int_bound_reg}
\|(X\cdot\nu_\Omega)_{\partial\Omega}^{-}\|_{L^{\infty}(\partial \Omega, \|D \uno_{\Omega}\|)} \le \| |X| \|_{L^{\infty}(\Omega)}.
\end{equation}
\end{theorem}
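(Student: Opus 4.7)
The natural strategy is to test the defining identity \eqref{eq:div-meas} for $X \in \DM^\infty(\X)$ against the product $f\varphi_\eps$, where $(\varphi_\eps)_{\eps > 0}$ is the defining sequence of the regular domain $\Omega$ from Definition \ref{def-seq}, and to pass to the limit as $\eps \to 0$. Since $\varphi_\eps$ vanishes outside $\Omega$ and $\supp(f\uno_\Omega)$ is bounded, $f\varphi_\eps$ belongs to $\Lip_\bs(\X)$, and combining \eqref{eq:div-meas} with the Leibniz rule (Proposition \ref{prop:diff_Leibniz}) yields
\begin{equation*}
-\int_\X f\varphi_\eps \,\d\div(X) \;=\; \int_\X \varphi_\eps \,\d f(X) \,\d\mu \,+\, \int_\X f \,\d\varphi_\eps(X) \,\d\mu.
\end{equation*}
Using $0 \le \varphi_\eps \le \uno_\Omega$ together with $\varphi_\eps \to \uno_\Omega$ pointwise on $\X$, the finiteness of $\div(X)$, and the fact that $\d f(X) \in L^1(\X)$ (by essential boundedness of $X$, $\Lip(f) < \infty$ and boundedness of $\supp(f\uno_\Omega)$), dominated convergence gives
\begin{equation*}
\int_\X f\varphi_\eps \,\d\div(X) \to \int_\Omega f \,\d\div(X), \qquad \int_\X \varphi_\eps \,\d f(X) \,\d\mu \to \int_\Omega \d f(X) \,\d\mu.
\end{equation*}

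To handle the remaining term, I would introduce the finite signed Radon measures $\nu_\eps := \d\varphi_\eps(X)\,\mu$. The pointwise bound $|\d\varphi_\eps(X)| \le |\nabla\varphi_\eps|\,|X|$ gives $|\nu_\eps| \le \| \, |X| \, \|_{L^\infty(\Omega)}\,|\nabla\varphi_\eps|\,\mu$; since by Proposition \ref{w-conv-per} the dominating sequence converges narrowly (hence is tight by Prokhorov), the family $\{\nu_\eps\}$ is uniformly bounded in total variation and tight. By Banach--Alaoglu applied in the dual of $C_0(\X)$ (this is where local compactness is essential) combined with Prokhorov's theorem, a subsequence $\nu_{\eps_k}$ converges narrowly to some finite Radon measure $\nu \in \M(\X)$. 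Testing any $g \in C_b(\X)$ and using the narrow convergence $|\nabla\varphi_\eps|\,\mu \weakto \|D\uno_\Omega\|$ of Proposition \ref{w-conv-per}, one obtains
\begin{equation*}
\left|\int_\X g \,\d\nu\right| \le \| \, |X| \, \|_{L^\infty(\Omega)} \int_\X |g| \,\d\|D\uno_\Omega\|,
\end{equation*}
which, together with the density of $C_b(\X)$ in $L^1(\X,\|D\uno_\Omega\|)$, forces $|\nu| \le \| \, |X| \, \|_{L^\infty(\Omega)}\,\|D\uno_\Omega\|$. Hence $\nu \ll \|D\uno_\Omega\|$; since $\|D\uno_\Omega\|$ is concentrated on $\partial\Omega$ by \eqref{eq:concentration_perimeter_measure}, Radon--Nikodym produces a function $(X\cdot\nu_\Omega)^{-}_{\partial\Omega} \in L^\infty(\partial\Omega, \|D\uno_\Omega\|)$ with norm bounded by $\| \, |X| \, \|_{L^\infty(\Omega)}$ such that $\nu = (X\cdot\nu_\Omega)^{-}_{\partial\Omega}\|D\uno_\Omega\|$, yielding \eqref{eq:norm_trace_int_bound_reg}.

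Passing to the narrow limit along $\eps_k$ in the initial identity produces \eqref{eq:-36}. Since the left-hand side of \eqref{eq:-36} is independent of the chosen subsequence, varying $f$ in a sufficiently rich class uniquely determines $\nu$, so in fact the whole family $\nu_\eps$ converges narrowly to the same $\nu$ and the interior normal trace is well defined. The main obstacle of this plan is the extraction of the narrow limit $\nu$ and the absolute continuity estimate $|\nu| \le \| \, |X| \, \|_{L^\infty(\Omega)}\,\|D\uno_\Omega\|$: both steps depend crucially on Proposition \ref{w-conv-per} and on local compactness of $\X$, which is precisely where the assumption of $\Omega$ being a regular domain enters.
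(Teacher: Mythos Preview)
Your proposal is correct and follows essentially the same route as the paper: test $f\varphi_\eps$ in \eqref{eq:div-meas}, apply the Leibniz rule, pass to the limit in the two ``bulk'' terms by dominated convergence, and control the boundary term via Proposition \ref{w-conv-per} to produce an $L^\infty(\partial\Omega,\|D\uno_\Omega\|)$ density. The only difference is organizational. You introduce the signed measures $\nu_\eps=\d\varphi_\eps(X)\mu$, extract a narrowly convergent subsequence by compactness, and then argue a posteriori that the limit is unique; the paper instead observes immediately from the identity that $T_X(f):=\lim_{\eps\to0}\int_\X f\,\d\varphi_\eps(X)\,\d\mu$ exists for \emph{every} test $f$ (being equal to $-\int_\Omega f\,\d\div(X)-\int_\Omega\d f(X)\,\d\mu$), so it works directly with the linear functional $T_X$ and appeals to Riesz on $L^1(\partial\Omega,\|D\uno_\Omega\|)$ without any subsequence extraction. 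Your detour through Banach--Alaoglu/Prokhorov is harmless but unnecessary: the existence of the full limit is already guaranteed by the equation itself.
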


As it is done in the literature on weak integration by parts formulas, for instance in the Euclidean space, we shall call the function $(X\cdot\nu_\Omega)_{\partial\Omega}^{-}$ the \textit{{interior normal trace}} of $X$ on $\partial\Omega$.

\begin{proof}
Let $\left(\varphi_{\varepsilon}\right)_{\varepsilon>0}$ be the defining sequence 
of $\Omega$ as in \eqref{eq:def-sequence}. It is clear that $f \varphi_{\eps} \in \Lip_{\bs}(\X)$, so that, by \eqref{eq:Leibniz-Lip}, we have
\begin{align*}
\int_{{\X}}f \text{d}\varphi_{\varepsilon}(X) \, \text{d}\mu & =\int_{{\X}}\text{d}\left(f\varphi_{\varepsilon}\right)(X) \, \text{d}\mu - \int_{{\X}}\varphi_{\varepsilon}\text{d}f(X) \, \text{d}\mu \\
 & =-\int_{{\X}}\varphi_{\varepsilon} f \text{d}\div(X) -\int_{{\X}}\varphi_{\varepsilon} \text{d}f(X) \text{d}\mu.
\end{align*}
Since $\varphi_{\varepsilon}$ converges to $\mathds1_{\Omega}$
everywhere and $\supp{(f \varphi_{\eps})} \subset \supp{(f \uno_{\Omega})}$ is a bounded set, the Lebesgue's Dominated Convergence Theorem entails the existence of
\begin{equation} \label{eq:approx_def_sequence_IBP}
\lim_{\varepsilon\to0}\int_{{\X}}f \text{d}\varphi_{\varepsilon}(X) \, \text{d}\mu=-\int_{\Omega}f \, \text{d}\div(X) -\int_{\Omega}\text{d}f(X) \, \text{d}\mu.
\end{equation}
Let us now define the map
\[
T_{X}(f):=\lim_{\varepsilon\to0}T_{X}^{\varepsilon}(f),
\]
for any $f\in{\rm Lip}_{b}({\X})$, where
\[
T_{X}^{\varepsilon}(f):=\int_{{\X}}f \text{d}\varphi_{\varepsilon}(X) \, \text{d}\mu.
\]
It is clear that we have the estimate
\[
\left|T_{X}^{\varepsilon}(f)\right|\leq \||X|\|_{L^{\infty}(\Omega)}\int_{{\X}} |f| \left|D \varphi_{\varepsilon}\right|\, \text{d}\mu.
\]
Hence, by \eqref{eq:weak_conv_nabla_phi}, we deduce
\begin{equation} \label{eq:trace_functional_bound}
\left|T_{X}(f)\right|\leq \||X|\|_{L^{\infty}(\Omega)} \int_{\partial \Omega} |f| \, \d \left\Vert D\mathds1_{\Omega}\right\Vert \le \||X|\|_{L^{\infty}(\Omega)} \|f\|_{L^{1}(\X, \|D \uno_{\Omega}\|)}.
\end{equation}
We notice that, since $\Lip_{\bs}(\X)$ is dense in $C_{\bs}(\X)$ and $C_{\bs}(\X)$ is dense in $L^{1}(\X, \|D \uno_{\Omega}\|)$, $T_{X}$ may be extended to a continuous linear functional in the dual of $L^{1}(\X, \|D \uno_{\Omega}\|)$.
Therefore, by the Riesz Representation Theorem and the fact that $\|D \uno_{\Omega}\|(\X \setminus \partial \Omega) = 0$ (see Remark \ref{rem:prop_set_fin_per}), there exists a function $(X\cdot\nu)_{\partial\Omega}^{-}\in L^{\infty}(\partial \Omega,\left\Vert D\mathds1_{\Omega}\right\Vert )$
such that 
\begin{equation} \label{eq:trace_def_reg}
T_{X}(f) = \int_{\partial \Omega} f (X \cdot \nu_{\Omega})_{\partial \Omega}^{-} \, \d \|D \uno_{\Omega}\|,
\end{equation}
for any $f \in \Lip_{\bs}(\X)$. 
Since
\begin{equation*}
T_{X}(f) = -\int_{\Omega}f \, \text{d}\div(X) -\int_{\Omega}\text{d}f(X) \, \text{d}\mu,
\end{equation*}
by \eqref{eq:approx_def_sequence_IBP}, we immediately obtain \eqref{eq:-36} for any $f \in \Lip_{\bs}(\X)$.
Let us now consider $f \in \Lip_{b}(\X)$ such that $\supp(f \uno_{\Omega})$ is bounded. It is easy to notice that, for any cutoff $\eta \in \Lip_{\bs}(\X)$ such that $\eta \equiv 1$ on $\supp(f \uno_{\Omega})$, we have 
\begin{align*}
\int_{\Omega}f \, \text{d}\div(X) + \int_{\Omega}\text{d}f(X) \, \d \mu &= \int_{\Omega} \eta f \, \text{d}\div(X) + \int_{\Omega}\text{d}(\eta f)(X) = - T_{X}(\eta f) \\
& = - \int_{\partial \Omega} \eta f (X \cdot \nu_{\Omega})_{\partial \Omega}^{-} \, \d \|D \uno_{\Omega}\| \\
& = - \int_{\partial \Omega} f (X \cdot \nu_{\Omega})_{\partial \Omega}^{-} \, \d \|D \uno_{\Omega}\|.
\end{align*}
Hence, we get \eqref{eq:-36} in this general case.
Finally, the estimate \eqref{eq:norm_trace_int_bound_reg} follows easily by \eqref{eq:trace_functional_bound} and \eqref{eq:trace_def_reg}.
\end{proof}

Thanks to this result, we immediately obtain a Gauss--Green formula for regular domains.

\begin{corollary}
Let $\X$ be locally compact, $X\in\DM^{\infty}({\X})$ and $\Omega\subset{\X}$ be a bounded regular domain. Then we have
\begin{equation} \label{eq:GG_int_reg}
\div(X)(\Omega) = - \int_{\partial \Omega} (X \cdot \nu)_{\partial \Omega}^{-} \, \d \|D \uno_{\Omega}\|.
\end{equation}
\end{corollary}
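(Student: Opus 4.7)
The plan is to apply Theorem \ref{gauss-geod} with the constant test function $f \equiv 1$. I first verify the hypotheses of that theorem: clearly $f \in \Lip_b(\X)$, and $\supp(f\uno_\Omega) = \overline{\Omega}$ is bounded because $\Omega$ is assumed to be bounded. So the theorem applies.

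Next I observe that the constant function $1$ lies in $D^{1,p}(\X)$ with minimal $p$-weak upper gradient $|D1| = 0$ (the inequality in Definition \ref{def-sob-class} is trivially satisfied with $g = 0$). Hence, by Definition \ref{differential} together with Definition \ref{point-norm}, the differential $\d f = \d 1$ vanishes as an element of $L^p(T^*\X)$, so $\d f(X) = 0$ $\mu$-almost everywhere. Consequently the second summand on the left-hand side of \eqref{eq:-36} is zero, while the first summand reduces to $\int_\Omega \d \div(X) = \div(X)(\Omega)$, and the right-hand side simplifies to $-\int_{\partial\Omega} (X\cdot\nu_\Omega)^-_{\partial\Omega}\d\|D\uno_\Omega\|$. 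This is exactly \eqref{eq:GG_int_reg}.

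If one wishes to avoid using a non-compactly supported test function, an equivalent route is to choose $\eta\in\Lip_\bs(\X)$ with $\eta\equiv 1$ on an open neighbourhood of $\overline{\Omega}$ (such $\eta$ exists because local compactness of $\X$ makes $\overline{\Omega}$ compact, so one may truncate the distance to $\overline{\Omega}$). Applying Theorem \ref{gauss-geod} to $\eta$ and invoking the locality of the differential on the open set $\{\eta = 1\}\supset\Omega$ gives $\d\eta = 0$ $\mu$-a.e.\ on $\Omega$, so the middle term again vanishes; since $\eta\equiv 1$ on $\overline{\Omega}\supset\partial\Omega$ the boundary integral matches \eqref{eq:GG_int_reg}. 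There is no genuine obstacle here: the corollary is a direct specialization of Theorem \ref{gauss-geod}, and the only mild point to keep in mind is the justification that $\d f(X)$ vanishes for constant $f$, which follows from the definitions of the cotangent module and the minimal weak upper gradient.
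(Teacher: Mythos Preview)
Your proof is correct and follows exactly the paper's approach: the paper's own proof consists of the single line ``Take $f = 1$ in Theorem \ref{gauss-geod}'', and you carry out precisely that specialization, with the additional (welcome) verification that the hypothesis on $\supp(f\uno_\Omega)$ is met and that $\d 1 = 0$. One small remark on your alternative route: local compactness alone does not guarantee that the bounded closed set $\overline{\Omega}$ is compact, but this is irrelevant since a Lipschitz cutoff with bounded support (e.g.\ built from $\mathrm{dist}(\cdot,\overline{\Omega})$) exists simply because $\overline{\Omega}$ is bounded.
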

\begin{proof}
Take $f = 1$ in Theorem \ref{gauss-geod}.
\end{proof}

We prove now an integration by parts formula on the closure of a given open set $\Omega$, under the assumption that $\overline{\Omega}^{c}$ is a regular domain.

\begin{theorem} \label{thm:IBP_ext_reg}
Let $\X$ be locally compact, $X\in\DM^{\infty}({\X})$ and $\Omega \subset{\X}$ be an open set such that $\overline{\Omega}^c$ is a regular domain. Then there
exists a function $(X\cdot\nu_\Omega)_{\partial\Omega}^{+}\in L^{\infty}\left(\partial \Omega,\left\Vert D\mathds1_{\overline{\Omega}}\right\Vert \right)$
such that 
\begin{equation}
\int_{\overline{\Omega}}f \, \mathrm{d}\div(X)+\int_{\overline{\Omega}}\mathrm{d}f(X) \, \mathrm{d}\mu = - \int_{\partial \Omega}f \, (X \cdot {\nu}_{\Omega})_{\partial\Omega}^{+} \, \mathrm{d}\left\Vert D\mathds1_{\overline{\Omega}}\right\Vert,\label{eq:IBP_ext_reg}
\end{equation}
for every $f\in\mathrm{Lip}_{b}({\X})$ such that $\supp(f \uno_{\Omega})$ is a bounded set. In addition, we have the following estimate:
\begin{equation} \label{eq:norm_trace_ext_bound_reg}
\|(X\cdot\nu_\Omega)_{\partial\Omega}^{+}\|_{L^{\infty}(\partial \Omega, \|D \uno_{\overline{\Omega}}\|)} \le \| |X| \|_{L^{\infty}(\overline{\Omega}^{c})}.
\end{equation}
If $\Omega$ is also bounded, then we have
\begin{equation} \label{eq:GG_ext_reg}
\div(X)(\overline{\Omega}) = - \int_{\partial \Omega} (X \cdot {\nu}_{\Omega})_{\partial\Omega}^{+} \, \mathrm{d}\left\Vert D\mathds1_{\overline{\Omega}}\right\Vert.
\end{equation}
\end{theorem}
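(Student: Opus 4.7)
The plan is to mimic the proof of Theorem~\ref{gauss-geod}, but applied to the regular domain $\overline{\Omega}^c$ in order to produce an \emph{exterior} trace of $X$. Let $(\psi_{\varepsilon})_{\varepsilon>0}$ be the defining sequence of $\overline{\Omega}^c$, so that $\psi_{\varepsilon}\equiv 0$ on $\overline{\Omega}$, $0\le 1-\psi_{\varepsilon}\le 1$, and $1-\psi_{\varepsilon}\to\uno_{\overline{\Omega}}$ pointwise as $\varepsilon\to 0$. Since $\uno_{\overline{\Omega}^c}\in BV(\X)$ by assumption, Remark~\ref{rem:prop_set_fin_per} ensures $\uno_{\overline{\Omega}}\in BV(\X)$ with $\|D\uno_{\overline{\Omega}^c}\|=\|D\uno_{\overline{\Omega}}\|$, and this common measure is concentrated on $\partial\overline{\Omega}\subseteq\partial\Omega$. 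Proposition~\ref{w-conv-per} applied to $\overline{\Omega}^c$ then yields the key weak convergence
\[
|\nabla\psi_{\varepsilon}|\,\mu\weakto\|D\uno_{\overline{\Omega}}\|\quad\text{in the sense of Radon measures.}
\]

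\textbf{Core identity and estimate.} First fix $f\in\Lip_{\bs}(\X)$, so that $f(1-\psi_{\varepsilon})\in\Lip_{\bs}(\X)$. Using $g=f(1-\psi_{\varepsilon})$ in the definition of $\DM^{\infty}(\X)$ together with the Leibniz rule \eqref{eq:Leibniz-d} produces
\[
-\int_\X f(1-\psi_{\varepsilon})\,\d\div(X)
= \int_\X (1-\psi_{\varepsilon})\,\d f(X)\,\d\mu - \int_\X f\,\d\psi_{\varepsilon}(X)\,\d\mu.
\]
Dominated convergence, applicable because $\div(X)$ is a finite measure and $|\d f(X)|\in L^{1}(\supp(f),\mu)$, lets me take $\varepsilon\to 0$ on the right-hand side and on the left. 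The resulting quantity
\[
T_X(f):=-\lim_{\varepsilon\to 0}\int_\X f\,\d\psi_{\varepsilon}(X)\,\d\mu= -\int_{\overline{\Omega}} f\,\d\div(X)-\int_{\overline{\Omega}}\d f(X)\,\d\mu
\]
is therefore well defined. Because $\psi_{\varepsilon}\equiv 0$ on the open set $\Omega$, the locality of the differential forces $|\d\psi_{\varepsilon}|_{*}=0$ $\mu$-a.e.\ on $\Omega$, whence
\[
\Bigl|\int_\X f\,\d\psi_{\varepsilon}(X)\,\d\mu\Bigr|\le \||X|\|_{L^{\infty}(\overline{\Omega}^c)} \int_{\X} |f|\,|\nabla\psi_{\varepsilon}|\,\d\mu,
\]
and passing to the limit via the weak convergence above gives the crucial bound $|T_X(f)|\le \||X|\|_{L^{\infty}(\overline{\Omega}^c)}\,\|f\|_{L^{1}(\X,\|D\uno_{\overline{\Omega}}\|)}$.

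\textbf{Representation, extension and conclusion.} As in the last part of the proof of Theorem~\ref{gauss-geod}, the density of $\Lip_{\bs}(\X)$ inside $C_{\bs}(\X)$ together with local compactness yields density inside $L^{1}(\X,\|D\uno_{\overline{\Omega}}\|)$, so $T_X$ extends uniquely to a continuous linear functional on this $L^{1}$ space. The Riesz Representation Theorem, combined with the concentration of $\|D\uno_{\overline{\Omega}}\|$ on $\partial\Omega$, then produces $(X\cdot\nu_{\Omega})^{+}_{\partial\Omega}\in L^{\infty}(\partial\Omega,\|D\uno_{\overline{\Omega}}\|)$ representing $T_X$ and automatically satisfying \eqref{eq:norm_trace_ext_bound_reg}. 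Equating the two expressions of $T_X(f)$ yields \eqref{eq:IBP_ext_reg} for $f\in\Lip_{\bs}(\X)$; the upgrade to $f\in\Lip_{b}(\X)$ with $\supp(f\uno_\Omega)$ bounded then follows by inserting a cutoff $\eta\in\Lip_{\bs}(\X)$ with $\eta\equiv 1$ on $\supp(f\uno_{\overline{\Omega}})$, exactly as in Theorem~\ref{gauss-geod} (this is legitimate since the continuity of $f$ forces $\supp(f\uno_{\overline{\Omega}})\subseteq\supp(f\uno_\Omega)$). Finally, plugging $f\equiv 1$ into \eqref{eq:IBP_ext_reg}, which is admissible when $\Omega$ is bounded, delivers \eqref{eq:GG_ext_reg}. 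The only delicate point is the choice of approximation: $\Omega$ itself need not be a regular domain, so one has to approximate $\uno_{\overline{\Omega}}$ \emph{from the outside} via $1-\psi_{\varepsilon}$, and it is precisely the locality of $\d\psi_{\varepsilon}$ on $\Omega$ that yields the sharp $L^{\infty}$ bound involving $|X|$ only on $\overline{\Omega}^c$ rather than on all of $\X$.
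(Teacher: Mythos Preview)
Your argument is correct and is essentially the same as the paper's, only unpacked: the paper applies Theorem~\ref{gauss-geod} directly to the regular domain $\overline{\Omega}^c$ to obtain $(X\cdot\nu_{\overline{\Omega}^c})^{-}_{\partial\Omega}$, then uses the global identity $\int_\X g\,\d\div(X)+\int_\X \d g(X)\,\d\mu=0$ with $g=\eta f$ to split into $\overline{\Omega}$ and $\overline{\Omega}^c$ and sets $(X\cdot\nu_\Omega)^{+}_{\partial\Omega}:=-(X\cdot\nu_{\overline{\Omega}^c})^{-}_{\partial\Omega}$; you instead inline the proof of Theorem~\ref{gauss-geod} by working with $1-\psi_\varepsilon$. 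One small imprecision: to get the sharp constant $\||X|\|_{L^\infty(\overline{\Omega}^c)}$ in your estimate you need $|\d\psi_\varepsilon|_*=0$ $\mu$-a.e.\ on all of $\overline{\Omega}$, not just on the open set $\Omega$ (otherwise you would only get $\||X|\|_{L^\infty(\Omega^c)}$); this does hold, since $\psi_\varepsilon\equiv 0$ on $\overline{\Omega}$ and $\d\psi_\varepsilon=0$ $\mu$-a.e.\ on the level set $\psi_\varepsilon^{-1}(\{0\})$ by Proposition~\ref{prop:diff_Leibniz}.
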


Analogously as before, we shall call the function $(X\cdot\nu_\Omega)_{\partial\Omega}^{+}$ the \textit{{exterior normal trace}} of $X$ on $\partial\Omega$.

\begin{proof}
Let $\eta \in \Lip_{\bs}(\X)$ be a cutoff function such that $\eta \equiv 1$ on $\supp(f \uno_{\Omega})$. Then, $\eta f \in \Lip_{\bs}(\X)$, and so \eqref{eq:div-meas} implies
\begin{equation*}
\int_{\X} \eta f \, \d \div(X) + \int_{\X} \d(\eta f)(X) \, \d \mu = 0.
\end{equation*}
By \eqref{eq:-36}, we have
\begin{align*}
0 & = \int_{\overline{\Omega}} \eta f \, \d \div(X) + \int_{\overline{\Omega}} \d(\eta f)(X) \, \d \mu + \int_{\overline{\Omega}^c} \eta f \, \d \div(X) + \int_{\overline{\Omega}^c} \d(\eta f)(X) \, \d \mu \\
& = \int_{\overline{\Omega}} \eta f \, \d \div(X) + \int_{\overline{\Omega}} \d(\eta f)(X) \, \d \mu - \int_{\partial \Omega} \eta f \, (X \cdot \nu_{\overline{\Omega}^{c}})_{\partial \Omega}^{-} \, \d \|D \uno_{\overline{\Omega}}\| \\
& = \int_{\overline{\Omega}} f \, \d \div(X) + \int_{\overline{\Omega}} \d f(X) \, \d \mu - \int_{\partial \Omega} f \, (X \cdot \nu_{\overline{\Omega}^{c}})_{\partial \Omega}^{-} \, \d \|D \uno_{\overline{\Omega}}\|.
\end{align*}
Thus, if we set $(X\cdot\nu_\Omega)_{\partial\Omega}^{+} := - (X \cdot \nu_{\overline{\Omega}^{c}})_{\partial \Omega}^{-}$, we get \eqref{eq:IBP_ext_reg} and \eqref{eq:norm_trace_ext_bound_reg}, by Theorem \ref{gauss-geod}. Finally, if $\overline{\Omega}$ is bounded, we can take $f = 1$ and obtain \eqref{eq:GG_ext_reg}.
\end{proof}

\begin{remark}
It is not difficult to see that one can argue in an analogous way as in the proof of Theorem \ref{thm:IBP_ext_reg} to achieve an integration by parts formula on any closed set $C$ such that $\mu(C) > 0$ and $C^{c}$ is a regular domain. The condition $\mu(C) > 0$ is necessary so that $\uno_{C}$ is not identically zero, as a $BV$ function. Under this and the previous assumptions, we obtain the existence of $(X\cdot\nu_C)_{\partial C}^{+}\in L^{\infty}\left(\partial C,\left\Vert D\mathds1_{C}\right\Vert \right)$
such that 
\begin{equation*}
\int_{C}f \, \mathrm{d}\div(X)+\int_{C}\mathrm{d}f(X) \, \mathrm{d}\mu = - \int_{\partial C}f \, (X \cdot {\nu}_{C})_{\partial C}^{+} \, \mathrm{d}\left\Vert D\mathds1_{C}\right\Vert,
\end{equation*}
for every $f\in\mathrm{Lip}_{b}({\X})$ such that $\supp(f \uno_{C})$ is a bounded set. In addition, we have the following estimate:
\begin{equation*} 
\|(X\cdot\nu_C)_{\partial C}^{+}\|_{L^{\infty}(\partial C, \|D \uno_{C}\|)} \le \| |X| \|_{L^{\infty}(C^{c})}.
\end{equation*}
If then $C$ is also bounded, we obtain
\begin{equation*}
\div(X)(C) = - \int_{\partial C} (X \cdot {\nu}_{C})_{\partial C}^{+} \, \d \|D\mathds1_{C}\|.
\end{equation*}
\end{remark}

We consider now a particular subfamily of regular domains, in order to obtain both integration by parts formulas, from the interior and the exterior, with the same perimeter measure.

\begin{corollary} \label{cor:mu_negl_boundary_Omega}
Let $\X$ be locally compact, $X\in\DM^{\infty}({\X})$ and $\Omega \subset{\X}$ be a regular domain such that $\overline{\Omega}^c$ is a regular domain and $\mu(\partial \Omega) = 0$. Then $(X \cdot {\nu}_{\Omega})_{\partial\Omega}^{+} \in L^{\infty}(\partial \Omega, \|D\uno_{\Omega}\|)$, and we have \eqref{eq:-36}, 
\begin{equation}
\int_{\overline{\Omega}}f \, \mathrm{d}\div(X)+\int_{\Omega}\mathrm{d}f(X) \, \mathrm{d}\mu = - \int_{\partial \Omega}f \, (X \cdot {\nu}_{\Omega})_{\partial\Omega}^{+} \, \mathrm{d}\left\Vert D\mathds1_{\Omega}\right\Vert,\label{eq:IBP_ext_reg_mod}
\end{equation}
and
\begin{equation}
\int_{\partial \Omega} f \, \mathrm{d}\div(X) = - \int_{\partial \Omega}f \, \left ( (X \cdot \nu_{\Omega})_{\partial\Omega}^{+} - (X \cdot \nu_{\Omega})_{\partial \Omega}^{-} \right ) \, \mathrm{d}\left\Vert D\mathds1_{\Omega}\right\Vert,\label{eq:IBP_boundary_reg}
\end{equation}
for every $f\in\mathrm{Lip}_{b}({\X})$ such that $\supp(f \uno_{\Omega})$ is a bounded set. If $\Omega$ is also bounded, then we have
\begin{equation} \label{eq:GG_boundary_reg}
\div(X)(\partial \Omega) = - \int_{\partial \Omega} \left ( (X \cdot \nu_{\Omega})_{\partial\Omega}^{+} - (X \cdot \nu_{\Omega})_{\partial \Omega}^{-} \right ) \, \mathrm{d}\left\Vert D\mathds1_{\Omega}\right\Vert.
\end{equation}
\end{corollary}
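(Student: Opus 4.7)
The proof is essentially a bookkeeping exercise that combines the interior and exterior integration-by-parts formulas already at our disposal, once one observes the right consequence of $\mu(\partial\Omega)=0$.

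The plan is to first record the key identity $\|D\uno_{\Omega}\|=\|D\uno_{\overline{\Omega}}\|$ as Radon measures on $\X$. Indeed, $\mu(\partial\Omega)=0$ forces $\uno_{\Omega}=\uno_{\overline{\Omega}}$ $\mu$-almost everywhere, so the two functions represent the same element of $L^1(\X)$ and therefore give rise to the same total variation in the sense of Definition \ref{def-bv-rcd} (equivalently, by formula \eqref{T-Var}). Consequently, $L^{\infty}(\partial\Omega,\|D\uno_{\overline{\Omega}}\|)=L^{\infty}(\partial\Omega,\|D\uno_{\Omega}\|)$, which immediately delivers the membership $(X\cdot\nu_{\Omega})_{\partial\Omega}^{+}\in L^{\infty}(\partial\Omega,\|D\uno_{\Omega}\|)$ produced by Theorem \ref{thm:IBP_ext_reg}. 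For the same reason, $\int_{\overline{\Omega}}g\,\d\mu=\int_{\Omega}g\,\d\mu$ for every $g\in L^{1}(\X)$.

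Next, formula \eqref{eq:-36} is already the content of Theorem \ref{gauss-geod} applied to the regular domain $\Omega$. For \eqref{eq:IBP_ext_reg_mod}, apply Theorem \ref{thm:IBP_ext_reg} to $\Omega$ (whose hypothesis that $\overline{\Omega}^{c}$ is a regular domain is part of our standing assumptions); this yields
\[
\int_{\overline{\Omega}} f\,\mathrm{d}\div(X)+\int_{\overline{\Omega}}\mathrm{d}f(X)\,\mathrm{d}\mu=-\int_{\partial\Omega} f\,(X\cdot\nu_{\Omega})_{\partial\Omega}^{+}\,\mathrm{d}\|D\uno_{\overline{\Omega}}\|.
\]
Using the two equalities recorded above, the first integral on the left may be integrated over $\Omega$ instead of $\overline{\Omega}$, and the measure on the right may be rewritten as $\|D\uno_{\Omega}\|$, producing \eqref{eq:IBP_ext_reg_mod}.

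To obtain \eqref{eq:IBP_boundary_reg}, subtract \eqref{eq:-36} from \eqref{eq:IBP_ext_reg_mod}: the two $\int\mathrm{d}f(X)\,\mathrm{d}\mu$ contributions cancel, while the divergence terms combine into $\int_{\overline{\Omega}}f\,\mathrm{d}\div(X)-\int_{\Omega}f\,\mathrm{d}\div(X)=\int_{\partial\Omega}f\,\mathrm{d}\div(X)$, and the boundary terms regroup into the claimed jump of the two normal traces. Finally, if $\Omega$ is bounded, then $\overline{\Omega}$ is compact; choose a cutoff $\eta\in\mathrm{Lip}_{\bs}(\X)$ with $\eta\equiv 1$ on a neighbourhood of $\overline{\Omega}$ and insert $f=\eta$ into \eqref{eq:IBP_boundary_reg}: since $\div(X)$ is a finite measure concentrated, on $\overline{\Omega}$, inside $\overline{\Omega}$ where $\eta\equiv 1$, and $\|D\uno_{\Omega}\|$ is supported on $\partial\Omega$ where again $\eta\equiv 1$, we recover \eqref{eq:GG_boundary_reg}. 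There is no serious obstacle here; the only subtlety worth double-checking is precisely the identification $\|D\uno_{\Omega}\|=\|D\uno_{\overline{\Omega}}\|$, which is exactly where the hypothesis $\mu(\partial\Omega)=0$ enters the argument.
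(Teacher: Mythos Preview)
Your proof is correct and follows the same approach as the paper: observe that $\mu(\partial\Omega)=0$ gives $\|D\uno_{\Omega}\|=\|D\uno_{\overline{\Omega}}\|$, then apply Theorems \ref{gauss-geod} and \ref{thm:IBP_ext_reg} and subtract. There is one slip worth correcting: when you pass from \eqref{eq:IBP_ext_reg} to \eqref{eq:IBP_ext_reg_mod}, it is the \emph{second} integral on the left (the one against $\mu$) that can be taken over $\Omega$ instead of $\overline{\Omega}$, not the first; the divergence integral must stay over $\overline{\Omega}$, since $\div(X)$ need not be absolutely continuous with respect to $\mu$. Your subtraction step shows you actually use the formula in this correct form, so this is just a mislabelling. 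For the final formula the paper simply takes $f\equiv 1$, which is legitimate since $\supp(\uno_{\Omega})=\overline{\Omega}$ is bounded; your cutoff argument is equivalent but slightly less direct.
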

\begin{proof}
Since $\mu(\partial \Omega) = 0$, then $\|D \uno_{\Omega}\| = \|D \uno_{\overline{\Omega}}\|$. Therefore, it is enough to apply Theorems \ref{gauss-geod} and \ref{thm:IBP_ext_reg}. Then, by setting $f = 1$ in the case $\Omega$ is bounded, we immediately get \eqref{eq:GG_boundary_reg}. 
\end{proof}

\begin{remark}
Under the assumptions of Corollary \ref{cor:mu_negl_boundary_Omega} and arguing as in the proof of Theorem \ref{thm:IBP_ext_reg}, we obtain the following relations:
\begin{equation*} 
(X \cdot \nu_{\overline{\Omega}^{c}})_{\partial \Omega}^{-} = - (X \cdot \nu_{\Omega})_{\partial \Omega}^{+} \ \text{and} \ (X \cdot \nu_{\overline{\Omega}^c})_{\partial \Omega}^{+} = - (X \cdot \nu_{\Omega})_{\partial \Omega}^{-}. 
\end{equation*}
\end{remark}

We conclude this section with a refined result in the special case in which the measure $\div(X)$ is absolutely continuous with respect to $\mu$, which includes the case $X \in \frd^{\infty}(\X)$.

\begin{corollary} \label{cor:dom_reg_trace_abs_cont}
Let $\X$ be locally compact, $X\in\DM^{\infty}({\X})$ such that $\|\div(X)\| \ll \mu$, and let $\Omega \subset{\X}$ be a regular domain such that $\overline{\Omega}^c$ is a regular domain and $\mu(\partial \Omega) = 0$. Then we have
\begin{equation} \label{eq:traces_equal_reg_abs_cont}
(X \cdot \nu_{\Omega})_{\partial \Omega}^{-} = (X \cdot \nu_{\Omega})_{\partial \Omega}^{+}  \  \ \|D \uno_{\Omega}\|\text{-a.e. in} \ \partial \Omega.
\end{equation}
Thus, there exists a unique normal trace, which we denote by $(X \cdot \nu_{\Omega})_{\partial \Omega}$, satisfying
\begin{equation} \label{eq:norm_trace_int_bound_reg_abs_cont}
\|(X\cdot\nu_\Omega)_{\partial\Omega}\|_{L^{\infty}(\partial \Omega, \|D \uno_{\Omega}\|)} \le \| |X| \|_{L^{\infty}(\Omega)}
\end{equation}
and
\begin{equation}
\int_{\Omega}f \, \mathrm{d}\div(X)+\int_{\Omega}\mathrm{d}f(X) \, \mathrm{d}\mu = - \int_{\partial \Omega}f \, (X \cdot {\nu}_{\Omega})_{\partial\Omega} \, \mathrm{d}\left\Vert D\mathds1_{\Omega}\right\Vert \label{eq:IBP_reg_mod_abs_cont}
\end{equation}
for every $f\in\mathrm{Lip}_{b}({\X})$ such that $\supp(f \uno_{\Omega})$ is a bounded set. If $\Omega$ is also bounded, then we have
\begin{equation}  \label{eq:GG_reg_mod_abs_cont}
\div(X)(\Omega) = - \int_{\partial \Omega} (X \cdot {\nu}_{\Omega})_{\partial\Omega} \, \mathrm{d}\left\Vert D\mathds1_{\overline{\Omega}}\right\Vert.
\end{equation}
\end{corollary}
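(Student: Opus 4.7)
The strategy is to combine Corollary \ref{cor:mu_negl_boundary_Omega} with the extra hypothesis $\|\div(X)\|\ll\mu$. The key observation is that, since $\mu(\partial\Omega)=0$ and $\|\div(X)\|\ll\mu$, we have $\|\div(X)\|(\partial\Omega)=0$. Hence the left-hand side of \eqref{eq:IBP_boundary_reg} vanishes: for every $f\in\Lip_b(\X)$ with $\supp(f\uno_{\Omega})$ bounded,
\begin{equation*}
\int_{\partial\Omega} f\,\big((X\cdot\nu_\Omega)^+_{\partial\Omega}-(X\cdot\nu_\Omega)^-_{\partial\Omega}\big)\,\d\|D\uno_\Omega\|=0.
\end{equation*}

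\textbf{Step 1 (equality of traces).} I would upgrade the above identity to a $\|D\uno_\Omega\|$-a.e.\ vanishing statement as follows. Since $\X$ is locally compact and $\|D\uno_\Omega\|$ is a finite Radon measure concentrated on $\partial\Omega$, the space $\Lip_{\bs}(\X)$ is dense in $C_{\bs}(\X)$, and $C_{\bs}(\X)$ is dense in $L^1(\X,\|D\uno_\Omega\|)$ (this is the same density argument used in the proof of Theorem \ref{gauss-geod}). Because any $f\in\Lip_{\bs}(\X)$ automatically has $\supp(f\uno_\Omega)$ bounded, the equality above holds on a dense subset of $L^1(\X,\|D\uno_\Omega\|)$, whence the $L^\infty(\partial\Omega,\|D\uno_\Omega\|)$ function $(X\cdot\nu_\Omega)^+_{\partial\Omega}-(X\cdot\nu_\Omega)^-_{\partial\Omega}$ must vanish $\|D\uno_\Omega\|$-a.e.\ on $\partial\Omega$, giving \eqref{eq:traces_equal_reg_abs_cont}. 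We may then define $(X\cdot\nu_\Omega)_{\partial\Omega}$ as the common value.

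\textbf{Step 2 (bound and integration by parts).} The bound \eqref{eq:norm_trace_int_bound_reg_abs_cont} is inherited directly from \eqref{eq:norm_trace_int_bound_reg} in Theorem \ref{gauss-geod}, since the two traces coincide. To obtain \eqref{eq:IBP_reg_mod_abs_cont} I would apply Theorem \ref{gauss-geod} and rewrite the right-hand side using the common trace; alternatively, one may start from \eqref{eq:IBP_ext_reg_mod} of Corollary \ref{cor:mu_negl_boundary_Omega} and replace $\int_{\overline{\Omega}}f\,\d\div(X)$ by $\int_{\Omega}f\,\d\div(X)$, which is justified because $\|\div(X)\|(\partial\Omega)=0$. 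Both routes yield the desired identity for any $f\in\Lip_b(\X)$ with $\supp(f\uno_\Omega)$ bounded.

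\textbf{Step 3 (Gauss--Green).} If $\Omega$ is additionally bounded, I would take $f\equiv 1$ in \eqref{eq:IBP_reg_mod_abs_cont}: then $\d f(X)=0$, and, using that $\mu(\partial\Omega)=0$ together with $\|\div(X)\|\ll\mu$ implies both $\div(X)(\Omega)=\div(X)(\overline{\Omega})$ and $\|D\uno_\Omega\|=\|D\uno_{\overline{\Omega}}\|$, we recover \eqref{eq:GG_reg_mod_abs_cont}. No step appears particularly delicate: everything reduces to the fact that the singular part of the boundary identity \eqref{eq:IBP_boundary_reg} is killed by the absolute continuity hypothesis, and the only mild technicality is the density argument in Step 1, which is standard in this setting.
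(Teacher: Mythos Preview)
Your proof is correct and follows essentially the same approach as the paper: use $\|\div(X)\|\ll\mu$ together with $\mu(\partial\Omega)=0$ to kill the left-hand side of \eqref{eq:IBP_boundary_reg}, deduce \eqref{eq:traces_equal_reg_abs_cont} by density, and then read off \eqref{eq:norm_trace_int_bound_reg_abs_cont}, \eqref{eq:IBP_reg_mod_abs_cont} and \eqref{eq:GG_reg_mod_abs_cont} from \eqref{eq:norm_trace_int_bound_reg}, \eqref{eq:-36} and \eqref{eq:GG_int_reg}. You are simply more explicit than the paper about the density step in passing from the integral identity to the $\|D\uno_\Omega\|$-a.e.\ equality.
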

\begin{proof}
By the absolute continuity property of $\div(X)$, we have $\|\div(X)\|(\partial \Omega) = 0$, so that \eqref{eq:IBP_boundary_reg} reduces to 
\begin{equation*}
\int_{\partial \Omega}f \, \left ( (X \cdot \nu_{\Omega})_{\partial\Omega}^{+} - (X \cdot \nu_{\Omega})_{\partial \Omega}^{-} \right ) \, \mathrm{d}\left\Vert D\mathds1_{\Omega}\right\Vert = 0,
\end{equation*}
for any $f \in \Lip_{b}(\X)$ such that $\supp(\uno_{\Omega} f)$ is bounded. 
Hence, we obtain \eqref{eq:traces_equal_reg_abs_cont}, and so we are allowed to set $(X \cdot\nu_{\Omega})_{\partial \Omega} := (X \cdot \nu_{\Omega})_{\partial \Omega}^{-}$. Finally, \eqref{eq:norm_trace_int_bound_reg_abs_cont}, \eqref{eq:IBP_reg_mod_abs_cont} and \eqref{eq:GG_reg_mod_abs_cont} follow immediately from \eqref{eq:norm_trace_int_bound_reg}, \eqref{eq:-36} and \eqref{eq:GG_int_reg}.
\end{proof}

\section{Leibniz rules}\label{leib-rules}

In this section $\left({\X},d,\text{\ensuremath{\mu}}\right)$
will be assumed to be a locally compact $\mathsf{RCD}(K,\infty)$ metric measure space. It is important to notice that such assumption is not too restrictive: indeed, there is a variety of results holding in locally compact (or even compact) $\rcd$ spaces (see for instance \cite[Remark 3.5]{gh}). In addition, there are examples of locally compact $\rcd$ spaces which cannot be $\mathsf{RCD}(K,N)$ for any finite $N$: as an example, we may consider $(\R^n, d, \gamma)$, where $d$ is the usual Euclidean distance, and $\gamma$ is the Gaussian measure $e^{- K |x|^2/2} \Leb{n}$.

We observe that by \eqref{ball-exp}, the reference measure $\mu$ is automatically finite on bounded sets.
Recall also the definition
of heat flow $\text{h}_{t}$ along with its properties given above
in Section \ref{rcdht}.

\smallskip

In order to obtain a generalization of Lemma \ref{dmlipleibniz} to scalar functions of bounded variation, we need two preliminary technical results concerning the regularizing properties of the heat flow.

\medskip

\begin{lemma} \label{lem:heat_flow_lip_functions}
Let $f\in L^\infty(\X)\cap BV(\X)$. Then, we have $\h_{t} f \in \Lip_b(\X)$ and $|D\h_{t} f| \in L^{1}(\X)$ for all $t > 0$. In addition, we get $\left|D\text{h}_{t}f\right| \mu \rightharpoonup\|Df\|$ as $t \to 0^+$ and
\begin{equation}
\int_{{\X}}\varphi\left|D \h_{t}f\right| \, \d \mu\le e^{-Kt}\int_{{\X}} \h_{t}\varphi \, \d \|Df\| \label{eq:BE-BV-dual}
\end{equation}
for all $t > 0$ and non--negative $\varphi \in C_{b}(\X)$. If $\supp(\mu) = \X$, then \eqref{eq:BE-BV-dual} holds also for any non--negative $\varphi \in L^{\infty}(\X)$.
\end{lemma}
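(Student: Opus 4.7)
The plan is to combine the self-improved Bakry-Émery estimate \eqref{eq:BE} with a BV-approximation of $f$ to establish \eqref{eq:BE-BV-dual} first for $\varphi \in C_{b}(\X)$, then to deduce $|\nabla \h_t f| \in L^{1}(\X)$ by specializing to $\varphi \equiv 1$, and finally to extend the inequality to $\varphi \in L^{\infty}(\X)$ under $\supp(\mu) = \X$ via a heat-flow mollification. The Lipschitz regularity of $\h_t f$ will come essentially for free from \eqref{eq:BE-1}.

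I would first observe that, since $f \in L^{\infty}(\X)$, the $L^{\infty}$-contraction \eqref{eq:ht-contraction} gives $\h_t(f^{2}) \le \|f\|_{\infty}^{2}$ $\mu$-a.e., so that \eqref{eq:BE-1} yields
\[
|\nabla \h_t f|^{2} \le \frac{\h_t(f^{2}) - (\h_t f)^{2}}{c(K,t)} \le \frac{\|f\|_{\infty}^{2}}{c(K, t)} \quad \mu\text{-a.e.}
\]
Combined with the facts that $\h_t$ maps $L^{\infty}(\X)$ continuously into $C_b(\supp(\mu))$ and that, in an $\rcd$ space, a continuous function with essentially bounded minimal weak upper gradient admits a Lipschitz representative whose Lipschitz constant is controlled by the $L^{\infty}$-norm of its upper gradient, this gives $\h_t f \in \Lip_b(\X)$.

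To prove \eqref{eq:BE-BV-dual} for non-negative $\varphi \in C_b(\X)$, I would pick a sequence $(f_n) \subset W^{1,2}(\X) \cap \Lip_b(\X)$ with $\sup_n \|f_n\|_\infty \le \|f\|_\infty$, $f_n \to f$ $\mu$-a.e., and $|\nabla f_n|\mu \weakto \|Df\|$ in $\mathbf{M}(\X)$; such an approximation is provided by standard BV-approximation results in $\rcd$ spaces. By \eqref{eq:BE} applied to $f_n$ and the self-adjointness of $\h_t$,
\[
\int_{\X} \varphi \, |\nabla \h_t f_n| \, \d \mu \le e^{-Kt} \int_{\X} (\h_t \varphi) \, |\nabla f_n| \, \d \mu.
\]
Since $\h_t \varphi \in C_b(\supp(\mu))$, the right-hand side converges to $e^{-Kt} \int \h_t \varphi \, \d \|Df\|$ by the weak convergence $|\nabla f_n|\mu \weakto \|Df\|$. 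On the left, the uniform $L^{\infty}$-bound on $(f_n)$ combined with the representation of $\h_t$ as integration against the heat kernel yields $\h_t f_n \to \h_t f$ in $L^{1}_{loc}(\X)$; together with the uniform Lipschitz bound from the previous step and the closure property of the differential \cite[Theorem 2.2.9]{gi2}, this gives the lower-semicontinuity estimate
\[
\int_{\X} \varphi \, |\nabla \h_t f| \, \d \mu \le \liminf_{n \to \infty} \int_{\X} \varphi \, |\nabla \h_t f_n| \, \d \mu,
\]
which completes the argument. Taking $\varphi \equiv 1$ and invoking $\h_t 1 = 1$ from \eqref{ht-uno} then yields $\| |\nabla \h_t f| \|_{L^{1}(\X)} \le e^{-Kt} \|Df\|(\X) < \infty$.

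For the extension to non-negative $\varphi \in L^{\infty}(\X)$ under $\supp(\mu) = \X$, I would mollify by setting $\varphi_n := \h_{1/n} \varphi$, which belongs to $C_b(\X)$, is non-negative, and satisfies $\|\varphi_n\|_\infty \le \|\varphi\|_\infty$. Applying the already established $C_b$ case to $\varphi_n$, the left-hand side rewrites as $\int \varphi \cdot \h_{1/n}(|\nabla \h_t f|) \, \d \mu$ by self-adjointness and converges to $\int \varphi |\nabla \h_t f| \, \d \mu$ by strong continuity of $\h_s$ at $s = 0$ on $L^{1}(\X)$ applied to $|\nabla \h_t f| \in L^{1}(\X)$; the right-hand side equals $e^{-Kt} \int \h_{t + 1/n} \varphi \, \d \|Df\|$ and converges to $e^{-Kt} \int \h_t \varphi \, \d \|Df\|$ by dominated convergence, using that $\h_{t+1/n} \varphi \to \h_t \varphi$ pointwise on $\X$ (thanks to $\supp(\mu) = \X$ and the continuity of $\h_t \varphi$) with uniform bound $\|\varphi\|_\infty$. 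The main technical obstacle is the lower-semicontinuity step in the $C_b$ case, since one has to reconcile the $L^{2}$-based closure theory of the differential with the $L^{1}$-level convergences at play; however, the uniform Lipschitz bound on each $\h_t f_n$ coming from the first step makes this a routine consequence of the closure theorem.
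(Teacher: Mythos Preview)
Your argument is correct and follows the same overall strategy as the paper---apply the self-improved Bakry--\'Emery inequality \eqref{eq:BE} to a $W^{1,2}$ approximation of $f$, move $\h_t$ onto $\varphi$ by self-adjointness, and pass to the limit---but the implementations differ in one point worth noting. The paper does not introduce an external approximating sequence $(f_n)$; instead it exploits the semigroup identity $\h_{t+s}f=\h_t(\h_s f)$, applies \eqref{eq:BE} directly to $\h_s f\in W^{1,2}(\X)$, and then uses the weak convergence $|\nabla \h_s f|\,\mu \weakto \|Df\|$ as $s\to 0$ (obtained from \cite[Proposition~5.2]{mms2} and \cite[Proposition~1.80]{afp}) to handle the right-hand side. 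On the left-hand side both proofs require the same lower-semicontinuity step $\int_\X \varphi\,|\nabla \h_t f|\,\d\mu \le \liminf \int_\X \varphi\,|\nabla(\cdot)|\,\d\mu$; your appeal to the closure theorem for the differential is not quite the right reference here---the natural tool is the lower semicontinuity of the total variation on open sets (plus a layer-cake/Fatou argument to get the weighted version), or equivalently the weak lower semicontinuity of the convex functional $V\mapsto \int_\X \varphi\,|V|\,\d\mu$ on $L^2(T\X)$, once you have observed that the uniform Lipschitz and $L^1$ gradient bounds place $\nabla \h_t f_n$ in a bounded set of $L^2(T\X)$. Your treatment of the extension to non-negative $\varphi\in L^\infty(\X)$ via $\h_{1/n}\varphi$ coincides with the paper's, and your justification of the left-hand limit there (self-adjointness plus strong $L^1$-continuity of $\h_s$ applied to $|\nabla \h_t f|\in L^1(\X)$) is in fact more explicit than what the paper writes.
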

\begin{proof}
Since $f \in L^{1}(\X) \cap L^{\infty}(\X)$, it is clear that $f \in L^{2}(\X)$, so that $\h_{t} f$ is well defined. The $L^{\infty}$--estimate on $\text{h}_{t}f$ comes from (\ref{eq:ht-contraction}). As a consequence, \eqref{eq:BE-1} together with \cite[Theorem 6.2]{ags2} implies $\text{h}_{t}f\in\text{Lip}_{b}({\X})$.

By (\ref{eq:BE}) and the semigroup property, for any $s,t>0$
we have $\left|D\text{h}_{t+s}f\right|\le e^{-Kt}\text{h}_{t}\left(\left|D\text{h}_{s}f\right|\right)$.
Hence, for any non--negative $\varphi\in C_b({\X})$, we get
\begin{equation}
\int_{{\X}}\varphi\left|D\text{h}_{t+s}f\right|\text{d}\mu\le e^{-Kt}\int_{{\X}}\varphi\text{h}_{t}\left|D\text{h}_{s}f\right|\text{d}\mu=e^{-Kt}\int_{{\X}}\left(\text{h}_{t}\varphi\right)\left|D\text{h}_{s}f\right|\text{d}\mu,\label{eq:BE+semigroup}
\end{equation}
where we explicitly used the self--adjointness of the heat flow.

By \cite[Proposition 5.2]{mms2}, if $f\in BV({\X})$ then 
\begin{equation*}
\int_{\X} \left|D\text{h}_{s}f\right| \, \d \mu \rightarrow \|Df\|({\X})
\end{equation*}
as $s\rightarrow0$. Moreover, the lower semicontinuity of the total
variation yields
\[
\underset{s\rightarrow0}{\lim\inf} \int_{A} \left|D\text{h}_{s}f\right| \, \d \mu \ge\|Df\|(A)
\]
for any open set $A\subset{\X}$. Now, \cite[Proposition 1.80]{afp}
entails the convergence $\left|D\text{h}_{s}f\right| \mu \rightharpoonup\|Df\|$
as $s\rightarrow0$. Therefore, we can pass to the limit as $s\rightarrow0$
in the inequality (\ref{eq:BE+semigroup}) to get \eqref{eq:BE-BV-dual} for any non--negative $\varphi \in C_b({\X})$.

Now, we can take the supremum over $\varphi\in C_b({\X})$,
$0\le\varphi\le1$, to get
\[
\left\Vert D\text{h}_{t}f\right\Vert _{L^{1}(\X)}\le e^{-Kt}\|Df\|({\X}),
\]
which in turn gives $\left|D\text{h}_{t}f\right|\in L^{1}(\X)$. Finally, in the case $\supp(\mu) = \X$, let us consider a non--negative $\varphi \in L^{\infty}(\X)$: for any $\eps > 0$, we have $\h_{\eps} \varphi \in C_b(\X)$, by the $L^{\infty}$--contractivity property, and so we get
\begin{equation*}
\int_{{\X}}\h_{\eps} \varphi \left|D \h_{t}f\right| \, \d \mu\le e^{-Kt}\int_{{\X}} \h_{t+ \eps}\varphi  \, \d \|Df\| 
\end{equation*}
for any $t > 0$. Hence, by taking the limit as $\eps \to 0$, we get \eqref{eq:BE-BV-dual}.
\end{proof}

We notice that, in the case $\varphi = 1$, \eqref{eq:BE-BV-dual} was already proved in \cite[Proposition 1.6.3]{ambhonda} under the more general assumption that $f$ is only a function of bounded variation.

\begin{lemma} \label{lem:pairing_X_f}
Let $X \in L^{\infty}(T\X)$ and $f\in L^{\infty}(\X)\cap BV(\X)$. Then the family of measures $(\d (\h_{t}f)(X) \mu)_{t > 0}$ satisfies
\begin{equation} \label{eq:pairing_X_f_bound}
\int_{{\X}} \left | \d \left(\h_{t}f\right)(X) \right| \, \d \mu \le \left\Vert |X| \right\Vert _{L^{\infty}(\X)} e^{-Kt}\|Df\|({\X}),
\end{equation}
and any of its weak limit point $\bm{D}f(X) \in \mathbf{M}(\X)$ satisfies 
\begin{equation}
\left|\bm{D}f(X)\right|\le\left\Vert |X|\right\Vert _{L^{\infty}(\X)}\|Df\|.\label{eq:abscont-pairing}
\end{equation}
\end{lemma}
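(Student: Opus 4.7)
The plan is to derive both inequalities by combining the pointwise module-duality bound with the dual Bakry--\'Emery estimate of Lemma \ref{lem:heat_flow_lip_functions}. Since $f\in L^\infty(\X)\cap BV(\X)$, that lemma guarantees $\h_{t}f \in \Lip_b(\X)$ with $|\nabla \h_{t}f|\in L^1(\X)$, so $\d(\h_t f) \in L^1(T^*\X)$; the duality of Remark \ref{duality} between $L^\infty(T\X)$ and $L^1(T^*\X)$ then yields the pointwise bound
\begin{equation*}
|\d(\h_t f)(X)| \le |\d(\h_t f)|_* \, |X| = |\nabla \h_t f| \, |X| \qquad \mu\text{-a.e. on } \X,
\end{equation*}
which is the common ingredient for both claims.

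For \eqref{eq:pairing_X_f_bound}, I would integrate this pointwise inequality and apply \eqref{eq:BE-BV-dual} with the constant test function $\varphi\equiv 1 \in C_b(\X)$, invoking $\h_t 1 = 1$ from \eqref{ht-uno}, to obtain
\begin{equation*}
\int_\X |\d(\h_t f)(X)|\,\d\mu \le \||X|\|_{L^\infty(\X)}\int_\X |\nabla \h_t f|\,\d\mu \le \||X|\|_{L^\infty(\X)} e^{-Kt}\|Df\|(\X).
\end{equation*}
In particular, the family $\{\d(\h_t f)(X)\mu\}_{t>0}$ is uniformly bounded in $\M(\X)$, and weak limit points $\bm{D}f(X)$ exist by standard compactness.

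For \eqref{eq:abscont-pairing}, I would fix a sequence $t_j\to 0$ with $\d(\h_{t_j}f)(X)\mu \weakto \bm{D}f(X)$. For every non-negative $\varphi\in C_\bs(\X)$, the pointwise bound and a second use of \eqref{eq:BE-BV-dual} give
\begin{equation*}
\int_\X \varphi\,|\d(\h_{t_j} f)(X)|\,\d\mu \le \||X|\|_{L^\infty(\X)}\, e^{-Kt_j}\int_\X \h_{t_j}\varphi\,\d\|Df\|.
\end{equation*}
Lower semicontinuity of the total variation under weak convergence yields
\begin{equation*}
\int_\X \varphi\,\d|\bm{D}f(X)| \le \liminf_{j\to\infty}\int_\X \varphi\,|\d(\h_{t_j} f)(X)|\,\d\mu,
\end{equation*}
and to close the argument I would verify that $\int_\X \h_{t_j}\varphi\,\d\|Df\| \to \int_\X \varphi\,\d\|Df\|$: the $L^\infty$--$C_b(\supp(\mu))$ contractivity of $\h_t$ recorded in Section \ref{rcdht} gives the uniform bound $\|\h_{t_j}\varphi\|_\infty \le \|\varphi\|_\infty$ together with the pointwise convergence $\h_{t_j}\varphi \to \varphi$ on $\supp(\mu)$, and since $\|Df\|$ is finite and concentrated in $\supp(\mu)$, dominated convergence closes the step. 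Chaining the inequalities produces
\begin{equation*}
\int_\X \varphi\,\d|\bm{D}f(X)| \le \||X|\|_{L^\infty(\X)}\int_\X \varphi\,\d\|Df\|
\end{equation*}
for every non-negative $\varphi\in C_\bs(\X)$, and outer regularity of Radon measures upgrades this to the measure inequality \eqref{eq:abscont-pairing}.

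The main obstacle is justifying the limit $\int_\X \h_{t_j}\varphi\,\d\|Df\|\to\int_\X \varphi\,\d\|Df\|$: because $\|Df\|$ is typically singular with respect to $\mu$ (as for $f=\uno_E$ with $E$ of finite perimeter, cf.\ Remark \ref{rem:prop_set_fin_per}), neither the $L^2(\X)$-convergence coming from gradient-flow theory nor a subsequential $\mu$-a.e.\ convergence is usable, and one must exploit the genuinely $\rcd$-type Feller behaviour of $\h_t$ on $C_b(\X)$ recalled in Section \ref{rcdht}.
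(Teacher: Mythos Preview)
Your proof is correct and follows essentially the same approach as the paper's: both use the pointwise duality bound $|\d(\h_t f)(X)|\le |X|\,|\nabla\h_t f|$ together with \eqref{eq:BE-BV-dual}, and both reduce \eqref{eq:abscont-pairing} to the convergence $\int_\X \h_{t_j}\varphi\,\d\|Df\|\to\int_\X \varphi\,\d\|Df\|$. The only cosmetic difference is that the paper bounds $\left|\int_\X \varphi\,\d\bm{D}f(X)\right|$ directly rather than passing through lower semicontinuity of the total variation, and it simply writes the limit as an equality without the justification you supply; your explicit use of the Feller property and dominated convergence against $\|Df\|$ is in fact more careful than the paper on this point (though strictly speaking the pointwise convergence $\h_{t_j}\varphi\to\varphi$ on $\supp(\mu)$ is a separate $\rcd$ fact, not a consequence of contractivity alone).
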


\begin{proof}
By \eqref{eq:BE-BV-dual}, for any $\varphi \in C_b(\X)$ and $t > 0$, we get
\begin{align*}
\left|\int_{{\X}} \varphi \, \text{d}\left(\text{h}_{t}f\right)(X)\text{d}\mu\right| & \le\left\Vert |X|\right\Vert _{L^{\infty}(\X)}\int_{{\X}}|\varphi|\left|D\text{h}_{t}f\right|\text{d}\mu\\
 & \le\left\Vert \varphi \right\Vert _{L^{\infty}(\X)}\left\Vert |X|\right\Vert _{L^{\infty}(\X)}e^{-Kt}\|Df\|({\X}),
\end{align*}
which implies \eqref{eq:pairing_X_f_bound}, by passing to the supremum on $\varphi \in C_{b}(\X)$ with $\|\varphi\|_{L^{\infty}(\X)} \le 1$. This means that the family of measures $\text{d}\left(\text{h}_{t}f\right)(X) \mu$
is uniformly bounded in ${\bf M}({\X})$. In addition, this family of measures is uniformly tight: indeed, arguing as above we can see that $\|\text{d}\left(\text{h}_{t}f\right)(X) \mu\| \le \left\Vert |X|\right\Vert _{L^{\infty}(\X)} \| (D h_{t} f) \mu\|$, and the family $(\|(D h_{t} f) \mu\|)_{t > 0}$ is uniformly tight by Theorem \ref{thm:bog_weak_conv}, since $|(D h_{t} f)| \mu \rightharpoonup \|Df\|$ as $t \to 0^+$, by Lemma \ref{lem:heat_flow_lip_functions}. Hence, by Theorem \ref{thm:bog_weak_conv} we conclude that there exists a measure $\bm{D}f(X)$ and a positive vanishing sequence $\left(t_{j}\right)_{j\in\mathbb{N}}$ such that
\[
\text{d}\text{h}_{t_{j}}f(X) \mu \rightharpoonup \bm{D}f(X)\;\text{in}\;{\bf M}({\X}).
\]
In order to prove (\ref{eq:abscont-pairing}), we choose
a non--negative $\varphi\in C_b({\X})$ and we employ (\ref{eq:BE-BV-dual})
once more to get
\begin{align*}
\left|\int_{{\X}}\varphi\,\text{d}\bm{D}f(X)\right| & =\underset{j\rightarrow\infty}{\lim}\left|\int_{{\X}}\varphi\,\text{d}\left(\text{h}_{t_{j}}f\right)(X)\text{d}\mu\right|\\
 & \le\underset{j\rightarrow\infty}{\lim\inf}\int_{{\X}}\varphi\left|\text{d}\left(\text{h}_{t_{j}}f\right)(X)\right|\text{d}\mu\\
 & \le\underset{j\rightarrow\infty}{\lim\inf}\,e^{-Kt_{j}}\left\Vert |X|\right\Vert _{L^{\infty}(\X)}\int_{{\X}}\text{h}_{t_{j}}\left(\varphi\right)\text{d}\|Df\|\\
 & =\left\Vert |X|\right\Vert _{L^{\infty}(\X)}\int_{{\X}}\varphi\,\text{d}\|Df\|.
\end{align*}
\end{proof}

\begin{theorem} \label{thm:Leibniz_DM_BV} Let $X\in{\cal DM}^{\infty}({\X})$
and $f\in L^{\infty}(\X)\cap BV(\X)$, then $fX\in \DM^{\infty}({\X})$.
In addition, there exists a non--negative sequence $\left(t_{j}\right)_{j\in\mathbb{N}}$
with $t_{j}\searrow0$ such that 
\begin{equation*}
\text{h}_{t_{j}}f\overset{*}{\rightharpoonup}\tildef{f} \text{ in } L^{\infty}({\X},\|\div(X)\|),
\end{equation*} 
\begin{equation} \label{eq:pairing_w_conv}
{\rm d}{\rm h}_{t_{j}}f(X) \mu \rightharpoonup \bm{D}f(X) \text{ in } {\bf M}({\X}),
\end{equation} 
and it holds 
\begin{equation}
\div(fX)=\tildef{f}\div(X)+\bm{D}f(X), \label{eq:LeibnizDM-BV}
\end{equation}
where the measure $\bm{D}f(X)$ satisfies \eqref{eq:abscont-pairing}. In addition, if we assume $\|\div(X)\| \ll \mu$, then we have 
\begin{equation} \label{eq:tilde_f_abs_cont}
\tildef{f}(x) = f(x)  \text{ for } \|\div(X)\|\text{-a.e. } x \in \X,
\end{equation} 
and there exists a unique $\bm{D}f(X) \in \M(\X)$ satisfying 
\begin{equation*}
{\rm d}{\rm h}_{t}f(X) \mu \rightharpoonup \bm{D}f(X) \text{ as } t \searrow 0 \text{ in } {\bf M}({\X})
\end{equation*}
and \eqref{eq:abscont-pairing}. In this case, it holds
\begin{equation}
\div(fX)= f \div(X)+\bm{D}f(X). \label{eq:LeibnizDM-BV_abs_cont}
\end{equation}
\end{theorem}

\begin{proof} 
We notice again that, since $f \in L^{1}(\X) \cap L^{\infty}(\X)$, it is clear that $f \in L^{2}(\X)$. Then, we start by approximating $f$ via $\text{h}_{t}f$. 
Thanks to Lemma \ref{lem:heat_flow_lip_functions}, we know that $\h_{t} f \in \Lip_{b}(\X)$ and $|D \h_{t} f| \in L^{1}(\X)$, so that we can employ \eqref{eq:Leibniz-Lip} to obtain 
\begin{align}
-\int_{{\X}}\text{d}g\left(\left(\text{h}_{t}f\right)X\right)\text{d}\mu & =\int_{{\X}}g \, \text{d} \div\left(\left(\text{h}_{t}f\right)X\right)\nonumber \\
 & =\int_{{\X}}\left(\text{h}_{t}f\right)g \, \text{d} \div(X) +\int_{{\X}} g \d (\text{h}_{t}f)(X) \, \text{d}\mu \label{eq:Leib-dual-approx}
\end{align}
for any $g \in \Lip_{\bs}(\X)$.
By the $L^{2}$--convergence $\text{h}_{t}f\rightarrow f$ as $t \searrow 0$
and by (\ref{eq:d-Linearity}), we get
\[
\int_{{\X}}\text{d}g\left(\left(\text{h}_{t}f\right)X\right)\text{d}\mu=\int_{{\X}}\left(\text{h}_{t}f\right)\text{d}g(X)\text{d}\mu\rightarrow\int_{{\X}}f\text{d}g(X)\text{d}\mu=\int_{{\X}}\text{d}g(fX)\text{d}\mu.
\]

Using (\ref{eq:ht-contraction}), we have $\left|\text{h}_{t}f(x)\right| \le\left\Vert f\right\Vert _{L^{\infty}(\X)}$ for any $x \in \X$.
In particular, $\left(\text{h}_{t}f\right)_{t\ge0}$ is uniformly
bounded in $L^{\infty}({\X},\|\div(X)\|)$. Hence, there exist $\tildef{f}\in L^{\infty}({\X},\|\div(X)\|)$
and a positive sequence $t_{j}\searrow0$ such that $\text{h}_{t_{j}}f\overset{*}{\rightharpoonup}\tildef{f}$
in $L^{\infty}({\X},\|\div(X)\|)$. This yields
\[
\int_{{\X}}\left(\text{h}_{t_{j}}f\right)g \, \text{d} \div(X) \rightarrow\int_{{\X}}\tildef{f}g \, \text{d}\div(X)
\]
as $j\rightarrow\infty$. Finally, Lemma \ref{lem:pairing_X_f} implies that there exists a finite Radon measure $\bm{D}f(X)$ satisfying \eqref{eq:abscont-pairing} and such that, up to extracting a further subsequence, 
\begin{equation*}
\int_{{\X}}g\, \text{d}\left(\text{h}_{t_{j}}f\right)(X)\text{d}\mu \to \int_{\X} g \, \d\bm{D}f(X).
\end{equation*}
All in all, we obtain $Xf\in{\cal DM}^{\infty}({\X})$ and \eqref{eq:LeibnizDM-BV}. 
Finally, in the case $\|\div(X)\| \ll \mu$, we immediately get
\begin{equation*}
\int_{\X} g \, \text{h}_{t}f \d \div(X) \to \int_{\X} g \, f \d \div(X) \text{ for all } g \in \Lip_{\bs}(\X), 
\end{equation*}
thanks to the $L^{2}$-convergence $\text{h}_{t}f\rightarrow f$ as $t \searrow 0$. This proves \eqref{eq:tilde_f_abs_cont}. Then, \eqref{eq:Leib-dual-approx} implies that, for any sequence $(t_{j})_{j \in \N}$ for which \eqref{eq:pairing_w_conv} holds, we get
\begin{align*}
 \int_{\X} g \, \d\bm{D}f(X) & = \lim_{j \to + \infty} \int_{{\X}} g \d (\text{h}_{t_{j}}f)(X) \, \text{d}\mu \\
 & = \lim_{j \to + \infty} \int_{{\X}}\left(\text{h}_{t_{j}}f\right)g \, \text{d} \div(X) +\int_{{\X}}\text{d}g\left(\left(\text{h}_{t_{j}}f\right)X\right)\text{d}\mu \\
& = \lim_{t \searrow 0} \int_{{\X}}\left(\text{h}_{t}f\right)g \, \text{d} \div(X) +\int_{{\X}}\text{d}g\left(\left(\text{h}_{t}f\right)X\right)\text{d}\mu \\
& = \int_{{\X}} g \, f \text{d} \div(X) +\int_{{\X}}\text{d}g\left(f X\right)\text{d}\mu = \int_{{\X}} g \, f \text{d} \div(X) - \int_{{\X}} g \, \d \div (f X ).
 \end{align*}
This immediately implies that $\bm{D}f(X)$ is unique and \eqref{eq:LeibnizDM-BV_abs_cont} holds. In addition, we get 
\begin{equation*}
{\rm d}{\rm h}_{t}f(X) \mu = \div(h_{t}(f) X) - \text{h}_{t}f \div(X) \rightharpoonup \div(f X) - f \div(X) = \bm{D}f(X) \text{ as } t \searrow 0,
\end{equation*}
and this ends the proof.
\end{proof}

\section{Gauss--Green formulas on sets of finite perimeter in $\rcd$ spaces} \label{GGformulas}

From this point onwards, we shall assume $(\X,d,\mu)$ to be a locally compact $\rcd$ space, unless otherwise stated.

In this section we formally introduce the notion of interior and exterior distributional normal traces on the boundary of sets of finite perimeter (which may be in general different from those defined in Theorems \ref{gauss-geod} and \ref{thm:IBP_ext_reg}) for a divergence--measure field, in order to achieve the Gauss--Green and integration by parts formulas. Following the Euclidean approach (see for instance \cite[Theorem 3.2]{comi2017locally}), we define these distributional normal traces as the densities of suitable pairings involving the given field and the characteristic function of the set with respect to the perimeter measure of the set itself. Even though in the general case these traces are not uniquely determined, since we do not have the uniqueness of the pairing term in the Leibniz rule (Theorem \ref{thm:Leibniz_DM_BV}), we shall also consider some additional assumptions which allow us to obtain a unique distributional normal trace.

\subsection{Distributional normal traces and Leibniz rules for characteristic functions} \label{sec:refined_Leibniz_rule}

\medskip

Let $X \in \DM^{\infty}(\X)$ and let $E \subset \X$ be a measurable set such that $\uno_{E} \in BV(\X)$ or $\uno_{E^{c}} \in BV(\X)$. By Remark \ref{rem:prop_set_fin_per}, we know that $\|D \uno_{E}\|$ is well defined and equal to $\|D \uno_{E^c}\|$. Let $\left(\h_{t}\right)_{t\ge0}$ denote as usual the heat flow: by Lemma \ref{lem:pairing_X_f}, there exists a sequence $t_{j} \to 0$ and two measures $\bm{D} \uno_{E}(\uno_{E} X), \bm{D} \uno_{E}(\uno_{E^{c}} X)$ such that
\begin{equation} \label{eq:pairing_X_E_conv_int_ext}
\begin{array}{c} 
\d (\h_{t_{j}} \uno_{E})(X_E) \mu \weakto \bm{D} \uno_{E}(X_E) \\
\d (\h_{t_{j}} \uno_{E})(X_{E^c}) \mu \weakto \bm{D} \uno_{E}(X_{E^c}) 
\end{array}\quad \text{in $\; \mathbf{M}(\X)$,}
\end{equation}

where we have set $X_E\coloneqq\uno_E X$ and $X_{E^c}\coloneqq\uno_{E^c}X$, a notation which we shall keep throughout the remaining sections of the paper.

Again by Lemma \ref{lem:pairing_X_f}, these weak limit measures satisfy the following estimates:
\begin{equation}\label{eq:pairing_X_E_bound}
|\bm{D} \uno_{E}(X_E)| \le \||X|\|_{L^{\infty}(E)} \|D \uno_{E}\|, \ \ \text{and} \ \ |\bm{D} \uno_{E}(X_{E^c})| \le  \||X|\|_{L^{\infty}(E^{c})} \|D \uno_{E}\|.
\end{equation}

We observe that, since the pairing measures $\bm{D}\mathds1_{E}(\cdot)$
are absolutely continuous with respect to the perimeter measure, thanks to \eqref{eq:pairing_X_E_bound}, we
are entitled to consider their densities. Thus, we define the \textit{interior
and exterior distributional normal traces} of $X$ on $\partial E$ as the functions $\ban{X, \nu_{E}}_{\partial E}^{-}$ and $\ban{X, \nu_{E}}_{\partial E}^{+}$ in $L^{\infty}(\partial E, \|D \uno_{E}\|)$
satisfying
\begin{equation}
2\bm{D}\mathds1_{E}\left(X_E\right)= \ban{X, \nu_{E}}_{\partial E}^{-}\|D\mathds1_{E}\|\label{eq:int-trace-per}
\end{equation}
and
\begin{equation}
2\bm{D}\mathds1_{E}\left(X_{E^c}\right)=\ban{X, \nu_{E}}_{\partial E}^{+}\|D\mathds1_{E}\|.\label{eq:ext-trace-per}
\end{equation}

In addition, since $\h_{t_{j}}\uno_{E}$ is uniformly bounded in $L^{\infty}(\X, \|\div(X)\|)$, up to extracting a subsequence, we can assume that there exists $\tildef{\uno_{E}} \in L^{\infty}(\X, \|\div(X)\|)$ such that 
\begin{equation} \label{eq:tildef_uno_E_weak_def}
\h_{t_{j}}\uno_{E} \weakstarto \tildef{\uno_{E}} \ \ \text{in} \ \ L^{\infty}(\X, \|\div(X)\|).
\end{equation}
We notice that, since $\tildef{\uno_{E}}$ is $\|\div(X)\|$--measurable, then it coincides with a Borel measurable function outside of a $\|\div(X)\|$--negligible set. Therefore, without loss of generality, we shall always choose $\tildef{\uno_{E}}$ to be Borel measurable.
Since $\tildef{\mathds1_{E}}$ is a representative of $\mathds1_{E}$,
it is relevant to consider the level sets of this function; that is,
\begin{equation} \label{def:E_s}
\tildef{E^{s}}:=\left\{ \tildef{\uno_{E}}= s \right\} .
\end{equation}
It is clear that $\tildef{E^{s}}$ is a Borel set for any $s \in [0, 1]$. In particular, $\tildef{E^{1}}$ and $\tildef{E^{0}}$ can be seen
as weaker ``versions'' of the measure--theoretic interior and exterior
of $E$. It seems then natural to define a measure theoretic boundary related to $\tildef{\uno_{E}}$ as the Borel set $$\tildef{\partial^{*} E} := \X \setminus (\tildef{E^{1}} \cup \tildef{E^{0}}).$$

We stress the fact that the notions of distributional normal traces of $X$ on the boundary of $E$ and representative of $E$ introduced above are heavily dependent on the choice of the sequence $(t_{j})_{j \in \N}$. In the following, we will always consider the sequence $(t_{j})$, or suitable subsequences, along which \eqref{eq:pairing_X_E_conv_int_ext} and \eqref{eq:tildef_uno_E_weak_def} hold.

In the following remark we show some easy relations between $\tildef{\uno_{E}}$ and $\tildef{\uno_{E^{c}}}$, and the distributional normal traces of an essentially bounded divergence--measure field on the boundary of $E$ and $E^{c}$.

\begin{remark} \label{rem:normal_traces_compl} Let $X \in \DM^{\infty}(\X)$ and $E \subset \X$ be a measurable set such that $\uno_{E} \in BV(\X)$ or $\uno_{E^{c}} \in BV(\X)$. Let $t_{j} \to 0$ be a sequence satisfying $\h_{t_{j}} \uno_{E} \weakstarto \tildef{\uno_{E}}$ and $\h_{t_{j}} \uno_{E^{c}} \weakstarto \tildef{\uno_{E^{c}}}$ in $L^{\infty}(\X, \|\div(X)\|)$ (up to extracting a subsequence, this always holds true). Then, we observe that
\begin{equation*}
\h_{t_{j}} \uno_{E} = 1 - \h_{t_{j}} \uno_{E^{c}}.
\end{equation*}
It is then clear that 
\begin{equation} \label{eq:E_tilde_compl}
\tildef{\uno_{E}} = 1 - \tildef{\uno_{E^{c}}},
\end{equation}
which implies
\begin{equation} \label{eq:E_E_compl_tilde}
\tildef{E^{1}} = \tildef{(E^{c})^{0}}, \ \ \tildef{E^{0}} = \tildef{(E^{c})^{1}}, \ \ \tildef{E^{1/2}} = \tildef{(E^{c})^{1/2}}, \ \ \tildef{\partial^{*} E} = \tildef{\partial^{*} E^c}.
\end{equation}
In addition, since 
\begin{equation*} 
\text{d}\left(\text{h}_{t_{j}}\mathds1_{E^{c}}\right)\left(X\right) = - \d(\h_{t_{j}} \uno_{E})(X),
\end{equation*}
it follows that
\begin{equation} \label{eq:E_compl_pairing_X}
\bm{D}\uno_{E^{c}}(X) = - \bm{D} \uno_{E}(X) 
\end{equation}
in the sense of Radon measures. Arguing analogously, we also obtain
\begin{equation*}
\bm{D} \uno_{E}(X_E) = - \bm{D} \uno_{E^{c}}(X_E) \quad \text{and} \quad \bm{D} \uno_{E}( X_{E^c}) = - \bm{D} \uno_{E^{c}}(X_{E^c}).
\end{equation*}
This easily implies the following relations between the distributional normal traces on $E$ and $E^c$:
\begin{equation*}
\ban{X, \nu_{E}}^{-}_{\partial E} = - \ban{X, \nu_{E^{c}}}^{+}_{\partial E^{c}} \ \ \text{and} \ \  \ban{X, \nu_{E}}^{+}_{\partial E} = - \ban{X, \nu_{E^{c}}}^{-}_{\partial E^{c}}.
\end{equation*}
We also notice that the linearity property of the pairing $\bm{D}\uno_{E}(\cdot)$ implies
\begin{equation} \label{eq:linearity_pairing_uno_E}
\bm{D}\mathds1_{E}(X) =\bm{D}\mathds1_{E}\left(\left(\mathds1_{E}+\mathds1_{E^{c}}\right)X\right)  =\bm{D}\uno_{E}\left(X_E\right)+\bm{D}\uno_{E}\left(X_{E^c}\right),
\end{equation}
from which, thanks to \eqref{eq:int-trace-per} and \eqref{eq:ext-trace-per}, we get
\begin{equation} \label{eq:linearity_pairing_uno_E_traces}
\bm{D}\mathds1_{E}(X) = \frac{\ban{X, \nu_{E}}^{-}_{\partial E} + \ban{X, \nu_{E}}^{+}_{\partial E}}{2} \| D \uno_{E} \|.
\end{equation}
\end{remark}

In the case of a Borel measurable set $E$ it is actually possible to characterize the sets $\tildef{E^{1}}$ and $\tildef{E^{0}}$ in terms of $E$ and its complementary.

\begin{remark}\label{w-lim} Let $E$ be a Borel measurable set such that $\uno_{E} \in BV(\X)$ or $\uno_{E^{c}} \in BV(\X)$.  Since it obviously holds $\h_t\uno_{E^c}=1-\h_t\uno_E$, without loss of generality we can assume $\uno_E\in BV(\X)$, which implies $\h_t\uno_E\in L^2(\X)$ and $\h_t\uno_E\to\uno_E$ in $L^2(\X)$  as $t\to 0$.

If we now take $X\in\mathcal{DM}^\infty(\X)$  and a positive vanishing sequence $\left(t_j\right)_{j \in \N}$ such that $\h_{t_j}\uno_E\overset{*}{\rightharpoonup}\widetilde{\uno_E}$ in $L^\infty\left(\X,\|\div(X)\|\right)$, by the $L^2$--convergence we have, up to some subsequence, the pointwise convergence $\h_{t_j}\uno_E(x)\to\uno_E(x)$ for $\mu$--almost every $x\in\X$.

Hence, there exists a Borel set $\mathcal{N}_{E}$ such that $\mu(\mathcal{N}_{E}) = 0$ and $\h_{t_{j}} \uno_{E}(x) \to \uno_{E}(x)$ for any $x \notin \mathcal{N}_{E}$. Therefore, for any $\psi \in L^{1}(\X, \|\div(X)\|)$ we have
\begin{equation*}
\int_{\X} \psi \, (\h_{t_{j}}\uno_{E}) \, \d \|\div(X)\| \to \int_{\X} \psi \, \tildef{\uno_{E}} \, \d \|\div(X)\|
\end{equation*}
and
\begin{equation*}
\int_{\X \setminus \mathcal{N}_{E}} \psi \, (\h_{t_{j}}\uno_{E}) \, \d \|\div(X)\| \to \int_{\X \setminus \mathcal{N}_{E}} \psi \, \uno_{E} \, \d \|\div(X)\|, 
\end{equation*}
by Lebesgue's Dominated Convergence Theorem. We stress the fact that $\uno_{E}$ is measurable with respect to $\|\div(X)\|$, since $E$ is a Borel set. This means that 
\begin{equation*}
\tildef{\uno_{E}}(x) = \uno_{E}(x) \ \text{for} \ \|\div(X)\|\text{--a.e.} \ x \notin \mathcal{N}_{E},
\end{equation*}
and so we get that $E \setminus \mathcal{N}_{E} = \tildef{E^{1}} \setminus \mathcal{N}_{E}$, up to a $\|\div(X)\|$--negligible set. Thanks to \eqref{eq:E_E_compl_tilde}, we can argue similarly with $E^{c}$ and $\tildef{E^{0}}$, so that we obtain 
\begin{equation*}
\|\div(X)\|\left ( (E \Delta \tildef{E^{1}}) \setminus \mathcal{N}_{E} \right ) = 0 \ \text{and} \ \|\div(X)\|\left ( (E^{c} \Delta \tildef{E^{0}}) \setminus \mathcal{N}_{E} \right ) = 0.
\end{equation*}
In addition, 
\begin{equation*}
\tildef{\partial^{*} E} \setminus \mathcal{N}_{E} = \X \setminus ( \tildef{E^{1}} \cup \tildef{E^{0}} \cup \mathcal{N}_{E}) \subset \left ( E \setminus (\tildef{E^{1}} \cup \mathcal{N}_{E}) \right ) \cup \left ( E^{c} \setminus (\tildef{E^{0}} \cup \mathcal{N}_{E}) \right ),
\end{equation*}
and so we get
\begin{equation*} 
\|\div(X)\|\left(\tildef{\partial^{*}E} \setminus \mathcal{N}_{E} \right) = 0.
\end{equation*}
Thus, we conclude that $\tildef{E^{1}}$ and $\tildef{E^{0}}$ are the representatives of the sets $E$ and $E^{c}$ with respect to the measure $\|\div(X)\| \res \X \setminus \mathcal{N}_{E}$, respectively.
\end{remark}

We employ now these remarks to obtain a refinement of the Leibniz rule for characteristic functions of Caccioppoli sets. 

As a preliminary, we observe that $\h_{t_{j}} \uno_{E}$ is uniformly bounded in $L^{\infty}(\partial E, \|D \uno_{E}\|)$. Hence, there exists a function $\hatE \in L^{\infty}(\partial E, \|D \uno_{E}\|)$ such that, up to a subsequence, 
\begin{equation} \label{weak_star_conv_perimeter_uno_E}
\h_{t_{j}} \uno_{E} \weakstarto \hatE \ \ \text{in} \ L^{\infty}(\partial E, \|D \uno_{E}\|). 
\end{equation}
As in the case of $\tildef{\uno_{E}}$, there exists a Borel representative of $\hatE$ which coincides with it up to a $\|D\uno_{E}\|$-negligible set, and, unless otherwise stated, we shall always consider this representative.

\begin{theorem} \label{thm:Leibniz_rule_E_1} 
Let $X\in{\cal DM}^{\infty}({\X})$ and let $E\subset\X$
be a measurable set such that $\uno_{E} \in BV(\X)$ or $\uno_{E^{c}} \in BV(\X)$. Then, we have $\mathds1_{E}X\in{\cal DM}^{\infty}({\X})$ and the following formulas hold
\begin{equation}
\div\left(X_E\right)=\tildef{\mathds1_{E}}\div(X)+\bm{D}\mathds1_{E}(X),\label{eq:Leib1}
\end{equation}
\begin{equation}
\div\left(X_E\right)=\left(\tildef{\mathds1_{E}}\right)^{2}\div(X)+ \hatE\bm{D}\mathds1_{E}(X)+\bm{D}\mathds1_{E}\left(X_E\right),\label{eq:Leib2}
\end{equation}
\begin{equation}
\tildef{\mathds1_{E}}\left(1-\tildef{\mathds1_{E}}\right)\div(X)= \hatE \bm{D}\mathds1_{E}\left(X_E\right) - (1 - \hatE) \bm{D}\mathds1_{E}\left(X_{E^c}\right).\label{eq:div-boundary}
\end{equation}
\end{theorem}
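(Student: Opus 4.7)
My plan is to derive the three identities in the order (\ref{eq:Leib1}), (\ref{eq:Leib2}), (\ref{eq:div-boundary}); the last will follow algebraically from the first two.

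First I would obtain (\ref{eq:Leib1}) as a direct application of Theorem~\ref{thm:Leibniz_DM_BV} to the scalar $f = \uno_{E} \in L^{\infty}(\X) \cap BV(\X)$, which also delivers $X_{E} \in \DM^{\infty}(\X)$ together with the convergences (\ref{eq:pairing_X_E_conv_int_ext}) and (\ref{eq:tildef_uno_E_weak_def}). Should only $\uno_{E^{c}}$ be of bounded variation, I would apply the theorem to $\uno_{E^{c}}$ and recast the output through the complement relations (\ref{eq:E_tilde_compl})--(\ref{eq:E_compl_pairing_X}) from Remark~\ref{rem:normal_traces_compl}. A diagonal extraction then lets me retain a single subsequence (still denoted $t_{j}$) along which (\ref{eq:pairing_X_E_conv_int_ext}), (\ref{eq:tildef_uno_E_weak_def}) and (\ref{weak_star_conv_perimeter_uno_E}) hold simultaneously.

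For (\ref{eq:Leib2}), rather than invoking Theorem~\ref{thm:Leibniz_DM_BV} a second time with $X_{E}$ in place of $X$, I plan to work directly with the Lipschitz Leibniz rule of Lemma~\ref{dmlipleibniz}, applied to $X_{E}\in\DM^{\infty}(\X)$ and the bounded Lipschitz scalar $\h_{t_{j}}\uno_{E}$, whose gradient lies in $L^{1}(\X)$ by Lemma~\ref{lem:heat_flow_lip_functions}. This yields
\begin{equation*}
\div\bigl(\h_{t_{j}}\uno_{E}\,X_{E}\bigr) = \h_{t_{j}}\uno_{E}\,\div(X_{E}) + \d(\h_{t_{j}}\uno_{E})(X_{E})\,\mu.
\end{equation*}
Pairing this identity with $g \in \Lip_{\bs}(\X)$ and letting $j\to\infty$, the left-hand side tends to $\int_{\X} g\,\d\div(X_{E})$ by $L^{\infty}$-linearity of the differential and the $L^{2}$-convergence $\h_{t_{j}}\uno_{E}\to\uno_{E}$, while the last term tends to $\int_{\X} g\,\d\bm{D}\uno_{E}(X_{E})$ by the very definition of the pairing in (\ref{eq:pairing_X_E_conv_int_ext}). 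The pivotal step is then to handle
\begin{equation*}
\int_{\X} g\,\h_{t_{j}}\uno_{E}\,\d\div(X_{E}) = \int_{\X} g\,\h_{t_{j}}\uno_{E}\,\tildef{\uno_{E}}\,\d\div(X) + \int_{\X} g\,\h_{t_{j}}\uno_{E}\,\d\bm{D}\uno_{E}(X),
\end{equation*}
after substituting (\ref{eq:Leib1}). In the first summand $g\,\tildef{\uno_{E}} \in L^{1}(\X,\|\div(X)\|)$, so the weak-$\ast$ convergence in $L^{\infty}(\X,\|\div(X)\|)$ sends it to $\int_{\X} g\,(\tildef{\uno_{E}})^{2}\,\d\div(X)$. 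For the second, (\ref{eq:abscont-pairing}) guarantees $\bm{D}\uno_{E}(X) = \rho\,\|D\uno_{E}\|$ with $\|\rho\|_{L^{\infty}(\X,\|D\uno_{E}\|)}\le\||X|\|_{L^{\infty}(\X)}$; absorbing $\rho$ into the test function I reduce the integral to $\int_{\X} g\,\rho\,\h_{t_{j}}\uno_{E}\,\d\|D\uno_{E}\|$, and the weak-$\ast$ convergence in $L^{\infty}(\X,\|D\uno_{E}\|)$ then gives $\int_{\X} g\,\hatE\,\d\bm{D}\uno_{E}(X)$. Collecting the limits yields (\ref{eq:Leib2}) first as an identity of linear functionals on $\Lip_{\bs}(\X)$, and then as an identity of Radon measures by density of $\Lip_{\bs}(\X)$ in $C_{c}(\X)$ together with the Riesz representation theorem --- which is where the local compactness of $\X$ is used.

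Finally, I would obtain (\ref{eq:div-boundary}) by subtracting (\ref{eq:Leib2}) from (\ref{eq:Leib1}), isolating $\tildef{\uno_{E}}(1 - \tildef{\uno_{E}})\,\div(X)$, and expanding $\bm{D}\uno_{E}(X) = \bm{D}\uno_{E}(X_{E}) + \bm{D}\uno_{E}(X_{E^{c}})$ via (\ref{eq:linearity_pairing_uno_E}). The main obstacle I anticipate is precisely the limit of the pairing $\int_{\X} g\,\h_{t_{j}}\uno_{E}\,\d\bm{D}\uno_{E}(X)$: the two weak-$\ast$ limits $\tildef{\uno_{E}}$ and $\hatE$ live in a priori different $L^{\infty}$ spaces and $\div(X_{E})$ need not be singular with respect to either $\|\div(X)\|$ or $\|D\uno_{E}\|$, so one cannot naively assemble them into a single limit in $L^{\infty}(\X,\|\div(X_{E})\|)$. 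The resolution, as described above, is to transfer the Radon--Nikodym density and the factor $\tildef{\uno_{E}}$ into the test function, reducing each piece to the already available weak-$\ast$ convergences without having to identify a single weak-$\ast$ limit of $\h_{t_{j}}\uno_{E}$ in $L^{\infty}(\X,\|\div(X_{E})\|)$.
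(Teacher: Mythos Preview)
Your proposal is correct and follows essentially the same route as the paper: apply Theorem~\ref{thm:Leibniz_DM_BV} for \eqref{eq:Leib1}, then use the Lipschitz Leibniz rule (Lemma~\ref{dmlipleibniz}) with $X_E$ and $\h_{t_j}\uno_E$, substitute \eqref{eq:Leib1}, and pass to the limit along the chosen subsequence to obtain \eqref{eq:Leib2}, with \eqref{eq:div-boundary} following by subtraction and \eqref{eq:linearity_pairing_uno_E}. Your explicit treatment of the limit $\int_{\X} g\,\h_{t_j}\uno_E\,\d\bm{D}\uno_E(X)$ via the Radon--Nikodym density $\rho$ of $\bm{D}\uno_E(X)$ with respect to $\|D\uno_E\|$ is exactly the mechanism the paper relies on (though it leaves this step implicit), and your diagonal extraction to synchronize the three weak-$\ast$ limits is the correct way to make the argument rigorous.
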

\begin{proof} Let first $\uno_{E} \in BV(\X)$. By (\ref{eq:LeibnizDM-BV}) we immediately get \eqref{eq:Leib1}, by extracting a suitable subsequence of a sequence $(t_{j})_{j \in \N}$ such that \eqref{eq:pairing_X_E_conv_int_ext} and \eqref{eq:tildef_uno_E_weak_def} hold.
In order to prove to (\ref{eq:Leib2}), we approximate $\mathds1_{E}$ via $\text{h}_{t}\mathds1_{E}$
and, employing (\ref{eq:Leibniz-Lip}) and (\ref{eq:Leib1}), we get
\begin{align}
\div\left(\left(\text{h}_{t}\mathds1_{E}\right) X_E\right) & =\text{h}_{t}\mathds1_{E}\div\left(X_E\right)+\text{d}\left(\text{h}_{t}\mathds1_{E}\right)\left(X_E\right)\mu\nonumber \\
 & =\left(\text{h}_{t}\mathds1_{E}\right)\tildef{\mathds1_{E}}\div(X)+\left(\text{h}_{t}\mathds1_{E}\right)\bm{D}\mathds1_{E}(X)+\text{d}\left(\text{h}_{t}\mathds1_{E}\right)\left(X_E\right)\mu.\label{eq:Leib-approx}
\end{align}
We select now a non--negative sequence $\left(t_{j}\right)_{j \in \N}$,
$t_{j}\searrow0$, such that \eqref{eq:pairing_X_E_conv_int_ext}, \eqref{eq:tildef_uno_E_weak_def} and \eqref{weak_star_conv_perimeter_uno_E} hold, and we easily get \eqref{eq:Leib2}.

Now, let $E$ be such that $\uno_{E^{c}} \in BV(\X)$. For any $g \in \Lip_{\bs}(\X)$, we have
\begin{align*}
\int_{\X} g \, \d \div(X_{E^c}) & = - \int_{\X} \d g(X_{E^c}) \, \d \mu = - \int_{\X} \uno_{E^{c}} \d g(X) \, \d \mu \\
& = - \int_{\X} \d g(X) \, \d \mu + \int_{\X} \uno_{E} \d g(X) \, \d \mu = \int_{\X} g \, \div(X) + \int_{\X} \uno_{E} \d g(X) \, \d \mu, 
\end{align*}
from which it follows that
\begin{equation*}
\int_{\X} \uno_{E} \d g(X) \, \d \mu = \int_{\X} g \, \d (\div(X_{E^c}) - \div(X)),
\end{equation*}
which easily implies that $X_E \in \DM^{\infty}(\X)$ and 
\begin{equation} \label{eq:complementary_E_div_X}
\div(X_E) = \div(X) - \div(X_{E^c}).
\end{equation}
Hence, by applying \eqref{eq:Leib1} to $\uno_{E^{c}}$ and \eqref{eq:complementary_E_div_X}, we obtain
\begin{equation} \label{eq:Leib_E_div_X_compl_1}
\div(X_E) = \div(X) - \div(X_{E^c}) = (1 - \tildef{\uno_{E^{c}}}) \div(X) - \bm{D}\uno_{E^{c}}(X).
\end{equation}
Therefore, thanks to \eqref{eq:E_tilde_compl}, \eqref{eq:Leib_E_div_X_compl_1} and \eqref{eq:E_compl_pairing_X}, we get \eqref{eq:Leib1} for $E$ such that $\uno_{E^{c}} \in BV(\X)$. Arguing analogously, we obtain also \eqref{eq:Leib2}. 

Finally, if we subtract \eqref{eq:Leib2} from \eqref{eq:Leib1},
we get
\begin{equation*}
\tildef{\mathds1_{E}}\left(1-\tildef{\mathds1_{E}}\right)\div(X) + (1 - \hatE) \bm{D}\mathds1_{E}(X)-\bm{D}\mathds1_{E}\left(X_E\right)=0.
\end{equation*}
Thus, \eqref{eq:linearity_pairing_uno_E} directly entails \eqref{eq:div-boundary}.
\end{proof}

\begin{remark}
We notice that, if we consider the Euclidean space $(\R^{n}, |\cdot|, \Leb{n})$, then the functions $\tildef{\mathds1_{E}}$ and $\hatE$ coincide with the precise representative of $\uno_E$ almost everywhere with respect to the $(n-1)$--dimensional Hausdorff measure, thanks to De Giorgi's blow-up theorem \cite[Theorem 3.59]{afp}. Roughly speaking, the richer structure of the Euclidean space makes the finer analysis that we just performed in our more abstract setting unnecessary. However, already in the framework of stratified groups it is necessary to consider an object similar to $\tildef{\mathds1_{E}}$ (\cite[Section 5]{cm}).
\end{remark}

In the following proposition we summarize some elementary properties
of distributional normal traces.

\begin{proposition} \label{prop:elementary_prop_normal_traces}
Let $X\in{\cal DM}^{\infty}({\X})$ and $E\subset\X$
be a measurable set such that $\uno_{E} \in BV(\X)$ or $\uno_{E^{c}} \in BV(\X)$. Then, $\ban{X, \nu_{E}}_{\partial\Omega}^{-}, \ban{X, \nu_{E}}_{\partial E}^{+}\in L^{\infty}\left(\partial E,\|D\mathds1_{E}\|\right)$, with the estimates
\begin{equation} \label{eq:sup_norm_traces_rough_estimate}
\|\ban{X, \nu_{E}}_{\partial E}^{-}\|_{L^{\infty}(\partial E, \|D \uno_{E}\|)} \le 2 \||X|\|_{L^{\infty}(E)} \ \ \text{and} \ \ \|\ban{X, \nu_{E}}_{\partial E}^{+}\|_{L^{\infty}(\partial E, \|D \uno_{E}\|)} \le 2 \||X|\|_{L^{\infty}(E^{c})}.
\end{equation}
In addition, it holds
\begin{equation}
\tildef{\uno_{E}}\left(1-\tildef{\mathds1_{E}}\right)\div(X)=\frac{\hatE \ban{X, \nu_{E}}_{\partial E}^{-} - (1 - \hatE ) \ban{X, \nu_{E}}_{\partial E}^{+}}{2}\|D\mathds1_{E}\|.\label{eq:div-D1E}
\end{equation}
\end{proposition}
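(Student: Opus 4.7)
The proof is essentially a direct reading of the defining identities \eqref{eq:int-trace-per} and \eqref{eq:ext-trace-per} in combination with the estimates \eqref{eq:pairing_X_E_bound} already available from Lemma \ref{lem:pairing_X_f} and the refined Leibniz rule \eqref{eq:div-boundary} just established in Theorem \ref{thm:Leibniz_rule_E_1}. There is no additional analytic work required: the heat-flow approximation and the weak compactness arguments were fully absorbed into Theorem \ref{thm:Leibniz_rule_E_1}, and here we merely translate the conclusions from the language of the pairing measures $\bm{D}\uno_E(\cdot)$ to that of their $\|D\uno_E\|$-densities.

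For the $L^\infty$ estimates \eqref{eq:sup_norm_traces_rough_estimate}, I would argue as follows. Combining \eqref{eq:int-trace-per} with the first half of \eqref{eq:pairing_X_E_bound}, we get that the Radon--Nikod\'ym derivative $\tfrac{1}{2}\ban{X,\nu_E}_{\partial E}^{-}$ of $\bm{D}\uno_E(X_E)$ with respect to $\|D\uno_E\|$ is bounded in absolute value by $\||X|\|_{L^\infty(E)}$ in the $L^\infty(\X,\|D\uno_E\|)$ sense, so that
\[
\|\ban{X,\nu_E}_{\partial E}^{-}\|_{L^\infty(\partial E,\|D\uno_E\|)}\le 2\||X|\|_{L^\infty(E)}.
\]
The same argument applied to $X_{E^c}$ and \eqref{eq:ext-trace-per}, using the second half of \eqref{eq:pairing_X_E_bound}, yields the analogous bound for $\ban{X,\nu_E}_{\partial E}^{+}$ with $E^c$ in place of $E$. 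Since $\|D\uno_E\|$ is concentrated on $\partial E$ by \eqref{eq:concentration_perimeter_measure}, interpreting both traces as elements of $L^\infty(\partial E,\|D\uno_E\|)$ is unambiguous.

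For the identity \eqref{eq:div-D1E}, I would simply substitute the defining relations \eqref{eq:int-trace-per} and \eqref{eq:ext-trace-per} into formula \eqref{eq:div-boundary} of Theorem \ref{thm:Leibniz_rule_E_1}. Namely, from
\[
\tildef{\uno_E}(1-\tildef{\uno_E})\div(X)=\hatE\,\bm{D}\uno_E(X_E)-(1-\hatE)\,\bm{D}\uno_E(X_{E^c}),
\]
and the identities $\bm{D}\uno_E(X_E)=\tfrac{1}{2}\ban{X,\nu_E}_{\partial E}^{-}\|D\uno_E\|$ and $\bm{D}\uno_E(X_{E^c})=\tfrac{1}{2}\ban{X,\nu_E}_{\partial E}^{+}\|D\uno_E\|$, we directly obtain
\[
\tildef{\uno_E}(1-\tildef{\uno_E})\div(X)=\frac{\hatE\,\ban{X,\nu_E}_{\partial E}^{-}-(1-\hatE)\,\ban{X,\nu_E}_{\partial E}^{+}}{2}\|D\uno_E\|,
\]
which is precisely \eqref{eq:div-D1E}.

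No step here is particularly delicate; the only point worth being careful about is that the functions $\hatE$ and $\tildef{\uno_E}$ are constructed from the \emph{same} subsequence $(t_j)$ along which \eqref{eq:pairing_X_E_conv_int_ext}, \eqref{eq:tildef_uno_E_weak_def}, and \eqref{weak_star_conv_perimeter_uno_E} all hold, so that the right-hand sides of \eqref{eq:Leib2} and \eqref{eq:div-boundary} make simultaneous sense. This compatibility was already ensured in the statement of Theorem \ref{thm:Leibniz_rule_E_1} by passing to a common subsequence, and so it is automatic here.
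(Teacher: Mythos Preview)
Your proof is correct and follows exactly the same approach as the paper: the $L^\infty$ bounds are read off from \eqref{eq:pairing_X_E_bound} via the definitions \eqref{eq:int-trace-per}--\eqref{eq:ext-trace-per}, and \eqref{eq:div-D1E} is obtained by substituting those same definitions into \eqref{eq:div-boundary}. The paper's proof is just a two-line version of what you wrote.
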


\begin{proof} It is easy to see that \eqref{eq:pairing_X_E_bound} implies \eqref{eq:sup_norm_traces_rough_estimate}.
As for (\ref{eq:div-D1E}), it follows directly from (\ref{eq:div-boundary}),
(\ref{eq:int-trace-per}) and (\ref{eq:ext-trace-per}).
\end{proof}

We employ now Proposition \ref{prop:elementary_prop_normal_traces} to investigate further the relation between the measure $\div(X)$ and the sets $\tildef{E^{s}}$.

\begin{proposition}\label{rel-div-Es}Let $X\in{\cal DM}^{\infty}({\X})$ and let
$E\subset\X$
be a measurable set such that $\uno_{E} \in BV(\X)$ or $\uno_{E^{c}} \in BV(\X)$. Then, we have
\begin{equation} \label{eq:pairing_equality}
\hatE \bm{D}\uno_{E}(X_E) \res \tildef{E^{1}} \cup \tildef{E^{0}} = (1 - \hatE) \bm{D}\uno_{E}(X_{E^c}) \res \tildef{E^{1}} \cup \tildef{E^{0}},
\end{equation}
\begin{equation} \label{eq:trace_uno_E_equality}
\hatE \ban{X, \nu_{E}}_{\partial E}^{-} = (1 - \hatE ) \ban{X, \nu_{E}}_{\partial E}^{+} \ \ \|D\mathds1_{E}\|\text{-a.e. on} \ \tildef{E^{1}} \cup \tildef{E^{0}}
\end{equation}
and
\begin{equation}\label{eq:repr-1E_tilde_bar_div}
\tildef{\mathds1_{E}}=\hatE \ \ \|\div(X)\|\text{--a.e. on} \ \tildef{\partial^{*} E}.
\end{equation}
\end{proposition}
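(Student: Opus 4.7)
My plan is to derive \eqref{eq:pairing_equality} and \eqref{eq:trace_uno_E_equality} directly from \eqref{eq:div-D1E}, and to establish \eqref{eq:repr-1E_tilde_bar_div} through a two-step argument: first a restricted absolute continuity $\|\div(X)\|\res\tildef{\partial^{*}E}\ll\|D\uno_E\|$, then a dual weak$^{*}$ matching via Radon--Nikodym.

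For \eqref{eq:trace_uno_E_equality}, I observe that on $\tildef{E^{1}} \cup \tildef{E^{0}}$ the coefficient $\tildef{\uno_{E}}(1-\tildef{\uno_{E}})$ vanishes identically, so the left-hand side of \eqref{eq:div-D1E} restricted to this Borel set is the zero measure. The right-hand side restricted there must then vanish, yielding $\hatE\ban{X,\nu_{E}}^{-}_{\partial E} = (1-\hatE)\ban{X,\nu_{E}}^{+}_{\partial E}$ \,$\|D\uno_E\|$-a.e.\ on $\tildef{E^{1}}\cup\tildef{E^{0}}$, which is \eqref{eq:trace_uno_E_equality}. Multiplying this identity by $\frac{1}{2}\|D\uno_E\|$ and invoking the definitions \eqref{eq:int-trace-per}--\eqref{eq:ext-trace-per} gives \eqref{eq:pairing_equality}.

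For \eqref{eq:repr-1E_tilde_bar_div}, I first note that \eqref{ht-uno} and \eqref{htf-le-htg} give $0\le\h_{t}\uno_E\le 1$ pointwise, so the weak$^{*}$ limit $\tildef{\uno_E}$ admits a Borel representative taking values in $[0,1]$; with such a choice, $\tildef{\uno_E}(1-\tildef{\uno_E})>0$ everywhere on $\tildef{\partial^{*}E}$. By \eqref{eq:div-boundary} and the bounds \eqref{eq:pairing_X_E_bound}, the signed measure $\tildef{\uno_E}(1-\tildef{\uno_E})\div(X)$ is absolutely continuous with respect to $\|D\uno_E\|$; since its nonnegative density with respect to $\div(X)$ is strictly positive on $\tildef{\partial^{*}E}$, this forces $\|\div(X)\|\res\tildef{\partial^{*}E}\ll\|D\uno_E\|$. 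The Radon--Nikodym theorem then produces a nonnegative $\rho\in L^{1}(\X,\|D\uno_E\|)$, supported in $\tildef{\partial^{*}E}$, with $\|\div(X)\|\res\tildef{\partial^{*}E}=\rho\,\|D\uno_E\|$.

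To conclude, I fix an arbitrary Borel $B\subset\tildef{\partial^{*}E}$ and test both weak$^{*}$ convergences of $\h_{t_{j}}\uno_E$ against suitable functions. Testing \eqref{eq:tildef_uno_E_weak_def} against $\uno_{B}\in L^{1}(\X,\|\div(X)\|)$ gives
\[
\lim_{j\to\infty}\int_{B}\h_{t_{j}}\uno_E\,\d\|\div(X)\| = \int_{B}\tildef{\uno_E}\,\d\|\div(X)\|,
\]
while the change of measure $\d\|\div(X)\|\res B=\rho\,\d\|D\uno_E\|$, combined with $\uno_{B}\rho\in L^{1}(\X,\|D\uno_E\|)$ and the weak$^{*}$ convergence \eqref{weak_star_conv_perimeter_uno_E}, yields
\[
\lim_{j\to\infty}\int_{B}\h_{t_{j}}\uno_E\,\d\|\div(X)\| = \lim_{j\to\infty}\int_{\X}\uno_{B}\rho\,\h_{t_{j}}\uno_E\,\d\|D\uno_E\| = \int_{B}\hatE\,\d\|\div(X)\|.
\]
Matching these limits as $B$ varies in $\tildef{\partial^{*}E}$ delivers \eqref{eq:repr-1E_tilde_bar_div}. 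The main obstacle lies in the third paragraph: the absolute continuity $\|\div(X)\|\res\tildef{\partial^{*}E}\ll\|D\uno_E\|$ hinges on choosing the $[0,1]$-valued Borel representative of $\tildef{\uno_E}$ so that the coefficient $\tildef{\uno_E}(1-\tildef{\uno_E})$ in \eqref{eq:div-D1E} is genuinely positive on $\tildef{\partial^{*}E}$; once this is in place, the identification of the two weak$^{*}$ limits reduces to a standard change-of-measure argument.
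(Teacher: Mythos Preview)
Your treatment of \eqref{eq:pairing_equality} and \eqref{eq:trace_uno_E_equality} is correct and essentially coincides with the paper's: you use the trace form \eqref{eq:div-D1E} and deduce \eqref{eq:trace_uno_E_equality} first, then \eqref{eq:pairing_equality}, whereas the paper uses the measure form \eqref{eq:div-boundary} and goes in the opposite order; these are interchangeable.

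For \eqref{eq:repr-1E_tilde_bar_div} your argument is correct but follows a genuinely different route. The paper simply multiplies the identity \eqref{eq:div-D1E} by $\h_{t_{j}}\uno_E$ and passes to the limit on both sides: on the left the weak$^{*}$ convergence in $L^{\infty}(\X,\|\div(X)\|)$ gives $(\tildef{\uno_E})^{2}(1-\tildef{\uno_E})\div(X)$, while on the right the weak$^{*}$ convergence in $L^{\infty}(\X,\|D\uno_E\|)$ gives $\hatE$ times the density, which via \eqref{eq:div-D1E} again equals $\hatE\,\tildef{\uno_E}(1-\tildef{\uno_E})\div(X)$; subtracting yields $(\tildef{\uno_E}-\hatE)\tildef{\uno_E}(1-\tildef{\uno_E})\div(X)=0$, and the conclusion follows on $\tildef{\partial^{*}E}$. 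Your approach instead isolates the structural fact $\|\div(X)\|\res\tildef{\partial^{*}E}\ll\|D\uno_E\|$ as an explicit intermediate step and then matches the two weak$^{*}$ limits through the Radon--Nikodym density. Both arguments ultimately hinge on the same positivity of $\tildef{\uno_E}(1-\tildef{\uno_E})$ on $\tildef{\partial^{*}E}$; the paper's version is a bit shorter and avoids Radon--Nikodym, while yours has the merit of making the absolute continuity of $\|\div(X)\|\res\tildef{\partial^{*}E}$ with respect to the perimeter explicit, which is a useful byproduct in its own right.
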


\begin{proof}
It is clear that \eqref{eq:pairing_equality} follows immediately from \eqref{eq:div-boundary}, by restricting the measures to $\tildef{E^{1}} \cup \tildef{E^{0}}$. Then, \eqref{eq:int-trace-per}, \eqref{eq:ext-trace-per} and \eqref{eq:pairing_equality} easily imply \eqref{eq:trace_uno_E_equality}.
In order to prove \eqref{eq:repr-1E_tilde_bar_div}, we start by considering the sequence $\text{h}_{t_{j}}\mathds1_{E}$ satisfying \eqref{eq:tildef_uno_E_weak_def} and \eqref{weak_star_conv_perimeter_uno_E}.
Then, since $\tildef{\mathds1_{E}}\left(1-\tildef{\mathds1_{E}}\right)\in L^{\infty}({\X},\|\div(X)\|)$,
it follows that
\[
\left(\text{h}_{t_{j}}\mathds1_{E}\right)\tildef{\mathds1_{E}}\left(1-\tildef{\mathds1_{E}}\right)\div(X)\rightharpoonup\left(\tildef{\mathds1_{E}}\right)^{2}\left(1-\tildef{\mathds1_{E}}\right)\div(X).
\]
Taking into account \eqref{eq:div-D1E}, the fact that distributional normal traces
are in $L^{\infty}\left({\X},\|D\mathds1_{E}\|\right)$
and \eqref{weak_star_conv_perimeter_uno_E}, we obtain
\begin{align*}
\left(\text{h}_{t_{j}}\mathds1_{E}\right)\tildef{\mathds1_{E}}\left(1-\tildef{\mathds1_{E}}\right)\div(X) & =\left(\text{h}_{t_{j}}\mathds1_{E}\right)\frac{\hatE \ban{X, \nu_{E}}_{\partial E}^{-} - (1 - \hatE ) \ban{X, \nu_{E}}_{\partial E}^{+}}{2}\|D\mathds1_{E}\|\\
 & \rightharpoonup \hatE \frac{\hatE \ban{X, \nu_{E}}_{\partial E}^{-} - (1 - \hatE ) \ban{X, \nu_{E}}_{\partial E}^{+}}{2}\|D\mathds1_{E}\|.
\end{align*}
Employing again (\ref{eq:div-D1E}), we get
\[
\left(\tildef{\mathds1_{E}}\right)^{2}\left(1-\tildef{\mathds1_{E}}\right)\div(X)=\hatE \tildef{\mathds1_{E}}\left(1-\tildef{\mathds1_{E}}\right)\div(X),
\]
which yields
\[
\left(\tildef{\mathds1_{E}}-\hatE\right)\tildef{\mathds1_{E}}\left(1-\tildef{\mathds1_{E}}\right)\div(X)=0
\]
from which we infer the thesis.
\end{proof}

This result underlines the strict relation between $\tildef{\uno_{E}}$ and $\hatE$ on $\tildef{\partial^{*} E}$, and is fundamental in the proof of the general Gauss--Green formulas.

\begin{theorem} \label{thm:Leibniz_rule_E_2}
Let $X\in{\cal DM}^{\infty}({\X})$ and let $E\subset\X$
be a measurable set such that $\uno_{E} \in BV(\X)$ or $\uno_{E^{c}} \in BV(\X)$. Then, we have 
\begin{equation*}
\frac{1}{1 - \hatE} \in L^{1}(\X, \|\bm{D} \uno_{E}(X_E)\|) \quad \text{and} \quad \frac{1}{\hatE} \in L^{1}(\X, \|\bm{D} \uno_{E}(X_{E^c})\|).
\end{equation*}
In addition, the following formulas hold:
\begin{align}
\div\left(X_E\right) & =\mathds1_{\tildef{E^{1}}}\div(X) + \frac{1}{2(1 - \hatE)} \ban{X, \nu_{E}}_{\partial E}^{-}\|D\mathds1_{E}\|,\label{eq:Leib-final1}\\
\div\left(X_E\right) & =\mathds1_{\tildef{E^{1}} \cup \tildef{\partial^{*} E}}\div(X) + \frac{1}{2 \hatE}\ban{X, \nu_{E}}_{\partial E}^{+}\|D\mathds1_{E}\|,\label{eq:Leib-final2}
\end{align}
and 
\begin{equation}
\mathds1_{\tildef{\partial^{*} E}}\div(X)=\frac{\hatE \ban{X, \nu_{E}}_{\partial E}^{-} - (1 - \hatE) \ban{X, \nu_{E}}_{\partial E}^{+}}{2 \hatE(1 - \hatE)}\|D\mathds1_{E}\| \res \tildef{\partial^{*} E}.\label{eq:div-bd-final}
\end{equation}
\end{theorem}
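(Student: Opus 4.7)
The plan is to establish the three identities in the order \eqref{eq:div-bd-final}, \eqref{eq:Leib-final1}, \eqref{eq:Leib-final2}, combining the three Leibniz-type formulas of Theorem \ref{thm:Leibniz_rule_E_1} with the relations \eqref{eq:linearity_pairing_uno_E_traces} and \eqref{eq:trace_uno_E_equality} from Proposition \ref{rel-div-Es}, the crucial identification $\tildef{\uno_E}=\hatE$ on $\tildef{\partial^* E}$ $\|\div(X)\|$-a.e.\ given by \eqref{eq:repr-1E_tilde_bar_div}, and finally the complement identifications of Remark \ref{rem:normal_traces_compl} to pass from \eqref{eq:Leib-final1} to \eqref{eq:Leib-final2}.

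I would first derive \eqref{eq:div-bd-final} by restricting \eqref{eq:div-D1E} to $\tildef{\partial^* E}$ and using \eqref{eq:repr-1E_tilde_bar_div} to rewrite $\tildef{\uno_E}(1-\tildef{\uno_E})$ as $\hatE(1-\hatE)$, obtaining
\[
\hatE(1-\hatE)\,\mathds1_{\tildef{\partial^* E}}\div(X) = \frac{\hatE\ban{X,\nu_E}^-_{\partial E}-(1-\hatE)\ban{X,\nu_E}^+_{\partial E}}{2}\|D\uno_E\|\res\tildef{\partial^* E}.
\]
By the definition of $\tildef{\partial^* E}$, $\tildef{\uno_E}$ takes values in $(0,1)$ on this set, so \eqref{eq:repr-1E_tilde_bar_div} forces $\hatE(1-\hatE)>0$ $\|\div(X)\|$-a.e.\ on $\tildef{\partial^* E}$; this strict positivity permits dividing the measure equation by $\hatE(1-\hatE)$, yielding \eqref{eq:div-bd-final}. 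As a byproduct, since $\mathds1_{\tildef{\partial^* E}}\div(X)$ is a finite Radon measure, the right-hand side density must be $\|D\uno_E\|$-integrable, forcing $\ban{X,\nu_E}^-_{\partial E}=0$ on $\{\hatE=1\}\cap\tildef{\partial^* E}$ and $\ban{X,\nu_E}^+_{\partial E}=0$ on $\{\hatE=0\}\cap\tildef{\partial^* E}$, both $\|D\uno_E\|$-a.e.

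For \eqref{eq:Leib-final1}, I would decompose $\tildef{\uno_E}\div(X)$ along the Borel partition $\X=\tildef{E^1}\sqcup\tildef{E^0}\sqcup\tildef{\partial^* E}$, so that $\tildef{\uno_E}\div(X)=\mathds1_{\tildef{E^1}}\div(X)+\hatE\,\mathds1_{\tildef{\partial^* E}}\div(X)$ by \eqref{eq:repr-1E_tilde_bar_div}. Substituting into \eqref{eq:Leib1}, then replacing $\mathds1_{\tildef{\partial^* E}}\div(X)$ via \eqref{eq:div-bd-final} and $\bm{D}\uno_E(X)$ via \eqref{eq:linearity_pairing_uno_E_traces}, a routine algebraic manipulation with common denominator $2(1-\hatE)$ produces the density $\frac{\ban{X,\nu_E}^-_{\partial E}}{2(1-\hatE)}$ on $\tildef{\partial^* E}$. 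On $\tildef{E^1}\cup\tildef{E^0}$, the relation \eqref{eq:trace_uno_E_equality} converts $\frac{\ban{X,\nu_E}^-_{\partial E}+\ban{X,\nu_E}^+_{\partial E}}{2}$ into the same expression on $\{\hatE<1\}$, while on $\{\hatE=1\}$ the same relation yields $\ban{X,\nu_E}^-_{\partial E}=0$ so that the $0/0=0$ convention applies consistently. The integrability $\frac{1}{1-\hatE}\in L^1(\X,\|\bm{D}\uno_E(X_E)\|)$ then follows from \eqref{eq:Leib-final1} itself, since both $\div(X_E)$ and $\mathds1_{\tildef{E^1}}\div(X)$ are finite Radon measures.

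Finally, \eqref{eq:Leib-final2} is obtained by applying \eqref{eq:Leib-final1} to the complementary set $E^c$ and using \eqref{eq:complementary_E_div_X} to write $\div(X_E)=\div(X)-\div(X_{E^c})$. The identifications of Remark \ref{rem:normal_traces_compl}, namely $\tildef{(E^c)^1}=\tildef{E^0}$, $\widehat{\uno_{E^c}}=1-\hatE$ in $L^\infty(\X,\|D\uno_E\|)$, $\ban{X,\nu_{E^c}}^-_{\partial E^c}=-\ban{X,\nu_E}^+_{\partial E}$ and $\|D\uno_{E^c}\|=\|D\uno_E\|$, convert the $E^c$ version of \eqref{eq:Leib-final1} into $\div(X_{E^c})=\mathds1_{\tildef{E^0}}\div(X)-\frac{1}{2\hatE}\ban{X,\nu_E}^+_{\partial E}\|D\uno_E\|$; subtracting this from $\div(X)$ and using $\X\setminus\tildef{E^0}=\tildef{E^1}\cup\tildef{\partial^* E}$ yields \eqref{eq:Leib-final2}, and the integrability of $1/\hatE$ against $\|\bm{D}\uno_E(X_{E^c})\|=\|\bm{D}\uno_{E^c}(X_{E^c})\|$ is exactly the integrability claim of \eqref{eq:Leib-final1} applied to $E^c$. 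The main technical subtlety throughout is the careful treatment of the degenerate sets $\{\hatE\in\{0,1\}\}$, where the $0/0=0$ convention must be rigorously justified by showing that the numerators of the various density fractions vanish $\|D\uno_E\|$-a.e.\ wherever the denominators do; this splits naturally into the $\tildef{E^1}\cup\tildef{E^0}$ piece (handled by \eqref{eq:trace_uno_E_equality}) and the $\tildef{\partial^* E}$ piece (handled by the integrability byproduct extracted in the proof of \eqref{eq:div-bd-final}).
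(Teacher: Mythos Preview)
Your derivations of \eqref{eq:div-bd-final} and \eqref{eq:Leib-final1} follow exactly the paper's route: restrict \eqref{eq:div-D1E} to $\tildef{\partial^{*}E}$, replace $\tildef{\uno_E}$ by $\hatE$ via \eqref{eq:repr-1E_tilde_bar_div}, divide, and then feed the result together with \eqref{eq:linearity_pairing_uno_E_traces} and \eqref{eq:trace_uno_E_equality} back into \eqref{eq:Leib1}. Your treatment of the degenerate sets $\{\hatE\in\{0,1\}\}$ via the $0/0=0$ convention is at the same level of detail as the paper's.

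There are, however, two genuine differences worth noting. First, for \eqref{eq:Leib-final2} you pass through the complement: you apply \eqref{eq:Leib-final1} to $E^{c}$ and use the identifications of Remark \ref{rem:normal_traces_compl} together with \eqref{eq:complementary_E_div_X}. The paper instead repeats a direct computation from \eqref{eq:Leib1}, writing $\tildef{\uno_E}=\uno_{\tildef{E^{1}}\cup\tildef{\partial^{*}E}}-(1-\hatE)\uno_{\tildef{\partial^{*}E}}$ and substituting \eqref{eq:div-bd-final} once more. Your complement trick is slightly more economical and perfectly valid. Second, and more substantively, you reverse the logical order for the $L^{1}$ integrability claims. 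The paper establishes $\frac{1}{1-\hatE}\in L^{1}(\X,\|\bm{D}\uno_E(X_E)\|)$ \emph{first}, by a direct estimate: it slices $\X$ into the annuli $A_{k,E}=\{1-\tfrac{1}{k}<\hatE<1-\tfrac{1}{k+1}\}$, uses \eqref{eq:div-boundary} on $\tildef{\partial^{*}E}\cap A_{k,E}$ and \eqref{eq:pairing_equality} on $(\tildef{E^{1}}\cup\tildef{E^{0}})\cap A_{k,E}$ to bound each piece by quantities summable in $k$, and only afterwards invokes this integrability to justify the division and the substitution \eqref{eq:trace_uno_E_equality_1}. Your a posteriori argument (``both sides of \eqref{eq:Leib-final1} are finite measures, so the density must be integrable'') is close to circular: until integrability is known, the right-hand side of \eqref{eq:Leib-final1} is not yet a well-defined finite measure, so the identity from which you want to read off integrability has not been fully established. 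The paper's annuli estimate removes this ambiguity.
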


\smallskip

\begin{proof}
Let 
\[
A_{k, E} := \left \{ 1 - \frac{1}{k} < \hatE < 1 - \frac{1}{k + 1} \right \}.
 \]
By \eqref{eq:div-boundary} and \eqref{eq:trace_uno_E_equality}, we get
\begin{equation*}
\frac{\hatE}{1 - \hatE} \bm{D} \uno_{E}(X_E) \res \tildef{\partial^* E} \cap A_{k, E} = \left ( \hatE \, \div(X) + \bm{D} \uno_{E}(X_{E^c}) \right ) \res \tildef{\partial^* E} \cap A_{k, E}
\end{equation*}
for any $k \ge 1$. Hence, we obtain
\begin{align*}
\int_{\tildef{\partial^* E}} \frac{1}{1 - \hatE} \, \d \|\bm{D}\uno_{E}(X_E)\| & = \sum_{k = 1}^{\infty} \int_{\tildef{\partial^* E} \cap A_{k, E}} \frac{1}{1 - \hatE} \, \d \|\bm{D}\uno_{E}(X_E)\| \\
& \le \int_{\tildef{\partial^* E} \cap A_{1, E}} \frac{1}{1 - \hatE} \, \d \|\bm{D}\uno_{E}(X_E)\| +\\
& + \sum_{k = 2}^{\infty} \int_{\tildef{\partial^* E} \cap A_{k, E}} \frac{k}{k - 1} \frac{\hatE}{1 - \hatE} \, \d \|\bm{D}\uno_{E}(X_E)\| \\
& \le 2 \| |X| \|_{L^{\infty}(E)} \|D \uno_{E}\|(\tildef{\partial^{*} E} \cap A_{1, E}) + \\
& + 2 \sum_{k = 2}^{\infty} \left ( \int_{\tildef{\partial^* E} \cap A_{k, E}} \hatE \, \d \|\div(X)\| + \|\bm{D} \uno_{E}(X_{E^c})\|\left (\tildef{\partial^{*}E} \cap A_{k, E} \right ) \right ) \\
& \le 2 ( \| |X| \|_{L^{\infty}(E)} + \| |X| \|_{L^{\infty}(E^{c})}) \|D \uno_{E}\|(\tildef{\partial^{*} E}) + 2 \|\div(X)\|(\tildef{\partial^{*} E}).
\end{align*}

In addition, by \eqref{eq:pairing_equality}, we get 
\begin{equation*}
\frac{\hatE}{1 - \hatE} \bm{D}\uno_{E}(X_E) \res \left ( \tildef{E^{1}} \cup \tildef{E^{0}} \right ) \cap A_{k, E} = \bm{D}\uno_{E}(X_{E^c}) \res \left ( \tildef{E^{1}} \cup \tildef{E^{0}} \right ) \cap A_{k, E},
\end{equation*}
for any $k \ge 1$, and this implies
\begin{align*}
\int_{\tildef{E^{1}} \cup \tildef{E^{0}}} \frac{1}{1 - \hatE} \, \d \|D\uno_{E}(X_E)\| & \le 2 \| |X| \|_{L^{\infty}(E)} \| D\uno_{E}\|((\tildef{E^{1}} \cup \tildef{E^{0}}) \cap A_{1, E}) + \\
& + \sum_{k = 2}^{\infty} \frac{k}{k - 1} \| |X|\|_{L^{\infty}(E^{c})} \|D\uno_{E}\|((\tildef{E^{1}} \cup \tildef{E^{0}}) \cap A_{k, E}) \\
& \le 2 ( \| |X| \|_{L^{\infty}(E)} + \| |X|\|_{L^{\infty}(E^{c})}) \|D\uno_{E}\|(\tildef{E^{1}} \cup \tildef{E^{0}}).
\end{align*}
Therefore, we obtain $\displaystyle \frac{1}{1 - \hatE} \in L^{1}(\X, \|\bm{D}\uno_{E}(X_E)\|)$.

Then, in order to prove $\displaystyle \frac{1}{\hatE} \in L^{1}(\X, \|\bm{D} \uno_{E}(X_{E^c})\|)$, we proceed in a similar way, by employing the sets $B_{k, E} := \left \{ \frac{1}{k + 1} < \hatE < \frac{1}{k} \right \}$.

Hence, we can restrict \eqref{eq:div-D1E} to $\tildef{\partial^{*} E}$, employ \eqref{eq:repr-1E_tilde_bar_div} and divide by $\hatE(1 - \hatE)$ to obtain
\begin{equation*}
\uno_{\tildef{\partial^{*} E}} \div(X) = \left ( \frac{1}{2(1 - \hatE)}  \ban{X, \nu_{E}}_{\partial E}^{-} - \frac{1}{2 \hatE}  \ban{X, \nu_{E}}_{\partial E}^{+} \right ) \|D\mathds1_{E}\| \res \tildef{\partial^{*} E}.
\end{equation*} 
Thus, \eqref{eq:div-bd-final} immediately follows.

If we combine \eqref{eq:Leib1}, \eqref{eq:linearity_pairing_uno_E_traces} and \eqref{eq:div-bd-final}, we obtain
\begin{align*}
\div(\uno_{E} X) & = \uno_{\tildef{E^{1}}} \div(X) + \tildef{\uno_{E}}\uno_{\tildef{\partial^{*}E}} \div(X) + \bm{D}\uno_{E}(X) \\
& = \uno_{\tildef{E^{1}}} \div(X) + \hatE\uno_{\tildef{\partial^{*}E}} \div(X) + \frac{\ban{X, \nu_{E}}^{-}_{\partial E} + \ban{X, \nu_{E}}^{+}_{\partial E}}{2} \| D \uno_{E} \| \\
& = \uno_{\tildef{E^{1}}} \div(X) + \frac{\hatE}{2(1 - \hatE)}  \ban{X, \nu_{E}}_{\partial E}^{-} \|D\mathds1_{E}\| \res \tildef{\partial^{*} E} - \frac{1}{2}  \ban{X, \nu_{E}}_{\partial E}^{+} \|D\mathds1_{E}\| \res \tildef{\partial^{*} E} +\\
& + \frac{\ban{X, \nu_{E}}^{-}_{\partial E} + \ban{X, \nu_{E}}^{+}_{\partial E}}{2} \| D \uno_{E} \| \\
& = \uno_{\tildef{E^{1}}} \div(X) + \frac{1}{2(1 - \hatE)}  \ban{X, \nu_{E}}_{\partial E}^{-} \|D\mathds1_{E}\| \res \tildef{\partial^{*} E} + \\
& + \frac{\ban{X, \nu_{E}}^{-}_{\partial E} + \ban{X, \nu_{E}}^{+}_{\partial E}}{2} \| D \uno_{E} \| \res \tildef{E^{1}} \cup \tildef{E^{0}}.
\end{align*}
Now, \eqref{eq:trace_uno_E_equality} implies
\begin{equation} \label{eq:trace_uno_E_equality_1}
\ban{X, \nu_{E}}^{+}_{\partial E} = \frac{\hatE}{1 - \hatE}  \ban{X, \nu_{E}}^{-}_{\partial E} \quad \| D \uno_{E} \|\text{-a.e. on} \ \tildef{E^{1}} \cup \tildef{E^{0}},
\end{equation}
thanks to the summability of $1/(1 - \hatE)$ with respect to the measure $|(X\cdot \nu_{E})^{-}_{\partial E}| \|D \uno_{E}\| = 2 \|\bm{D} \uno_{E}(X_E)\|$.
Thus, we substitute $\ban{X, \nu_{E}}^{+}_{\partial E}$ and we get \eqref{eq:Leib-final1}.

As for \eqref{eq:Leib-final2}, we employ \eqref{eq:repr-1E_tilde_bar_div} to rewrite \eqref{eq:Leib1} as
\begin{align*}
\div(\uno_{E} X) & = \uno_{\tildef{E^{1}} \cup \tildef{\partial^{*} E}} \div(X) + \uno_{\tildef{\partial^{*} E}} (\tildef{\uno_{E}} - 1) \div(X) + \bm{D}\uno_{E}(X)\\
& = \uno_{\tildef{E^{1}} \cup \tildef{\partial^{*} E}} \div(X) - \uno_{\tildef{\partial^{*} E}} (1 - \hatE) \div(X) + \bm{D}\uno_{E}(X).
\end{align*}
Then, by \eqref{eq:linearity_pairing_uno_E_traces}, \eqref{eq:div-bd-final} and \eqref{eq:trace_uno_E_equality_1}, we get
\begin{align*}
- \uno_{\tildef{\partial^{*} E}} (1 - \hatE) \div(X) + \bm{D}\uno_{E}(X) & = \left (- \frac{1}{2}  \ban{X, \nu_{E}}_{\partial E}^{-} + \frac{1 - \hatE}{2 \hatE}  \ban{X, \nu_{E}}_{\partial E}^{+} \right ) \|D\mathds1_{E}\| \res \tildef{\partial^{*} E} + \\
& + \frac{\ban{X, \nu_{E}}^{-}_{\partial E} + \ban{X, \nu_{E}}^{+}_{\partial E}}{2} \| D \uno_{E} \| \\
& = \frac{1}{2 \hatE}  \ban{X, \nu_{E}}_{\partial E}^{+} \|D\mathds1_{E}\| \res \tildef{\partial^{*} E} + \\
& + \left ( \frac{1 - \hatE}{\hatE} + 1 \right ) \frac{\ban{X, \nu_{E}}_{\partial E}^{+}}{2} \|D\mathds1_{E}\| \res \tildef{E^{1}} \cup \tildef{E^{0}} \\
& = \frac{1}{2 \hatE}  \ban{X, \nu_{E}}_{\partial E}^{+} \|D\mathds1_{E}\|.
\end{align*}
This concludes the proof.
\end{proof}

We end this section with a remark in which we consider the possibility of having two distinct weak* limits of $h_{t} \uno_{E}$ in $L^{\infty}(\X, \|\div(X)\|)$ and we show that they differ only inside $\partial E$.

\begin{remark}
Let $(t_j)$ and $(t_k)$ be two non--negative vanishing sequences such that 
\begin{equation*}
\h_{t_{j}} \uno_{E} \weakstarto \tildef{\uno_{E}}   \ \ \text{and} \ \ \h_{t_{k}} \uno_{E} \weakstarto \tildef{\uno_{E}}' \ \ \text{in} \ L^{\infty}(\X, \|\div(X)\|),
\end{equation*}
with $\tildef{\uno_{E}} \neq \tildef{\uno_{E}}'$. If we set $\tildef{E^{1}}' := \{ \tildef{\uno_{E}}' = 1 \}$, thanks to Theorem \ref{thm:Leibniz_rule_E_2}, it is possible to prove that 
\begin{equation} \label{eq:difference_E_1_representative}
\|\div(X)\| \left ( \left (\tildef{E^{1}} \Delta \tildef{E^1}' \right ) \setminus \partial E \right ) = 0.
\end{equation} 
Indeed, by arguing as we did in this section also for the sequence $(t_{k})$, we obtain a version of \eqref{eq:Leib-final1} for $\tildef{E^{1}}'$; that is, there exists $\hatE'$ and $(\ban{X, \nu_{E}}_{\partial E}^{-})^{'}$ such that
\begin{equation*}
\div\left(X_E\right) =\mathds1_{\tildef{E^{1}}'}\div(X) + \frac{1}{2(1 - \hatE')} (\ban{X, \nu_{E}}_{\partial E}^{-})^{'}\|D\mathds1_{E}\|.
\end{equation*}
By combining this with \eqref{eq:Leib-final1}, we get
\begin{equation*}
\left (\uno_{\tildef{E^{1}}} - \uno_{\tildef{E^1}'} \right ) \div(X) = \left ( \frac{1}{2(1 - \hatE')} (\ban{X, \nu_{E}}_{\partial E}^{-})^{'} - \frac{1}{2(1 - \hatE)} \ban{X, \nu_{E}}_{\partial E}^{-} \right ) \|D \uno_{E}\|.
\end{equation*}
Hence, thanks to \eqref{eq:concentration_perimeter_measure}, \eqref{eq:difference_E_1_representative} immediately follows.
Analogously, it is possible to prove that 
\begin{equation*}
\|\div(X)\|\left (\left (\tildef{E^{0}} \Delta \tildef{E^0}' \right ) \setminus \partial E \right ) = 0.
\end{equation*}
\end{remark}

\subsection{The Gauss--Green and integration by parts formulas} \label{sec:general_Gauss_Green}

Before passing to the Gauss--Green formulas, we need the following
simple result.

\begin{lemma} \label{lem:bounded_supp} Let $X\in{\cal DM}^{\infty}({\X})$
be such that ${\rm supp}(X)$ is bounded. Then, $\div(X)({\X})=0.$\end{lemma}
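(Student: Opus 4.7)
My plan is to exploit the boundedness of $\supp(X)$ by testing the definition of $\div(X)$ against a sequence of Lipschitz cutoffs that eventually equal $1$ on $\supp(X)$ and then passing to the limit. Fix $x_{0}\in\X$ and $R>0$ such that $\supp(X)\subset B_{R}(x_{0})$. For each integer $n\ge R$, define
\[
\psi_{n}(x)\coloneqq \max\{0,\min\{1,n+1-d(x,x_{0})\}\},
\]
so that $\psi_{n}$ is $1$-Lipschitz, takes values in $[0,1]$, equals $1$ on $B_{n}(x_{0})$, and has $\supp(\psi_{n})\subset\overline{B_{n+1}(x_{0})}$; in particular $\psi_{n}\in\Lip_{\bs}(\X)$. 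The defining relation \eqref{eq:div-meas} then yields
\[
-\int_{\X}\psi_{n}\,\d\div(X)=\int_{\X}\d\psi_{n}(X)\,\d\mu.
\]

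The key claim is that $\d\psi_{n}(X)=0$ $\mu$-a.e.\ on $\X$. Indeed, $\X$ decomposes (up to $\mu$-null sets) into three pieces: on the open set $B_{n}(x_{0})$, $\psi_{n}$ is constant, so by locality of the differential (Proposition \ref{prop:diff_Leibniz}, or the locality property recalled after Definition \ref{differential}) $\d\psi_{n}=0$ $\mu$-a.e.; on the open set $\X\setminus\overline{B_{n+1}(x_{0})}$, $\psi_{n}\equiv 0$, so $\d\psi_{n}=0$ $\mu$-a.e.\ again; finally, on the transition annulus $\overline{B_{n+1}(x_{0})}\setminus B_{n}(x_{0})$, the choice $n\ge R$ ensures $\supp(X)\cap\bigl(\X\setminus B_{n}(x_{0})\bigr)=\emptyset$, so $X=0$ $\mu$-a.e.\ there and the $L^{\infty}$-linearity of the differential gives $\d\psi_{n}(X)=\d\psi_{n}(\uno_{\supp(X)}X)=0$. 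Hence $\int_{\X}\psi_{n}\,\d\div(X)=0$ for every $n\ge R$.

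To conclude, I will let $n\to\infty$. Since $\psi_{n}\to 1$ pointwise on $\X$ (every $x\in\X$ eventually lies in $B_{n}(x_{0})$), $|\psi_{n}|\le 1$, and $\div(X)\in\mathbf{M}(\X)$ is a finite Radon measure, the Dominated Convergence Theorem applied to $\div(X)^{+}$ and $\div(X)^{-}$ separately gives
\[
\div(X)(\X)=\lim_{n\to\infty}\int_{\X}\psi_{n}\,\d\div(X)=0,
\]
as desired. The argument presents no real obstacle; the only point requiring mild care is the verification that $\d\psi_{n}(X)=0$ $\mu$-a.e., handled by splitting $\X$ according to whether $\psi_{n}$ is locally constant or $X$ locally vanishes.
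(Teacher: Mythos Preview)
Your proof is correct and follows essentially the same strategy as the paper's: test $\div(X)$ against a Lipschitz cutoff equal to $1$ on $\supp(X)$ and verify that the pairing with the differential vanishes. The paper simply uses a single cutoff $g$ rather than a sequence (since $\supp(\div(X))\subset\supp(X)$ makes the limit step unnecessary) and deduces $\d g(X)=0$ from the Leibniz rule \eqref{eq:Leibniz-Lip} rather than by a direct locality argument, but these are cosmetic differences.
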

\begin{proof} Let $g\in\Lip_{\bs}(\X)$ be such that $g\equiv1$ on
$\supp(X)$. Then, we have $gX=X$ and $g\div(X)=\div(X)$, and so,
by \eqref{eq:Leibniz-Lip}, we get 
\[
\div(X)=\div(gX)=g\div(X)+\d g(X)\d\mu=\div(X)+\d g(X)\d\mu,
\]
from which we deduce that $\d g(X)=0$. Therefore, by \eqref{eq:div-meas},
we have 
\begin{equation*}
\div(X)({\X})=\int_{\X}g\,\d\div(X)=-\int_{\X}\d g(X)\,\d\mu=0.
\end{equation*}
\end{proof}

We give now the general version of the Gauss--Green formula in locally compact $\mathsf{RCD}(K,\infty)$ metric measure spaces.

\begin{theorem}[Gauss--Green formulas I] \label{thm:Gauss_Green_formulas}
Let $X\in{\cal DM}^{\infty}({\X})$ and let $E\subset\X$ be a
bounded set of finite perimeter. Then, we have 
\begin{align}
\div(X)(\tildef{E^{1}}) & =-\int_{\partial E} \frac{1}{2(1 - \hatE)} \ban{X, \nu_{E}}_{\partial E}^{-}\,\d\|D\uno_{E}\|,\label{eq:GG_general_int}\\
\div(X)(\tildef{E^{1}}\cup\tildef{\partial^{*} E}) & = - \int_{\partial E} \frac{1}{2 \hatE} \ban{X, \nu_{E}}_{\partial E}^{+}\,\d\|D\uno_{E}\|.\label{eq:GG_general_ext}
\end{align}
\end{theorem}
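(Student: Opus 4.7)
The plan is to combine the refined Leibniz rules from Theorem \ref{thm:Leibniz_rule_E_2} with Lemma \ref{lem:bounded_supp}, exploiting the assumption that $E$ is bounded so that $X_E$ has bounded support.

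First, I would verify that $X_E = \uno_E X \in \DM^\infty(\X)$ has bounded support. Since $|X_E|(x) = \uno_E(x) |X|(x) = 0$ for $\mu$-a.e.\ $x \in E^c$, the support of $X_E$ is contained in $\overline{E}$, which is bounded because $E$ is bounded. Membership of $X_E$ in $\DM^\infty(\X)$ is guaranteed by Theorem \ref{thm:Leibniz_rule_E_1}. Therefore, Lemma \ref{lem:bounded_supp} applies and gives
\begin{equation*}
\div(X_E)(\X) = 0.
\end{equation*}

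Next, I would evaluate both sides of identity \eqref{eq:Leib-final1} on the whole space $\X$. Using the concentration property \eqref{eq:concentration_perimeter_measure} (so that the integral against $\|D\uno_E\|$ over $\X$ reduces to one over $\partial E$), we obtain
\begin{equation*}
0 = \div(X_E)(\X) = \div(X)(\tildef{E^{1}}) + \int_{\partial E} \frac{1}{2(1 - \hatE)}\, \ban{X, \nu_{E}}_{\partial E}^{-}\,\d\|D\uno_{E}\|,
\end{equation*}
which, after rearranging, yields \eqref{eq:GG_general_int}. Analogously, applying the same procedure to identity \eqref{eq:Leib-final2} produces
\begin{equation*}
0 = \div(X_E)(\X) = \div(X)(\tildef{E^{1}} \cup \tildef{\partial^{*} E}) + \int_{\partial E} \frac{1}{2 \hatE}\, \ban{X, \nu_{E}}_{\partial E}^{+}\,\d\|D\uno_{E}\|,
\end{equation*}
and rearranging gives \eqref{eq:GG_general_ext}.

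The only nontrivial point—and arguably the main obstacle—is the applicability of Lemma \ref{lem:bounded_supp} to $X_E$, which requires checking that the support is genuinely bounded and that $X_E \in \DM^\infty(\X)$; both are already ensured by the boundedness of $E$ and by Theorem \ref{thm:Leibniz_rule_E_1}, respectively. All the hard analytic work (the existence and integrability of the traces, the identification of $\tildef{\uno_E}$ with $\hatE$ on $\tildef{\partial^* E}$, and the refined Leibniz decompositions) has already been established, so the Gauss–Green formulas follow almost immediately once the support argument is in place.
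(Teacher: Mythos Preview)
Your proof is correct and follows essentially the same route as the paper's own argument: apply Lemma \ref{lem:bounded_supp} to $X_E$ (using that $E$ is bounded and $X_E\in\DM^\infty(\X)$ by Theorem \ref{thm:Leibniz_rule_E_1}), evaluate the refined Leibniz identities \eqref{eq:Leib-final1} and \eqref{eq:Leib-final2} on $\X$, and then use \eqref{eq:concentration_perimeter_measure} to restrict the perimeter integral to $\partial E$. You have even made explicit the bounded-support verification that the paper leaves implicit.
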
 
\begin{proof} 
It is enough to recall \eqref{eq:Leib-final1}
and \eqref{eq:Leib-final2}, and to apply Lemma \ref{lem:bounded_supp}
in order to get 
\begin{align*}
0=\div\left(X_E\right)(\X) & =\div(X)(\tildef{E^{1}})+\int_{\X} \frac{1}{2(1 - \hatE)} \ban{X, \nu_{E}}_{\partial E}^{-}\,\d\|D\mathds1_{E}\|,\\
0=\div\left(X_E\right)(\X) & =\div(X)(\tildef{E^{1}}\cup\tildef{\partial^*E }) + \int_{\X} \frac{1}{2 \hatE} \ban{X, \nu_{E}}_{\partial E}^{+}\,\d\|D\mathds1_{E}\|.
\end{align*}
Then we employ \eqref{eq:concentration_perimeter_measure} and we get \eqref{eq:GG_general_int} and \eqref{eq:GG_general_ext}.
\end{proof}

The first consequence of Theorem \ref{thm:Gauss_Green_formulas} are the integration by parts formulas.

\begin{theorem}[Integration by parts formulas I]\label{thm:IBP_general}
Let $X \in \DM^{\infty}(\X)$, $E\subset\X$ be a measurable set such that $\uno_{E} \in BV(\X)$ or $\uno_{E^{c}} \in BV(\X)$, and $\varphi \in \Lip_b(\X)$ such that $\supp(\uno_{E} \varphi)$ is bounded. Then, we have 
\begin{align}
\int_{\tildef{E^{1}}} \varphi \, \d \div(X) + \int_{E} \d\varphi(X) \, \d \mu & =-\int_{\partial E} \frac{1}{2(1 - \hatE)} \varphi \ban{X, \nu_{E}}_{\partial E}^{-}\,\d\|D\uno_{E}\|,\label{eq:IBP_general_int}\\
\int_{\tildef{E^{1}}\cup\tildef{\partial^{*} E}} \varphi \, \d \div(X) + \int_{E} \d\varphi(X) \, \d \mu & =-\int_{\partial E} \frac{1}{2 \hatE} \varphi \ban{X, \nu_{E}}_{\partial E}^{+}\,\d\|D\uno_{E}\|.\label{eq:IBP_general_ext}
\end{align}
\end{theorem}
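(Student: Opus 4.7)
My strategy is to derive both \eqref{eq:IBP_general_int} and \eqref{eq:IBP_general_ext} by integrating $\varphi$ against the two refined Leibniz rules for $X_E := \uno_E X \in \DM^\infty(\X)$ supplied by Theorem \ref{thm:Leibniz_rule_E_2}. Since the two arguments are identical, I concentrate on \eqref{eq:IBP_general_int}. Inserting \eqref{eq:Leib-final1} on the left-hand side and using that $\|D\uno_E\|$ is concentrated on $\partial E$ (Remark \ref{rem:prop_set_fin_per}), the task reduces to establishing the integration-by-parts identity
\[
\int_\X \varphi \, \d\div(X_E) \;=\; -\int_E \d\varphi(X) \, \d\mu.
\]
The only obstruction to reading this off directly from the definition of $\div(X_E)$ is that $\varphi$ is not assumed to belong to $\Lip_{\bs}(\X)$; only the product $\uno_E \varphi$ has bounded support.

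To bypass this, using that $\X$ is locally compact and $\supp(\uno_E \varphi)$ is bounded, I construct a sequence $(\eta_n) \subset \Lip_{\bs}(\X)$ with $0 \le \eta_n \le 1$, $\eta_n \equiv 1$ on a fixed open neighborhood $V$ of $\supp(\uno_E \varphi)$, and $\eta_n \to 1$ pointwise on $\X$; one may take $\eta_n$ to be piecewise-linear truncations of the distance function $d(\cdot,\supp(\uno_E\varphi))$ with the $1$-plateau growing with $n$. Setting $\psi_n := \varphi \eta_n \in \Lip_{\bs}(\X)$, the definition of the divergence measure and the $L^\infty$-linearity \eqref{eq:d-Linearity} of the differential give
\[
\int_\X \psi_n \, \d\div(X_E) \;=\; -\int_\X \d\psi_n(X_E) \, \d\mu \;=\; -\int_E \d\psi_n(X) \, \d\mu.
\]

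The core observation is that the right-hand side is independent of $n$ and coincides with $-\int_E \d\varphi(X) \, \d\mu$. Indeed, on $V$ one has $\eta_n \equiv 1$, so by the locality of the differential (Proposition \ref{prop:diff_Leibniz}) $\d\eta_n = 0$ $\mu$-a.e.\ on $V$, and hence $\d\psi_n = \d\varphi$ $\mu$-a.e.\ on $V$; on the other hand $V^c \subset \X \setminus \supp(\uno_E \varphi)$, which is the largest open set on which $\uno_E \varphi$ vanishes $\mu$-a.e., so $\varphi = 0$ $\mu$-a.e.\ on $E \cap V^c$, and a further application of locality yields $\d\varphi = \d\psi_n = 0$ $\mu$-a.e.\ on $E \cap V^c$. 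Combining, $\uno_E \d\psi_n = \uno_E \d\varphi$ $\mu$-a.e., for every $n$. To conclude, since $\psi_n \to \varphi$ pointwise with $|\psi_n| \le \|\varphi\|_{L^\infty(\X)}$ and $\|\div(X_E)\|$ is a finite Radon measure, Lebesgue's Dominated Convergence Theorem forces the left-hand side to converge to $\int_\X \varphi \, \d\div(X_E)$, yielding the desired identity and hence \eqref{eq:IBP_general_int}; \eqref{eq:IBP_general_ext} follows identically using \eqref{eq:Leib-final2}. The main subtlety is exactly the elimination of the $n$-dependence on the right-hand side, which relies on applying the locality property of the differential separately to $\varphi$ and to each $\psi_n$ on $E \cap V^c$.
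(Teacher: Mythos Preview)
Your proof is correct. Both your argument and the paper's start from the refined Leibniz rules \eqref{eq:Leib-final1}--\eqref{eq:Leib-final2} and must then cope with the fact that $\varphi \notin \Lip_{\bs}(\X)$; they diverge only in this step. The paper fixes a \emph{single} cutoff $\eta \in \Lip_{\bs}(\X)$ with $\eta \equiv 1$ on $\supp(\uno_E\varphi)$, computes $\div(\varphi X_E) = \div(\eta\varphi X_E)$ via Lemma~\ref{dmlipleibniz}, proves that $\div(X_E)$ is concentrated on $\overline{E}$ (so that $\eta\varphi\div(X_E) = \varphi\div(X_E)$), and then invokes Lemma~\ref{lem:bounded_supp} on the compactly supported field $\varphi X_E$. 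You instead take an exhausting sequence of cutoffs and pass to the limit on the left-hand side by dominated convergence, which sidesteps both the support property of $\div(X_E)$ and Lemma~\ref{lem:bounded_supp}; the cost is the locality argument on $E \cap V^c$, which in fact is also implicitly needed (in a one-step form) in the paper's simplification of $\d(\eta\varphi)(X_E)$. The two routes are equivalent rearrangements of the same ingredients.
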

\begin{proof}
It is clear that $\varphi X_E \in L^{\infty}(T \X)$, and that $X_E \in \DM^{\infty}(\X)$, by Theorem \ref{thm:Leibniz_rule_E_1}. Since $\supp(\uno_{E} \varphi)$ is bounded, there exists a cutoff function $\eta \in \Lip_{\bs}(\X)$ such that $\eta \equiv 1$ on $\supp(\uno_E \varphi)$. It is clear that $\eta \varphi \uno_{E} = \varphi \uno_{E}$, and that $\eta \varphi \in \Lip_{\bs}(\X)$. Hence, by Lemma \ref{dmlipleibniz}, we have $\eta \varphi X_E = \varphi X_{E} \in \DM^{\infty}(\X)$ and
\begin{align*}
\div(\varphi X_{E}) & = \div(\eta \varphi X_E) = \eta \varphi \div(X_E) + \d (\eta \varphi)(X_E) \mu \\
& = \eta \varphi \div(X_E) + \uno_{E} \eta \d \varphi(X) \mu + \uno_{E} \varphi \d \eta(X) \mu = \eta \varphi \div(X_E) + \uno_{E} \d \varphi(X) \mu.
\end{align*}
Now, we notice that $\|\div(X_{E})\|(\overline{E}^{c})=0$: indeed, for any $\psi \in \Lip_{\bs}(\overline{E}^c)$, we have
\begin{equation*}
\int_{\X} \psi \, \d \div(X_{E}) = - \int_{\X} \d \psi(X_{E}) \, \d \mu = 0
\end{equation*}
Hence, $\div(X_{E}) = \div(X_{E}) \res \overline{E}$, and this means that 
\begin{equation*}
\eta \varphi \div(X_E) = \varphi \div(X_{E}).
\end{equation*}
Then, by \eqref{eq:Leib-final1} and \eqref{eq:Leib-final2}, we have
\begin{align*}
\div(\varphi X_E) & = \uno_{\tildef{E^{1}}} \varphi \div(X) + \frac{1}{2(1 - \hatE)} \varphi \ban{X, \nu_{E}}^{-}_{\partial E} \|D \uno_{E}\| + \uno_{E} \d \varphi(X) \mu, \\
\div(\varphi X_E) & = \uno_{\tildef{E^{1}} \cup \tildef{\partial^{*} E}} \varphi \div(X) +  \frac{1}{2 \hatE} \varphi \ban{X, \nu_{E}}^{+}_{\partial E} \|D \uno_{E}\| + \uno_{E} \d \varphi(X) \mu. 
\end{align*}
Hence, we evaluate these equations over $\X$ and we employ Lemma \ref{lem:bounded_supp}, in order to obtain $$\div(\varphi X_E)(\X) = 0,$$ since $\supp(\varphi  X_E)$ is bounded. Thus, thanks to \eqref{eq:concentration_perimeter_measure}, we get \eqref{eq:IBP_general_int} and \eqref{eq:IBP_general_ext}.
\end{proof}

\subsection{The case $\|\div(X)\|\ll\mu$}\label{div_abscont_mu}

We study now the Gauss--Green formulas for fields with absolutely continuous divergence--measure, such as, for instance, the elements of $L^\infty(T\X)\cap\frd^{1}(\X)$. This property of the divergence--measure implies that, in particular, the distributional normal traces are unique and do not depend on any approximating sequence.

\begin{proposition} \label{prop:Leibniz_rule_abs_cont} Let $X\in\DM^{\infty}(\X)$
be such that $\|\div(X)\|\ll\mu$ and let $E\subset\X$ be a measurable set such that $\uno_{E} \in BV(\X)$ or $\uno_{E^{c}} \in BV(\X)$. Then, we have 
\begin{equation}
\tildef{\uno_{E}}(x)=\uno_{E}(x)\ \text{for}\ \|\div(X)\|\text{-a.e.}\ x\in\X,\label{eq:repr_uno_E_abs_cont}
\end{equation}
$\|\div(X)\|(\tildef{E^{1}}\Delta E)=0$ and $\|\div(X)\|(\tildef{\partial^{*} E})=0$.
In addition, there exists a unique distributional normal trace $\ban{X,\nu_{E}}_{\partial E}\in L^{\infty}(\partial E,\|D\uno_{E}\|)$,
which satisfies 
\begin{equation}
\div(X_E)=\uno_{E}\div(X)+ \ban{X,\nu_{E}}_{\partial E}\|D\uno_{E}\|.\label{eq:Leibniz_rule_abs_cont}
\end{equation}
\end{proposition}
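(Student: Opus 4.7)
The plan has two main steps. First, I will establish the key identification \eqref{eq:repr_uno_E_abs_cont}; the remaining claims are essentially bookkeeping consequences. Since $E$ is only $\mu$-measurable, I may replace it by a Borel representative (the discrepancy is $\mu$-null, hence $\|\div(X)\|$-null by hypothesis), so I henceforth assume $E$ is Borel. I will treat the case $\uno_E \in BV(\X)$ in detail; then $\mu(E) < \infty$, hence $\uno_E \in L^2(\X)$ and $\h_t \uno_E$ is standardly defined. If instead $\uno_{E^c} \in BV(\X)$ but $\uno_E \notin L^2(\X)$, I will use $\h_t \uno_E = 1 - \h_t \uno_{E^c}$ (available via $\h_t 1 = 1$ from \eqref{ht-uno}) and transfer the argument via the complementarity relations of Remark \ref{rem:normal_traces_compl}.

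To prove $\tildef{\uno_E} = \uno_E$ holds $\|\div(X)\|$-a.e., I will exploit $\h_t \uno_E \to \uno_E$ in $L^2(\X)$ to extract a subsequence $t_j \to 0$ with $\h_{t_j} \uno_E \to \uno_E$ $\mu$-almost everywhere, and therefore $\|\div(X)\|$-a.e. by absolute continuity. Passing to a further subsequence that simultaneously realises \eqref{eq:tildef_uno_E_weak_def} and \eqref{eq:pairing_X_E_conv_int_ext}, the uniform bound $0 \le \h_{t_j} \uno_E \le 1$ and dominated convergence let me test against any $\psi \in L^1(\X, \|\div(X)\|)$ and identify the weak* limit $\tildef{\uno_E}$ with $\uno_E$ in $L^\infty(\X, \|\div(X)\|)$. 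Since $\uno_E$ is $\{0,1\}$-valued, this forces $\|\div(X)\|(\tildef{E^{1}} \Delta E) = 0$ and $\|\div(X)\|(\tildef{\partial^* E}) = 0$.

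For the trace, plugging $\tildef{\uno_E}\,\div(X) = \uno_E\,\div(X)$ into \eqref{eq:Leib1} gives
\begin{equation*}
\bm{D}\uno_E(X) = \div(X_E) - \uno_E \div(X),
\end{equation*}
whose right-hand side is independent of the approximating sequence $(t_j)$; hence $\bm{D}\uno_E(X)$ itself is intrinsically defined by $X$ and $E$. By \eqref{eq:abscont-pairing}, $\bm{D}\uno_E(X) \ll \|D\uno_E\|$ with essentially bounded density, so I will define $\ban{X, \nu_E}_{\partial E} \in L^\infty(\partial E, \|D\uno_E\|)$ as this Radon--Nikodym derivative (concentrated on $\partial E$ by \eqref{eq:concentration_perimeter_measure}); this yields \eqref{eq:Leibniz_rule_abs_cont} and, via the uniqueness of $\bm{D}\uno_E(X)$, uniqueness of the trace. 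The main obstacle I foresee lies precisely in the second step: upgrading the merely $\mu$-a.e. convergence provided by $L^2$ theory to a $\|\div(X)\|$-a.e. equality with the weak* limit $\tildef{\uno_E}$. The hypothesis $\|\div(X)\| \ll \mu$ is exactly the bridge that accomplishes this, and it is what ultimately strips the sequence-dependence from the general Leibniz identity \eqref{eq:Leib1}.
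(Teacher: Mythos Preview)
Your proof is correct. The first step—identifying $\tildef{\uno_E}$ with $\uno_E$ via $\mu$-a.e.\ convergence upgraded to $\|\div(X)\|$-a.e.\ by absolute continuity—matches the paper's argument (and is essentially Remark~\ref{w-lim} specialized to the hypothesis $\|\div(X)\|\ll\mu$). Your second step, however, is genuinely more direct than the paper's. The paper invokes the refined Leibniz identity \eqref{eq:Leib-final2} from Theorem~\ref{thm:Leibniz_rule_E_2}, together with \eqref{eq:div-D1E}, to define $\ban{X,\nu_E}_{\partial E} := \frac{1}{2(1-\hatE)}\ban{X,\nu_E}_{\partial E}^{-} = \frac{1}{2\hatE}\ban{X,\nu_E}_{\partial E}^{+}$; this route passes through the auxiliary object $\hatE$ and the summability result for $\frac{1}{1-\hatE}$. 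You bypass all of that by reading directly off the basic identity \eqref{eq:Leib1}: once $\tildef{\uno_E}\,\div(X) = \uno_E\,\div(X)$ is known, the pairing $\bm{D}\uno_E(X) = \div(X_E) - \uno_E\,\div(X)$ is intrinsic, and its Radon--Nikodym density against $\|D\uno_E\|$ (bounded by \eqref{eq:abscont-pairing}) is the unique trace. What the paper's route buys is an explicit link between the unique trace and the previously defined interior and exterior traces $\ban{X,\nu_E}_{\partial E}^{\pm}$ and $\hatE$; your route is more self-contained and uses only \eqref{eq:Leib1} and \eqref{eq:abscont-pairing}, but does not make that connection explicit.
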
 
\begin{proof} 
It is easy to notice that \eqref{eq:repr_uno_E_abs_cont} is a consequence of \eqref{eq:tilde_f_abs_cont}. Then, the negligibility of the sets $\tildef{E^{1}}\Delta E$ and $\tildef{\partial^{*}E}$ with respect to the measure $\|\div(X)\|$ follows from \eqref{eq:repr_uno_E_abs_cont}. Hence, \eqref{eq:div-D1E}
implies that 
\begin{equation*}
\hatE(x) \ban{X, \nu_{E}}_{\partial E}^{-}(x)=(1 - \hatE(x)) \ban{X, \nu_{E}}_{\partial E}^{+}(x) \quad \text{for} \ \|D\uno_{E}\|\text{-a.e.} \ x\in\X,
\end{equation*}
from which we can define 
\begin{equation*}
\ban{X, \nu_{E}}_{\partial E}:= \frac{1}{2 (1 - \hatE)} \ban{X, \nu_{E}}_{\partial E}^{-} = \frac{1}{2 \hatE} \ban{X, \nu_{E}}_{\partial E}^{+}
\end{equation*}
to be the unique distributional normal trace of $X$ on $\partial E$. All in all,
we get \eqref{eq:Leibniz_rule_abs_cont} as a consequence of \eqref{eq:Leib-final2},
\eqref{eq:repr_uno_E_abs_cont} and the uniqueness of the distributional normal trace.
\end{proof}

The Leibniz rule given in Proposition \ref{prop:Leibniz_rule_abs_cont} allows us to obtain the following Gauss--Green and integration by parts formulas.

\begin{theorem}[Gauss--Green formulas II] \label{thm:GG_abs_cont} 
Let $X\in\DM^{\infty}(\X)$ be such that $\|\div(X)\|\ll\mu$ and let $E$ be a bounded set of finite perimeter. Then, we have 
\begin{equation}
\div(X)(E)=-\int_{\partial E}\ban{X, \nu_{E}}_{\partial E}\,\d\|D\uno_{E}\|.\label{eq:GG_abs_cont}
\end{equation}
\end{theorem}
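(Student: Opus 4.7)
The plan is to exploit Proposition \ref{prop:Leibniz_rule_abs_cont} directly, since it already provides the clean Leibniz rule
\[
\div(X_E)=\uno_{E}\div(X)+\ban{X,\nu_{E}}_{\partial E}\|D\uno_{E}\|
\]
together with the existence and uniqueness of the distributional normal trace under the assumption $\|\div(X)\|\ll\mu$. The formula \eqref{eq:GG_abs_cont} should follow by testing this identity against the constant function $1$ on the whole space $\X$.

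More precisely, I would first observe that, since $E$ is bounded, the field $X_E=\uno_E X$ belongs to $\DM^\infty(\X)$ (by Theorem \ref{thm:Leibniz_rule_E_1}, or directly by Proposition \ref{prop:Leibniz_rule_abs_cont}) and has bounded support contained in $\overline{E}$. Therefore Lemma \ref{lem:bounded_supp} applies and yields
\[
\div(X_E)(\X)=0.
\]
Evaluating the Leibniz rule above on $\X$ then gives
\[
0=\int_{\X}\uno_{E}\,\d\div(X)+\int_{\X}\ban{X,\nu_{E}}_{\partial E}\,\d\|D\uno_{E}\|.
\]
The first integral equals $\div(X)(E)$ by definition, and, by the concentration property \eqref{eq:concentration_perimeter_measure} of the perimeter measure (Remark \ref{rem:prop_set_fin_per}), the second integral reduces to an integration over $\partial E$. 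Rearranging yields precisely \eqref{eq:GG_abs_cont}.

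Alternatively, one can recover the statement as a direct corollary of Theorem \ref{thm:Gauss_Green_formulas}: under the additional hypothesis $\|\div(X)\|\ll\mu$, Proposition \ref{prop:Leibniz_rule_abs_cont} ensures $\|\div(X)\|(\tildef{E^{1}}\Delta E)=0$ and $\|\div(X)\|(\tildef{\partial^{*}E})=0$, so that $\div(X)(\tildef{E^{1}})=\div(X)(E)$, while the interior and exterior distributional traces collapse to a single trace $\ban{X,\nu_{E}}_{\partial E}$ with the relation $\ban{X,\nu_{E}}_{\partial E}^{-}=2(1-\hatE)\ban{X,\nu_{E}}_{\partial E}$; plugging these identifications into \eqref{eq:GG_general_int} produces \eqref{eq:GG_abs_cont}. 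Since no technical obstacle remains---the heavy lifting (existence of traces, Leibniz rule, absolute continuity considerations) has all been done in Section \ref{sec:refined_Leibniz_rule} and in Proposition \ref{prop:Leibniz_rule_abs_cont}---the proof is essentially a one-line application of Lemma \ref{lem:bounded_supp} to the Leibniz identity.
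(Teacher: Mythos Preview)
Your proposal is correct and follows essentially the same approach as the paper: apply Lemma \ref{lem:bounded_supp} to $X_E$ (which has bounded support because $E$ is bounded), evaluate the Leibniz identity \eqref{eq:Leibniz_rule_abs_cont} on $\X$, and use \eqref{eq:concentration_perimeter_measure} to restrict the perimeter integral to $\partial E$. The alternative derivation via Theorem \ref{thm:Gauss_Green_formulas} that you sketch is also valid, but the paper opts for the direct route through Proposition \ref{prop:Leibniz_rule_abs_cont}.
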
 \begin{proof} It is enough to recall \eqref{eq:Leibniz_rule_abs_cont},
and to apply Lemma \ref{lem:bounded_supp} in order to get 
\begin{equation*}
0=\div\left(X_E\right)(\X)=\div(X)(E)+\int_{\X}\ban{X, \nu_{E}}_{\partial E}\,\d\|D\mathds1_{E}\|.
\end{equation*}
Then, \eqref{eq:concentration_perimeter_measure} implies \eqref{eq:GG_abs_cont}.
\end{proof}

\begin{theorem}[Integration by parts formulas II] \label{thm:IBP_abs_cont}
Let $X\in\DM^{\infty}(\X)$ be such that $\|\div(X)\|\ll\mu$, $E\subset\X$ be a measurable set such that $\uno_{E} \in BV(\X)$ or $\uno_{E^{c}} \in BV(\X)$, and $\varphi \in \Lip_b(\X)$ such that $\supp(\uno_{E} \varphi)$ is bounded. Then, we have 
\begin{equation} \label{eq:IBP_abs_cont}
\int_{E} \varphi \, \d \div(X) + \int_{E} \d\varphi(X) \, \d \mu = - \int_{\partial E}\varphi \ban{X, \nu_{E}}_{\partial E} \,\d\|D\uno_{E}\|.
\end{equation}
\end{theorem}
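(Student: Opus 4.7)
The plan is to mimic the proof of Theorem \ref{thm:IBP_general}, with the simplification afforded by the hypothesis $\|\div(X)\|\ll\mu$: Proposition \ref{prop:Leibniz_rule_abs_cont} now supplies the clean Leibniz rule
\[
\div(X_E) \;=\; \uno_E\,\div(X) + \ban{X,\nu_E}_{\partial E}\,\|D\uno_E\|,
\]
with a single, canonically defined distributional normal trace $\ban{X,\nu_E}_{\partial E}$, so the refined measures $\tildef{E^1}$, $\tildef{\partial^* E}$ and the factors $1/(2(1-\hatE))$, $1/(2\hatE)$ no longer appear.

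First, I would observe that $X_E\in\DM^\infty(\X)$ by Theorem \ref{thm:Leibniz_rule_E_1}, and pick a cutoff $\eta\in\Lip_{\bs}(\X)$ with $\eta\equiv 1$ on the bounded set $\supp(\uno_E\varphi)$. Then $\eta\varphi\in\Lip_{\bs}(\X)$ and $\eta\varphi\uno_E=\varphi\uno_E$, so Lemma \ref{dmlipleibniz} gives $\varphi X_E=\eta\varphi X_E\in\DM^\infty(\X)$ with
\[
\div(\varphi X_E) \;=\; \eta\varphi\,\div(X_E) + \uno_E\eta\,\d\varphi(X)\,\mu + \uno_E\varphi\,\d\eta(X)\,\mu.
\]
Exactly as in the proof of Theorem \ref{thm:IBP_general}, testing on $\Lip_{\bs}(\overline E^c)$ shows that $\|\div(X_E)\|$ is concentrated on $\overline E$, hence $\eta\varphi\,\div(X_E)=\varphi\,\div(X_E)$; and by the locality part of Proposition \ref{prop:diff_Leibniz} applied with $\mathcal{N}=\{1\}$, one has $\d\eta=0$ $\mu$-a.e.\ on $\{\eta=1\}\supset\supp(\uno_E\varphi)$, so the last term vanishes. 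Combining with the Leibniz rule from Proposition \ref{prop:Leibniz_rule_abs_cont} yields
\[
\div(\varphi X_E) \;=\; \uno_E\varphi\,\div(X) + \varphi\ban{X,\nu_E}_{\partial E}\|D\uno_E\| + \uno_E\,\d\varphi(X)\,\mu.
\]

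Since $\supp(\varphi X_E)\subset\supp(\uno_E\varphi)$ is bounded, Lemma \ref{lem:bounded_supp} gives $\div(\varphi X_E)(\X)=0$. Evaluating the displayed identity on $\X$ and reading off each piece gives the conclusion: the first term equals $\int_E\varphi\,\d\div(X)$ thanks to \eqref{eq:repr_uno_E_abs_cont}, the third equals $\int_E\d\varphi(X)\,\d\mu$ by definition, and the middle term reduces to $\int_{\partial E}\varphi\ban{X,\nu_E}_{\partial E}\,\d\|D\uno_E\|$ by \eqref{eq:concentration_perimeter_measure}. Rearranging produces \eqref{eq:IBP_abs_cont}. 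The case $\uno_{E^c}\in BV(\X)$ requires no separate argument, as Proposition \ref{prop:Leibniz_rule_abs_cont} already established \eqref{eq:Leibniz_rule_abs_cont} uniformly in both cases. There is no real obstacle: the entire content of the theorem lies in Proposition \ref{prop:Leibniz_rule_abs_cont}, and the present statement is a routine localization via the cutoff $\eta$, following verbatim the template of Theorem \ref{thm:IBP_general}.
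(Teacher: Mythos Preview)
Your proof is correct and follows essentially the same approach as the paper: both rely on Proposition \ref{prop:Leibniz_rule_abs_cont}, Lemma \ref{dmlipleibniz}, and Lemma \ref{lem:bounded_supp}, together with a cutoff $\eta$ to reduce to the boundedly supported situation. The only cosmetic difference is that the paper first treats $\varphi\in\Lip_{\bs}(\X)$ directly (where the Leibniz rule \eqref{eq:Leibniz-Lip} applies without a cutoff) and only afterwards inserts $\eta$ to pass to general $\varphi\in\Lip_b(\X)$, whereas you introduce $\eta$ from the outset, mirroring the proof of Theorem \ref{thm:IBP_general}; neither ordering has any real advantage. One small point worth making explicit (it is tacit both in your write-up and in the paper's proof of Theorem \ref{thm:IBP_general}): the passage from $\uno_E\eta\,\d\varphi(X)$ to $\uno_E\,\d\varphi(X)$ is justified because $\eta\varphi=\varphi$ pointwise on $E$ (if $x\in E$ and $\varphi(x)\neq 0$ then $x\in\supp(\uno_E\varphi)$, so $\eta(x)=1$), whence by locality of the differential $\d(\eta\varphi)=\d\varphi$ $\mu$-a.e.\ on $E$.
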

\begin{proof}
Let at first $\varphi \in \Lip_{\bs}(\X)$.
By Lemma \ref{dmlipleibniz} and Proposition \ref{prop:Leibniz_rule_abs_cont}, we have $\varphi  X _E\in \DM^{\infty}(\X)$ and 
\begin{align}
\div(\varphi X_E) & = \varphi \div(X_E) + \d \varphi(X_E) \mu \nonumber \\
 \label{eq:Leib_abs_cont_varphi} & = \uno_{E} \varphi \div(X) + \varphi \ban{X,\nu_{E}}_{\partial E}\|D\uno_{E}\| + \uno_{E} \d \varphi(X) \mu,
\end{align}
thanks to \eqref{eq:Leibniz-Lip} and \eqref{eq:Leibniz_rule_abs_cont}. Since $\supp(\varphi X_E)$ is bounded, Lemma \ref{lem:bounded_supp} implies that $$\div(\varphi X_E)(\X) = 0.$$
By evaluating \eqref{eq:Leib_abs_cont_varphi} over $\X$ and using \eqref{eq:concentration_perimeter_measure}, we get \eqref{eq:IBP_abs_cont} for $\varphi \in \Lip_{\bs}(\X)$. Now, let $\eta \in \Lip_{\bs}(\X)$ be a cutoff function such that $\eta \equiv 1$ on $\supp(\uno_{E} \varphi)$. Since $\eta \varphi \in \Lip_{\bs}(\X)$, we get
\begin{equation*} 
\int_{E} \eta \varphi \, \d \div(X) + \int_{E} \d(\eta \varphi)(X) \, \d \mu = - \int_{\partial E} \eta \varphi \ban{X, \nu_{E}}_{\partial E} \,\d\|D\uno_{E}\|.
\end{equation*}
It is easy to notice that $\eta \equiv 1$ on $\supp(\uno_{\overline{E}} \varphi)$, and that $\d(\eta \varphi) = \d \varphi$ on $E$, and this ends the proof.
\end{proof}

\subsection{The case of regular domains}\label{reg_domain_comparison}

In this section, we compare the two different integration by parts formulas that we obtain on a regular domain $\Omega$, namely \eqref{eq:-36} and \eqref{eq:IBP_general_int}. In particular, we are able to find some characterization for the sets $\tildef{\Omega^1}$ and $\tildef{\Omega^0}$.

\begin{proposition}
Let $X \in \DM^{\infty}(\X)$ and $\Omega$ be a regular domain such that $\overline{\Omega}^{c}$ is a regular domain and $\mu(\partial \Omega) = 0$. Let $\tildef{\Omega^1}, \tildef{\Omega^0}$ be defined as in \eqref{def:E_s}, for some weakly* converging sequence $\h_{t_{j}} \uno_{\Omega} \weakstarto \tildef{\uno_{\Omega}}$ in $L^{\infty}(\X, \|\div(X)\|)$.  Then, we have
\begin{equation} \label{eq:reg_dom_tilde_1_0}
\|\div(X)\|((\Omega \Delta \tildef{\Omega^1}) \setminus \partial \Omega) = 0 \ \text{and} \ \|\div(X)\|((\overline{\Omega}^c \Delta \tildef{\Omega^0}) \setminus \partial \Omega) = 0.
\end{equation}
\end{proposition}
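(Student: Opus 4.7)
The plan is to compare, for $E=\Omega$, the two integration by parts formulas now available to us: the ones from Corollary \ref{cor:mu_negl_boundary_Omega} (applicable here since $\Omega$ and $\overline\Omega^{c}$ are both regular domains with $\mu(\partial\Omega)=0$) with those from Theorem \ref{thm:IBP_general}. Since both pairs share the gradient term $\int_\Omega \d f(X)\,\d\mu$ and the boundary term is supported on $\partial\Omega$, subtracting will leave an identity between two measures on $\X$ whose difference is concentrated on $\partial\Omega$, and from this the conclusion follows.

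More precisely, given $f\in\Lip_b(\X)$ with $\supp(f\uno_\Omega)$ bounded, \eqref{eq:-36} from Corollary \ref{cor:mu_negl_boundary_Omega} and the interior formula \eqref{eq:IBP_general_int} of Theorem \ref{thm:IBP_general} both express the common quantity $\int_\Omega\d f(X)\,\d\mu$. Subtracting gives
\[
\int_\X f\bigl(\uno_{\tildef{\Omega^1}}-\uno_\Omega\bigr)\,\d\div(X)=\int_{\partial\Omega} f\left[(X\cdot\nu_\Omega)^{-}_{\partial\Omega}-\frac{\ban{X,\nu_\Omega}^{-}_{\partial\Omega}}{2(1-\hat\Omega)}\right]\d\|D\uno_\Omega\|.
\]
The right-hand side defines a finite Radon measure concentrated on $\partial\Omega$. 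Taking $f\in\Lip_\bs(\X)$ with $\supp(f)\subset\X\setminus\partial\Omega$ makes the right-hand side vanish, and a standard approximation, using that $\X\setminus\partial\Omega$ is open and that such $f$'s are dense in the bounded Borel functions vanishing near $\partial\Omega$, shows that the signed measure $(\uno_{\tildef{\Omega^1}}-\uno_\Omega)\div(X)$ is concentrated on $\partial\Omega$. Passing to total variations yields the first identity in \eqref{eq:reg_dom_tilde_1_0}.

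For the second identity I would carry out the same subtraction but using the exterior formulas: \eqref{eq:IBP_ext_reg_mod} from Corollary \ref{cor:mu_negl_boundary_Omega} and \eqref{eq:IBP_general_ext} from Theorem \ref{thm:IBP_general}. The analogous computation shows that
\[
\bigl(\uno_{\tildef{\Omega^1}\cup\tildef{\partial^*\Omega}}-\uno_{\overline\Omega}\bigr)\div(X)=\bigl(\uno_{\overline\Omega^{c}}-\uno_{\tildef{\Omega^0}}\bigr)\div(X)
\]
is likewise concentrated on $\partial\Omega$, giving the second equality in \eqref{eq:reg_dom_tilde_1_0}.

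The only mildly delicate step is the density argument that lets one pass from an identity tested against $f\in\Lip_\bs(\X\setminus\partial\Omega)$ to the vanishing of $(\uno_{\tildef{\Omega^1}}-\uno_\Omega)\div(X)\res(\X\setminus\partial\Omega)$; this is standard for Radon measures on the locally compact space $\X\setminus\partial\Omega$, exploiting the density of $\Lip_\bs$ in $C_\bs$ already used earlier in Section \ref{reg-dom}. All remaining algebra is routine.
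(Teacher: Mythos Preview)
Your proof is correct and follows essentially the same approach as the paper: subtract the regular-domain integration by parts formulas \eqref{eq:-36} and \eqref{eq:IBP_ext_reg_mod} from the general ones \eqref{eq:IBP_general_int} and \eqref{eq:IBP_general_ext}, then test against $f\in\Lip_\bs(\X\setminus\partial\Omega)$ to conclude. The paper's version is slightly terser about the density step, and it cites \eqref{eq:IBP_ext_reg} directly rather than through Corollary \ref{cor:mu_negl_boundary_Omega}, but these are the same formulas under the hypothesis $\mu(\partial\Omega)=0$.
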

\begin{proof}
If we apply \eqref{eq:IBP_general_int} to $E = \Omega$, and we subtract it from \eqref{eq:-36}, we get
\begin{equation*}
\int_{\X} f \left (\uno_{\Omega} - \uno_{\tildef{\Omega^1}} \right) \, \d \div(X) = - \int_{\partial \Omega} f \left ( (X \cdot \nu)_{\partial \Omega}^{-} - \frac{1}{2 ( 1 - \widehat{\uno_{\Omega}})} \ban{X, \nu_{\Omega}}_{\partial \Omega}^{-} \right) \, \d \|D \uno_{\Omega}\|,
\end{equation*}
for any $f \in \Lip_{b}(\X)$ such that $\supp(f \uno_{\Omega})$ is bounded. If we choose now $f \in \Lip_\bs(\X \setminus \partial \Omega)$, we obtain
\begin{equation*}
\int_{\X} f \left (\uno_{\Omega} - \uno_{\tildef{\Omega^1}} \right) \, \d \div(X)  = 0,
\end{equation*}
from which the first equality in \eqref{eq:reg_dom_tilde_1_0} follows. As for the second, it is enough to argue in a similar way, employing \eqref{eq:IBP_ext_reg} and \eqref{eq:IBP_general_ext}.
\end{proof}

In particular, in the case of absolutely continuous divergence--measure, it follows that the normal trace on regular domains coincide with the distributional one.

\begin{corollary}
Let $X \in \DM^{\infty}(\X)$ such that $\|\div(X)\| \ll \mu$, and let $\Omega$ be a regular domain such that $\overline{\Omega}^{c}$ is a regular domain and $\mu(\partial \Omega) = 0$. Then, we have $(X \cdot \nu_{\Omega})_{\partial \Omega} = \ban{X, \nu_{\Omega}}_{\partial \Omega}$.
\end{corollary}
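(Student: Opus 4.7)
The plan is to recover the equality by subtracting the two integration-by-parts identities that characterize the two traces and then invoke a density argument in $L^{1}(\partial\Omega,\|D\uno_{\Omega}\|)$.

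First, I would note that both traces live in $L^{\infty}(\partial\Omega,\|D\uno_{\Omega}\|)$: the trace $(X\cdot\nu_{\Omega})_{\partial\Omega}$ is provided by Corollary \ref{cor:dom_reg_trace_abs_cont} and satisfies the integration by parts formula \eqref{eq:IBP_reg_mod_abs_cont}, while $\ban{X,\nu_{\Omega}}_{\partial\Omega}$ is the unique distributional normal trace from Proposition \ref{prop:Leibniz_rule_abs_cont} (applicable since $\uno_{\Omega}\in BV(\X)$ and $\|\div(X)\|\ll\mu$) and satisfies \eqref{eq:IBP_abs_cont}. Crucially, both identities hold for the same admissible class of test functions $f\in\Lip_{b}(\X)$ with $\supp(f\uno_{\Omega})$ bounded.

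Fixing any such $f$ and taking the difference of the two formulas, the left-hand sides cancel exactly (both equal $\int_{\Omega} f\,\d\div(X)+\int_{\Omega}\d f(X)\,\d\mu$), leaving
\[
\int_{\partial\Omega} f \bigl((X\cdot\nu_{\Omega})_{\partial\Omega} - \ban{X,\nu_{\Omega}}_{\partial\Omega}\bigr)\,\d\|D\uno_{\Omega}\| = 0.
\]
It then remains to upgrade this vanishing integral to $\|D\uno_{\Omega}\|$-a.e.\ equality on $\partial\Omega$.

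The only mildly nontrivial step is the final density argument. Since $\X$ is locally compact and separable and $\|D\uno_{\Omega}\|$ is a finite Radon measure concentrated on $\partial\Omega$ (by \eqref{eq:concentration_perimeter_measure}), the space $C_{c}(\X)$ is dense in $L^{1}(\partial\Omega,\|D\uno_{\Omega}\|)$, and functions in $\Lip_{\bs}(\X)$ are in turn uniformly dense in $C_{c}(\X)$; any such $f$ automatically satisfies the required support condition. This density forces the bracketed difference to vanish as an element of $L^{\infty}(\partial\Omega,\|D\uno_{\Omega}\|)$, yielding the desired identification $(X\cdot\nu_{\Omega})_{\partial\Omega}=\ban{X,\nu_{\Omega}}_{\partial\Omega}$. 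I do not foresee a serious obstacle here, as the substantive work has already been carried out in Corollary \ref{cor:dom_reg_trace_abs_cont} and Proposition \ref{prop:Leibniz_rule_abs_cont}.
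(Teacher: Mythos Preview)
Your proposal is correct and follows essentially the same route as the paper's proof: subtract the two integration-by-parts identities \eqref{eq:IBP_reg_mod_abs_cont} and \eqref{eq:IBP_abs_cont} (applied with $E=\Omega$), observe that the left-hand sides coincide, and conclude by density that the difference of the traces vanishes $\|D\uno_{\Omega}\|$-a.e. The paper simply states this as an ``immediate consequence'' of the two formulas, while you spell out the density step explicitly; there is no substantive difference in approach.
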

\begin{proof}
It is an immediate consequence of \eqref{eq:IBP_abs_cont} applied to $E = \Omega$ and of \eqref{eq:IBP_reg_mod_abs_cont}.
\end{proof}

\subsection{The case $\hatE \equiv 1/2$}\label{hatE_onehalf}

In this section, we  show that it is possible to refine the results of Sections \ref{sec:refined_Leibniz_rule} and \ref{sec:general_Gauss_Green} under the additional assumption that, for any measurable set $E$ satisfying $\uno_{E} \in BV(\X)$ or $\uno_{E^{c}} \in BV(\X)$, the weak$^{*}$ limit points $\hatE$ of $\h_{t} \uno_{E}$ in $L^{\infty}(\X, \|D\uno_{E}\|)$ are constant functions. 

We wish to underline the fact that there exist non trivial $\rcd$ spaces enjoying such property. Indeed, in the Euclidean space $(\R^{n}, |\cdot|, \Leb{n})$, if $E$ or $\R^n \setminus E$ is a set of finite perimeter, we have 
\begin{equation*}
\h_t \uno_{E}(x) \to \frac{1}{2}  
\end{equation*} 
for any $x$ in the reduced boundary of $E$, as a simple consequence of the blow-up property. In addition, $\hatE$ is unique and constant in the Wiener spaces $(\X, \|\cdot \|, \gamma)$, where $(\X, \|\cdot\|)$ is a Banach space and $\gamma$ is a Gaussian probability measure. Indeed, Ambrosio and Figalli proved in \cite[Proposition 4.3]{Ambrosio_Figalli} that, if $E$ or $E^c$ is a set of finite perimeter in $(\X, \|\cdot \|, \gamma)$, then
\begin{equation*}
\h_{t} \uno_{E} \weakstarto \frac{1}{2} \quad \text{in} \ L^{\infty}(\X, \|D_{\gamma} \uno_{E}\|).
\end{equation*}
Hence, also in the Wiener spaces we have $\displaystyle \hatE \equiv \frac{1}{2}$. However, it is relevant to underline that the Wiener spaces are not locally compact, and so our theory for the Gauss--Green formulas cannot be applied there, a priori.

Actually, we can prove that, if we assume the constancy of the weak$^{*}$ limit points of $\h_{t} \uno_{E}$ in $L^{\infty}(\X, \|D\uno_{E}\|)$, then the limit is unique and equal to $1/2$.
To prove this result, we show first the following general property of the weak$^{*}$ limit points $\hatE \in L^{\infty}(\X, \|D\uno_{E}\|)$. We stress the fact that such property does not require the local compactness of the $\rcd$ metric measure space, but it is necessary to assume that $\supp(\mu) = \X$.

\begin{theorem} \label{thm:weak_star_conv_set_fin_per} 
Assume $\supp(\mu) = \X$. Let $E\subset \X$ be a measurable set such that either $\uno_{E}\in BV(\X)$ or
$\uno_{E^{c}}\in BV(\X)$. Then, any weak$^*$ limit $\hatE$ of $\h_{t} \uno_{E}$ in $L^{\infty}(\X, \|D \uno_{E}\|)$ satisfies
\begin{equation} \label{eq:weak_star_mean} 
\mean{\X} \hatE \, \d \|D \uno_{E}\| = \frac{1}{2}.
\end{equation}
\end{theorem}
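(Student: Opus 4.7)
My plan is to prove the stronger claim that the full limit $\lim_{t\to 0^+}\int_{\X}\h_t\uno_E\,\d\|D\uno_E\|$ exists and equals $\tfrac{1}{2}\|D\uno_E\|(\X)$. The thesis will then follow at once by testing the weak$^{*}$ convergence $\h_{t_j}\uno_E\weakstarto\hatE$ in $L^{\infty}(\X,\|D\uno_E\|)$ against the constant function $1\in L^{1}(\X,\|D\uno_E\|)$, which is integrable since $\|D\uno_E\|$ is finite. I will assume without loss of generality that $\uno_E\in BV(\X)$; the case $\uno_{E^c}\in BV(\X)$ will be handled by swapping $E$ and $E^c$ and using the equalities $\widehat{\uno_{E^c}}=1-\hatE$ and $\|D\uno_{E^c}\|=\|D\uno_E\|$ from Remark \ref{rem:prop_set_fin_per}.

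The core idea is to extract matching lower bounds of $\tfrac{1}{2}\|D\uno_E\|(\X)$ for two companion quantities, and then to close them via the BV Bakry--\'Emery inequality \eqref{eq:BE-BV-dual}. Applying the chain rule of Proposition \ref{prop:diff_Leibniz} to $\varphi(x)=x^{2}$ (and to $\varphi(x)=(1-x)^{2}$), and recalling $0\le\h_t\uno_E\le 1$, I will obtain the pointwise identities $2\h_t\uno_E|\nabla\h_t\uno_E|=|\nabla(\h_t\uno_E)^{2}|$ and $2\h_t\uno_{E^c}|\nabla\h_t\uno_E|=|\nabla(1-\h_t\uno_E)^{2}|$. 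Since $|\nabla\h_t\uno_E|\in L^{1}(\X)$ by Lemma \ref{lem:heat_flow_lip_functions}, integrating these identities gives
\[
\int_{\X}\h_t\uno_E|\nabla\h_t\uno_E|\,\d\mu=\tfrac{1}{2}\|D(\h_t\uno_E)^{2}\|(\X),\qquad \int_{\X}\h_t\uno_{E^c}|\nabla\h_t\uno_E|\,\d\mu=\tfrac{1}{2}\|D(1-\h_t\uno_E)^{2}\|(\X).
\]
The $L^{2}$-convergence $\h_t\uno_E\to\uno_E$ yields $(\h_t\uno_E)^{2}\to\uno_E$ and $(1-\h_t\uno_E)^{2}\to\uno_{E^c}$ in $L^{1}_{\mathrm{loc}}(\X)$, so the standard lower semicontinuity of the total variation combined with $\|D\uno_{E^c}\|=\|D\uno_E\|$ yields
\[
\liminf_{t\to 0^+}\int_{\X}\h_t\uno_E|\nabla\h_t\uno_E|\,\d\mu\ge\tfrac{1}{2}\|D\uno_E\|(\X),\qquad \liminf_{t\to 0^+}\int_{\X}\h_t\uno_{E^c}|\nabla\h_t\uno_E|\,\d\mu\ge\tfrac{1}{2}\|D\uno_E\|(\X).
\]

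Next, I will invoke the $L^{\infty}$-version of \eqref{eq:BE-BV-dual}---whose availability is precisely where the hypothesis $\supp(\mu)=\X$ enters---with $f=\uno_E$ and with the non-negative $L^{\infty}$ test functions $\varphi=\h_t\uno_E$ and $\varphi=\h_t\uno_{E^c}$, obtaining
\[
\int_{\X}\h_t\uno_E|\nabla\h_t\uno_E|\,\d\mu\le e^{-Kt}\int_{\X}\h_{2t}\uno_E\,\d\|D\uno_E\|,
\]
\[
\int_{\X}\h_t\uno_{E^c}|\nabla\h_t\uno_E|\,\d\mu\le e^{-Kt}\int_{\X}\h_{2t}\uno_{E^c}\,\d\|D\uno_E\|.
\]
Passing to the liminf as $t\to 0^+$, and using $\h_{2t}\uno_E+\h_{2t}\uno_{E^c}=\h_{2t}1=1$ so that the two right-hand sides add up to $\|D\uno_E\|(\X)$, the matching liminf lower bounds force both the $\liminf$ and the $\limsup$ of $\int_{\X}\h_{2t}\uno_E\,\d\|D\uno_E\|$ to equal $\tfrac{1}{2}\|D\uno_E\|(\X)$, which proves the claim. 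The main technical points requiring care will be the admissibility of $L^{\infty}$ test functions in Lemma \ref{lem:heat_flow_lip_functions} and, when $\mu(\X)=\infty$, the fact that $(1-\h_t\uno_E)^{2}$ need not be in $L^{1}(\X)$, which forces me to use the $L^{1}_{\mathrm{loc}}$-version of the lower semicontinuity of the total variation rather than the global one.
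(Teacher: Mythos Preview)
Your argument is correct and in fact proves the stronger statement that the \emph{full} limit $\lim_{t\to 0}\int_{\X}\h_t\uno_E\,\d\|D\uno_E\|$ exists and equals $\tfrac{1}{2}\|D\uno_E\|(\X)$, not merely that every subsequential weak$^*$ limit has average $\tfrac{1}{2}$.

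The paper's proof shares the same high-level architecture---two matching lower bounds from lower semicontinuity of the total variation, closed by the BV Bakry--\'Emery estimate and the complementarity $E\leftrightarrow E^c$---but implements it differently. Instead of your smooth approximants $(\h_t\uno_E)^2$ and $(\h_t\uno_{E^c})^2$ obtained via the chain rule, the paper works with the products $(\h_t\uno_E)\uno_E$ and $(\h_t\uno_{E^c})\uno_{E^c}$ and computes their total variation through the vector-field duality \eqref{T-Var}: for $X\in\frd^\infty(\X)$ with $|X|\le 1$ it splits $\int(\h_t\uno_E)\uno_E\div(X)\,\d\mu$ into a ``derivation'' term controlled by $\int\h_t\uno_E\,\d\|D\uno_E\|$ and a ``gradient'' term controlled via \eqref{eq:BE-BV-dual} with $\varphi=\uno_E$, giving $\|D((\h_t\uno_E)\uno_E)\|(\X)\le(1+e^{-Kt})\int\h_t\uno_E\,\d\|D\uno_E\|$. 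Your route avoids the duality step entirely by appealing to the pointwise chain rule, which makes the argument shorter and yields the stronger conclusion. A minor remark: since $\h_t\uno_E\in C_b(\X)$ under the hypothesis $\supp(\mu)=\X$, you could invoke the $C_b$-version of \eqref{eq:BE-BV-dual} rather than the $L^\infty$-version; the assumption $\supp(\mu)=\X$ is still needed either way, exactly as in the paper.
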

\begin{proof}
Without loss of generality, we can assume $\mathds1_{E}\in BV(\X)$ (since clearly $\h_{t}\uno_{E^{c}} = 1 - \h_{t} \uno_{E}$);
under these hypotheses one has $\text{h}_{t}\mathds1_{E}\in L^{2}(\X) \cap C_{b}(\X)$, since $\supp(\mu) = \X$, and $\left|D\text{h}_{t}\mathds1_{E}\right|\in L^{1}(\X)$.
Now, let $X\in\frd^\infty(\X)$ such that $|X|\le1$. We have
\[
\underbrace{\int_{{\X}}\left(\text{h}_{t}\mathds1_{E}\right)\mathds1_{E}\div(X) \, \d\mu}_{\text{(A)}}=\underbrace{\int_{{\X}}\mathds1_{E}\div\left(\left(\text{h}_{t}\mathds1_{E}\right)X\right)\text{d}\mu}_{\text{(B)}}-\underbrace{\int_{{\X}}\mathds1_{E}\text{d}\left(\text{h}_{t}\mathds1_{E}\right)(X) \, \text{d}\mu}_{\text{(C)}}.
\]
Let us consider the term (B) right above: we see that
\begin{align*}
\int_{{\X}}\mathds1_{E}\div\left(\left(\text{h}_{t}\mathds1_{E}\right)X\right)\text{d}\mu =-\int_{{\X}} \, \text{d} D{\mathds1_{E}}\left(\text{h}_{t}\mathds1_{E} X \right)  =-\int_{{\X}}\left(\text{h}_{t}\mathds1_{E}\right) \, \text{d} D{\mathds1_{E}}(X).
\end{align*}
Therefore, it follows that
\[
\left|\int_{{\X}}\mathds1_{E}\div\left(\left(\text{h}_{t}\mathds1_{E}\right)X\right)\text{d}\mu\right|\le \int_{\X}\left(\text{h}_{t}\mathds1_{E}\right)\text{d}\|D\mathds1_{E}\|.
\]
As for (C), notice that, by the fact that $|\text{d}\left(\text{h}_{t}\mathds1_{E}\right)(X)| \le |X| |D \h_{t} \uno_{E}|$ and \eqref{eq:BE-BV-dual},
\begin{align*}
\left|\int_{{\X}}\mathds1_{E}\text{d}\left(\text{h}_{t}\mathds1_{E}\right)(X)\text{d}\mu\right| & \le \int_{\X} \uno_{E} |X| |D \h_{t} \uno_{E}| \, \d \mu \le \int_{\X} \uno_{E} |D \h_{t} \uno_{E}| \, \d \mu \\
& \le e^{-Kt}\int_{{\X}}\text{h}_{t} \mathds1_{E} \, \text{d}\|D\mathds1_{E}\|.
\end{align*}
Hence, we find that (A) satisfies the following bound:
\begin{equation*}
\left | \int_{{\X}}\left(\text{h}_{t}\mathds1_{E}\right)\mathds1_{E}\div(X) \, \d\mu \right | \le \int_{\X} \text{h}_{t}\mathds1_{E} \, \text{d}\|D\mathds1_{E}\| + e^{- Kt} \int_{\X} \h_{t}\uno_{E} \, \d \|D \uno_{E}\|.
\end{equation*}
Now, upon passing to the supremum over $X\in\frd^\infty(\X)$
with $|X|_{\infty}\le1$, we get
\begin{equation}
\|D\left(\text{h}_{t}\mathds1_{E}\right)\mathds1_{E}\|(\X)\le \int_{\X} \text{h}_{t}\mathds1_{E} \, \text{d}\|D\mathds1_{E}\| + e^{- Kt} \int_{\X} \h_{t}\uno_{E} \, \d \|D \uno_{E}\|.\label{eq:ineq-perimeter}
\end{equation}
We notice that by (\ref{eq:ht-contraction}) we have $0\le\text{h}_{t}\mathds1_{E}(x)\le1$
for every $x\in{\X}$. Hence, the sequence $\left(\text{h}_{t}\mathds1_{E}\right)_{t\ge0}$
is uniformly bounded in $L^{\infty}\left({\X},\|D\mathds1_{E}\|\right)$,
and then there exist a positive decreasing sequence $t_{j}\searrow0$
and a function $\hatE\in L^{\infty}\left({\X},\|D\mathds1_{E}\|\right)$
such that $\text{h}_{t_{j}}\mathds1_{E}\overset{*}{\rightharpoonup}\hatE$
in $L^{\infty}\left({\X},\|D\mathds1_{E}\|\right)$.

Clearly, (\ref{eq:ineq-perimeter}) holds for $t=t_{j}$ as well;
then, by the lower semicontinuity of the total variation, we can let $j\rightarrow\infty$ and we get
\begin{align*}
\|D\mathds1_{E}\|(\X) & \le\underset{j\rightarrow\infty}{\lim\inf}\|D\left(\text{h}_{t_{j}}\mathds1_{E}\right)\mathds1_{E}\|(\X) \le\underset{j\rightarrow\infty}{\lim\inf} (1 + e^{- Kt_{j}}) \int_{\X} \text{h}_{t_{j}}\mathds1_{E} \, \text{d}\|D\mathds1_{E}\| \\
 & =2\int_{\X}\hatE \, \text{d}\|D\mathds1_{E}\|.
\end{align*}
Therefore, we obtain
\begin{equation*}
\mean{\X}\hatE \, \d \|D \uno_{E}\| \ge \frac{1}{2}.
\end{equation*}

In order to prove the opposite inequality, we focus on $\mathds1_{E^{c}}$.
Arguing as before, we notice that $0\le\text{h}_{t}\mathds1_{E^{c}}\le1$
and so there exists a weak$^{*}$ limit point $\hatEc$ of this family in $L^{\infty}\left({\X},\|D\mathds1_{E}\|\right)$.
The convergence $\text{h}_{t_{j}}\mathds1_{E}\overset{*}{\rightharpoonup}\hatE$
entails that, up to a subsequence, there exists $\hatEc$
such that $\text{h}_{t_{j}}\mathds1_{E^{c}}\overset{*}{\rightharpoonup}\hatEc$,
satisfying $\hatEc=1-\hatE$.

\allowdisplaybreaks

Since in general $\mathds1_{E}\in BV({\X})$ does not mean that $\mathds1_{E^{c}}\in BV({\X})$, we need to verify the previous computations for $\mathds1_{E^{c}}$. For any $X\in\frd^{\infty}({\X})$ we have
\begin{align*}
\int_{{\X}}\left(\text{h}_{t}\mathds1_{E^{c}}\right)\mathds1_{E^{c}}\div(X)\text{d}\mu & =\int_{{\X}}\left(\text{h}_{t}\left(1-\mathds1_{E}\right)\right)\left(1-\mathds1_{E}\right)\div(X)\text{d}\mu\\
 & =\int_{{\X}}\left(1-\text{h}_{t}\mathds1_{E}\right)\left(1-\mathds1_{E}\right)\div(X)\text{d}\mu\\
 & =\int_{{\X}}\left(1-\text{h}_{t}\mathds1_{E}-\mathds1_{E}+\left(\text{h}_{t}\mathds1_{E}\right)\mathds1_{E}\right)\div(X)\text{d}\mu\\
 & =-\int_{{\X}}\left(\text{h}_{t}\mathds1_{E}\right)\div(X)\text{d}\mu-\int_{{\X}}\mathds1_{E}\div(X)\text{d}\mu\\&+\int_{{\X}}\left(\text{h}_{t}\mathds1_{E}\right)\mathds1_{E}\div(X)\text{d}\mu\\
 & =-\int_{{\X}}\left(\text{h}_{t}\mathds1_{E}\right)\div(X)\text{d}\mu-\int_{{\X}}\mathds1_{E}\div(X)\text{d}\mu+\\
 & +\int_{{\X}}\mathds1_{E} \, \div\left(\left(\text{h}_{t}\mathds1_{E}\right)X\right)\text{d}\mu-\int_{{\X}}\mathds1_{E}\text{d}\left(\text{h}_{t}\mathds1_{E}\right)(X)\text{d}\mu\\
 & =\int_{{\X}}\left(1-\mathds1_{E}\right)\text{d}\left(\text{h}_{t}\mathds1_{E}\right)(X)\text{d}\mu-\int_{{\X}}\mathds1_{E}\div\left(\text{h}_{t}\left(1-\mathds1_{E}\right)X\right)\text{d}\mu\\
 & =\int_{{\X}}\mathds1_{E^{c}}\text{d}\left(\text{h}_{t}\mathds1_{E}\right)(X)\text{d}\mu+\int_{{\X}} \text{h}_{t}\mathds1_{E^{c}} \d D \mathds1_{E} \left(X\right).
\end{align*}
Now we pass to the supremum over $X\in\frd^{\infty}({\X})$
with $\left\Vert X\right\Vert _{\infty}\le1$ in order to find
\[
\|D\left(\text{h}_{t}\mathds1_{E^{c}}\right)\mathds1_{E^{c}}\|({\X})\le\left(1+e^{-Kt}\right)\int_{{\X}}\text{h}_{t}\mathds1_{E^{c}}\text{d}\|D\mathds1_{E}\|.
\]
We consider again the sequence $t_{j}\searrow0$ in such a way that
$\text{h}_{t_{j}}\mathds1_{E^{c}}\overset{*}{\rightharpoonup}\hatEc$
and we pass to the limit as $j\rightarrow\infty$ exploiting the lower
semicontinuity to get
\begin{align*}
\|D\mathds1_{E}\|({\X}) & \le\underset{j\rightarrow\infty}{\lim\inf}\|D\left(\text{h}_{t_{j}}\mathds1_{E^{c}}\right)\mathds1_{E^{c}}\|({\X}) \le\underset{j\rightarrow\infty}{\lim\inf}\left(1+e^{-Kt_{j}}\right)\int_{{\X}}\text{h}_{t_{j}}\mathds1_{E^{c}}\text{d}\|D\mathds1_{E}\|\\
 & \le2\int_{{\X}}\hatEc\text{d}\|D\mathds1_{E}\|.
\end{align*}
Hence, we get
\begin{equation*}
\mean{\X} \hatEc \, \d \|D \uno_{E}\| \ge \frac{1}{2},
\end{equation*}
and so
\begin{equation*}
\mean{\X} \hatE \, \d \|D \uno_{E}\| \le \frac{1}{2},
\end{equation*}
Thus, we finally obtain \eqref{eq:weak_star_mean}.
\end{proof}

\begin{corollary} \label{cor:unique_1_2}
Let $E\subset \X$ be a measurable set such that either $\uno_{E}\in BV(\X)$ or
$\uno_{E^{c}}\in BV(\X)$. Assume that the weak$^*$ limit points $\hatE$ of $\h_{t} \uno_{E}$ in $L^{\infty}(\X, \|D \uno_{E}\|)$ are constant functions. Then, there exists a unique weak$^*$ limit $\displaystyle \hatE = \frac{1}{2}$.
\end{corollary}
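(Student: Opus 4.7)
The plan is to deduce the corollary as an almost immediate consequence of Theorem \ref{thm:weak_star_conv_set_fin_per}. The key observation is that the averaging identity
\[
\mean{\X} \hatE \, \d\|D\uno_E\| = \frac{1}{2}
\]
completely pins down the value of any constant limit point. First I would quickly dispose of the degenerate case $\|D\uno_E\|(\X) = 0$, where the ambient Banach space $L^\infty(\X, \|D\uno_E\|)$ is trivial and the statement holds vacuously. Thus we may assume $\|D\uno_E\|(\X) \in (0, \infty)$, which is guaranteed by the hypothesis $\uno_E \in BV(\X)$ or $\uno_{E^c} \in BV(\X)$ together with Remark \ref{rem:prop_set_fin_per}.

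Next, let $\hatE$ be an arbitrary weak$^*$ limit point of $\h_t\uno_E$ in $L^\infty(\X, \|D\uno_E\|)$. By assumption, $\hatE$ is constant $\|D\uno_E\|$-a.e., say $\hatE \equiv c$ for some $c \in \R$. Plugging this into the conclusion of Theorem \ref{thm:weak_star_conv_set_fin_per} gives
\[
\frac{1}{2} = \mean{\X} \hatE \, \d\|D\uno_E\| = c \cdot \frac{\|D\uno_E\|(\X)}{\|D\uno_E\|(\X)} = c,
\]
so necessarily $c = \frac{1}{2}$.

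Finally, to extract uniqueness: since every weak$^*$ limit point of $\h_t\uno_E$ must equal $\tfrac{1}{2}$ in $L^\infty(\X, \|D\uno_E\|)$ by the above, and the sequence $(\h_t\uno_E)_{t > 0}$ is uniformly bounded in this space (as $0 \le \h_t\uno_E \le 1$ by \eqref{eq:ht-contraction}), a standard Banach--Alaoglu / subsequence argument shows that the whole family converges weakly$^*$ to $\tfrac{1}{2}$. Indeed, if a subnet failed to converge to $\tfrac{1}{2}$, one could extract a further weak$^*$ limit point distinct from $\tfrac{1}{2}$, contradicting what we just proved. There is no real obstacle here; the entire work has already been done in Theorem \ref{thm:weak_star_conv_set_fin_per}, and the corollary is essentially a formal consequence.
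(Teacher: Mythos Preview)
Your argument is correct and follows essentially the same route as the paper: both deduce $c=\tfrac12$ for any constant limit point directly from the mean-value identity \eqref{eq:weak_star_mean} of Theorem \ref{thm:weak_star_conv_set_fin_per}. Your version is slightly more detailed in that you explicitly treat the degenerate case $\|D\uno_E\|(\X)=0$ and spell out the Banach--Alaoglu subsequence argument for full convergence, whereas the paper leaves these as implicit.
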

\begin{proof}
Since any weak$^{*}$ limit point $\hatE$ is constant, \eqref{eq:weak_star_mean} implies that
\begin{equation*}
\hatE = \mean{\X} \hatE \, \d \|D \uno_{E}\| = \frac{1}{2}.
\end{equation*}
\end{proof}

In view of these results, in the rest of this section we shall assume that, for any measurable set $E$ such that $\uno_{E} \in BV(\X)$ or $\uno_{E^{c}} \in BV(\X)$, we have $\displaystyle \hatE \equiv \frac{1}{2}$. In addition, the $\rcd$ metric measure space $(\X, d, \mu)$ is chosen to be locally compact and such that $\supp(\mu) = \X$.

Under these assumptions, it is possible to refine many of the result of Section \ref{sec:refined_Leibniz_rule} and Section \ref{sec:general_Gauss_Green}.

\begin{proposition}Let $X\in{\cal DM}^{\infty}({\X})$ and let $E\subset\X$
be a measurable set such that $\uno_{E} \in BV(\X)$ or $\uno_{E^{c}} \in BV(\X)$. Then, we have
\begin{equation} \label{eq:trace_uno_E_equality_1_bis}
\ban{X, \nu_{E}}_{\partial E}^{-} = \ban{X, \nu_{E}}_{\partial E}^{+} \ \ \|D\mathds1_{E}\|\text{-a.e. on} \ \tildef{E^{1}} \cup \tildef{E^{0}},
\end{equation}
\begin{equation} \label{eq:tripartition_div_E}
\|\div(X)\|\left({\X}\backslash\left(\tildef{E^{0}}\cup\tildef{E^{1/2}}\cup\tildef{E^{1}}\right)\right)=0,
\end{equation}
and
\begin{equation} \label{eq:repr-1Ediv}
\tildef{\mathds1_{E}}=\left(\mathds1_{\tildef{E^{1}}}+\frac{1}{2}\mathds1_{\tildef{E^{1/2}}}\right) \quad \|\div(X)\|\text{-a.e. in} \ \X.
\end{equation}
\end{proposition}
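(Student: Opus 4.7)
The three statements all follow by substituting $\hatE \equiv 1/2$ into the identities already established in Section \ref{sec:refined_Leibniz_rule}. First, \eqref{eq:trace_uno_E_equality} from Proposition \ref{rel-div-Es} reads
\[
 \hatE \, \ban{X, \nu_{E}}_{\partial E}^{-} = (1 - \hatE ) \, \ban{X, \nu_{E}}_{\partial E}^{+} \qquad \|D\mathds1_{E}\|\text{-a.e. on } \tildef{E^{1}} \cup \tildef{E^{0}};
\]
with $\hatE = 1/2$ both sides simplify and \eqref{eq:trace_uno_E_equality_1_bis} is immediate.

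For the second and third assertions, the starting point is \eqref{eq:repr-1E_tilde_bar_div}, which, under the standing hypothesis $\hatE \equiv 1/2$, becomes
\[
 \tildef{\mathds1_{E}} = \tfrac{1}{2} \qquad \|\div(X)\|\text{-a.e. on } \tildef{\partial^{*} E}.
\]
Recalling the definition $\tildef{\partial^{*} E} = \X \setminus (\tildef{E^{1}} \cup \tildef{E^{0}})$ and $\tildef{E^{1/2}} = \{\tildef{\uno_{E}} = 1/2\}$, this precisely says that $\|\div(X)\|$-almost every point of $\tildef{\partial^{*} E}$ belongs to $\tildef{E^{1/2}}$. Therefore
\[
 \|\div(X)\|\bigl(\tildef{\partial^{*} E} \setminus \tildef{E^{1/2}}\bigr) = 0,
\]
which is exactly \eqref{eq:tripartition_div_E}.

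Finally, to obtain \eqref{eq:repr-1Ediv}, one observes that by construction $\tildef{\uno_{E}}$ takes the value $1$ on $\tildef{E^{1}}$ and the value $0$ on $\tildef{E^{0}}$, hence coincides there with $\mathds1_{\tildef{E^{1}}} + \tfrac{1}{2}\mathds1_{\tildef{E^{1/2}}}$; on $\tildef{E^{1/2}}$ the same function equals $1/2$, which agrees with $\tildef{\uno_{E}}$ by definition; and on the residual set $\tildef{\partial^{*} E} \setminus \tildef{E^{1/2}}$ the two functions may differ but this set is $\|\div(X)\|$-negligible by \eqref{eq:tripartition_div_E}. This yields the pointwise identity $\|\div(X)\|$-almost everywhere.

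No serious obstacle is expected here: the work has all been done in Section \ref{sec:refined_Leibniz_rule}, and the proposition is essentially a corollary extracting the particularly clean consequences of the normalization $\hatE \equiv 1/2$. The only mildly subtle point is keeping track that the identification $\tildef{\uno_{E}} \equiv 1/2$ on $\tildef{\partial^{*} E}$ is only $\|\div(X)\|$-a.e., so the complement of $\tildef{E^{0}} \cup \tildef{E^{1/2}} \cup \tildef{E^{1}}$ need not be empty — it is only measure-theoretically small — and \eqref{eq:repr-1Ediv} must therefore be read as an equality $\|\div(X)\|$-almost everywhere, not pointwise.
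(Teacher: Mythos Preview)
Your proposal is correct and follows exactly the same approach as the paper: substitute $\hatE \equiv 1/2$ into \eqref{eq:trace_uno_E_equality} to get \eqref{eq:trace_uno_E_equality_1_bis}, and into \eqref{eq:repr-1E_tilde_bar_div} to obtain \eqref{eq:tripartition_div_E} and \eqref{eq:repr-1Ediv}. The paper's proof is a two-line reference to these same earlier identities; you have simply spelled out the straightforward case analysis that the paper leaves implicit.
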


\begin{proof}
Since $\hatE \equiv 1/2$, \eqref{eq:trace_uno_E_equality} implies \eqref{eq:trace_uno_E_equality_1_bis}. Analogously, \eqref{eq:repr-1E_tilde_bar_div} yields easily \eqref{eq:repr-1Ediv} and \eqref{eq:tripartition_div_E}.
\end{proof}

This result shows that there is a ``tripartition'' of ${\X}$
up to a $\|\div(X)\|$-negligible set, that is, $\tildef{\mathds1_{E}}(x)\in\left\{ 0,\frac{1}{2},1\right\} $
for $\|\div(X)\|$--almost every $x\in{\X}$, which in turn allows us
to refine the Gauss--Green formulas.

\begin{theorem} \label{thm:Leibniz_rule_E_22}
Let $X\in{\cal DM}^{\infty}({\X})$ and let $E\subset\X$
be a measurable set such that $\uno_{E} \in BV(\X)$ or $\uno_{E^{c}} \in BV(\X)$. Then, we have 
\begin{align}
\div\left(X_E\right) & =\mathds1_{\tildef{E^{1}}}\div(X)+\ban{X, \nu_{E}}_{\partial E}^{-}\|D\mathds1_{E}\|,\label{eq:Leib-final1_extra}\\
\div\left(X_E\right) & =\mathds1_{\tildef{E^{1/2}}\cup\tildef{E^{1}}}\div(X)+\ban{X, \nu_{E}}_{\partial E}^{+}\|D\mathds1_{E}\|,\label{eq:Leib-final2_extra}
\end{align}
and 
\begin{equation}
\mathds1_{\tildef{E^{1/2}}}\div(X)=\left[\ban{X, \nu_{E}}_{\partial E}^{-}-\ban{X, \nu_{E}}_{\partial E}^{+}\right]\|D\mathds1_{E}\|.\label{eq:div-bd-final_extra}
\end{equation}
\end{theorem}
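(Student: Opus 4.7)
The strategy is to obtain all three formulas as direct consequences of Theorem \ref{thm:Leibniz_rule_E_2}, by substituting the constant value $\hatE \equiv \frac{1}{2}$ and then using the $\|\div(X)\|$-tripartition \eqref{eq:tripartition_div_E} and the matching of traces \eqref{eq:trace_uno_E_equality_1_bis} to rewrite the restriction sets. First I would invoke Corollary \ref{cor:unique_1_2}: under the standing hypothesis that the weak$^{*}$ limit points of $\h_{t}\uno_{E}$ in $L^{\infty}(\X,\|D\uno_{E}\|)$ are constants, there is a unique such limit $\hatE$ and it equals $\frac{1}{2}$ $\|D\uno_{E}\|$-a.e. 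In particular the coefficients $\frac{1}{2(1-\hatE)}$ and $\frac{1}{2\hatE}$ appearing in \eqref{eq:Leib-final1} and \eqref{eq:Leib-final2} both collapse to $1$.

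With this substitution, \eqref{eq:Leib-final1} immediately yields \eqref{eq:Leib-final1_extra}. For \eqref{eq:Leib-final2_extra}, substituting $\hatE = \frac{1}{2}$ into \eqref{eq:Leib-final2} gives
\begin{equation*}
\div(X_E) = \uno_{\tildef{E^{1}} \cup \tildef{\partial^{*} E}} \, \div(X) + \ban{X,\nu_{E}}_{\partial E}^{+} \|D\uno_{E}\|.
\end{equation*}
Now \eqref{eq:tripartition_div_E} (or, equivalently, \eqref{eq:repr-1Ediv}) tells us that the symmetric difference between $\tildef{\partial^{*}E}$ and $\tildef{E^{1/2}}$ is $\|\div(X)\|$-negligible: indeed $\tildef{E^{1/2}} \subset \tildef{\partial^{*}E}$ by definition, and on $\tildef{\partial^{*}E}$ the representative $\tildef{\uno_{E}}$ equals $\hatE = \frac{1}{2}$ by \eqref{eq:repr-1E_tilde_bar_div}. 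Hence $\uno_{\tildef{E^{1}} \cup \tildef{\partial^{*}E}} \div(X) = \uno_{\tildef{E^{1}} \cup \tildef{E^{1/2}}}\div(X)$ as measures, which is \eqref{eq:Leib-final2_extra}.

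For the boundary formula \eqref{eq:div-bd-final_extra}, I would substitute $\hatE = \frac{1}{2}$ into \eqref{eq:div-bd-final}; the coefficient simplifies as $\frac{(1/2)\ban{X,\nu_{E}}_{\partial E}^{-} - (1/2)\ban{X,\nu_{E}}_{\partial E}^{+}}{2 \cdot (1/2)(1/2)} = \ban{X,\nu_{E}}_{\partial E}^{-} - \ban{X,\nu_{E}}_{\partial E}^{+}$, yielding
\begin{equation*}
\uno_{\tildef{\partial^{*} E}}\div(X) = \bigl[\ban{X,\nu_{E}}_{\partial E}^{-} - \ban{X,\nu_{E}}_{\partial E}^{+}\bigr] \|D\uno_{E}\| \res \tildef{\partial^{*}E}.
\end{equation*}
On the left-hand side, the same $\|\div(X)\|$-a.e. identification of $\tildef{\partial^{*}E}$ with $\tildef{E^{1/2}}$ used above lets us replace $\uno_{\tildef{\partial^{*} E}}$ by $\uno_{\tildef{E^{1/2}}}$. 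On the right-hand side, \eqref{eq:trace_uno_E_equality_1_bis} gives $\ban{X,\nu_{E}}_{\partial E}^{-} = \ban{X,\nu_{E}}_{\partial E}^{+}$ $\|D\uno_{E}\|$-a.e.\ on $\tildef{E^{0}} \cup \tildef{E^{1}}$, so the signed-measure difference $[\ban{X,\nu_{E}}_{\partial E}^{-} - \ban{X,\nu_{E}}_{\partial E}^{+}] \|D\uno_{E}\|$ is already supported on $\tildef{E^{1/2}}$; in particular the restriction $\res \tildef{\partial^{*}E}$ is redundant. Combining these two observations produces \eqref{eq:div-bd-final_extra}.

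None of the three steps presents a genuine obstacle: the real work is done in the preceding sections, and the present statement is essentially a book-keeping translation of Theorem \ref{thm:Leibniz_rule_E_2} under the extra rigidity $\hatE \equiv \frac{1}{2}$. The only point that requires care is keeping track of which null-set class one is working in, since the identification $\tildef{\partial^{*}E} \sim \tildef{E^{1/2}}$ holds modulo $\|\div(X)\|$ while the trace identity \eqref{eq:trace_uno_E_equality_1_bis} holds modulo $\|D\uno_{E}\|$; both are however enough, because each appears precisely on the side of the equation where the relevant measure controls the equality.
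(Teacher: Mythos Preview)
Your proposal is correct and follows essentially the same approach as the paper: substitute $\hatE \equiv \tfrac{1}{2}$ into the three identities of Theorem \ref{thm:Leibniz_rule_E_2} and use the $\|\div(X)\|$-tripartition \eqref{eq:tripartition_div_E} to replace $\tildef{\partial^{*}E}$ by $\tildef{E^{1/2}}$. The paper's proof is a one-sentence version of exactly this argument; you have simply filled in the details (in particular, the use of \eqref{eq:trace_uno_E_equality_1_bis} to drop the restriction $\res\,\tildef{\partial^{*}E}$ on the right-hand side of \eqref{eq:div-bd-final}, which the paper leaves implicit). One small imprecision: from \eqref{eq:trace_uno_E_equality_1_bis} you only deduce that the trace difference is supported on $\tildef{\partial^{*}E}$, not on the smaller set $\tildef{E^{1/2}}$; but this is already enough to make the restriction $\res\,\tildef{\partial^{*}E}$ redundant, so your conclusion stands.
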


\begin{proof}
We notice that \eqref{eq:tripartition_div_E} implies $\|\div(X)\|(\tildef{\partial^{*}E} \setminus \tildef{E^{1/2}}) = 0$. Hence, thanks to this fact and $\hatE \equiv 1/2$, it is clear that \eqref{eq:Leib-final1}, \eqref{eq:Leib-final2} and \eqref{eq:div-bd-final} imply \eqref{eq:Leib-final1_extra}, \eqref{eq:Leib-final2_extra} and \eqref{eq:div-bd-final_extra}, respectively.
\end{proof}

Finally, arguing analogously as in Section \ref{sec:general_Gauss_Green} and employing the assumption $\hatE \equiv 1/2$, we easily obtain the following refined versions of the Gauss--Green and integration by parts formulas.

\begin{theorem}[Gauss--Green formulas III] \label{thm:Gauss_Green_formulas_ref}
Let $X\in{\cal DM}^{\infty}({\X})$ and let $E\subset\X$ be a
bounded set of finite perimeter. Then, we have 
\begin{align}
\div(X)(\tildef{E^{1}}) & =-\int_{\partial E} \ban{X, \nu_{E}}_{\partial E}^{-}\,\d\|D\uno_{E}\|,\label{eq:GG_general_int_ref}\\
\div(X)(\tildef{E^{1}}\cup\tildef{E^{1/2}}) & = - \int_{\partial E} \ban{X, \nu_{E}}_{\partial E}^{+}\,\d\|D\uno_{E}\|.\label{eq:GG_general_ext_ref}
\end{align}
\end{theorem}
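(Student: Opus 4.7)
The plan is to derive these formulas by exactly the same strategy used in Theorem \ref{thm:Gauss_Green_formulas}, now using the sharper Leibniz identities available under the assumption $\hatE \equiv 1/2$. The two key ingredients will be Theorem \ref{thm:Leibniz_rule_E_22} (the refined product rules \eqref{eq:Leib-final1_extra} and \eqref{eq:Leib-final2_extra}) and Lemma \ref{lem:bounded_supp} (the vanishing of the total divergence mass for a $\DM^\infty$-field with bounded support).

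First I would note that since $E$ is bounded and $\uno_E \in BV(\X)$ (equivalently $\uno_{E^c} \in BV(\X)$), Theorem \ref{thm:Leibniz_rule_E_1} gives $X_E = \uno_E X \in \DM^\infty(\X)$, and moreover $\supp(X_E)$ is bounded. Lemma \ref{lem:bounded_supp} therefore applies and yields $\div(X_E)(\X) = 0$. Next, I would evaluate both sides of \eqref{eq:Leib-final1_extra} on the whole space $\X$, obtaining
\[
0 \;=\; \div(X_E)(\X) \;=\; \div(X)(\tildef{E^{1}}) \;+\; \int_{\X} \ban{X, \nu_E}_{\partial E}^{-} \, \d \|D \uno_E\|.
\]
By \eqref{eq:concentration_perimeter_measure} the perimeter measure is concentrated on $\partial E$, so the right-hand integral reduces to an integral over $\partial E$, which rearranged gives \eqref{eq:GG_general_int_ref}. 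Repeating the same evaluation with \eqref{eq:Leib-final2_extra} in place of \eqref{eq:Leib-final1_extra} produces \eqref{eq:GG_general_ext_ref} in identical fashion.

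There is essentially no obstacle: all the genuine work has already been absorbed into Theorem \ref{thm:Leibniz_rule_E_22}, whose proof relies on the tripartition \eqref{eq:tripartition_div_E} and the identification \eqref{eq:repr-1Ediv}, both of which come from the standing assumption $\hatE \equiv 1/2$ via \eqref{eq:repr-1E_tilde_bar_div}. The only minor point worth checking is the admissibility of evaluating the measure identities on $\X$, which is justified by the finiteness of $\div(X)$, $\|D\uno_E\|$ and $\div(X_E)$ as Radon measures on $\X$. In short, this theorem is a clean corollary of Theorem \ref{thm:Leibniz_rule_E_22} combined with Lemma \ref{lem:bounded_supp}, exactly parallel to how Theorem \ref{thm:Gauss_Green_formulas} is deduced from Theorem \ref{thm:Leibniz_rule_E_2}.
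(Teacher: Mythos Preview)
Your proposal is correct and matches the paper's approach essentially verbatim: the paper derives Theorem \ref{thm:Gauss_Green_formulas_ref} by ``arguing analogously as in Section \ref{sec:general_Gauss_Green}'', i.e., by evaluating the refined Leibniz identities \eqref{eq:Leib-final1_extra} and \eqref{eq:Leib-final2_extra} over $\X$, invoking Lemma \ref{lem:bounded_supp} to get $\div(X_E)(\X)=0$, and then using \eqref{eq:concentration_perimeter_measure}. There is nothing to add.
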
 

\begin{theorem}[Integration by parts formulas III]\label{thm:IBP_general_ref}
Let $X \in \DM^{\infty}(\X)$, $E\subset\X$ be a measurable set such that $\uno_{E} \in BV(\X)$ or $\uno_{E^{c}} \in BV(\X)$, and $\varphi \in \Lip_b(\X)$ such that $\supp(\uno_{E} \varphi)$ is bounded. Then, we have 
\begin{align}
\int_{\tildef{E^{1}}} \varphi \, \d \div(X) + \int_{E} \d\varphi(X) \, \d \mu & =-\int_{\partial E} \varphi \ban{X, \nu_{E}}_{\partial E}^{-}\,\d\|D\uno_{E}\|,\label{eq:IBP_general_int_ref}\\
\int_{\tildef{E^{1}}\cup\tildef{E^{1/2}}} \varphi \, \d \div(X) + \int_{E} \d\varphi(X) \, \d \mu & =-\int_{\partial E} \varphi \ban{X, \nu_{E}}_{\partial E}^{+}\,\d\|D\uno_{E}\|.\label{eq:IBP_general_ext_ref}
\end{align}
\end{theorem}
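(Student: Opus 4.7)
The plan is to mirror the argument used to establish Theorem \ref{thm:IBP_general}, but substituting the refined Leibniz rules \eqref{eq:Leib-final1_extra} and \eqref{eq:Leib-final2_extra} from Theorem \ref{thm:Leibniz_rule_E_22} in place of their more general counterparts \eqref{eq:Leib-final1} and \eqref{eq:Leib-final2}. The simplification is precisely that, under the standing assumption $\hatE \equiv 1/2$, the factors $\frac{1}{2(1-\hatE)}$ and $\frac{1}{2\hatE}$ both collapse to $1$, and the set $\tildef{E^{1}} \cup \tildef{\partial^{*} E}$ may be replaced by $\tildef{E^{1}} \cup \tildef{E^{1/2}}$ thanks to \eqref{eq:tripartition_div_E}.

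First, I would observe that $X_E = \uno_{E} X \in \DM^{\infty}(\X)$ by Theorem \ref{thm:Leibniz_rule_E_1}, and that $\varphi X_E \in L^{\infty}(T\X)$. To upgrade $\varphi X_E$ to a $\DM^{\infty}(\X)$ field, I would pick a cutoff $\eta \in \Lip_{\bs}(\X)$ with $\eta \equiv 1$ on $\supp(\uno_E \varphi)$, so that $\eta \varphi \in \Lip_{\bs}(\X)$ and $\eta \varphi \uno_{E} = \varphi \uno_{E}$. Lemma \ref{dmlipleibniz} applied to $X_E$ and $\eta\varphi$ then gives $\varphi X_E \in \DM^{\infty}(\X)$ with
\begin{equation*}
\div(\varphi X_E) = \div(\eta \varphi X_E) = \eta\varphi \div(X_E) + \d(\eta\varphi)(X_E)\, \mu.
\end{equation*}
Expanding $\d(\eta\varphi) = \eta\,\d\varphi + \varphi\,\d\eta$ on $\supp(\uno_E\varphi)$ and using $\eta \equiv 1$ there, together with the observation $\|\div(X_E)\|(\overline{E}^{c}) = 0$ (which follows from $\int \d\psi(X_E)\,\d\mu = 0$ for any $\psi \in \Lip_{\bs}(\overline{E}^{c})$), yields $\eta\varphi \div(X_E) = \varphi \div(X_E)$, hence
\begin{equation*}
\div(\varphi X_E) = \varphi \div(X_E) + \uno_{E}\, \d\varphi(X)\,\mu.
\end{equation*}

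Next I would insert the two refined Leibniz formulas \eqref{eq:Leib-final1_extra} and \eqref{eq:Leib-final2_extra} into the above identity, giving
\begin{align*}
\div(\varphi X_E) & = \uno_{\tildef{E^{1}}}\varphi \div(X) + \varphi \ban{X,\nu_{E}}_{\partial E}^{-} \|D\uno_{E}\| + \uno_{E}\, \d\varphi(X)\,\mu,\\
\div(\varphi X_E) & = \uno_{\tildef{E^{1}}\cup \tildef{E^{1/2}}}\varphi \div(X) + \varphi \ban{X,\nu_{E}}_{\partial E}^{+} \|D\uno_{E}\| + \uno_{E}\, \d\varphi(X)\,\mu.
\end{align*}
Since $\supp(\varphi X_E)$ is bounded, Lemma \ref{lem:bounded_supp} gives $\div(\varphi X_E)(\X) = 0$. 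Evaluating both identities over $\X$ and applying \eqref{eq:concentration_perimeter_measure}, which ensures $\|D\uno_{E}\|$ is concentrated on $\partial E$, produces exactly \eqref{eq:IBP_general_int_ref} and \eqref{eq:IBP_general_ext_ref}.

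No step requires any truly new technical input beyond what was done for Theorem \ref{thm:IBP_general}: the constant trace value $\hatE = 1/2$ only simplifies the normalizing factors and the relevant partition of $\X$. The mildest subtlety is checking that $\|\div(X_E)\|$ is concentrated on $\overline{E}$, so that the cutoff $\eta$ can be dropped against $\div(X_E)$; this is a routine verification via testing against $\Lip_{\bs}(\overline{E}^c)$ and exploiting $X_E \equiv 0$ there $\mu$-a.e. Beyond that, the argument is essentially bookkeeping of the refined formulas, so I do not anticipate any serious obstacle.
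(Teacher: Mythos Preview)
Your proposal is correct and follows precisely the approach the paper intends: the paper omits an explicit proof and simply states that the result follows by ``arguing analogously as in Section \ref{sec:general_Gauss_Green} and employing the assumption $\hatE \equiv 1/2$'', which is exactly what you carry out by rerunning the proof of Theorem \ref{thm:IBP_general} with the refined Leibniz rules \eqref{eq:Leib-final1_extra}--\eqref{eq:Leib-final2_extra} in place of \eqref{eq:Leib-final1}--\eqref{eq:Leib-final2}.
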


\bigskip
\bigskip

\appendix
\section{Appendix}\label{sec-appendix}

In this section we recall the basic notions from the theory of $L^p$--normed $L^\infty$ modules developed in \cite{gi2}. Hence, from this point on, we follow the notation adopted in \cite{gi2}, by denoting $L^p(\X) := L^p(\X, \mu)$ by $L^p(\mu)$.

Naively, an $L^\infty$--module is a Banach space $(\mm,\|\cdot\|_\mm)$ seen as a module over the Abelian ring of essentially bounded functions.

\medskip

To get things started, let us consider a $\sigma$--finite measure space $(\X, \mathcal{A}, \mu)$.

\begin{definition}\cite[Definition 1.2.1]{gi2} A Banach space $(\mm,\|\cdot\|_\mm)$ is an $L^\infty(\mu)$\textit{--premodule} provided there is bilinear map
\begin{align*}
 L^\infty(\mu)\times\mm&\to\mm,\\
 (f,v)&\mapsto f\cdot v,
\end{align*}
namely the \textit{pointwise multiplication}, such that
\begin{align*}
 (fg)\cdot v&=f\cdot(g\cdot v),\\
 \uno\cdot v&=v,\\
 \|f\cdot v\|_\mm&\le\|f\|_{L^\infty(\mu)}\|v\|_\mm,
\end{align*}
for every $v\in\mm$ and $f,g\in L^\infty(\mu)$, where $\uno=\uno_\X$.

An $L^\infty(\mu)$--premodule becomes an $L^\infty(\mu)$\textit{--module} if the following additional properties are satisfied:

\smallskip

-- \underline{\textit{Locality}}: for every $v\in\mm$ and $A_n\in\mathfrak{B}(\X)$, $n\in\N$, one has

\begin{equation*}
 \uno_{A_n}\cdot v=0\:\forall\,n\in\N\quad\Longrightarrow\quad\uno_{\bigcup_{n\in\N}A_n}\cdot v=0.
\end{equation*}

-- \underline{\textit{Gluing}}: for any sequences $(v_n)_{n\in\N}\subset\mm$ and $(A_n)_{n\in\N}\subset\mathfrak{B}(\X)$ such that

\begin{equation*}
 \uno_{A_i\cap A_j}\cdot v_i=\uno_{A_i\cap A_j}\cdot v_j\:\forall\,i,j\in\N\qquad\mathrm{and}\qquad\underset{n\to\infty}{\overline{\lim}}\left\Vert\sum_{i=1}^n\uno_{A_i}\cdot v_i\right\Vert_{\mm}<\infty,
\end{equation*}

there exists $v\in\mm$ such that

\begin{equation*}
 \uno_{A_i}\cdot v=\uno_{A_i}\cdot v_i\:\forall\,i\in\N\qquad\mathrm{and}\qquad\|v\|_{\mm}\le\underset{n\to\infty}{\underline{\lim}}\left\Vert\sum_{i=1}^n\uno_{A_i}\cdot v_i\right\Vert_{\mm}.
\end{equation*}
\end{definition}

As expectable, with the definition of $L^\infty(\mu)$--module it comes a natural notion of module morphisms.

\begin{definition}
 Let $\mm,\nn$ be two $L^\infty(\mu)$--modules. We say that $T:\mm\to\nn$ is a \textit{module morphism} whenever is a bounded linear map from $\mm$ to $\nn$ viewed as Banach spaces, satisfying
 
 \begin{equation*}
  T(f\cdot v)=f\cdot T(v),\quad\forall\,v\in\mm,f\in L^\infty(\mu).
 \end{equation*}

The set of all module morphisms $T:\mm\to\nn$ will be denoted by $\textsc{Hom}(\mm,\nn)$.
\end{definition}

It can be seen that $\textsc{Hom}(\mm,\nn)$ has a canonical structure of $L^\infty(\mu)$--module, whose norm - as a Banach space - is just the operator norm

\begin{equation}\label{norm-hom}
 \|T\|\coloneqq\underset{\|v\|_\mm\le1}{\sup}\|T(v)\|_\nn.
\end{equation}

Since $L^1(\mu)$ has a natural structure of $L^\infty(\mu)$--module, a notion of duality can be given in the following sense.

\begin{definition}
 Let $\mm$ be an $L^\infty(\mu)$--module. The \textit{dual module} $\mm^*$ is defined as
 \begin{equation*}
  \mm^*\coloneqq\textsc{Hom}(\mm,L^1(\mu)).
 \end{equation*}
\end{definition}

Of course, by \eqref{norm-hom} one immediately gets
\begin{equation*}
 \|T\|_{\mm^*}=\underset{\|v\|_\mm\le1}{\sup}\|T(v)\|_{L^1(\mu)}.
\end{equation*}

It is interesting to notice that by virtue of this definition, one has that the dual of  $L^p(\mu)$ is precisely $L^q(\mu)$, where $p,q\in[1,\infty]$ with $\frac{1}{p}+\frac{1}{q}=1$; see \cite[Example 1.2.7]{gi2}.

\smallskip

If it is possible to endow an $L^\infty(\mu)$--module with an ``$L^p$--norm'', then one has the following definition.

\begin{definition} \label{def:L_p_module}
 Let $\mm$ be an $L^{\infty}(\mu)$--premodule and $p\in[1,\infty]$. We say that $\mm$ is an $L^p(\mu)$\textit{--normed premodule} if there is a non--negative map $|\cdot|_{*}:\mm\to L^p(\mu)$ such that
 \begin{align*}
  \||v|_{*}\|_{L^p(\mu)}&=\|v\|_\mm,\\
  |f\cdot v|_{*}&=|f||v|_{*}
 \end{align*}
$\mu$--almost everywhere for all $f\in L^\infty(\mu)$ and all $v\in\mm$.

We shall call $|\cdot|_{*}$ the \textit{pointwise $L^p(\mu)$--norm} or, more simply, the \textit{pointwise norm}.

When an $L^p(\mu)$--normed premodule is also an $L^\infty(\mu)$--module, it will be called an $L^p(\mu)$\textit{--normed module}.
\end{definition}

It is easy to see that $|\cdot|_{*}:\mm\to L^p(\mu)$ is continuous, thanks to the simple inequality

\begin{equation*}
 \||v|_{*}-|w|_{*}\|_{L^p(\mu)}\le\||v-w|_{*}\|_{L^p(\mu)}=\|v-w\|_\mm,
\end{equation*}

valid for all $v,w\in\mm$. Also, $|\cdot|_{*}$ is local in the sense that for any $v\in\mm$ and $E\in\mathfrak{B}(\X)$,
\begin{equation*}
 v=0\quad\mu\text{--almost\;everywhere\;on}\;E\quad\Longleftrightarrow\quad|v|_{*}=0\quad\mu\text{--almost\;everywhere\;on}\;E,
\end{equation*}
\cite[Proposition 1.2.12]{gi2}.

\begin{remark}\label{dual-mod-norm}
 By \cite[Proposition 1.2.14]{gi2}, if $\mm$ is an $L^p(\mu)$--normed module, $p\in[1,\infty]$, then its dual module $\mm^*$ is an $L^q(\mu)$--normed module, $\frac{1}{p}+\frac{1}{q}=1$ with pointwise norm defined by
 \begin{equation*}
  |L|_{*}\coloneqq\underset{v\in\mm;\:|v|\le1\;\mu\text{-a.e.}}{\text{ess-}\sup}|L(v)|,
 \end{equation*}
where we have now denoted by $|\cdot|$ the pointwise norm on $\mm$. Then, by duality one also finds
\begin{equation*}
 |v|=\underset{L\in\mm^*;\;|L|_{*}\le1\;\mu\text{-a.e.}}{\text{ess-}\sup}|L(v)|.
\end{equation*}
\end{remark}

%

Finally, we recall the notions of generating sets and span over $L^\infty(\mu)$--modules.

\begin{definition} \label{def:span} Let $\mathscr{M}$ be an $L^{\infty}(\mu)$--module, $V \subset \mathscr{M}$ a subset and $A \in \mathfrak{B}(\X)$.
The span of $V$ on $A$, denoted by ${\rm Span}_{A}(V)$, is the subset of $\mathscr{M}$ made of vectors $v$ concentrated on $A$ with the following property: there are $(A_{n}) \subset \mathfrak{B}(\X)$, $n \in \N$, disjoint such that $A = \bigcup_{i} A_{i}$ and for every $n$ elements $v_{1,n}, \dots , v_{m_{n},n} \in \mathscr{M}$ and functions $f_{1,n}, \dots , f_{m_{n},n} \in L^{\infty}(\mu)$ such that
$$\chi_{A_{n}} v = \sum_{i = 1}^{m_{n}} f_{i,n} v_{i,n}.$$
We refer to ${\rm Span}_{A}(V)$ as the space spanned by $V$ on $A$, or simply spanned by $V$ if $A = \X$.
Similarly, we refer to the closure $\overline{{\rm Span}_{A}(V)}$ of ${\rm Span}_{A}(V)$ as the space generated by $V$ on $A$, or simply as the space generated by $V$ if $A = \X$.
\end{definition}

\vspace{1cm}

\begin{adjustwidth}{1cm}{1cm}{\bf Acknowledgements.} This research was partially supported by the Academy of Finland and by the INDAM-GNAMPA project 2017 ``Campi vettoriali, superfici e perimetri in geometrie singolari''. The authors wish to thank Simone Di Marino and Nicola Gigli for their useful comments and suggestions, along with the anonymous referees for carefully reading the manuscript and pointing out some amendments on the previous version.
\end{adjustwidth}

\phantomsection
\addcontentsline{toc}{section}{References}

\vspace{1cm}

\textsc{Vito Buffa}:
40121, Bologna, Italy.

E--mail: bff.vti@gmail.com. \textsf{ORCID iD}: 0000-0003-4175-4848.

\medskip

\textsc{Giovanni E. Comi}: 
Universit\"at Hamburg, Fakult\"at f\"ur Mathematik, Informatik und Naturwissenschaften, Fachbereich Mathematik -- Bundesstra\ss e 55, 20146 Hamburg, Germany.

E--mail: giovanni.comi@uni-hamburg.de.

\medskip

\textsc{Michele Miranda Jr.}:
Universit\`a degli Studi di Ferrara, Dipartimento di Matematica e Informatica --
Via Machiavelli 30, 44121 Ferrara, Italy.

E--mail: michele.miranda@unife.it.

\end{document}